\documentclass[tbtags,oneside]{article}
\usepackage{amsmath,amssymb,amsthm,amsfonts}
\usepackage[T1]{fontenc}
\usepackage{sectsty}
\usepackage{mathtools}
\usepackage{bm}
\usepackage{comment}
\usepackage[indent=20pt, skip=5pt]{parskip}
\usepackage{subcaption}
\DeclarePairedDelimiter{\norm}{\lVert}{\rVert}
\usepackage{xcolor}
\usepackage[colorlinks=true,linkcolor=blue,citecolor=blue]{hyperref}
\usepackage{graphicx}
\usepackage{upgreek, relsize}
\usepackage[shortlabels]{enumitem}
\usepackage[left=3.5cm,right=3.5cm]{geometry}
\usepackage{mathrsfs}
\usepackage{cite}
\usepackage{authblk}

\newtheorem{theorem}{Theorem}[section]
\newtheorem{prop}[theorem]{Proposition}
\newtheorem{lemma}[theorem]{Lemma}

\theoremstyle{definition}
\newtheorem{remark}[theorem]{Remark}
\newtheorem{definition}[theorem]{Definition}

\numberwithin{equation}{section}

\newtheorem{conj}{Conjecture}[section]
\def\R{\mathbb{R}}
\def\C{\mathbb{C}}

\def\Z{\mathbb{Z}}
\def\P{\mathbb{P}}
\def\E{\mathbb{E}\,}

\def\g{\gamma}

\def\d{\mathrm{d}}
\def\de{\delta}

\def\G{\Lambda}
\def\bG{\partial \G}
\def\D{D}

\def\Dd{\D_\de}

\def\w{\mathrm{w}}

\def\N{N}
\def\eps{\varepsilon}
\def\L{\mathcal{L}}

\def\n{\mathbf{n}}

\def\CLE{\textnormal{CLE}}
\def\GFF{h}
\def\IMF{\Phi}
\def\A{\mathbb{A}}
\def\CC{\mathbb{C}}
\def\P{\mathbb{P}}
\def\E{\mathbb{E}}
\def\w{\omega}
\def\wFK{\omega^{\text{FK}}}
\def\L{\Lambda}

\def\a{\alpha}
\def\Z{\mathbb{Z}}
\def\G{\Gamma}
\def\g{\gamma}
\def\Int{\mathrm{In}}

\def\F{\mathcal{F}}
\def\b{\partial}

\let\geq\geqslant                 
\let\leq\leqslant
\let\ge\geqslant

\def\Dd{D_\de}
\def\vp{\varphi}
\def\at{\textnormal{AT}}
\def\at{\textnormal{AT}}

\def\coin{\mathscr{S}}

\def\dual{\dagger}
\def\free{\mathrm{free}}

\def\n{\mathbf{{n}}}
\def\nh{\widehat{\n}}
\def\odd{\textrm{odd}}
\def\even{\textrm{even}}

\def\M{\mathrm{M}}

\def\C{\mathcal{C}}

\def\m{\bm{\mathrm{m}}}
\def\i{\mathbf{1}}
\def\f{\textnormal f}

\def\bG{{\b G}}
\def\bL{\b\L}
\def\Gd{G^\lozenge}

\title{The Ising magnetisation field \\ and the Gaussian free field}
\begin{document}

\author{
Tom{\'a}s Alcalde L{\'o}pez$^*$, Lorca Heeney$^*$, and Marcin Lis\thanks{ Technische Universität Wien.
	
Emails: \texttt{\{tomas.alcalde, lorca.heeney-brockett, marcin.lis\}@tuwien.ac.at}
}
}
	 
\maketitle
\begin{abstract}
	We construct a natural coupling between the continuum Gaussian free field (GFF) and the critical Ising magnetisation field (IMF) in a planar domain. In fact, we show that two independent IMFs with $+$ boundary conditions and two independent IMFs with free boundary conditions are a deterministic function of a single instance of the
	GFF together with a sequence of independent coin flips. This construction should be seen as an extension of the bosonisation phenomenon, and to the best of our knowledge its existence has not been predicted before.
	
	We arrive at our main result in the continuum by studying novel discrete structures. Our starting point is a coupling resembling the Edwards--Sokal coupling between the Ising model and the Fortuin--Kasteleyn random cluster model, though with role of the latter played by a different percolation model obtained from the double random current model. By taking a scaling limit of the coupling at criticality, we obtain a continuum Edwards--Sokal-like representation of the IMFs in terms of certain two-valued sets of the GFF introduced by Aru, Sep\'ulveda and Werner.
\end{abstract}

\tableofcontents

\section{Introduction}\label{sec:intro}

In this work we uncover an intrinsic relationship between two planar random fields that play central roles in random conformal geometry -- the continuum \emph{Gaussian free field} (GFF) and the \emph{Ising magnetisation field} (IMF).
The former is a universal object appearing as the (in many cases still conjectural) scaling limit of discrete height function models in their delocalized phase~\cite{Kenyon,BPR,GMT,BLR,DRC}.
In physics it is a model of a free (non-interacting) \emph{bosonic field theory}, and in mathematics it is the foundation on which numerous theories are built, e.g.~imaginary geometry~\cite{MilShe,MilShe3,MilShe2}, Gaussian multiplicative chaos~\cite{RhoVar,Ber,JSW,LRV} and Liouville quantum gravity~\cite{RhoVarIcm,DupShe}.
The IMF on the other hand arises as the scaling limit of the discrete magnetisation field of the celebrated Ising model at its critical point~\cite{mag-field,CHI1,CGN2}.

To be more precise, the GFF $\GFF$ with zero boundary conditions in a domain $D \subset \mathbb C$ is a random Gaussian field whose covariance kernel is the Green's function $G_D$ of the Brownian motion in $D$ killed upon hitting the boundary~$\partial D$, and as such is a conformally invariant object (see~\cite{BerPow} for an exhaustive account on the GFF).
Since $G_D$ diverges on the diagonal, the field is too rough to be made sense of as a random function and must be considered only as a random Schwartz distribution acting on test functions:
for smooth functions $f,g$ with support in $D$, the values of $\GFF$ tested against $f$ and $g$, denoted by $(\GFF,f)$ and $(\GFF,g)$ respectively, are centered Gaussians with covariance 
\[
\mathbb E[ (\GFF,f) (\GFF,g)]= \int_D \int_D G_D(x,y)f(x)g(y)\, dx\, dy.
\]
In beautiful works of Aru, Sep\'ulveda and Werner~\cite{ASW,AruSep}, that will be crucial for our considerations, it was shown that the GFF has a rich structure of built-in 
geometric objects called \emph{two-valued sets}. For $a,b>0$ and $a+b\geq2\lambda$, where $\lambda=\pi/2$
is (under the normalisation we use in this article) the \emph{height gap} of the GFF defined in~\cite{SS}, their boundaries $\mathcal L_{-a,b}$ are countable collections of non-crossing fractal-like loops in $D$, which may be heuristically thought of as the
{level lines} of the field at values $-a$ and~$b$, despite the fact that the field itself is not a function. 

The main novelty of our work is to show that two-valued sets with particular values of $a$ and~$b$ give rise to the IMF through a geometric representation, decomposing the field into a signed sum of the ``area'' measures of the two-valued sets, where the signs are given by independent coin tosses. In fact, we are able to couple \emph{four} (two pairs of two independent) IMFs with a single instance of a GFF. 
As a direct consequence the IMFs are measurable functions of the GFF and the signs. Moreover, we prove that two-valued sets with different values of $a$ and $b$ define a one-parameter family of continuum fields associated with the GFF, and conjecture them to be the continuum magnetisation fields of critical \emph{Ashkin--Teller} models. 

We achieve our main result by taking the scaling limit of a \emph{new} representation of the Ising model. This representation is analogous to the Edwards--Sokal coupling \cite{FK, EdwSok} between the Ising model and the FK-Ising model (also referred to as the Fortuin-Kasteleyn random cluster model with cluster weight $q=2$), but the role of the latter is played by a different percolation model defined in terms of the double random current (DRC) model. This is a classical graphical representation of the (products of) Ising correlation functions introduced by Griffiths, Hurst and Sherman~\cite{GHS}, and developed by Aizenman, Duminil-Copin and others
with great success in the study of both planar and higher-dimensional Ising (and related) models~\cite{Aiz82,AizBarFer,ADCS,DCT,ADTW,AizDC,DRC}.
It was recently shown by Duminil-Copin, Qian and the third author \cite{DRC, DRC2} that the scaling limit of a critical DRC is given by a specific iteration of two-valued sets. 
Our results build on this and may be seen as an extension, which adds the GFF into the picture, of the link developed by Camia, Garban and Newman~\cite{mag-field, CME} between the IMF and planar random geometry beyond the conformal loop ensembles with dual parameters $\kappa = 3$ and $\kappa = 16/3$. 

We highlight that our representation of the IMF in terms of the GFF is \emph{not} a local one like e.g.~the \emph{vertex operators} in the free-boson conformal field theory of the GFF.
Indeed, the \emph{bosonisation} framework in physics describes the Ising magnetisation field as a \emph{twist field} rather than a vertex operator of the GFF~\cite{ZubItz,MatLie,DVV}.
These two classes of fields belong to two different sectors in the \emph{orbifold CFT} description of bosonisation -- the twisted and untwisted one correspondingly. 
Unlike the vertex operators, the twist fields are not ``local functions'' of the GFF, and their construction proceeds via a change in the topology of the domain on which the GFF lives by considering a double cover branching around the points of insertion of the twist fields~\cite{DVV}.
In discrete terms, twist fields are therefore more like disorder operators in the Kadanoff-Ceva sense~\cite{KadCev} that change the state-space of the discrete model, whereas the 
vertex operators correspond to the true random degrees of freedom of the model.
What we achieve in this work is a natural probabilistic coupling of the twist fields in the {orbifold CFT}, realised as true random distributions, together with the GFF itself and its vertex operators (that include the scaling limit of the critical XOR-Ising model~\cite{JSW, XOR-exc}, to be defined later).

During the preparation of this article, we learned that the statement of our main Theorem~\ref{thm:main-decomp} 
was also conjectured by Aru and Lupu who provided evidence for it at the level of matching two-point functions. 
They build on the twist field description of the magnetisation field (see Section~\ref{sec:further}) and the recent interpretation of such fields through topological events in the GFF~\cite{lupu-twist}.
 We refer to their upcoming work \cite{AL26} for details.
 \bigskip

\subsection{Main results}
\def\free{\textnormal{free}}

Let us start by recalling the definition of the Ising model, whose history goes back more than a century to the foundational work of Lenz~\cite{Lenz}. Let $G=(V, E)$ be a finite graph. The Ising model with coupling constant $J\geq0$ and free boundary conditions, is a probability measure on spin configurations $\sigma\in \{\pm1\}^{V}$ given by
\begin{align}
	\mathbb P_{G,J} (\sigma)\propto \prod_{uv\in E} \exp\big(J \sigma_u \sigma_v\big).
\end{align}
The model with $+$ boundary conditions is defined by distinguishing a particular subset of the vertices on which $\sigma$ is required to be $+1$.

To define the IMF in a simply connected domain $D\subset \CC$, one starts by considering a sequence of discrete domain approximations $D_\delta \subset \delta \mathbb Z^2$. Here, we abuse notation and write $\Dd$ also for the vertex-set of these graphs. 
We consider the Ising model $\sigma_\de$ on $\Dd$ at the critical \cite{onsager} value of the coupling constants
\begin{equation}\label{eq:critical-Ising}
	J_c = \frac{1}{2} \log(1+\sqrt{2}),
\end{equation}
with either free boundary conditions or $+$ boundary conditions imposed on the boundary vertices of $\Dd$.
Each spin configuration $\sigma=\sigma_\de\in \{\pm1\}^{\Dd}$ can be thought of as a field (generalised function) acting on test functions $f: D\to \mathbb R$ by defining 
\begin{align} \label{eq:defgenfun}
	(\sigma, f) := \delta^2 \sum_{v\in\Dd} f(v)\sigma_v,
\end{align} 
where the vertices of $\Dd$ are identified with the corresponding points in the plane. The breakthrough result of Chelkak, Hongler and Izyruov \cite{CHI1} showed that the scaling limit $\de\to0$ of spin correlation functions, under free or $+$ boundary conditions, exists and is conformally covariant. In turn, it was shown in work of Camia, Garban and Newman \cite{mag-field} that this readily implies that the limit
\begin{align} \label{eq:IMFdef}
	\lim_{\de\to0}\delta^{-1/8} \sigma_\de =: \IMF_D
\end{align}
exist in law, when considered in proper spaces of generalised functions (and under very mild regularity assumptions on $\partial D$). The limit above is taken as the definition of the IMF.

We highlight that the methods in this paper do not rely on the existence of the scaling limit of the discrete IMF, but rather provide a new proof, which solely requires the convergence of the one-point function of the Ising model. This was already the case in the geometric construction of \cite[Section 2]{mag-field}, and indeed our approach is similar in spirit, yet with some key differences we discuss in-depth in Section \ref{sec:outline}.

\noindent\textbf{Statement of the continuum couplings.} We start by stating the existence of a coupling between one Gaussian free field (GFF) and four critical Ising magnetisation fields (IMFs). It is important to note that there is no canonical coupling of all four IMFs at once: there is always an arbitrary choice in how the signs for the $+$ and free boundary condition fields depend on each other --- see Remark \ref{rem:signs}.
\medskip

\begin{theorem}[Four IMFs from one GFF and independent coin tosses] \label{thm:main-coupling} 
Let $D \subset \mathbb C$ be a Jordan domain. Let $\GFF$ be a GFF in $D$ with zero boundary conditions,
and let $ \xi=(\xi_k)_{k\geq 0}$ be i.i.d.~symmetric $\{\pm1\}$-valued random variables. 
Then, there exist (explicit) deterministic measurable functions $\IMF^+$, $\tilde \IMF^+$, $\IMF^\textnormal{f}$, $\tilde \IMF^\textnormal{f}$ such that
\begin{itemize}
\item $\IMF^+(\GFF,\xi)$ and $\tilde\IMF^+(\GFF,\xi)$ have the law of two \textbf{independent} IMFs with $+$ boundary conditions,
\item  $\IMF^\textnormal{f}(\GFF,\xi)$ and $\tilde\IMF^\textnormal{f}(\GFF,\xi)$ have the law of two \textbf{independent} IMFs with free boundary conditions.
\end{itemize}
\end{theorem}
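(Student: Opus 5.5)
We build the coupling by passing the discrete Edwards--Sokal-type representation of the Ising model through the double random current (DRC) to the scaling limit. On $D_\delta$, the Ising spins are then obtained from the clusters of a percolation $\omega$ derived from the DRC by assigning each cluster an independent fair sign. Clusters touching the wired boundary get sign $+$ in the $+$ boundary condition case. The key fact is that two independent Ising models $\sigma,\tilde\sigma$ on $D_\delta$ with the same boundary condition can both be sampled from one DRC configuration plus independent coins. Concretely, the sourceless DRC $\mathbf n_1+\mathbf n_2$ determines a percolation whose clusters carry $\sigma$, while the clusters of the "odd part" (the XOR-Ising/dual structure) carry $\tilde\sigma$. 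Equivalently, $\sigma\tilde\sigma$ is the XOR-Ising field, which is constant on clusters.

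By \cite{DRC,DRC2}, the DRC scaling limit is an explicit nested iteration of two-valued sets $\mathbb A_{-a,b}$ of a single GFF $h$, with $a,b\in\{\lambda,2\lambda\}$-type values. The $+$ case uses the outermost level, and the free case uses the nested next level of the same field. This is what lets all four fields come from one $h$. Concretely:

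\begin{enumerate}[(i)]
\item Fix the discrete coupling and write $\delta^{-1/8}(\sigma_\delta,f)=\sum_{\mathcal C}\epsilon_{\mathcal C}\,\delta^{15/8}\sum_{v\in\mathcal C}f(v)$, summing over clusters.
\item Show that the renormalised cluster area measures $\mu^\delta_{\mathcal C}=\delta^{15/8}\sum_{v\in\mathcal C}\delta_v$ converge jointly with the loops to measures $\mu_{\mathcal C}$ on the regions delimited by the two-valued set loops. Each $\mu_{\mathcal C}$ is measurable with respect to $h$, for example as a Minkowski-content type limit of the two-valued set near the loops.
\item Define $\Phi^+(h,\xi)=\sum_k \xi_k\mu_{\mathcal C_k}+\mu_{\partial}$, with an enumeration of the clusters measurable in $h$ (e.g.\ by decreasing diameter). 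Define $\tilde\Phi^+$, $\Phi^{\mathrm f}$ and $\tilde\Phi^{\mathrm f}$ analogously, using the other nested levels and disjoint subsequences of $\xi$.
\item Prove that the discrete signed sums converge in law, jointly, to these continuum sums. Then identify the limit laws as independent IMFs by matching with \eqref{eq:IMFdef} for each pair.
\end{enumerate}

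Independence of each pair holds already in the discrete setting and passes to the limit, since convergence is joint.

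For step (iv), truncate to clusters of diameter $>\varepsilon$. Control the remainder in $L^2$ using the second moment $\sum_{\mathcal C}\mu^\delta_{\mathcal C}(f)^2\le\delta^{-1/4}\mathbb E[(\sigma,f)^2]$-type bounds. These need only the one-point and two-point function asymptotics from \cite{CHI1}, following the scheme of \cite[Section 2]{mag-field}. The finitely many macroscopic clusters then converge by the loop convergence of \cite{DRC2}.

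The main obstacle is step (ii): proving that cluster areas converge and are deterministic functions of $h$, rather than of extra randomness in the limit. We would first show convergence of $\mathbb E[\mu^\delta_{\mathcal C}(f)\mid\text{loops}]$ using the convergent one-point function and conditional independence inside loops. Then we would prove concentration, i.e.\ vanishing conditional variance, from a decorrelation estimate for the Ising density at mesoscopic scales. This identifies $\mu_{\mathcal C}$ as a functional of the loops and hence of $h$.
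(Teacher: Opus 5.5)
Your overall architecture matches the paper's: a discrete Edwards--Sokal-type representation via the DRC, convergence of cluster area measures, and $L^2$ control of small clusters. However, step (iii) has a genuine gap in how you build the second field with the same boundary condition.

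\textbf{The second field.} You define $\tilde\Phi^+$ and $\tilde\Phi^{\mathrm f}$ ``analogously, using the other nested levels and disjoint subsequences of $\xi$''. This rests on the premise that $\tilde\sigma$ lives on clusters of an ``odd part'', and that premise is false. Since $\omega=\xi(\sigma)\cap\xi(\tilde\sigma)\cap\eta$, both $\sigma$ and $\tilde\sigma$ are constant on the \emph{same} $\omega$-clusters, and both are $+1$ on the boundary cluster. Moreover $\tilde\sigma=\sigma\tau$, where the XOR spin $\tau$ is also constant on $\omega$-clusters.

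Fresh coins cannot reproduce this. For instance, keeping the same carpets but using an independent sign sequence gives $\mathrm{Cov}(\Phi^+[f],\tilde\Phi^+[f])=\mathrm{Var}(\mu_0[f])>0$. In the correct representation $\tilde\Phi^+=\mu_0+\sum_k\xi_k\tau_k\mu_k$ (same measures, \emph{same} coins), the covariance is $\mathrm{Var}(\mu_0[f])+\sum_k\mathbb{E}[\tau_k\mu_k[f]^2]$. This vanishes only because the $\tau_k$ are specific functions of $h$, correlated with the cluster geometry.

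\textbf{The missing ingredient} is therefore the continuum identification of the XOR spins on clusters. In the master coupling $\tau_\delta=(-1)^{H_\delta}$ on primal vertices. So its value on each $\omega$-cluster is a function of the height function, determined by parity labels of the enclosing DRC loops. Since $H_\delta$ converges only as a distribution, convergence of these cluster-wise signs is not automatic. It is supplied by \cite[Theorems 6.2--6.4]{DRC}: jointly with $H_\delta\to h/(\pi\sqrt2)$ and the clusters, the signs converge to products of labels $c_j\in\{\pm1\}$. These labels are read off from the boundary values of the enclosing two-valued-set loops, so each $\tau_k$ is $h$-measurable.

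Because $\xi$ is independent of $h$, $(\xi_k\tau_k)_k$ is again i.i.d.\ fair and independent of the clusters. Hence your single-field argument applies to $\tilde\Phi^+$. Your remark that independence ``passes to the limit, since convergence is joint'' is correct, but only in this representation, where the discrete pair is written through the same clusters with signs $\xi_k$ and $\xi_k\tau_k^\delta$. The free pair is handled the same way.

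Relatedly, the $+$ and free families are not nested levels of one another. They come from $\omega=(\hat{\mathbf n}^\dagger)^*$ and $\omega^\dagger=\hat{\mathbf n}^*$, the dual complements of the two non-crossing DRCs of the master coupling, which share one height function.

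\textbf{Step (ii)} is only a sketch at the hard point. $\omega$ has a Markov property with respect to open circuits, but not with respect to the closed dual circuits that form inner boundaries. Also, a cluster's mass does not concentrate given only its outer boundary. What concentrates is the mass in a mesoscopic box reached by the cluster, around $\beta\varepsilon^{15/8}$, where $\beta$ is an IIC density and not just the one-point function. Proving this requires an IIC coupling. Since $\omega$ is not known to satisfy FKG, it also needs the FKG structure of $(\tau,\omega^+,-\omega^-)$ and RSW bounds for $\omega^\pm$.
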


\def\C{\mathcal{C}}
\def\La{\mathcal{L}}

\noindent As already introduced, the explicit functions in Theorem \ref{thm:main-coupling} are given by sums of signed measures, each of which is a measurable function of the GFF. This is precisely the content of the next statement. For the sake of exposition, we explain how to sample the measures and their supports thereafter.

\begin{theorem}[Representation of IMFs via two-valued sets of the GFF]\label{thm:main-decomp} 
Let $D \subset \mathbb C$, $\GFF$ and $(\xi^+)_{k\ge1}$, $(\xi^{\textnormal{f}})_{k\ge1}$ be as above. There exist signs $(\tau_k^+)_{k\geq1}$, $(\tau_k^{\textnormal{f}})_{k\geq1}$ and measures $(\mu_k^+)_{k\geq0}$, $(\mu^{\textnormal {f}}_k)_{k\geq1}$ such that
	\begin{equation}\label{eq:IMF}
		\IMF^+ = \mu_0^+ + \sum_{k=1}^\infty\xi^+_k\mu_k^+,\quad
		\tilde\IMF^+ = \mu_0^+ + \sum_{k=1}^\infty\xi^+_k\tau_k^+\mu_k^+,\quad
		\IMF^{\textnormal{f}} =\sum_{k=1}^
		\infty\xi_k^{\textnormal{f}}\mu_k^{\textnormal{f}},\quad
		\tilde\IMF^{\textnormal{f}} = \sum_{k=1}^\infty\xi_k^{\textnormal{f}}\tau_k^{\textnormal{f}}\mu_k^{\textnormal{f}}, 
	\end{equation}
	are the IMFs in Theorem \ref{thm:main-coupling}. Moreover,
	\begin{itemize}
		\item The convergence of the sums holds a.s. in  the Sobolev space $H^s_{\textrm{loc}}(D)$ for any $s<-1$.
		\item The ordering of the sum is by decreasing diameter of the supports $(\C_k^+)_{k\geq0}$, $(\C^{\textnormal {f}}_k)_{k\geq1}$ of the respective measures.
		\item The measures are a measurable function of their supports, which in turn are a measurable function of $\GFF$.
		\item The signs $(\tau_k^+)_{k\geq1}$, $(\tau_k^{\textnormal{f}})_{k\geq1}$ are measurable functions of $\GFF$.
	\end{itemize} 
\end{theorem}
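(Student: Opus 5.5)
The plan is to obtain the representation as the scaling limit of a discrete Edwards--Sokal-type coupling built on the double random current (DRC) model, and then to identify the limiting clusters with iterated two-valued sets of $\GFF$ as in \cite{DRC,DRC2}.

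\textbf{Step 1: the discrete coupling.} On $\Dd$ we take the sum $\n=\n_1+\n_2$ of two independent currents with the boundary conditions matching the Ising model. We declare an edge open when $\n$ is nonzero on it, possibly adding extra Bernoulli randomness, and tune this so that the following holds. Assign an independent fair sign to each cluster that does not touch the boundary, and sign $+$ to the boundary cluster. Then the resulting spin field is an Ising configuration $\sigma$. Moreover, flipping each cluster according to the parity information carried by the sourceless current $\n_1$ on it produces a second spin field $\tilde\sigma$, and $\sigma,\tilde\sigma$ are independent Ising models. This is the discrete analogue of the $\tau_k$: the sign $\tau_k$ records a parity of the current on cluster $k$. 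I will verify it via the switching lemma, by computing joint correlations $\E[\sigma_A\tilde\sigma_B]=\langle\sigma_A\rangle\langle\sigma_B\rangle$. In the free case there is no $\mu_0$ term.

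\textbf{Step 2: passing to the limit.} We use the convergence of DRC cluster boundaries to the loops $\La_{-a,b}$ of appropriate two-valued sets, nested and iterated \cite{DRC,DRC2}. This gives joint convergence of macroscopic clusters $\C_k$, ordered by diameter, to sets that are measurable with respect to $\GFF$. The parity signs $\tau_k$ converge jointly as well, since they are encoded by the labels of the iteration, i.e.\ by which two-valued set boundary value ($-a$ or $b$) is seen. Next we rescale the cluster areas: $\mu_k^\de:=\de^{2-1/8}\sum_{v\in\C_k^\de}\delta_v$. Following the Camia--Garban--Newman strategy \cite[Section 2]{mag-field}, we show that $\mu_k^\de$ converges to a measure that is a function of the limit set $\C_k$. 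This needs only second-moment estimates, i.e.\ the one-point function asymptotics and quasi-multiplicativity of arm events for DRC clusters. Measurability of the limit measures in their supports follows from an $L^2$ approximation by counting $\eps$-boxes hit by $\C_k$, renormalised by $\eps^{-1/8}$.

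\textbf{Step 3: summation and identification.} We show tightness in $H^s_{\rm loc}$ for $s<-1$ via $\E\|\de^{-1/8}\sigma_\de\|_{H^s}^2\lesssim 1$. Truncating to clusters of diameter greater than $\eps$ gives an error that is uniformly small in $L^2$: its contribution is $\sum_{\mathrm{diam}<\eps}\|\mu_k\|^2$, which is controlled by a two-point function restricted to small clusters, and this tends to $0$. Hence the limit of $\de^{-1/8}\sigma_\de$ equals the almost surely convergent signed sum. By \eqref{eq:IMFdef}, or by the uniqueness of limits of the one-point-determined fields, this identifies the law as the IMF, and independence of the pairs passes to the limit.

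The main obstacle is Step 1, namely designing the percolation built from $\n_1+\n_2$ so that the two sign assignments are exactly independent Ising fields. The second-hardest point is the uniform small-cluster control in Step 3, which needs RSW-type estimates for the DRC percolation.
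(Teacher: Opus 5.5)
Your architecture (a discrete Edwards--Sokal coupling, cluster limits from \cite{DRC,DRC2}, $L^2$ box counting, truncation by diameter) matches the paper's. However, the step you yourself call the main obstacle is not carried out, and the hint you give for it points the wrong way.

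The percolation that works is $\omega=\xi(\sigma)\cap\xi(\tilde\sigma)\cap\eta$, where $\xi(\cdot)$ is the set of edges on which a spin agrees and $\eta$ is Bernoulli with parameter $1-e^{-4J}$ (Definition~\ref{def:one}). Writing $J(\sigma_u\sigma_v+\tilde\sigma_u\tilde\sigma_v)=2J\,\mathbf 1(\tau_u=\tau_v)\,\sigma_u\sigma_v$ with $\tau=\sigma\tilde\sigma$ shows that, given $\tau$, $(\omega,\sigma)$ is an ordinary FK--Ising/Ising pair. Hence $\sigma$ is a uniform colouring of the $\omega$-clusters, and $\tilde\sigma=\sigma\tau$ is the second, independent Ising field; no correlation matching via the switching lemma is needed.

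In DRC language this is $\omega=\widehat{\mathbf n}\cup(\eta'\cap\xi(\tau))$, with $\tau$ given by coin tosses on the clusters of the wired primal DRC and $\eta'$ of parameter $1-e^{-2J}$. The extra edges are only allowed where $\tau$ agrees, so this is not an independent sprinkling to be tuned. With no sprinkling at all, coin tosses on $\widehat{\mathbf n}$ give the XOR--Ising model, not Ising.

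Correspondingly, the flip sign $\tau_k$ must be the XOR--Ising spin. Its domain walls are the odd part of the \emph{total} dual current $\mathbf n^\dagger_1+\mathbf n^\dagger_2$ (i.e.\ $\tau=(-1)^H$), and its conditional law given $\omega$ is antiferromagnetic. A flip read off from $\mathbf n_1$ alone, e.g.\ the sign field with domain walls $(\mathbf n^\dagger_1)_{\mathrm{odd}}$, is, given the clusters, just another uniform colouring. Then $\sigma,\tilde\sigma$ are conditionally independent colourings of the same clusters, and $\mathbb E[\sigma_x\sigma_y\tilde\sigma_x\tilde\sigma_y]=\langle\sigma_x\sigma_y\rangle$ rather than $\langle\sigma_x\sigma_y\rangle^2$, so they are not independent.

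The limit step has two further gaps.

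First, Theorem~\ref{thm:DRC} is about DRC clusters. The wired primal DRC clusters converge to $\mathbb A_{-\sqrt2\lambda,\sqrt2\lambda}$ and $\mathbb A_{-2\lambda,(2\sqrt2-2)\lambda}$ (dimension $2-1/4$), not to the $\mathrm{CLE}_4$ carpets $\mathbb A_{-2\lambda,2\lambda}(h^\ell)$ of the statement. Nothing in \cite{DRC,DRC2} tells you how a sprinkling glues primal DRC clusters together at macroscopic scale. The missing idea is planar duality (Proposition~\ref{prop:dualiden}): $\omega$ has the law of the dual complement of the free DRC on the weak dual (for $+$ boundary conditions; the strong dual for free). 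So outer boundaries of $\omega$-clusters are inner boundaries of dual DRC clusters and vice versa. This is exactly what turns Theorem~\ref{thm:DRC} into the iterations $(+)$ and (free), and the labels of \cite{DRC} into the $\tau_k$.

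Second, you have misplaced the difficulty in the measure step. The $L^2$ control of small clusters is immediate from $\mathbb P(x\overset{\omega}{\leftrightarrow}y)=\langle\sigma_x\sigma_y\rangle$ and Ising two-point bounds. By contrast, counting $\varepsilon$-boxes with weight $\varepsilon^{2-1/8}$ approximates $\mu_k$ in $L^2$ only under two conditions:
\begin{itemize}
\item you multiply by a constant $\beta$ satisfying $\delta^{2-1/8}\mathbb E[|\mathcal C\cap A|\mid A\cap\mathcal C\neq\emptyset]\approx\beta\varepsilon^{2-1/8}$;
\item you have decorrelation between well-separated boxes.
\end{itemize}
Both come from an incipient-infinite-cluster coupling (Propositions~\ref{prop:IIC} and~\ref{prop:betalimit}). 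Building it requires exploring $\omega$ monotonically, even though $\omega$ is not known to satisfy FKG. The paper supplies this through:
\begin{itemize}
\item the FKG property of $(\tau,\omega^+,-\omega^-)$;
\item the conditional FKG for $(\tau,\omega^+)$ (Lemma~\ref{lemma:conFKG});
\item dual one-arm bounds for $\omega^\pm$;
\item the anti-correlation Lemma~\ref{lemma:decorrlemma}.
\end{itemize}
Second-moment bounds plus quasi-multiplicativity for DRC arm events do not provide this, and DRC arm exponents are those of the XOR model, not of $\omega$.
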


\begin{remark}\label{rem:signs}
	In the statement of Theorem \ref{thm:main-decomp}, we have chosen to use different coin tosses for different boundary conditions. This is an arbitrary choice, albeit natural, and the statement remains valid also when choosing e.g. the same coin tosses.
\end{remark}

\begin{remark}
	Decompositions of the form \eqref{eq:IMF} are known for fields other than the IMF. For instance, recent work of Aru, Lupu and Sep{\'u}lveda showed the GFF itself admits a similar decomposition, and further proved it arises as the scaling limit of the decomposition of the metric graph GFF  \cite{GFF-excur}. Similarly, in the work of the first author and Sep{\'u}lveda  \cite{XOR-exc}, it is shown that the real and imaginary parts of the complex multiplicative chaos of the GFF can be decomposed analogously. In fact, these fields are the (conjectural for the non-interacting case) scaling limit of the Ashkin-Teller polarisation field, see Section \ref{sec:conj}. 
\end{remark}

Let us now explain how to obtain the supports, which we refer to as (continuum) \emph{clusters}, as measurable functions of $\GFF$. The description we give now is a slightly simplified version of the iteration, where we only care about the law of the clusters as closed sets but do not make reference to the boundary values of $\GFF$ on them. We refer to Section~\ref{sec:proofs} and Figure \ref{fig:full-iter} for the full iteration, along with the definition of the signs $(\tau_k^+)_{k\geq1}$, $(\tau_k^{\textnormal{f}})_{k\geq1}$.

We denote by $\A_{-a,b}\equiv\A_{-a,b}(\GFF)$ the two-valued set \cite{ASW, AruSep} with boundary values $\{-a, b\}$ of a GFF $\GFF$ with zero boundary conditions in $D$. Recall that these are closed subsets of $\bar D$ connected to $\partial D$ such that the boundaries of the connected components of $D\setminus\A_{-a,b}$ define a countable union $\La_{-a,b}$ of loops (i.e. closed simple curves) in $D$. Heuristically, these loops should be thought of as the $\{-a,b\}$--level lines of the GFF and they are the two-dimensional analogue of the exit time of a standard Brownian motion from the interval $[-a,b]$. To be more precise, each $\ell\in\La_{-a,b}$ can be associated a label $c(\ell)\in\{-a,b\}$ such that the following strong Markov property holds: conditionally on $\ell$, the restriction of $\GFF$ to the domain $O(\ell)$ encircled by $\ell$ is given by 
\[	
h^{\ell} + c(\ell),
\]
where the law of $h^\ell$ is that of GFF \emph{with zero boundary conditions} in $O(\ell)$.

The existence and uniqueness of two-valued sets was established in \cite[Proposition 2]{ASW}, provided the boundary values are such that $a, b\geq0$ and $a+ b\geq2\lambda$, where we recall $\lambda=\pi/2$ is the height gap of the GFF \cite{SS, ASW}. Moreover, it was proved in \cite[Theorem 4.1]{AruSep} that the loops in $\La_{-a,b}$ touch each other if and only if $a+b<4\lambda$. Of special interest to us is the collection $\La_{-2\lambda, 2\lambda}$, which is known to have the same law  \cite[Proposition 1]{ASW} as the conformal loop ensemble $\CLE_4$ with parameter $\kappa=4$ defined in~\cite{SheTree,SheWer}. Equivalently, the two-valued set $\A_{-2\lambda, 2\lambda}$ has the same law as the \emph{carpet} of the $\CLE_4$ (i.e. the set of points not surrounded by a loop of the $\CLE_4$, or the closure of the union of all its loops).

Each cluster in the decompositions of Theorem \ref{thm:main-decomp} will be given by the two-valued set $\A_{-2\lambda, 2\lambda}(h^\ell)$ of a GFF in the Jordan domain defined by some loop $\ell$ (which in the case of $+$ boundary conditions may be the boundary $\partial D$ of the initial domain $D$), see Figure \ref{fig:cluster}. 
The exploration of the clusters goes as follows:

\begin{figure}[t]
	\centering
	\includegraphics[width=0.30\linewidth]{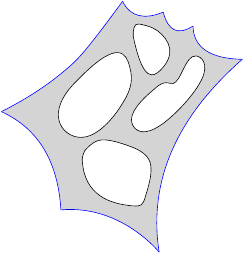}
	\caption{The two-valued set $\A_{-2\lambda, 2\lambda}$, shaded in grey, of a GFF with zero boundary conditions in the domain encircled by some loop, coloured in blue. Every cluster in the decompositions of Theorem \ref{thm:main-decomp} is of this form.
		\label{fig:cluster}}
\end{figure}

\paragraph{$\mathbf{(+)}$ For $+$ boundary conditions (Figure \ref{fig:decomp-plus}):}\hypertarget{pos}{}
\begin{enumerate}[align = left, labelwidth=\parindent, labelsep = 0pt]
	\item[(1.$+$)]\ The boundary cluster is $\C_0^+=\A_{-2\lambda, 2\lambda}(\GFF)$.
	\item[(2.$+$)]\ Let $\gamma\in\La_{-2\lambda, 2\lambda}(\GFF)$. Every loop $\ell\in\La_{-2\lambda, (2\sqrt{2}-2)\lambda}(\GFF^\gamma)$ is given a cluster $\C^+_{\ell}=\A_{-2\lambda, 2\lambda}(\GFF^\ell)$. 
	\item[(3.$+$)]\  Iteratively, let $\C^+_\ell=\A_{-2\lambda, 2\lambda}(\GFF^\ell)$ for some loop $\ell$. Apply (2.$+$) to every $\gamma\in\mathcal L_{-2\lambda, 2\lambda}(\GFF^\ell)$.
\end{enumerate}
Finally, reorder $\{ \C^+_\ell \}$ according to decreasing diameter to get $(\C^+_k)_{k\ge1}$.

\paragraph{$(\text{free})$ For free boundary conditions (Figure \ref{fig:decomp-free}):}\hypertarget{free}{}
\begin{enumerate}[align = left, labelwidth=\parindent, labelsep = 0pt]	
	\item[(1.\textup{f})]\ Every loop $\ell \in \La_{-\sqrt{2}\lambda, \sqrt{2}\lambda}(\GFF)$ is given a cluster $\C^{\textrm{f}}_\ell=\A_{-2\lambda, 2\lambda}(\GFF^\ell)$.
	\item[(2.\textup{f})]\ Iterate as in (3.$+$).
\end{enumerate}
Finally, reorder $\{ \C^{\textnormal{f}}_\ell \}$ according to decreasing diameter to get $(\C^\textnormal{f})_{k\ge1}$.	
\bigskip

\begin{figure}[h]
	\begin{subfigure}{.45\textwidth}
		\centering
		\includegraphics[width=.80\linewidth]{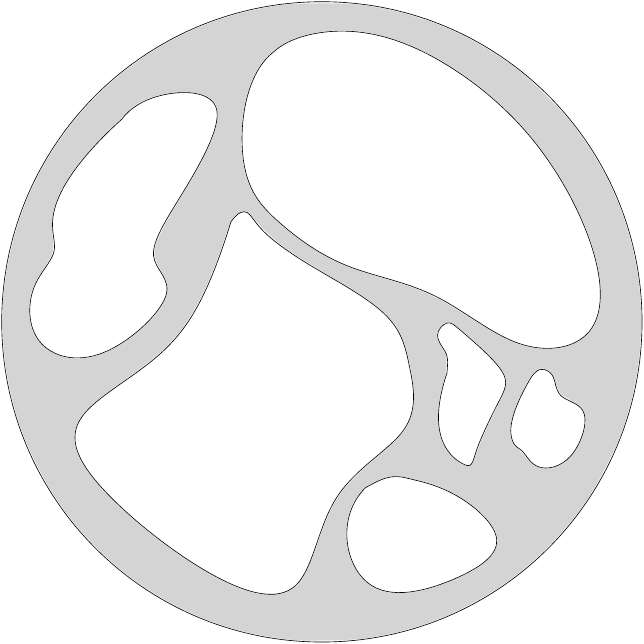}
	\end{subfigure}
	\begin{subfigure}{.45\textwidth}
		\centering
		\includegraphics[width=.80\linewidth]{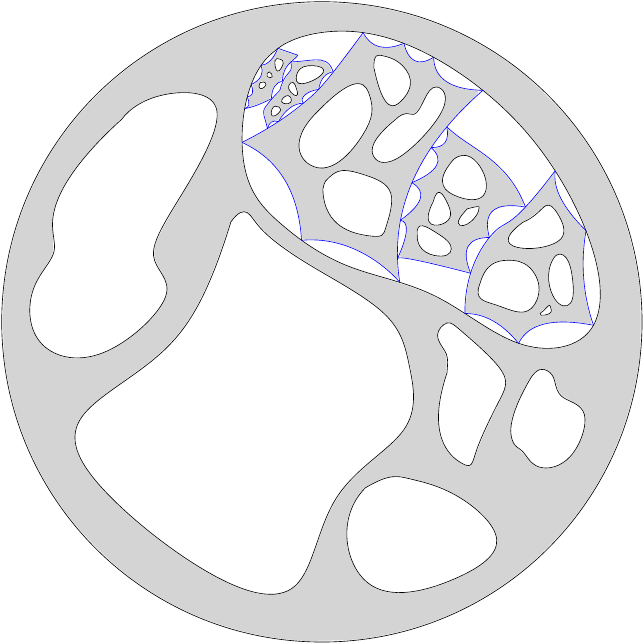}
	\end{subfigure}%
	\caption{Some of the clusters under $+$ boundary conditions. The left-hand depicts the boundary cluster $\C_0^+=\A_{-2\lambda, 2\lambda}(\GFF)$, shaded in grey. On the right-hand side, step (2.$+$) has been additionally performed inside \emph{only} one of the loops $\gamma\in\La_{-2\lambda, 2\lambda}(\GFF)$. Every  $\ell\in\La_{-2\lambda, (2\sqrt{2}-2)\lambda}(\GFF^\gamma)$, coloured in blue, is associated a unique cluster, shaded in grey, which has $\ell$ as its outer boundary.
		\label{fig:decomp-plus}}
\end{figure}

\begin{figure}[h]
	\centering
	\includegraphics[width=0.35\linewidth]{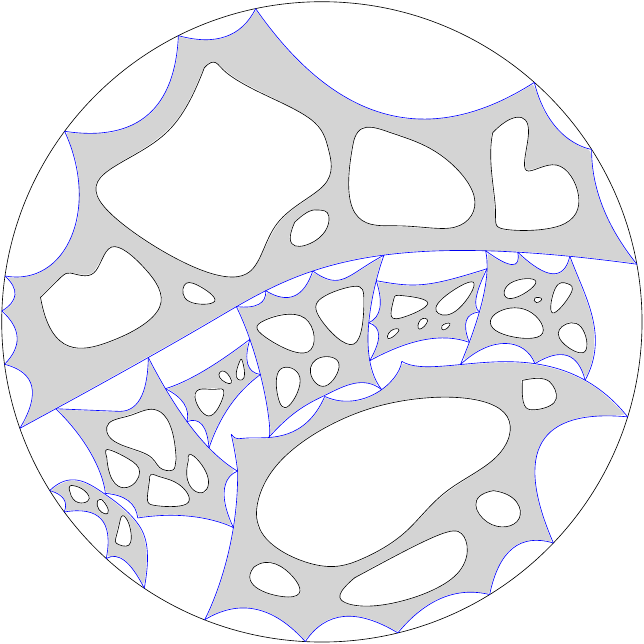}
	\caption{Some of the clusters under free boundary conditions. Only step (1.f) is depicted. Every loop $\ell\in\La_{-\sqrt{2}\lambda, \sqrt{2}\lambda}$, coloured blue, is associated a cluster, shaded in grey. 
		\label{fig:decomp-free}}
\end{figure}
\noindent The clusters $(\C_k^+)_{k\geq0}$ are pairwise disjoint, and similarly for the clusters $(\C^{\textnormal {f}}_k)_{k\geq1}$, but they may overlap from one collection to another.
\bigskip

\begin{remark}
	Note that in the iteration \hyperlink{pos}{$(+)$}, once the boundary cluster $\C_0^+$ has been obtained the iteration proceeds exactly as in \hyperlink{free}{$(\text{free})$} (after a constant shift of the values) within each loop of $\La_{-2\lambda,2\lambda}(\GFF)$. In particular, the $+$ boundary IMF $\Phi^+$ restricted to the interior of a loop $\ell \in \La_{-2\lambda,2\lambda}$ has the law of the free boundary IMF in this domain. Unlike the corresponding statement for the FK-Ising decomposition of the IMF, this spatial Markov property is emergent in the scaling limit and is not present in the discrete coupling.
\end{remark}

Our final result in the continuum establishes that the measures $(\mu_k^+)_{k\geq0}$ and $(\mu^{\textnormal {f}}_k)_{k\geq1}$ should be seen as the ``area'' measures of their (fractal) supports. 
Namely, we obtain a very direct definition of these measures in terms of box-counting, and use {\cite[Theorem 1.2]{CLE-meas}} to identify them as the unique conformally covariant measures in the carpet of $\CLE_4$. We stress that the existence of the limit in \eqref{eq:byboxcount} is already non-trivial, and will come as a consequence of some vital discrete inputs. 
\def\CR{\mathrm{CR}}

\begin{theorem}[Identification of the measures]\label{thm:unique-meas}
	Let $\C$ be any cluster in $(\C_k^+)_{k\geq0}$ or $(\C^{\textnormal {f}}_k)_{k\geq1}$, corresponding to the carpet of a $\CLE_4$ in some (random) Jordan domain $D$. Let $K\subset D$ be any compact subset and let $N_\eps(K\cap \C)$ be the number of boxes in $\eps\Z^2$ that intersect $K\cap \C$. Then, 
	\begin{equation}\label{eq:byboxcount}
		\mu_\C[K] = \lim_{\eps\to0} \beta\eps^{2-1/8}N_\eps(K\cap\C),
	\end{equation}
	where the limit exists in $L^2(\P)$ and $\beta\in(0,\infty)$ is a normalisation\footnote{This constant is such that $\E[\mu_\C[K]]=\mathfrak{C}\ 2^{1/4}\int_K\CR(z, D)^{-1/8}dz$, where $\mathfrak{C}=2^{5/48}e^{3/2\zeta'(-1)}$ is a lattice-dependent constant \cite{CHI1,CHI2}.} constant.
	
	\noindent Moreover, the measure is, up to a multiplicative constant, uniquely characterised by the Miller-Schoug axioms with exponent $d=\textnormal{dim}_{H}(\C)=2-1/8$. 
\end{theorem}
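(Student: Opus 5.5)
The plan is to prove the $L^2$ convergence in \eqref{eq:byboxcount} by a second-moment (Cauchy-type) argument. The measure $\mu_\C$ is defined as a limit of discrete measures, namely $\delta^{15/8}$ times the counting measure on a discrete cluster of the percolation model built from the double random current. So it suffices to identify the first and second moments of $\eps^{15/8}N_\eps(K\cap\C)$ with those of $\mu_\C[K]$, and to show that the cross moment $\E[\mu_\C[K]\,\eps^{15/8}N_\eps(K\cap\C)]$ converges to the same limit. Expanding $\E[(\mu_\C[K]-\beta\eps^{15/8}N_\eps)^2]$ then gives $0$ in the limit.

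\textbf{First moment.} Write
\[
N_\eps(K\cap\C)=\sum_{Q}\i\{Q\cap K\cap\C\neq\emptyset\},
\]
a sum over boxes $Q$ of $\eps\Z^2$. We need the one-box probability $\P(Q\cap\C\neq\emptyset)\sim c\,\eps^{1/8}\CR(z,D)^{-1/8}$, uniformly for $z\in Q\subset K$. Since $\C$ is the carpet of a $\CLE_4$ (equivalently $\A_{-2\lambda,2\lambda}$), this is a one-arm type estimate. It follows from the conformal radius law of the $\CLE_4$ loop around $z$: the event $Q\cap\C\ne\emptyset$ means that the loop surrounding $z$ comes within distance $\sim\eps$ of $z$. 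The exponent $1/8$ is the $\CLE_4$ carpet exponent $2-d$. The precise constant $c$ requires a regularity argument, namely that the ratio $\P(Q\cap\C\ne\emptyset)/\P(\CR(z,O)\le\eps)$ converges. We can obtain this via the Koebe distortion theorem together with the exact distribution of $\log\CR$ for $\CLE_4$, whose tail is $\asymp e^{-t/8}$ with a renewal-type limit. Matching against $\E[\mu_\C[K]]=\mathfrak C\,2^{1/4}\int_K\CR^{-1/8}$ fixes $\beta$.

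\textbf{Second moment.} For two boxes at macroscopic distance, the two events $Q_i\cap\C\neq\emptyset$ are asymptotically described by the pair of loops surrounding $z_1$ and $z_2$. We explore the carpet until the two points are separated, which is a stopping set for the GFF by the Markov property of two-valued sets. We then apply the one-point asymptotics in each of the two resulting domains. This gives
\[
\eps^{-1/4}\,\P(Q_1\cap\C\ne\emptyset,\ Q_2\cap\C\ne\emptyset)\to c^2\,\E\big[\CR(z_1,O_1)^{-1/8}\CR(z_2,O_2)^{-1/8}\big].
\]
The same quantity must arise as the two-point density of $\mu_\C$. On the discrete side it comes from the corresponding two-point Ising/double random current connection probabilities, because $\mu_\C$ is the scaling limit of the discrete cluster measure and the discrete computation factorises in the same way through the Markov exploration.

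\textbf{Error terms.} The near-diagonal contribution, where the two boxes are at distance $\le\eta$, must be shown to vanish as $\eta\to0$ uniformly in $\eps$. For this we use the quasi-multiplicative bound
\[
\P(Q_1,Q_2\text{ hit})\lesssim \eps^{1/4}|z_1-z_2|^{-1/8},
\]
which is integrable. The cross moment is handled by the same exploration, with the discrete box-count of the lattice cluster coupled to its continuum limit. This step is the main obstacle: it requires that box-counting of discrete clusters approximates box-counting of continuum clusters. The difficulty is that the Hausdorff convergence of clusters does not control $N_\eps$. I would first try to show $\E|N_\eps(\C_\delta)-N_\eps(\C)|\to0$ by bounding the probability that a box is hit by one set but only nearly hit by the other, which is a six-arm/near-miss estimate for $\CLE_4$ carpets.

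\textbf{Uniqueness.} Finally, we verify the Miller--Schoug axioms for $\mu_\C$ and invoke \cite[Theorem 1.2]{CLE-meas}.
\begin{itemize}
\item Locality and measurability with respect to $\C$ follow from \eqref{eq:byboxcount} itself.
\item Conformal covariance with exponent $d=15/8$ follows from the conformal invariance of $\CLE_4$ and the $\CR^{-1/8}$ covariance of the one-point density.
\item Finiteness of moments follows from the second-moment bound above.
\end{itemize}
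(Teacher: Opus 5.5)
There is a genuine gap in the two terms of your expansion that involve $\mu_{\mathcal{C}}$ itself: $\mathbb{E}[\mu_{\mathcal{C}}[K]^2]$ and the cross moment $\mathbb{E}[\mu_{\mathcal{C}}[K]\,\varepsilon^{15/8}N_\varepsilon(K\cap\mathcal{C})]$.

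A priori $\mu_{\mathcal{C}}$ is only a subsequential weak limit of $\delta^{15/8}$ times counting measure on $\mathcal{C}^\delta$, taken jointly with $\mathcal{C}$. Beyond moment upper bounds, the only exact information you have about it is its first moment (Theorem~\ref{thm:CHI1}).

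\emph{The continuum route is circular.} Factorising the two-point density of $\mu_{\mathcal{C}}$ through an exploration of the carpet presupposes that $\mu_{\mathcal{C}}$ is a local functional of $\mathcal{C}$ with a domain Markov property. That is exactly the measurability the theorem is meant to deliver.

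\emph{The discrete route also fails.} $\omega_\delta$ has a Markov property only across $\omega$-open circuits, not across the closed boundaries of the holes of a cluster; the carpet's Markov property only emerges in the limit. Ising two-point functions give $\mathbb{P}(x\leftrightarrow y)$ summed over all clusters. They do not give the probability that both points lie in the particular cluster $\mathcal{C}^\delta_k$, nor the mixed point/box probabilities you would need.

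\emph{The missing idea} is a discrete estimate, uniform in $\delta$, showing that the mass of $\mathcal{C}^\delta_k$ in a mesoscopic box concentrates around $\beta\varepsilon^{15/8}$ times the hitting indicator. This is Theorem~\ref{thm:boxcount}. Its proof:
\begin{enumerate}
\item Discard nearby pairs of boxes.
\item Discard boxes lacking an $\omega$-open circuit in $A(\varepsilon^{1/3})\setminus A(\varepsilon^{2/3})$, using Lemma~\ref{lemma:decorrlemma} and the dual one-arm bound of Lemma~\ref{lemma:dualonearm} for $\omega^-$.
\item For the remaining pairs, condition on the outermost open circuits, where the discrete Markov property does hold.
\item Use the IIC coupling (Proposition~\ref{prop:IIC}) to show that the conditional mass inside is $\beta\varepsilon^{15/8}+o(\varepsilon^{15/8})$, uniformly in $\delta$.
\end{enumerate}
Here $\beta$ is the IIC density of Proposition~\ref{prop:betalimit}. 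So your unproven ``renewal-type'' one-box constant for the $\mathrm{CLE}_4$ carpet is never needed; Koebe distortion would only give it up to constants anyway.

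Conversely, the step you single out as the main obstacle is easy. For fixed $\varepsilon$ only finitely many boxes are involved. The $\mathrm{CLE}_4$ carpet is almost surely not tangent to a fixed box (Lemma~\ref{lem:SLE4-touching}), so Hausdorff convergence already gives almost sure convergence of each $\mathbf{1}(A\cap\mathcal{C}^\delta\neq\emptyset)$, with no near-miss estimate. Combined with the uniform discrete bound, this makes the continuum box counts Cauchy in $L^2$ with limit $\mu_{\mathcal{C}}[f]$ (Proposition~\ref{prop:meas}).

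For the uniqueness part, \cite[Theorem 1.2]{CLE-meas} as originally stated requires conformal covariance of $\mu$ itself. Your $\mathrm{CR}^{-1/8}$ argument only gives covariance of the intensity measure, so you need the modified Theorem~\ref{thm:Miller-Schoug}. You should also verify its Markov axiom explicitly. This follows from the box-counting formula: for $f$ supported in $V$, only $\mathcal{C}\cap V$ enters, and given $\mathcal{C}\cap V^c$ this is a $\mathrm{CLE}_4$ carpet in $V$.
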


\noindent As explained in {\cite[Remark 1.3]{CLE-meas}}, it follows that if the $(2-1/8)$-dimensional Minkowski content of the carpet of $\CLE_4$ exists and has locally finite expectation, then it is a constant multiple of the measures in Theorem \ref{thm:main-decomp}.

\noindent\textbf{Statement of the discrete couplings and convergence.} While all the statements above concern only objects in the continuum, their proofs almost exclusively rely on their discrete counterparts. The heart of the coupling is the observation in Theorem \ref{thm:main-ES} of a new representation of the Ising model, of which we provide two different proofs in Section \ref{sec:defandprop}. 

Before stating it precisely, let us introduce some notation. Given a graph $G$, we write $V(G)$ and $E(G)$ for the sets of its vertices and edges respectively. For a subset $S \subset V(G)$, we denote by $E(S)$ the edges with both endpoints in $S$. When $G$ is a finite connected planar graph embedded in the plane, we let $G^*$ be its (strong) dual graph and $G^\dagger$ its \emph{weak} dual graph, i.e.~its dual graph with the vertex corresponding to the outer face removed. 
We also denote by $\partial G$ the set of vertices of $G$ lying on its unbounded face.

\begin{definition}[Coin tosses] \label{def:coins}
For any percolation configuration $\w \subset E(G)$, we denote by
\[
	\coin(\w) 
\]
the assignment of independent symmetric $\{\pm 1\}$-valued spins to each cluster (i.e. connected component) of $\w$. 
This gives rise to a spin configuration on $V(G)$ given by the sign of the cluster to which the vertex belongs. We write $\coin^+(\w)$ for the same procedure, with the exception that each cluster intersecting $\bG$ is always assigned the spin $+1$.
\end{definition}

\begin{definition}
	Given $\eta \subset E(G)$, we let $\eta^* \subset E(G^*)$ be its \emph{dual complement} defined by
\[
e\in \eta \qquad \Longleftrightarrow \qquad e^\dual \notin \eta^*,
\]
where $e^\dual$ is the dual edge of $e$. Note that $\eta \supset E(\bG)$ if and only if $\eta^* \subset E(G^\dual)$.
\end{definition}


\begin{theorem}[Alternative Edwards--Sokal coupling]\label{thm:main-ES}
	Let $G$ be a finite connected planar graph. Let $\widehat{\n}^\dual$ the trace of the sourceless double random current $\n^\dual$ on $G^*$ with free boundary conditions and with coupling constants 
	$J^\dual: E(G^\dagger)\to [0,\infty)$, as in Definition~\ref{def:drc}. Define $\w$ to be its \textbf{dual complement}
	\begin{align} \label{eq:dualcompcurr}
	\w = (\widehat{\n}^\dual)^*.
	\end{align}
	Then, $\coin(\w)$ has the law of the Ising model with free boundary conditions on $G$ 
	and coupling constants $J: E(G)\to [0,\infty)$ related to $J^\dual$ by the Kramers-Wannier duality \eqref{eq:Jdualdef}.

	\noindent Similarly, let $\nh^\dual$ be the trace of the sourceless double random current $\n^\dual$ now on the weak dual $G^\dual$ with free boundary conditions and coupling constants $J^\dual$. Again, define $\w = \nh^\dual$ to be its dual complement. Then, $\coin^+(\w)$ has the law of the Ising model on G with $+$ boundary conditions imposed on $\bG$ and the same coupling constants $J$ as above.
\end{theorem}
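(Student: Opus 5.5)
The plan is to prove the identity by direct computation of the law of $\coin(\w)$, in the spirit of the usual verification of the Edwards--Sokal coupling. Fix a spin configuration $\sigma\in\{\pm1\}^{V(G)}$ and let $\gamma(\sigma)\subset E(G^*)$ be the set of dual edges $e^\dual$ with $\sigma$ disagreeing across $e$. By definition of the coin tosses,
\[
\P(\coin(\w)=\sigma)=\E\big[2^{-k(\w)}\,\mathbf 1\{\sigma \text{ is constant on the clusters of } \w\}\big],
\]
where $k(\w)$ is the number of clusters. Since $\w$ is the dual complement of $\nh^\dual$, the constraint says exactly that $\gamma(\sigma)\subset \nh^\dual$. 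So it suffices to compute $\E[2^{-k(\w)}\mathbf 1\{\gamma(\sigma)\subset\nh^\dual\}]$ and show that it is proportional to $\prod_{uv}\exp(J_{uv}\sigma_u\sigma_v)$, i.e. to $\prod_{e^\dual\in\gamma(\sigma)} e^{-2J_e}$.

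\textbf{Step 1: law of the trace.} I first compute the law of the trace $F=\nh^\dual$. Write $\n^\dual=\n_1+\n_2$ with $\n_1,\n_2$ independent sourceless currents, and set $m=\n_1+\n_2$. For a fixed even $m$ (i.e.\ $\partial m=\emptyset$), summing $\prod_e\binom{m_e}{n_{1,e}}$ over sourceless $\n_1\le m$ gives $2^{\sum_e m_e}\cdot 2^{-|V^*|+k(\widehat m)}$. This is the standard count: a uniformly random splitting assigns each edge of the trace an independent uniform parity, and the probability that the result is even is $2^{-|V^*|+k(\widehat m)}$. Summing next over even $m$ with trace exactly $F$, I expand over the subgraph $\eta\subset F$ of odd edges of $m$, which must itself be even. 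This yields
\[
\P(\nh^\dual=F)\ \propto\ 2^{k(F)}\sum_{\eta\subset F,\ \partial\eta=\emptyset}\ \prod_{e\in\eta}\sinh(2J^\dual_e)\prod_{e\in F\setminus\eta}\big(\cosh(2J^\dual_e)-1\big).
\]

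\textbf{Step 2: Euler's formula.} Next I convert $2^{k(F)}\cdot2^{-k(\w)}$, with $\w=F^*$, into a product of edge weights. Euler's formula for the planar pair $(\w,F)$ gives $k(F)-k(\w)=|F|+\mathrm{const}$. In the free case I use the strong dual $G^*$; in the $+$ case I use the weak dual $G^\dual$ together with the convention that boundary clusters are not counted. This is exactly why the second statement uses $G^\dual$ and $\coin^+$, since all edges of $E(\bG)$ then lie in $\w$. The remaining expectation becomes a purely local sum
\[
\sum_{F\supset\gamma(\sigma)}\ \sum_{\eta\subset F \text{ even}}\ \prod_{e\in\eta}2\sinh(2J^\dual_e)\prod_{e\in F\setminus\eta}2\big(\cosh(2J^\dual_e)-1\big).
\]

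\textbf{Step 3: evaluating the sum.} I exchange the order of summation, summing over $\eta$ first and then over $F\setminus\eta$ edge by edge. Edges outside $\eta\cup\gamma(\sigma)$ contribute a factor $1+2(\cosh 2J^\dual-1)$, and edges in $\gamma(\sigma)\setminus\eta$ contribute $2(\cosh 2J^\dual-1)$. The remaining sum over even $\eta$ is a high-temperature expansion on $G^*$ with edges of $\gamma(\sigma)$ reweighted. I intend to evaluate it by rewriting it as a spin sum on the faces of $G^*$, i.e.\ on $V(G)$. After using the Kramers--Wannier relation \eqref{eq:Jdualdef}, namely $\tanh J^\dual=e^{-2J}$, each edge in $\gamma(\sigma)$ should carry a relative weight $e^{-2J_e}$ and each other edge weight $1$, with a $\sigma$-independent normalisation. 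This is the main obstacle: the cosh/sinh bookkeeping has to collapse exactly into $\prod_{e^\dual\in\gamma(\sigma)}e^{-2J_e}$. If the direct manipulation becomes unwieldy, I will fall back on the Lupu--Werner description of the trace of a double current, namely the union of two independent high-temperature-expansion even subgraphs and an independent Bernoulli percolation with explicit parameter. Combined with the fact that the high-temperature expansion on $G^*$ equals the domain walls of Ising on $G$, this would identify $\gamma(\sigma)\subset\nh^\dual$ probabilities through two independent Ising copies.
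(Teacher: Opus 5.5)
Your direct computation of $\P(\coin(\omega)=\sigma)$ can be made to work, and it is a genuinely different route from the paper's. As written, however, it breaks at Step~2, and Step~3, where the proof actually happens, is left open.

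\textbf{The sign error in Step~2.} The Euler relation has the wrong sign. The clusters of $F$ are the faces of $\omega=F^*$, so
\[
k(F)=1+k(\omega)-|V(G)|+|E(G)|-|F|, \qquad\text{i.e.}\qquad k(F)-k(\omega)=\mathrm{const}-|F|.
\]
As a check, $F=\emptyset$ gives $k(\omega)=1$, while $F=E(G^*)$ gives $k(\omega)=|V(G)|$. The same relation holds on $G^\dagger$ with $\coin^+$. Hence every edge of $F$ carries a factor $\tfrac12$, not $2$. The local weights are therefore $\tfrac12\sinh(2J^\dagger_e)=\sinh J^\dagger_e\cosh J^\dagger_e$ and $\tfrac12(\cosh 2J^\dagger_e-1)=\sinh^2 J^\dagger_e$, rather than $2\sinh(2J^\dagger_e)$ and $2(\cosh 2J^\dagger_e-1)$. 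With your factors no bookkeeping can produce the Ising law. Take $G$ a triangle, so $G^*$ is two vertices joined by three parallel edges. The ratio of the probability of a one-spin flip to that of a constant configuration then comes out $\approx 16(J^\dagger)^2$ for small $J^\dagger$, instead of $\tanh^2 J^\dagger=e^{-4J}$.

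\textbf{Closing Step~3 with the corrected weights.} Sum over $F\supseteq\eta\cup\gamma$ edge by edge. Edges outside $\eta\cup\gamma$ give $1+\sinh^2 J^\dagger_e=\cosh^2 J^\dagger_e$. Factor out $\prod_e\cosh^2 J^\dagger_e$ and write $t_e=\tanh J^\dagger_e=e^{-2J_e}$. You are left with
\[
\sum_{\eta\ \mathrm{even}}\ \prod_{e\in\eta}t_e\prod_{e\in\gamma\setminus\eta}t_e^{2}\;=\;\prod_{e\in\gamma}t_e\sum_{\eta\ \mathrm{even}}\ \prod_{e\in\eta\triangle\gamma}t_e .
\]
The idea missing from your proposal is that $\gamma=\gamma(\sigma)$ is itself even: in $G^*$ for free boundary conditions, and in $G^\dagger$ when $\sigma\in\Sigma^{\partial G}$. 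So $\eta\mapsto\eta\triangle\gamma$ permutes even subgraphs, and the last sum is a $\sigma$-independent high-temperature partition function. This gives $\P(\coin(\omega)=\sigma)\propto\prod_{uv:\,\sigma_u\neq\sigma_v}e^{-2J_{uv}}$, as required. Your planned spin sum over $V(G)$ amounts to the same substitution, $\sigma'\mapsto\sigma\sigma'$.

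\textbf{Comparison with the paper.} Once fixed, your argument is a self-contained alternative. The paper first identifies $\widehat{\mathbf n}^*$ with $\xi(\sigma)\cap\xi(\tilde\sigma)\cap\eta$ (Proposition~\ref{prop:dualiden}). It then proves Theorem~\ref{thm:edwardsokal} in one of two ways:
\begin{itemize}
\item by conditioning on $\tau=\sigma\tilde\sigma$, which reduces to the classical Edwards--Sokal coupling for FK-Ising with couplings $J^\tau$; or
\item via the switching lemma, together with the fact that projecting even submultigraphs of $\mathbf n$ onto their odd edges has fibres of constant size.
\end{itemize}
Your computation is more elementary, but it only yields the marginal law of $\coin(\omega)$. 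The paper's route builds the joint coupling with $(\sigma,\tilde\sigma,\tau)$, which it reuses later. Your fallback is essentially Proposition~\ref{prop:dualiden}. On its own it does not handle the $2^{-k(\omega)}$ factor, so you would still need the $\tau$-conditioning step of Theorem~\ref{thm:edwardsokal}.
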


\noindent In fact, one does not require a planar setup in order to explicitly define the percolation model $\omega$ satisfying the above \emph{Edwards--Sokal property}, see Definition~\ref{def:one}. Moreover, this alternative and more general definition will prove to be more tractable when deriving all the necessary properties of the model, including FKG-type inequalities and RSW statements.

Throughout the paper we work with an extension of the so-called \emph{master coupling} introduced in \cite{DRC} (see also \cite{spins-perc-height}). Let us briefly describe its main properties, referring to Section~\ref{sec:defandprop} for more details and equivalent constructions. First, one couples a pair 
\begin{equation}\label{eq:DRCs-couple}
	(\n, \n^\dagger)
\end{equation}
of sourceless double random currents with wired boundary conditions in $G$ and free boundary conditions in $G^\dagger$, respectively, such that they do not cross each other (meaning that if a primal edge is present in $\n$, then its dual edge is absent in $\n^\dagger$, and vice versa). We postpone the proper definition of wired boundary conditions for the double random current (Definition \ref{def:drc}), but note that it involves dropping the requirement to be sourceless on the vertices of $\bG$.
Each of the two configurations defines a spin model
	\[
		\tau = \coin^+(\n) \quad \text{and} \quad \tau^\dagger=\coin(\n^\dagger),
	\]
with the law of a XOR-Ising model (i.e. with the law of the product of two independent Ising models). In turn, this defines a height function $H: V(G)\cup V(G^\dual)\to \mathbb Z \cup (1/2 + \mathbb Z)$ by imposing zero boundary conditions and defining its gradient to be
	\begin{align} \label{eq:defhintrod}
		H(u^\dual) - H(u)  = \frac{1}{2}\tau(u)\tau^\dagger(u^\dual),
	\end{align}
for $u\in V(G)$ and $u^\dual \in V(G^\dagger)$. This is the content of the master coupling from~\cite{DRC}. We extend it by taking the dual complements of the currents as in~\eqref{eq:dualcompcurr}, one can define the pair $(\omega, \omega^\dagger)$ and use Theorem \ref{thm:main-ES} to define two Ising models
	\[
		\sigma = \coin^+(\omega) \quad \text{and} \quad \sigma^\dagger=\coin(\omega^\dagger),
	\]
on $G$ with $+$ boundary conditions and on $G^\dual$ with free boundary conditions respectively.
Finally, one can recover two more Ising fields simply by setting
	\[
		\tilde\sigma =\sigma\tau\quad \text{and} \quad \tilde\sigma^\dagger=\sigma^\dagger\tau^\dagger,
	\]
which must be independent of $\sigma$ and $\sigma^\dagger$, respectively. This augmentation of the master coupling with Ising spins is the starting point of our considerations.

The following statement extends the convergence result in \cite{DRC, DRC2} and gives the joint convergence of the discrete models to their continuum counterparts. We do not prove, however, the joint convergence of both XOR-Ising models. This is done as part of the work of the first author with Sepúlveda~\cite{XOR-exc}, identifying them as the real and imaginary parts of the complex multiplicative chaos of the limiting GFF. Note that the GFF here is normalised differently from that in~\cite{DRC, DRC2}, changing the multiplicative constant appearing in the limit.

\begin{theorem}(Joint scaling limit at criticality) \label{thm:joint}
	Let $D\subset\mathbb C$ be a Jordan domain with discrete domain approximation $D_\de\subset\delta\Z^2$. At criticality, as $\de\to0$,
	\[
		\big(h_\de,\ \omega_\de,\ \omega^\dagger_\de,\ \delta^{-1 / 8}\sigma_\de,\ \delta^{-1 / 8} \tilde\sigma_\de,\ \delta^{-1 / 8}\sigma_\de^\dagger,\ \delta^{-1 / 8}\tilde\sigma_\de^\dagger\big) \overset{(d)}{\longrightarrow}\big(\frac{1}{\pi\sqrt{2}}\GFF,\ \omega,\ \omega^\dagger,\ \IMF,\ \tilde\IMF,\ \IMF^\dagger,\ \tilde\IMF^\dagger\big),
	\]
	where the discrete spin configurations $\sigma_\de, \tilde\sigma_\de, \sigma_\de^\dagger, \tilde\sigma_\de^\dagger$ are thought of as random distributions as in~\eqref{eq:defgenfun}, $\GFF$ is a GFF with zero boundary conditions in $D$, the clusters of $\omega$ and $\omega^\dagger$ are as described in \hyperlink{pos}{$(+)$} and \hyperlink{free}{$(\text{free})$}, and the IMFs admit the decompositions in Theorem \ref{thm:main-decomp}. 
	The precise topologies of convergence are given in Appendix \ref{app:spaces}.	
\end{theorem}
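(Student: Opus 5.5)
Write $X_\de=\big(h_\de,\omega_\de,\omega^\dagger_\de,\delta^{-1/8}\sigma_\de,\delta^{-1/8}\tilde\sigma_\de,\delta^{-1/8}\sigma_\de^\dagger,\delta^{-1/8}\tilde\sigma_\de^\dagger\big)$ for the tuple on the left-hand side. The plan is to prove tightness of $X_\de$, identify every subsequential limit, and get uniqueness of the limit law from the fact that the limiting spin fields are explicit functions of the GFF, the cluster collections and the coin tosses.

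\textbf{Step 1: height function and clusters.} We start from the convergence of the master coupling proved in \cite{DRC, DRC2}. It gives the joint convergence of $h_\de$ to a multiple of the GFF together with the loop ensembles formed by the outer boundaries of the clusters of the two double random currents. These loop ensembles are realised as the iterated two-valued sets $\A_{-2\lambda,2\lambda}$, $\A_{-2\lambda,(2\sqrt2-2)\lambda}$, $\A_{-\sqrt2\lambda,\sqrt2\lambda}$ from the iterations \hyperlink{pos}{$(+)$} and \hyperlink{free}{$(\text{free})$}. The factor $1/(\pi\sqrt2)$ only accounts for the change of normalisation between \cite{DRC} and the present paper.

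Since $\omega$ is the dual complement of a current trace, its clusters are obtained by filling in the primal clusters of $\nh^\dagger$. We therefore expect the following.
\begin{itemize}
\item The outer boundaries of $\omega$-clusters are exactly certain level loops $\ell$.
\item Each cluster, viewed as a closed set, converges in Hausdorff distance to $\A_{-2\lambda,2\lambda}(h^\ell)$ of the GFF inside $\ell$.
\end{itemize}
To show this we will use RSW-type estimates and FKG for $\omega$. These are derived from the general non-planar Definition~\ref{def:one} and rule out pinching or merging of clusters in the limit. From this we get joint convergence of $(h_\de,\omega_\de,\omega^\dagger_\de)$ in the topology of Appendix~\ref{app:spaces}, together with convergence of the ordering by diameter.

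\textbf{Step 2: cluster measures.} For each macroscopic cluster $\C_\de$ define $\mu_{\C_\de}=\delta^{2-1/8}\sum_{v\in\C_\de}\delta_v$. By Theorem~\ref{thm:main-ES}, $\sigma_\de=\sum_k\xi_k\mu_{\C_{k,\de}}$ with independent signs $\xi_k$, and $\tilde\sigma_\de=\sigma_\de\tau_\de$, where $\tau_\de$ is constant on each $\omega$-cluster because current clusters are unions of dual-complement clusters. So we need three ingredients.
\begin{itemize}
\item \emph{Second moments.} We control $\E[\mu_{\C_\de}(f)^2]$ by Ising two-point functions. Under the Edwards--Sokal property, $\langle\sigma_x\sigma_y\rangle=\P[x\leftrightarrow y \text{ in }\omega]$, which is bounded by $C|x-y|^{-1/4}$.
\item \emph{Small clusters.} This bound shows that the total contribution of clusters of diameter less than $\eps$ is small in $L^2$ uniformly in $\de$. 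It also gives tightness in $H^s_{\mathrm{loc}}$, $s<-1$.
\item \emph{Large clusters.} For each macroscopic cluster, $\mu_{\C_\de}$ is close in $L^2$ to a box-counting functional $\beta\eps^{2-1/8}N_\eps(\cdot\cap\C_\de)$. This is obtained by a one-arm / quasi-multiplicativity argument as in \cite{mag-field}, using only the one-point function convergence of \cite{CHI1}.
\end{itemize}
The box-counting quantity is a continuous function of the limiting cluster. Hence the measures converge jointly with the clusters, to limits $\mu_k$ that are measurable functions of the $\C_k$.

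\textbf{Step 3: conclusion.} The discrete signs $\xi_k$ are independent of $(h_\de,\omega_\de,\omega_\de^\dagger)$. The signs $\tau_{k,\de}$ are read off from the height function: each $\tau$ is determined by the parity of the level crossings separating the cluster from the boundary. So $\tau_{k,\de}$ converges to the labels of the enclosing two-valued loops, which are measurable functions of $h$. Passing to the limit in the finite-cluster truncations and then removing the truncation with the uniform $L^2$ bound from Step 2 identifies the limits as the sums~\eqref{eq:IMF}. Their marginal laws are IMFs by \eqref{eq:IMFdef}. In fact they are IMFs even independently of \cite{mag-field}, since only the convergence of correlation functions is used.

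\textbf{Main obstacle.} I expect the hardest part to be the first two steps. One needs uniform control of the $\omega$-clusters, namely RSW estimates for this new percolation and a proof that discrete clusters converge exactly to the nested two-valued sets. One also needs the $L^2$ approximation of cluster masses by box counts, which requires a separation-of-arms estimate for $\omega$.
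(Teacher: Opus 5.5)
Your overall architecture matches the paper's:
\begin{itemize}
\item tightness;
\item the Edwards--Sokal decomposition into cluster measures with independent signs;
\item measurability of the limiting measures with respect to the limiting clusters;
\item uniqueness of subsequential limits;
\item truncation of small clusters via two-point bounds.
\end{itemize}
The gap is in the step that carries all the weight: the $L^2$ approximation of $\mu_{\mathcal C^\delta}$ by $\beta\varepsilon^{2-1/8}$ times a box count. You propose to prove it with tools that are not available here, namely FKG for $\omega$ and a one-arm/quasi-multiplicativity/separation-of-arms argument ``as in \cite{mag-field}''.

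\textbf{Why the proposed tools fail.} $\omega$ is not known to be FKG, and is expected not to be. This cannot be read off Definition~\ref{def:one}, because $\omega=\xi(\sigma)\cap\xi(\tilde\sigma)\cap\eta$ is not monotone in $(\sigma,\tilde\sigma,\eta)$. Every ingredient of \cite{mag-field} uses FKG of FK-Ising: RSW gluing, quasi-multiplicativity, and the ratio-limit construction of an $\varepsilon$-dependent $\beta(\varepsilon)$ from half-plane exponents. So none of it transfers.

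\textbf{The missing idea.} Split $\omega=\omega^+\cup\omega^-$ according to the sign of $\tau$. Then use that $(\tau,\omega^+,-\omega^-)$ is FKG (Proposition~\ref{prop:FKG}), together with the conditional FKG/monotonicity for explorations that reveal $\tau$ at both endpoints of an edge before the edge itself (Lemma~\ref{lemma:conFKG}). This gives:
\begin{itemize}
\item circuits and a dual one-arm bound for $\omega^\pm$ (Lemma~\ref{lemma:dualonearm});
\item the anti-correlation Lemma~\ref{lemma:decorrlemma}, used to discard boxes that fail local uniqueness;
\item above all, an ordered three-configuration exploration coupling in the spirit of \cite{hillairet2025short}, which builds the IIC with a polynomial rate (Proposition~\ref{prop:IIC}).
\end{itemize}
The IIC is what produces a scale-independent $\beta$. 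This is Proposition~\ref{prop:betalimit}, via Lemma~\ref{lemma:Xnestimate}, the one-point function, and convergence of connection probabilities. The IIC also gives the concentration of $\delta^{2-1/8}|A\cap\mathcal C^\delta|$ given the configuration outside the circuits $\Gamma_A,\Gamma_B$, which is Step~4 of Theorem~\ref{thm:boxcount}. No quasi-multiplicativity or separation of arms is needed. Your proposal gives no mechanism for either $\beta$ or this concentration.

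\textbf{Secondary points.}
\begin{itemize}
\item Step~1 needs no RSW for $\omega$. Outer boundaries of $\omega$-clusters are inner boundaries of dual-current clusters, and the holes of an $\omega$-cluster are bounded by outer boundaries of the enclosed dual-current clusters. Since \cite{DRC,DRC2} identify both families, Hausdorff convergence follows from Lemma~\ref{lem:bdy-to-hausdorff}. You cited only outer boundaries, which is why more seemed necessary.
\item The map $\mathcal C\mapsto\mathbf 1(A\cap\mathcal C\neq\emptyset)$ is not Hausdorff-continuous. It is continuous only at sets not tangent to $A$, so you also need that a $\mathrm{CLE}_4$ carpet is a.s.\ not tangent to a fixed box (Lemma~\ref{lem:SLE4-touching}).
\item Your reason for $\tau$ being constant on $\omega$-clusters has the inclusion backwards. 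Since $\widehat{\mathbf n}\subset\omega$, $\omega$-clusters are unions of current clusters, not the reverse. Constancy instead follows from $\omega\subset\xi(\sigma)\cap\xi(\tilde\sigma)$.
\item Convergence of the discrete $\tau$-labels to the two-valued-set labels does not follow from distributional convergence of $h_\delta$. It is \cite[Theorems 6.2--6.4]{DRC}. Once you have it, $(\xi_k\tau_k)_k$ are again i.i.d.\ fair signs independent of the clusters, and your Step~3 argument also covers $\tilde\Phi$.
\end{itemize}
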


\begin{remark}
	We also mention that our methods allow to couple multiple IMFs with different boundary conditions (including Dobrushin boundary conditions) through a \emph{single} instance of the GFF. Moreover, Theorem~\ref{thm:joint} suggests 
	natural couplings (in the spirit of CLE percolations from~\cite{MSW}) between the GFF and (multiple) CLE$_3$ and SLE$_3$, which are the scaling limit of the interfaces between $+1$ and $-1$ spins in the critical Ising model~\cite{SLE3,BenHon}. 
	Both directions will be explored in a future work.
\end{remark}

\begin{figure}
	\centering
	\includegraphics[width=0.58\linewidth]{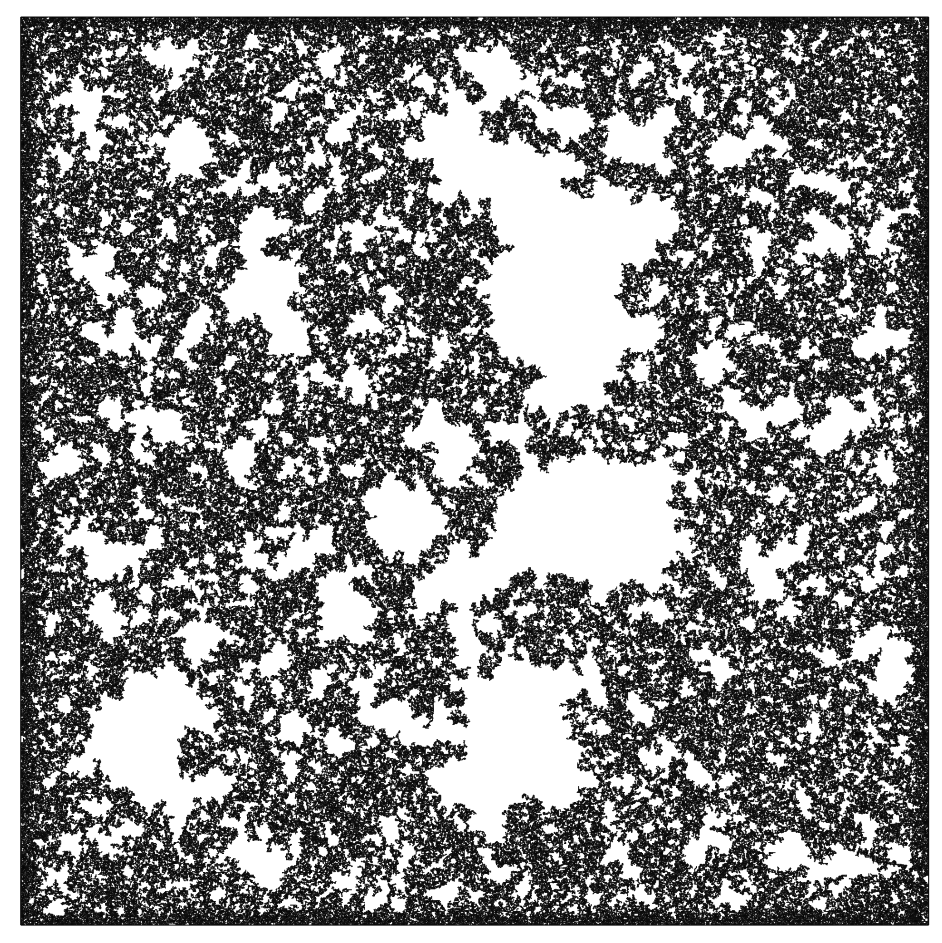}
	\caption{A simulation of the boundary cluster of the percolation $\omega$ on a 1000x1000 grid. The law of this boundary cluster converges to that of the carpet of $\CLE_4$. }
\end{figure}

\subsection{Further discussion and context}\label{sec:further}

The purpose of this section is two-fold. First, we introduce the Ashkin-Teller model and explain how our main results fit into a much broader picture, which motivates all of the conjectures we pose in Section \ref{sec:conj}. Secondly, we review some of the literature around previous approaches to the IMF \cite{mag-field, CME} and rigorous mathematical descriptions of bosonisation.

Let $G=(V, E)$ be a finite graph. The Ashkin--Teller (AT) model~\cite{AT} with coupling constants $J \geq0,\ U\in \mathbb R$, and free/free boundary conditions,
is a probability measure on pairs of spin configurations $(\sigma,\tilde \sigma)\in \{ -1,1\}^{V}\times \{ -1,1\}^{V}  $
given by
\begin{align} \label{eq:ATdef}
	\mathbb P_{G,J,U} (\sigma,\tilde \sigma)\propto \prod_{uv\in E} \exp\big(J (\sigma_u \sigma_v+\tilde \sigma_u\tilde \sigma_v)+U \sigma_u \sigma_v\tilde \sigma_u\tilde \sigma_v\big).
\end{align}
The measure under $+/+$ boundary conditions is defined as for the Ising model, namely by distinguishing a particular subset of the vertices on which $\sigma$ and $\tilde\sigma$ are both required to be $+1$. The single-spin configurations $\sigma$ and $\tilde\sigma$ are referred to as the \emph{magnetisation} spins, while the (coordinate-wise) product-spin configuration 
$\tau=\sigma\tilde \sigma$ is referred to as the \emph{polarisation} spin. On the \emph{free fermion line} $U=0$ the spins $\sigma$ and $\tilde \sigma$ become two independent Ising models with coupling constant $J$, and the polarisation spin is traditionally called the \emph{XOR-Ising model}.

The \emph{critical line}~\cite{Baxter} of the AT model is defined by 
\begin{align} \label{eq:critical}
	\sinh (2J) = e^{-2U}, \qquad J\geq U.
\end{align}
It has been predicted in the physics literature~\cite{DVV,Baxter,NienBook} that the model possesses explicit critical exponents that vary continuously on the critical line, and 
a conformally covariant scaling limit as $\de\to0$. In particular, given a domain $D\subset\CC$ and its discrete domain approximation $\Dd\subset\de\Z^2$, the following limits are predicted to exist and be conformally covariant:
\begin{align} \label{eq:ATexponents}
	\lim_{\delta \to 0}\ \delta^{-1/8}\ \langle \sigma_{x_\delta} \rangle^+_{\Dd,U} =: \langle \sigma_{x} \rangle^+_{D,U} \qquad \textnormal{and} \qquad
	\lim_{\delta \to 0}\ \delta^{-1/(2g)}\ \langle \tau_{x_\delta} \rangle^+_{\Dd,U} =: \langle \tau_{x} \rangle^+_{D,U}, \
\end{align}
where $\langle \cdot \rangle^+_{\Dd,U}$ denotes the expectation with respect to the critical model $\mathbb P_{\Dd,J_c(U),U}^+$
with $+/+$ boundary conditions and $x_\de\in\Dd$ is at distance $O(\de)$ from some fixed $x\in D$. The critical exponent of the polarisation is given by
\begin{align} \label{eq:defg}
	g:=\frac{8}{\pi}\sin^{-1}\left(\frac{1}{2}(1+\exp(4U))^{1/2}\right) \in (4/3, 4].
\end{align}
Interestingly, the critical exponent of the magnetisation field $\sigma$ is constant and equal $1/8$, whereas the polarisation exponent. 
As already discussed, for $U=0$, owing to its strong form of solvability, the convergence above was rigorously proved in recent breakthrough work of Chelkak, Hongler and Izyurov~\cite{CHI1}. Even though the AT model on the critical line is known to be tightly related to the \emph{six-vertex model} which is also exactly solvable~\cite{Baxter} (though in a weaker sense), the statements in~\eqref{eq:ATexponents} outside the critical free fermion point are still beyond the reach of rigorous proofs. 

An important feature of our predictions for the AT model is a conceptual explanation of the value $1/8$ on the whole critical line.  
Namely, we relate it to the fact that the Hausdorff dimension of the two-valued set $\A_{-2\lambda, 2\lambda}$ is equal to $2-1/8$ \cite{SSW,NacWer,dim-TVS}. 
This set appears in our conjectural geometric representation of the continuum AT fields \emph{irrespectively of the point on the critical line}. Moreover, the variation of the predicted exponent $g$ along the critical line corresponds to the fact that the two-valued sets with height gap $2\sqrt{2}$ in \hyperlink{pos}{$(+)$} and \hyperlink{free}{$(\text{free})$} are replaced by two-valued sets with height gap $2\sqrt{g}$, which have dimension $2-1/(2g)$ \cite{dim-TVS}. We refer to Conjectures \ref{conj:AT-currs}-\ref{conj:AT-mag-field}. for the precise statements.

Based on~\eqref{eq:ATexponents}, and viewing the discrete configurations as distributions as in \eqref{eq:defgenfun}, one expects that the limits 
\begin{align} \label{eq:fielddef}
	\lim_{\delta \to 0} \delta^{-1/8} \sigma_\de =: \IMF_D, \qquad \text{and} \qquad \lim_{\delta \to 0} \delta^{-1/(2g)} \tau_\de =: \mathlarger\uptau_D 
\end{align}
exist in law, again when considered in proper spaces of generalised functions. Beyond the case $U=0$, one of our contributions is an explicit conjecture describing the field $\IMF_D$ in terms of the appropriate two-valued sets. The identification of $\mathlarger\uptau_D$ with a vertex operator of the GFF (or \emph{imaginary multiplicative chaos} in the mathematics terminology~\cite{JSW}) follows directly from bosonisation and has been non-rigorously established in the physics literature~\cite{Baxter,NienBook,kadanoff-brown}. We restate this connection in Conjecture \ref{conj:AT-IGMC}, which has been already established for the Ising model~\cite{JSW}. 

In~\cite{mag-field,CamNew, CME} the Edwards--Sokal coupling between the Ising model and the FK-Ising model was employed to study the IMF from a geometric point of view. The coupling states that to sample the Ising model on $G$, one can first sample a configuration $\w^{\textup{FK}}\subset E(G)$ of the FK-Ising model and toss coins on each cluster. The critical FK-Ising model on the square lattice has been extensively studied in the last fifteen years. Thanks to its desirable features like exact solvability and positive association, a number of
breakthroughs were achieved including a complete RSW theory~\cite{RSW}, and proofs of conformally invariant scaling limits~\cite{Smir,FK-CLE}. 
In particular it was shown that the collection of all boundaries of the clusters on $\Dd$ 
 converges to the conformal loop ensemble CLE$_{16/3}$~\cite{FK-CLE} in $D$, which in turn allowed for a fully geometric description of the continuum Ising field~\cite{mag-field,CME}. We note that the dimension of the gasket of CLE$_{16/3}$ is also known to be $2-1/8$~\cite{MSWgasket}, thus the critical exponent of the magnetisation provides a nice conceptual explanation as for why this must equal the dimension of the carpet of $\CLE_4$.

The path to achieve our main results is indeed similar to~\cite{mag-field, CME}, but the study of the dual complement $\omega$ of the DRC differs in many aspects, as will become clear. It may even come as a surprise that two different laws on a percolation model can satisfy the Edwards--Sokal property. However, an observation to the same effect in a general planar setup can be found already in the work of the third author~\cite[Theorem 3.2] {LisT} and was explicitly stated in the master thesis of 
Ivan Hejn\'y~\cite{Hej}. 
In hindsight, our approach is closely related to that in~\cite{AT-6V}, and as mentioned therein it fits the framework of FK-repersentations
of the Ashkin--Teller model introduced by Pfister and Velenik~\cite{PfiVel} (though it falls outside the range of couplings constants considered there). We elaborate further on this point at the beginning of Section \ref{sec:conj}.

One can think of our coupling in the case of planar graphs as an enhancement of the bosonisation of the Ising model originating in the work of Zuber and Itzykson~\cite{ZubItz} 
with mathematical treatments among others in~\cite{dubedat-boson,BouTil,nesting-field}. In the continuum, this theory establishes exact identities between squares of Ising correlation functions (i.e. XOR-Ising correlation functions) and correlations of vertex operators of the GFF. Building on \cite{CHI1, CHI2}, the full picture for the continuum correlations has been completed in \cite{boson-cont}. In the discrete this is mirrored via a combinatorial 
relation between \emph{two independent} copies of the Ising model and a \emph{dimer model} on a \emph{bipartite} decoration of the original graph~\cite{dubedat-boson,BouTil,nesting-field}. 
Such dimer models can be thought of as models of random surfaces via their associated~\emph{height function}, where the XOR-Ising field is given by the parity of said function.
Since the groundbreaking work of Kenyon~\cite{Kenyon}, dimer height functions are known to converge to the GFF in the scaling limit. Intriguingly, the first dimer representation of the Ising model goes back to the work of Fisher~\cite{FisherDim}, who mapped a \emph{single} copy of the Ising model to a dimer model on a decorated graph. 
However, the decoration is \emph{nonbipartite} and as such does not correspond to a height function and hence does not fit in the framework of bosonisation.
Moreover, none of the aforementioned works makes a prediction about how a single Ising model could fit into the coupling between the XOR-Ising and the height function.
This is amended by our contributions. 

In terms of convergence of (dimer) height functions, of particular interest for the purposes of this paper is the convergence of the critical XOR-Ising height function to the GFF~\cite{DRC}, which justifies the name bosonisation used in reference to the combinatorial map between two Ising models and a dimer model. The work of Duminil-Copin and the third author~\cite{nesting-field} already made an implicit step in this direction by jointly coupling the DRC together with the
XOR-Ising model and the associated height function. This was in turn inspired by the \emph{alternating-flow} representation of the DRC model introduced in~\cite{LisT}.
In~\cite{DRC} the coupling from~\cite{nesting-field} was extended to the master coupling (which was briefly sketched in the previous section) of both primal and dual double random currents, XOR-Ising models and height functions. Moreover, it was realised in~\cite{DRC} that the DRC model itself (i.e.~not its dual complement) is coupled with the XOR-Ising model via the Edwards--Sokal procedure.
This is in contrast to the FK-Ising model whose dual complement is also an FK-Ising model, and hence both are coupled with a single Ising model 
An extension to the AT model (including the \emph{AT random currents} defined in~\cite{lis2022boundary}, the AT polarisation fields and the associated height functions) was simultaneously given in~\cite{spins-perc-height}. 

It was shown by Duminil-Copin, Qian and the third author~\cite{DRC,DRC2} that the critical master coupling has a joint scaling limit in which the height function converges to a GFF, and where the outer and inner boundaries of the 
clusters of the DRC converge to certain loops arising from two-valued sets of that GFF -- see Theorem \ref{thm:DRC}. We stress again that the couplings introduced in this paper take the setup of \cite{DRC, DRC2} as a starting point, and proceed by adding the magnetisation fields as explain in Theorem \ref{thm:main-ES}. It is precisely the duality relation in this statement that allows us identify the scaling limit of the clusters of our percolation model by invoking the results of \cite{DRC, DRC2}.

Lastly, we mention another approach to study planar spin systems that goes back to the work of Peierls~\cite{Peierls}, who interpreted a spin configuration by the collection of interfaces that separate the $+1$ and $-1$ spins. 
For the critical Ising model with so-called \emph{Dobrushin} boundary conditions, a single interface connecting two boundary points was shown to converge to Schramm--Loewner evolution SLE$_3$~\cite{SLE3}, whereas for $+$ boundary conditions the full collection of loops converges to CLE$_3$~\cite{BenHon}. 
The scaling limits of the analogous interfaces for the XOR-Ising model have been predicted by Wilson~\cite{Wil}, and should be described by the loops of certain two-valued sets, with different boundary values than those considered until now.
This is supported by the fact that in the discrete bosonisation picture the interfaces of the XOR-Ising model are actually level lines of the discrete height function~\cite{dubedat-boson,BouTil}. 

We recall that the conformal loop ensembles CLE$_\kappa$ for $\kappa\in (8/3,8)$ are countable collections of random fractal non-self crossing loops, which  are uniquely identified by conformal invariance and a type of spatial Markov property~\cite{SheWer}. Similarly, we already know that the loops of a two-valued set possess also a type of Markov property, now with respect to the underlying GFF. Conformal invariance and Markov property should usually be a feature of scaling limits of curves in critical models, and hence these two families yield natural candidates for the limits of the (XOR-)Ising interfaces.

On the other hand, the interfaces of the $\sigma$ and $\tilde \sigma$ configurations in the critical AT model away from the free fermion point do not possess any natural spatial Markov property even in the discrete. Indeed conditioning on the value of one spin along a circuit, still allows for information to flow from the outside of the circuit to the inside 
through the interaction with the other spin. For this reason there is no prediction on the scaling limit of these interfaces. 
We fill this gap by conjecturing (see Conjecture \ref{conj:AT-inter}) that the continuum curves corresponding to critical AT magnetisation interfaces are given by a coin tossing procedure on two valued-sets,
and then taking the interfaces between the two spin values. This is an analogue of the CLE percolation procedure described in~\cite{MSW} and we plan to explore this direction in a future work.

\subsection{Conjectures for the Ashkin--Teller model}\label{sec:conj}

 Based on the recent works of the third author~\cite{LisT,lis2022boundary,spins-perc-height}, it turns out that a completely analogous discrete coupling exists for the general Ashkin--Teller model (see Section \ref{sec:vargen} for the proofs).
Namely, the counterpart of the double random current on $G$ is the \emph{AT current} $\n=(\n_\odd,\n_\even)$ defined in~\cite{lis2022boundary}, whose law is given by
\begin{align} \label{eq:ATcurr}
\mathbb P_{\at}(\nh)\propto 2^{k(\nh)} x^{|\n_\odd|}y^{|\n_\even|},
\end{align}
where $k(\nh)$ is the number of clusters of $\nh = \n_\odd \cup \n_\even$, $\n_\odd$ and $\n_\even$ are disjoint and $\n_\odd$ is constrained to have even degree around every vertex in $V(G)$ (i.e.~$\n_\odd$ is an \emph{even} 
subgraph of $G$).
When compared to~\eqref{eq:ATdef}, the parameters satisfy
\[
x=e^{2U} \sinh(2J), \quad \textnormal{and} \quad y=e^{2U}\cosh(2J)-1.
\]
As was shown in~\cite{LisT} for 
$U=0$, the current $\n$ is the projection of the sourceless DRC (defined as a function from $E(G)$ to $\{0,1,2,\ldots \}$, see Definition \ref{def:drc}) onto the sets of edges where it takes odd and even positive values, resulting in $\n_\odd$ and $\n_\even$ respectively.
With each such current one can associate a height function $h$, called the \emph{nesting field}, which can be defined in the same way as the one for the noninteracting case introduced by Duminil-Copin and the third author in~\cite{nesting-field}. It is an integer-valued function defined on the faces of the graph that changes values by $\pm 1$ between neighbouring faces only when crossing an edge of $\n_\odd$, and moreover the 
increments across two edges are independent for edges in different clusters of $\n$. For a precise definition we refer to~\cite{nesting-field}, and for an alternative but equivalent one to~\eqref{eq:defhintrod} we refer to Section~\ref{sec:heightfunc} and Section~\ref{sec:vargen}.

A coupling between primal and dual AT currents, the corresponding primal and dual nesting fields and the two polarisation fields was given in~\cite{spins-perc-height}.
The extension to include the magnetisation fields follows from~\cite[Proposition 8.1]{AT-6V} for the
self-dual model on the square lattice, and from~\cite[Proposition 13]{lis2022boundary} in full generality of planar graphs and coupling constants.
As a result, all identities in law (in particular various spatial Markov properties that are crucial for the identification of scaling limits) at the discrete level described in this article for the case $U=0$ hold also for the corresponding objects in general AT models. The uncovering of this rich structure leads to new directions in the further study of the AT model described below. The proofs of precise statements can be either found in~\cite{lis2022boundary,spins-perc-height}
or are straightforward generalisations of formulas presented here. 

A particular feature of this coupling that instantiates the phenomenon of bosonisation is that the polarisation field is the complex exponential of the dual nesting field. This will be mirrored in one of the conjectures below.
Moreover, the joint low of the primal and dual height function is known to be the same as that of the height function in the associated \emph{six-vertex model} (see e.g.~\cite{AT-6V,spins-perc-height}).
Together with the physics prediction that the latter
converges to an explicit constant multiple of the Gaussian free field~\cite{Baxter,NienBook,kadanoff-brown}, 
it allows to make the following precise conjectures on the critical behaviour of the AT model. 

The first one is a reiteration of the convergence of the height function and the bosonisation relation for the polarisation field.

 \begin{conj}\label{conj:AT-IGMC} Let $D\subset\mathbb C$ be a Jordan domain with discrete domain approximation $D_\de\subset\delta\Z^2$. At each point of the critical line~\eqref{eq:critical}, as $\de\to0$, 
	the nesting field of the AT current on $D_\de$ under the $+/+$ or free/free boundary conditions converges to 
	\[
	\frac{1}{2\lambda\sqrt{g}}\ \GFF = \frac{1}{\pi\sqrt{g}}\ \GFF,
	\] 
	where $\GFF$ is a GFF with zero boundary conditions in $D$, and $g$ is defined in~\eqref{eq:defg}. 
	Moreover, the polarisation fields under $+/+$ and free/free boundary conditions, respectively, converge to
		\[
			\cos\big(h/\sqrt{g}\big) \quad \text{and} \quad	\sin \big(h/\sqrt{g}\big),
		\]
	corresponding to the \emph{imaginary Gaussian multiplicative chaos} with parameter $\alpha(g)=1/\sqrt{g}\in[1/2, \sqrt{3}/2)\subset[0, \sqrt{2}].$
\end{conj}

The second (and the first novel) conjecture is a direct analog of the main result of~\cite{DRC,DRC2}..
 \begin{conj}\label{conj:AT-currs}
 	The almost verbatim analog of Theorem~\ref{thm:DRC} holds for the Ashkin--Teller currents defined by~\eqref{eq:ATcurr} at each point of the critical line~\eqref{eq:critical}, the only difference being that every explicit occurrence of $\sqrt 2$ has to be replaced with $\sqrt g$ defined in~\eqref{eq:defg}. In particular, the appearance of $\A_{-2\lambda, 2\lambda}\equiv\CLE_4$ in the scaling limit is universal on this line, and the remaining 
	two-valued sets used in the iteration are of the form $$\A_{-2\lambda, (2\sqrt{g}-2)\lambda},\ \A_{(2-2\sqrt{g})\lambda,2\lambda}\ \ \text{or}\ \ \A_{-\sqrt{g}\lambda,\sqrt{g}\lambda}.$$ 
\end{conj}

We reiterate that the convergence of the outermost boundaries of the clusters to a CLE$_4$ is a geometric analog (that actually carries much more information)
of the classical conjecture that the exponent of the magnetisation fields are constant and equal to $1/8$ on the entire critical line (see e.g.~\cite{Baxter}).

\begin{remark}
Recall that at the endpoint $J=U$ of the critical curve we have $g=4$. In this case, all sets in the iteration are actually CLE$_4$.
This is consistent with the fact that the law of the AT becomes that of the 4-state Potts model, and the associated current $\nh$ becomes the critical FK random cluster model with cluster weight $q=4$. The former follows simply from the inspection of the Hamiltonian. The latter can be seen by noticing that the two weights $x$ and $y$ in~\eqref{eq:ATcurr} are both equal to $1$, which in turn implies that when one forgets which edges of $\n$ are in $\n_\odd$ and which in $\n_\even$, an additional combinatorial factor 
of $2^{|\n|+k(\n)}$ appears due to the total number of even subgraphs of $\n$ (see e.g.~\cite[Lemma 4.2]{LisT}). In particular, this yields a total weight proportional to $4^{k(\n)}2^{|\n|}$, which is the critical FK(4) random cluster model weight~\cite{BefDum}. As a consequence, our conjecture is a generalisation of the conjecture of Sheffield~\cite{SheTree} that the outer and inner boundaries of the clusters of the critical FK(4) model converge in the scaling limit to CLE$_4$'s.
\end{remark}

 
The next conjecture is a generalisation of our main results, namely Theorems~\ref{thm:main-coupling} and~\ref{thm:main-decomp}, and predicts a geometric decomposition of the continuum Ashkin-Teller magnetisation fields.

 \begin{conj}\label{conj:AT-mag-field}
	 The statement of Theorem~\ref{thm:main-decomp} under the almost verbatim analogue of the iterations \hyperlink{pos}{$(+)$} and \hyperlink{free}{$(\text{free})$} holds for the Ashkin--Teller magnetisation field defined by~\eqref{eq:fielddef} at each point of the critical line~\eqref{eq:critical}, the only difference being that every explicit occurrence of $\sqrt 2$ has to be again replaced with $\sqrt g$ defined in~\eqref{eq:defg}.
 \end{conj}

\noindent We believe that the methods we apply in this paper are robust, so that Conjecture \ref{conj:AT-mag-field} could be proven once Conjecture \ref{conj:AT-currs} and the convergence of the height function in Conjecture \ref{conj:AT-IGMC} are known.
Part of the statement above is that the relevant fields are well defined in the continuum, which we in fact prove.

 \begin{prop}\label{prop:AT-mag-field}
 	The analogous series to those in \eqref{eq:IMF}, but with $\sqrt 2$ replaced with $\sqrt g$ in the definition of the relevant two-valued sets, converge a.s. and in $L^2$ for $g\neq4$ (or, equivalently, $J\neq U$).
 \end{prop}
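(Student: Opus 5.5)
The plan is to prove convergence of the series in the Sobolev space $H^s_{\textrm{loc}}(D)$, $s<-1$, through a second moment computation. Conditionally on $\GFF$, the coins $\xi_k$ are independent and centred. So it suffices to show that, for each fixed test function $f\in C_c^\infty(D)$ and compact $K\subset D$,
\[
\E\Big[\sum_{k}\mu_k[K]^2\Big]<\infty .
\]
Given this, the partial sums $\sum_{k\le n}\xi_k\mu_k$ form an $L^2$-bounded martingale in $n$, for the filtration generated by $\GFF$ and $\xi_1,\dots,\xi_n$. This gives a.s. and $L^2$ convergence of $(\cdot,f)$ for every $f$. We then upgrade to $H^s_{\textrm{loc}}$ with the standard Fourier weighting argument: bound $\E\|\chi\sum_{k\ge n}\xi_k\mu_k\|_{H^s}^2$ by $\int(1+|\zeta|^2)^{s}\,\E|\widehat{\chi\nu_n}(\zeta)|^2d\zeta\le C\,\E\sum_{k\ge n}\mu_k[K]^2$ with $K=\mathrm{supp}\,\chi$, which is summable for $s<-1$. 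The same argument covers the variants with signs $\tau_k$ and the deterministic $\mu_0^+$ term (which has finite mass).

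The measures $\mu_k$ are defined cluster by cluster. Each is a conformally covariant measure on a $\CLE_4$ carpet $\A_{-2\lambda,2\lambda}(\GFF^\ell)$ in the domain $O(\ell)$ bounded by some loop $\ell$. By the Miller--Schoug construction of Theorem~\ref{thm:unique-meas}, for such a carpet in a domain $O$ and $K\subset O$,
\[
\E[\mu[K]^2\mid O]\le C\int_K\!\int_K |x-y|^{-1/4}\,\CR(x,O)^{a}\CR(y,O)^{b}\,dx\,dy ,
\]
with $a,b$ chosen so that the exponents total $-1/8$ per point. The cruder bound $\E[\mu[K]^2\mid O]\le C\,\mathrm{diam}(O)^{4-1/4}$ (up to boundary effects near $\partial O$) will be enough. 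Hence
\[
\E\sum_k\mu_k[K]^2\le C\,\E\sum_{\ell:\,O(\ell)\cap K\ne\emptyset}\mathrm{diam}(O(\ell))^{4-1/4},
\]
the sum running over all loops $\ell$ produced by the iteration.

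The key step, and the main obstacle, is to show that the right-hand side is finite when $g\neq4$. The iteration alternates two steps inside a given loop: it takes the $\CLE_4$ loops $\gamma$, and then the loops of $\A_{-2\lambda,(2\sqrt g-2)\lambda}(\GFF^\gamma)$ (or of $\A_{-\sqrt g\lambda,\sqrt g\lambda}$ at the first free step). This is a branching structure with conformally invariant offspring laws, so I will estimate it with a multiplicative cascade argument. Fix the exponent $p=2-1/8$ (any $p<4-1/4$ would do, but this one matches dimensions). Let
\[
M=\E\sum_{\ell'}\CR(0,O(\ell'))^p,
\]
where the sum is over grandchildren loops $\ell'$ surrounding the origin, one generation of the iteration in the unit disc. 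Because the carpet has dimension $2-1/8$, we expect $\E\sum_{\gamma\ni0}$ of the CLE$_4$ step to be of order one. The essential point is that the second step strictly shrinks: $\A_{-2\lambda,(2\sqrt g-2)\lambda}$ has total height gap $2\sqrt g\lambda<4\lambda$ exactly when $g<4$. So its loops touch, and the conformal radius of the loop around a point is a.s. strictly smaller than one, with explicit Laplace transform from \cite{ASW,SSW}. Computing $\E[\CR^{-\alpha}]$ for two-valued sets via the Brownian exit-time analogy (hitting time of $\{-a,b\}$) gives an explicit formula. With it one checks that the one-generation multiplier for the pointwise density $\E\sum_{\ell\ni z}\CR(z,O(\ell))^{\theta}$ is strictly less than one for the relevant $\theta$, and degenerates to one exactly at $g=4$. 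Summing the geometric series over generations, and integrating over $z\in K$ (converting $\mathrm{diam}^{4-1/4}$ into $\int_{O(\ell)}\CR(z)^{2-1/4}dz$ via Koebe), yields finiteness. I expect the delicate part to be this exponent bookkeeping and verifying the explicit moment inequality for all $g\in(4/3,4)$. The loops near $\partial D$ are handled by taking $K$ compact, since then only finitely many generations have loops of macroscopic size near $K$ in expectation.
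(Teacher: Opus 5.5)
Your first steps are sound and parallel the paper. Orthogonality of the coin flips reduces everything to summability of $\sum_k\mathbb{E}[\mu_k[f]^2]$, and your martingale and Fourier upgrade to $H^s_{\mathrm{loc}}$ are fine. Each term is then controlled by an integrated Ising two-point function in $O_k$; the paper gets this bound from $\mu_{\mathcal C}[f]=\mathbb{E}[\Phi^{O}[f]\mid\mathcal F_{\mathcal C}]$, not from the Miller--Schoug construction. The gap is in what comes next.

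\textbf{The discarded boundary term is where $g\neq4$ matters.} The ``cruder bound'' $\mathbb{E}[\mu[K]^2\mid O]\le C\,\mathrm{diam}(O)^{4-1/4}$ ``up to boundary effects'' throws away exactly the term that carries the hypothesis. The intensity of a carpet measure is a multiple of $\mathrm{CR}(x,O)^{-1/8}dx$, which blows up at $\partial O$. The two-point bound therefore contains $\int\!\!\int\mathrm{CR}(x,O)^{-1/8}\mathrm{CR}(y,O)^{-1/8}\,dx\,dy$. Its size relative to $\mathrm{diam}(O)^{4-1/4}$ depends on how $O$ sits against the fractal set that cut it out, and it is not uniformly bounded. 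Sum these boundary terms over the clusters produced by one step $\mathbb{A}_{-2\lambda,(2\sqrt g-2)\lambda}(h^\gamma)$ inside a loop $\gamma$. They are then controlled through $\int\mathrm{CR}(z,O_{\ell(z)})^{-1/8}dz$. This has finite expectation iff $\mathbb{E}[\mathrm{CR}^{-1/8}]<\infty$ for the loop surrounding a point. Since $-\log\mathrm{CR}$ is the exit time of Brownian motion from an interval of length $2\sqrt g\lambda=\sqrt g\,\pi$, this holds iff $\tfrac18<\tfrac1{2g}$. Equivalently, the two-valued set must have dimension $2-\tfrac1{2g}<2-\tfrac18$. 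This is precisely the input the paper's proof rests on, through the dimension of two-valued sets and its deterministic claim on $\int\!\!\int\mathrm{CR}^{-1/8}\mathrm{CR}^{-1/8}\cosh(\tfrac12G)^{1/2}$. Your proposal never supplies it.

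\textbf{The step you call the main obstacle does not see $g$.} With the positive exponent $p=2-\tfrac18$, the one-generation multiplier is $\mathbb{E}[\mathrm{CR}(0,O)^p]$ for the unique loop surrounding the origin. This is $<1$ for every $g\in(4/3,4]$, simply because that loop has conformal radius $<1$ almost surely. Whether the loops of the two-valued set touch is irrelevant, and nothing degenerates at $g=4$. So as written, either the boundary terms are uncontrolled, or, if the crude bound is taken at face value, the argument is blind to the hypothesis $g\neq4$. In both cases the $g$-dependent estimate is missing.

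\textbf{How to repair it.} Keep the $\mathrm{CR}^{-1/8}$ boundary factors in the two-point bound. Then use the exponential-moment (equivalently, dimension) estimate above to integrate them over the regions cut out by the two-valued sets of gap $2\sqrt g\lambda$.
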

\medskip
 
We highlight that Conjecture \ref{conj:AT-mag-field} was independently discovered by Aru and Lupu, and will appear in their upcoming work \cite{AL26}, where they are able to compute the two-point function of the field. This suggests that a potential approach to rigorously establishing the convergence and conformal covariance of the spin-spin correlation function of the AT magnetisation field is to prove Conjectures \ref{conj:AT-IGMC}-\ref{conj:AT-currs} (which, again, we believe to be sufficient to establish Conjecture~\ref{conj:AT-mag-field}) and use their result thereafter.

As a final conjecture, we describe a continuum percolation procedure, in the spirit of \cite{MSW}, that should describe the scaling limit of the interfaces of the AT magnetisation field. This conjecture is simply the continuum analogue, given Conjecture \ref{conj:AT-currs}, of the discrete representation of the field in terms of the dual complement of the AT current. Even for $U=0$, this suggests a very interesting connection between the $\CLE_3$ and the two-valued sets of the GFF.

\begin{conj}\label{conj:AT-inter}
	Consider the clusters of the decomposition described in Conjecture \ref{conj:AT-IGMC} under $+/+$ boundary conditions. Toss a fair coin for each cluster to assign a value $+1$ or $-1$, except for the boundary cluster which is assigned $+1$. If two clusters with the value $+1$ (resp. $-1$) touch each other, we say that they belong to the same $\oplus$-cluster (resp. $\ominus$-cluster). Then, the law of the outermost interfaces of the Ashkin-Teller magnetisation field is given by the outermost outer boundaries of the $\ominus$-clusters. The full collection of interfaces is defined iteratively.
\end{conj}

\subsection{About the proof}
Let us now discuss all the inputs needed in the proofs of our main results above. We do so in the hope of identifying the minimal set of pre-existing results needed to generalise our arguments to Ashkin-Teller models.

The crucial first input in our proof is the identification of the continuum clusters as found in \cite{DRC, DRC2}. Simply put, the clusters of the DRC correspond to the ``complements'' of the clusters described in Theorem \ref{thm:main-decomp}. Namely, they are given by the two-valued sets with height gap $2\sqrt{2}\lambda$. We state the result directly in terms of clusters (rather than in terms of their outer and inner boundaries as found in \cite{DRC, DRC2}), see Appendix \ref{app:conv-ext}.

\begin{theorem}[Convergence of critical double random currents \cite{DRC, DRC2}]\label{thm:DRC}
	Let $D\subset\mathbb C$ be a Jordan domain with discrete domain approximation $D_\de\subset\delta\Z^2$. {Let $(\C_k^\de)_{k\geq0}$ and $((\C_{k}^\de)^\dagger)_{k\geq1}$ be the clusters of a pair of double random currents with wired boundary conditions in $\Dd$ and free boundary conditions in $\Dd^\dagger$, respectively, coupled as in \eqref{eq:DRCs-couple}. Let $H_\de$ be the associated height function defined in \eqref{eq:defhintrod}.} At criticality, as $\de\to0$, 
	\[
	\big(H_\de, (\C_k^\de)_{k\geq0}, ((\C_{k}^\de)^\dagger)_{k\geq1}\big) \overset{(d)}{\longrightarrow}\big(\frac{1}{\pi\sqrt{2}}\GFF, (\C_k)_{k\geq0}, (\C_{k}^\dagger)_{k\geq1}\big),
	\]
	{where $h$ is a Gaussian free field with zero boundary conditions and $(\C_k)_{k\geq0}, (\C_{k}^\dagger)_{k\geq1}$ are obtained reodering by decreasing size of diameter the following iterations:}
	
	\noindent For wired boundary conditions:
	\begin{enumerate}[align = left, labelwidth=\parindent, labelsep = 0pt]	
		\item[(1.\textup{w})]\ The boundary cluster is $\C_0=\A_{-\sqrt{2}\lambda, \sqrt{2}\lambda}(h)$.
		\item[(2.\textup{w})]\ Let $\ell\in\La_{-\sqrt{2}\lambda, \sqrt{2}\lambda}(h)$. Every loop $\gamma\in\La_{-2\lambda, 2\lambda}(\GFF^\ell)$ is given a cluster $\C_\gamma=\A_{-2\lambda, (2\sqrt{2}-2)\lambda}(\GFF^\gamma)$.
		\item[(3.\textup{w})]\  Iteratively, let $\C_\gamma=\A_{-2\lambda, (2\sqrt{2}-2)\lambda}(\GFF^\gamma)$ for some loop $\gamma$. Apply (2.\textup{w)}, but now to every $\ell\in\La_{-2\lambda, (2\sqrt{2}-2)\lambda}(\GFF^\gamma)$.
	\end{enumerate}	
	\noindent For free boundary conditions:
	\begin{enumerate}[align = left, labelwidth=\parindent, labelsep = 0pt]
		\item[(1.\textup{f})]\ Every loop $\gamma\in\mathcal L_{-2\lambda, 2\lambda}(h)$ is given a cluster $\C^\dagger_\gamma=\A_{-2\lambda, (2\sqrt{2}-2)\lambda}(\GFF^\gamma)$.
		\item[(2.\textup{f})]\ Iterate as in (3.\textup{w}).
	\end{enumerate}
\end{theorem}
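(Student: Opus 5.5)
The statement is a reformulation of the main results of \cite{DRC, DRC2}. Those results are phrased in terms of the \emph{outer and inner boundaries} of the clusters of the (primal wired and dual free) double random currents, jointly with the height function $H_\de$. The plan is therefore to take that joint convergence as given and deduce the convergence of the clusters themselves as random closed sets, in the Hausdorff topology on compact subsets of $\bar D$, ordered by decreasing diameter.

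\textbf{Step 1 (coupling).} By Skorokhod's representation we work on a probability space where, almost surely:
\begin{itemize}
\item $H_\de \to \frac{1}{\pi\sqrt2}\GFF$ in the relevant Sobolev space;
\item the nested collection of discrete cluster boundaries converges to the loop collection generated by the iterations (w) and (f).
\end{itemize}
The loop convergence is understood in the following sense. For every $\eps>0$, only finitely many limiting loops have diameter larger than $\eps$. Moreover, for $\de$ small there is a bijection between these limiting loops and the discrete boundaries of diameter larger than $\eps$ (up to boundary effects at level $\eps$). This bijection preserves the nesting and the outer/inner labels, and matched loops are close in the sup-norm modulo reparametrisation. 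The identification of the limiting outer boundary of a cluster with a loop $\gamma$ (or $\partial D$), and of its inner boundaries with $\La_{-2\lambda,(2\sqrt2-2)\lambda}(\GFF^\gamma)$, is exactly the content of \cite{DRC,DRC2}. It yields the claimed description $\C_\gamma=\A_{-2\lambda,(2\sqrt2-2)\lambda}(\GFF^\gamma)$, once we know that a two-valued set equals the closed region $\overline{O(\gamma)}$ minus the union of the open domains encircled by its loops.

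\textbf{Step 2 (Hausdorff convergence of a fixed cluster).} Fix a macroscopic cluster $\C$ with discrete approximants $\C^\de$. Each $\C^\de$ is contained in the closed region bounded by its outer boundary, and its complement in that region is the union of the interiors of its inner boundaries (its holes), up to $O(\de)$.

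Points of $\C$ are close to $\C^\de$. The set $\A_{-2\lambda,(2\sqrt2-2)\lambda}$ has empty interior, having dimension $<2$. Hence every $z\in\C$ lies within $\eps$ of one of the finitely many loops of diameter larger than $\eps$, or of the outer loop. Those loops are approximated by discrete boundaries, which belong to $\C^\de$.

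Points of $\C^\de$ are close to $\C$. Suppose $z$ lies in the discrete region but at distance larger than $2\eps$ from $\C$. Then in the limit $z$ lies in a hole of diameter at least $2\eps$. That hole is matched by a discrete hole converging to it, so $z\notin\C^\de$ for small $\de$; this is a contradiction.

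Since the limiting loops may touch (as $a+b<4\lambda$), care is needed that discrete holes which nearly touch do not merge into a single discrete hole with macroscopic diameter. The bijection at level $\eps$ rules this out, since distinct macroscopic discrete holes are matched with distinct limiting loops.

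\textbf{Step 3 (ordering).} Diameters of matched clusters converge. Almost surely, no two limiting clusters have equal diameter, by the conformal Markov property and the continuity of the law of the diameter. Finitely many clusters exceed any given size, so the reordering by decreasing diameter is eventually consistent. This gives joint convergence of the whole sequence, together with $H_\de$, in product topology.

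\textbf{Main obstacle.} I expect the main obstacle to be Step 2. The key difficulty is ensuring that the precise topology of \cite{DRC,DRC2} gives the uniform matching of all loops above scale $\eps$, including that no macroscopic discrete hole escapes detection. If the cited topology is weaker, I would first try to supplement it with an a priori RSW-type arm estimate. The estimate needed is that a discrete cluster does not come within $\de$-microscopic distance of closing a macroscopic hole without the boundary curves being macroscopically close.
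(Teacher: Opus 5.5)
Your route is the paper's. The paper also takes the convergence of the outer and inner cluster boundaries from \cite{DRC,DRC2} as a black box and converts it into Hausdorff convergence of the clusters (Appendix~\ref{app:conv-ext}). It outsources that conversion, via Lemma~\ref{lem:bdy-to-hausdorff}, to \cite[Theorem 3.5]{loop-soup-CLE}, under the hypothesis that the limiting loops are pairwise disjoint. That hypothesis does not hold for the DRC clusters themselves: their boundary loops come from $\mathbb{A}_{-\sqrt2\lambda,\sqrt2\lambda}$ and $\mathbb{A}_{-2\lambda,(2\sqrt2-2)\lambda}$, where $a+b=2\sqrt2\lambda<4\lambda$, so the loops touch. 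Your direct argument is therefore better adapted to this statement, because it handles touching loops by requiring that the scale-$\varepsilon$ matching keep distinct macroscopic loops distinct and respect nesting. Its cost is exactly the input you flag, namely extracting such a matching from \cite{DRC,DRC2}. Your Step 3 (no ties among limiting diameters) is not addressed in the paper at all.

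There is, however, a faulty inference in the first half of your Step 2. Empty interior of $\mathcal{C}$ only tells you that holes come arbitrarily close to each $z\in\mathcal{C}$. It does not give a loop of diameter larger than $\varepsilon$ within distance $\varepsilon$ of $z$: a patch of the carpet of size comparable to $\varepsilon$ can perfectly well be perforated only by much smaller holes.

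The fix is a two-scale compactness argument.
\begin{itemize}
\item For each $z\in\mathcal{C}$, the ball $B(z,\varepsilon/2)$ contains a point outside $\mathcal{C}$.
\item The segment joining that point to $z$ meets the boundary loop of some hole, or the outer loop. That loop has some positive diameter $d_z$ and lies within $\varepsilon$ of every point of $B(z,\varepsilon/2)$.
\item Cover the compact set $\mathcal{C}$ by finitely many such balls $B(z_i,\varepsilon/2)$, and set $\varepsilon'=\min_i d_{z_i}>0$, which is random.
\item Then every point of $\mathcal{C}$ is within $\varepsilon$ of a boundary loop of $\mathcal{C}$ of diameter at least $\varepsilon'$.
\item Invoke the matching at level $\varepsilon'$ rather than $\varepsilon$. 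This is harmless, since you work almost surely in the Skorokhod coupling.
\end{itemize}

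For the reverse inclusion, make the argument uniform in the point, since $z$ depends on $\delta$. Only finitely many holes have diameter at least $4\varepsilon$. If $z$ is at distance more than $2\varepsilon$ from a limiting loop, its winding number is unchanged when that loop is replaced by any loop within $d_\ell$-distance $\varepsilon$. With these two adjustments, Step 2 goes through.
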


As part of the second input, we stress once more that we do not use the convergence of all $k$-point spin correlation functions due to Chelkak, Hongler and Izyurov \cite{CHI1}. Instead, we only need the existence of the limit of the one-point function. We use its explicit expression to prove the uniqueness part of Theorem \ref{thm:unique-meas}.

\begin{theorem}[Convergence of one-point function \cite{CHI1}]\label{thm:CHI1}
	Let $D$ be a bounded, simply connected domain with discrete domain approximation $\Dd\subset\de\Z^2$. At criticality, for any $\eps>0$,
	\[
	\lim_{\de \to 0}\ \de^{ - 1 /  8} \E_{\Dd}^+ \left[ \sigma_\de(z) \right] = \mathfrak{C}\ 2^{1/4}\ \CR(z, D)^{-1/8}
	\] 
	uniformly over all points $z$ such that $d(z, \partial D)>\eps$, where $\CR(z, D)$ denotes the conformal radius of $D$ as viewed from $z$.
\end{theorem}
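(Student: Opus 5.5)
We do not reprove this result; it is imported verbatim from \cite{CHI1}. For completeness, here is the route we would follow to establish it, which is essentially that of Chelkak, Hongler and Izyurov.

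The idea is to access $\E^+_{\Dd}[\sigma_\de(z)]$ through a discrete holomorphic \emph{spinor observable} on the double cover $\Dd^{[z]}$ of $\Dd$ branched around the face (or vertex) $z$. Concretely, we would use the low-temperature (Kramers--Wannier dual) expansion. There $\E^+_{\Dd}[\sigma_\de(z)]$ is a ratio of partition functions, $Z^{[z]}/Z$, where $Z^{[z]}$ counts even subgraphs weighted by $(-1)^{\#\text{loops surrounding }z}$. One then defines a fermionic observable $F_\de(a)$ on edge midpoints $a$ of the double cover. It is obtained by inserting a disorder/fermion pair next to $z$, and it is normalised so that its value adjacent to $z$ equals $1$. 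The first step is to check three properties of $F_\de$:
\begin{itemize}
\item it is \emph{s-holomorphic} and spinor-valued, i.e.\ it changes sign between the two sheets;
\item it satisfies the Riemann-type boundary condition $F_\de\parallel \tau^{-1/2}$ on $\partial\Dd$, coming from the $+$ boundary conditions;
\item its values at the two lattice neighbours of $z$ encode the ratio $\E^+_{\Dd}[\sigma_\de(z+2\de)]/\E^+_{\Dd}[\sigma_\de(z)]$, i.e.\ a discrete logarithmic derivative of the one-point function.
\end{itemize}

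The second step is the scaling limit of the observable. Using s-holomorphicity, we would build the discrete primitive $H_\de=\int \mathrm{Im}(F_\de^2\,dz)$. This function is approximately sub/superharmonic on the two sublattices and constant on the boundary arcs. Together with a priori regularity of discrete harmonic functions, this gives equicontinuity of $F_\de/\sqrt{\de}$ away from $z$ and from $\partial D$. Any subsequential limit is then a holomorphic spinor $f$ on the double cover of $D$ branched at $z$, with boundary condition $\mathrm{Im}\int f^2dz=0$ on $\partial D$ and prescribed singularity $f(w)\sim (w-z)^{-1/2}$. This boundary value problem has a unique solution, which is conformally covariant. Its next expansion coefficient, $f(w)=(w-z)^{-1/2}\bigl(1+2\mathcal A(z)(w-z)+\dots\bigr)$, equals $-\tfrac18\partial_z\log \CR(z,D)$ (up to convention). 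The delicate point is the \emph{local} analysis at the branch point. One must show that the discrete values next to $z$ converge to the continuum coefficient at the correct scale $\de$, not merely on macroscopic scales. For this we would compare $F_\de$ with the explicit full-plane discrete spinor, constructed via discrete harmonic measure on the slit plane, and use a discrete Cauchy formula on the double cover.

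The third step integrates the logarithmic derivative along a lattice path from a reference point, which yields
\[
\frac{\E^+_{\Dd}[\sigma_\de(z)]}{\E^+_{\Dd'}[\sigma_\de(z')]}\longrightarrow \frac{\CR(z,D)^{-1/8}}{\CR(z',D')^{-1/8}} .
\]
Uniformity for $d(z,\partial D)>\eps$ follows from the uniform convergence of the observables on compacts. It remains to fix the overall multiplicative constant $\mathfrak C\,2^{1/4}$. For this we would take $D$ a large square with $z$ at its centre and compare with the infinite-volume critical two-point function via the known Wu asymptotics, $\E[\sigma_0\sigma_x]\sim \mathfrak C^2|x|^{-1/4}$, in a boundary-decoupling argument.

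We expect this last step to be the main obstacle: showing the discrete log-derivative convergence at lattice scale near the branch point and then pinning down the lattice-dependent constant $\mathfrak C$, which is why we simply invoke \cite{CHI1} here.
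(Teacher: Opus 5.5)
Your proposal matches the paper: the paper gives no proof of this statement and simply imports it from \cite{CHI1} as an external input, using it only for the convergence of the numerator in Proposition~\ref{prop:betalimit} and for the intensity measure in Theorem~\ref{thm:unique-meas}. One caveat on your optional sketch: integrating the discrete logarithmic derivative in $z$ only gives ratios $\E^+_{\Dd}[\sigma_\de(z)]/\E^+_{\Dd}[\sigma_\de(z')]$ within a \emph{fixed} domain, not across different domains $D,D'$. So fixing the constant once in a large square does not suffice; the normalisation has to be pinned down in every domain, or you need a separate cross-domain comparison argument.
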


Finally, the uniqueness part of Theorem \ref{thm:unique-meas} is achieved by verifying the axioms posed in {\cite[Theorem 1.2]{CLE-meas}} for the uniqueness of measures supported on the carpet/gasket of some $\CLE_\kappa$ for $\kappa\in(8/3, 8)$. 
Below, we present a modified version of the statement in \cite{CLE-meas}. The key modification needed for our application comes from replacing the assumption on the conformal covariance of the measure by only the conformal covariance of the \emph{intensity measure}. This follows from essentially the same proof as in \cite{CLE-meas}. 

In terms of notation, let $\C$ be the carpet of the \CLE$_4$ in a simply connected domain $D$. Given an open, simply connected set $U \subset D$, we let $U^\C$ be the set obtained as the complement in $U$ of the closure of the union of the interior of the loops of $\C$ which intersect $U$ and $D\setminus U$. The Markov property of the $\CLE_4$ states that, for any connected component $V$ of $U^\C$, the conditional law of the restriction of $\C$ to $V$ given $\C\cap V^c$ is that of a $\CLE_4$ carpet in the new domain $V$. This is depicted in Figure \ref{fig:cle-markov}. We have stated the theorem in terms of a fixed domain $D$, which is an equivalent reformulation of the statement in \cite{CLE-meas}\footnote{Indeed, once a pair $(\C_D, \mu_D)$ satisfies the uniqueness axioms in $D$, it follows that for any conformal map $\psi:D\to\tilde\D$, the pair $(\C_{\tilde D}, \mu_{\tilde D})$ given by $\C_{\tilde D}=\psi(\C_D)$ and the pushforward $\mu_{\tilde D}=\psi_{*}(|\psi'|^d\mu_D)$ also satisfies the axioms in $\tilde\D$. This is immediate from the conformal invariance of the $\CLE_4$ and a simple chain rule computation. 
}.

	\begin{figure}[t]
			\centering
			\includegraphics[width=0.40\linewidth]{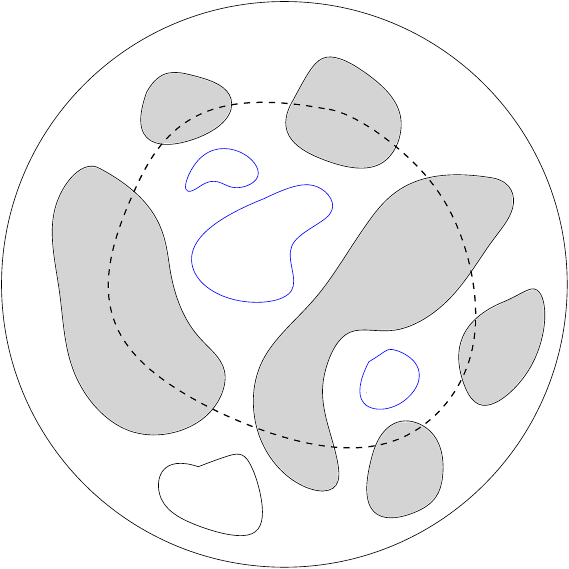}
			\caption{The dashed line is the boundary of a simply connected domain $U$. It defines a domain $U^\C$, with two connected components, by removing all the loops of $\C$ that intersect both the inside and the outside of $U$. The Markov property of the $\CLE_4$ says that the blue loops have the law of (independent) $\CLE_4$ in each of the connected components of $U^\C$.}
			\label{fig:cle-markov}
	\end{figure}

\begin{theorem}[Unique characterisation of measures]\label{thm:Miller-Schoug}
	Let $D$ be a simply connected domain. Consider a law on a pair $(\C,\mu)$ where $\C$ has the law of the carpet of the \CLE$_4$ in the domain $D$ and $\mu$ is a measure supported on the set $\C$. Suppose in addition that the joint law of $(\C,\mu)$ is such that:
	\begin{enumerate}
		\item (Measurability) The measure $\mu$ is measurable with respect to $\C$.
		\item (Local finiteness)  For each compact set $K \subset D$, $\E \left[ \mu(K) \right] < \infty$.
		\item (Markov property) For any open, simply connected set $U \subset D$ and any component $V$ of $U^\C$, the conditional law of $\mu|_{V}$ given $\C \cap V^c$ and $\mu|_{V^c}$ is only dependent on $V$. 
		\item (Conformal covariance of the intensity measure) Let $U, V$ be as above. For any $f:D\to\R$ with compact support in $V$ and any conformal map $\varphi:D\to V$,
		\[
		\E \left[ \mu[f\circ\varphi^{-1}] \mid V \right] = \int_D |\vp'(z)|^{d} f(z) \d  \E\left[ \mu \right](z).
		\] 
	\end{enumerate}
	Then, the law on $(\C,\mu)$ is uniquely determined (up to a multiplicative constant) and is equal to the measure constructed in \cite{CLE-meas}. In particular, the measure is conformally covariant.
\end{theorem}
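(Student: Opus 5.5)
The plan is to adapt the proof of \cite[Theorem 1.2]{CLE-meas}, changing only the places where conformal covariance of $\mu$ itself is used. The first step is to fix two pairs $(\C,\mu_1)$ and $(\C,\mu_2)$ satisfying axioms 1--4 with the same carpet law. Normalise them so that their intensity measures agree at one point of $D$, and hence everywhere. This is justified by axiom 4: applying it with $V=D$ and $\varphi$ ranging over the automorphisms of $D$, and then with general $V$, shows that $\E[\mu_i]$ has density $c_i\,\CR(z,D)^{d-2}$ with respect to Lebesgue measure. Indeed, the density is forced to be a conformally covariant function of the domain with exponent $d$, and hence a constant multiple of this one. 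After rescaling we may therefore assume $\E[\mu_1]=\E[\mu_2]$. The goal is then to show $\mu_1=\mu_2$ a.s., which I would do by proving $\E[(\mu_1[f]-\mu_2[f])^2]=0$ for each continuous $f$ with compact support.

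The second step is a martingale (or Doob) argument built on the Markov property. Fix a small scale $\eps$ and a grid of squares $U$ of side $\eps$. Exploring the carpet from outside, consider the components $V$ of $U^\C$. By axiom 3 together with measurability (axiom 1), the conditional expectation of $\mu_i|_V$ given $\C\cap V^c$ depends only on $V$. By axiom 4 it equals the pushforward under the uniformising map $\varphi_V:D\to V$ of $|\varphi_V'|^d\,\E[\mu_i]$. Since the intensities now coincide, this conditional expectation is the same for $i=1,2$. Summing over the squares, the conditional means of $\mu_1[f]$ and $\mu_2[f]$ given the ``outside'' $\sigma$-algebra $\mathcal F_\eps$ agree. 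The part of the mass not captured lies in the carpet pieces outside the $V$'s, and I would need to show this error is negligible.

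The third step is to show that, as $\eps\to0$, $\mu_i[f]-\E[\mu_i[f]\mid\mathcal F_\eps]\to0$ in $L^1$. Combined with the previous step, this gives $\mu_1[f]=\mu_2[f]$ a.s. The inputs are as follows. By measurability (axiom 1), $\mu_i[f]$ is $\C$-measurable. Also $\C$ is generated by the increasing family $\mathcal F_\eps$ up to information inside the small components $V$, which have diameter $O(\eps)$. One then needs to control the residual fluctuation $\sum_V(\mu_i|_V[f]-\E[\cdot\mid\mathcal F_\eps])$. Its conditional variance is a sum over conditionally independent pieces, which suggests it is small. The difficulty is that we only have first-moment bounds (axiom 2), not second moments.

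I expect this last point to be the main obstacle. The original argument in \cite{CLE-meas} handles it by a truncation and $L^1$-martingale convergence rather than variances, using local finiteness and the scaling of the expected mass of small components, which is $\asymp\mathrm{diam}(V)^d$ by conformal covariance of the intensity and Koebe distortion. I would follow exactly that route: the only place where full conformal covariance of $\mu$ enters in \cite{CLE-meas} is to identify the conditional laws inside components up to scaling. There we only need conditional \emph{expectations}, and for those axiom 4 suffices. Finally, once uniqueness is shown, the measure of \cite{CLE-meas} itself satisfies axioms 1--4, so the unique law is that one, and it is conformally covariant.
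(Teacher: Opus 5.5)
Your plan matches the paper's own treatment. The paper gives no separate argument; it only asserts that the proof of \cite[Theorem 1.2]{CLE-meas} carries over with covariance of the intensity in place of covariance of $\mu$. Your observation that only conditional expectations of $\mu|_V$ on the Markov components $V$ are needed, and that axiom 4 supplies exactly these, is the natural justification of that claim.

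Two adjustments to your sketch. First, step 3 needs no second moments: $\bigvee_\eps\mathcal F_\eps=\sigma(\mathcal{C})$, so L\'evy's upward theorem gives $\mathbb{E}[\mu_i[f]\mid\mathcal F_\eps]\to\mu_i[f]$ in $L^1$ from axioms 1--2 alone. Second, the point that actually needs an argument is the one you postponed in step 2. The uncaptured set (grid lines and grid-crossing loops) only grows as $\eps\downarrow 0$, so it is not enough for its mass to be small: you must show that $\mu_i$ charges no $\textnormal{CLE}_4$ loop. This does hold, because the expected mass of $\mu_i$ there is $c_i$ times a quantity depending only on the $\textnormal{CLE}_4$ law, and that quantity vanishes by the one-point conformal-radius asymptotics for $\textnormal{CLE}_4$.
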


Beyond these three inputs, the rest of the proof is essentially self-contained. As already hinted, the \emph{main difficulty} of the argument is to show that the measurability of the discrete IMFs with respect to the percolations $\omega_\de$ and $\omega_\de^\dagger$ is preserved upon taking the scaling limit $\de\to0$. Namely, we will define a collection of (appropriately renormalised) counting measures $\mu^\de_{\C^\de}$ on each discrete cluster $\C^\de$ and focus our efforts on proving that the limit $\mu_\C$ of these measures is measurable with respect to the continuum cluster $\C$.
 Whilst this may seem like a technical point, without it it would not be possible to translate the coupling in the discrete into our representation of the IMF in terms of two-valued sets of the GFF. The main novelties of the proof itself are as follows.
\begin{itemize}
	\item A new argument to prove measurability, based on the approaches of \cite{GPS, mag-field}. It uses a box-counting approximation as in \eqref{eq:byboxcount}, rather than an approximation by annuli crossings.

	\item We prove that the limit $\mu_\C$ is measurable with respect to the cluster $\C$ itself, not just the complete limit $\omega$. This is precisely why we can identify the limiting measures in Theorem~\ref{thm:unique-meas}.

	\item We develop new tools to circumvent the repeated issue that the law of $\w_\de$ is not known to satisfy the FKG inequality. We also construct the IIC measure of $\w_\delta$ by adapting the recent argument of \cite{hillairet2025short} to our setting.
\end{itemize}

\noindent \textbf{Outline.} The paper is organised as follows. In Sections \ref{sec:defandprop}--\ref{sec:heightfunc} we introduce several equivalent constructions of the discrete coupling and prove Theorem \ref{thm:main-ES}. We then discuss generalisations to the Ashkin-Teller model in Section \ref{sec:vargen}. Thereafter, we devote Sections \ref{sec:FKG}--\ref{sec:RSW} to deriving the most important properties of our percolation model: an alternative to the FKG inequality and standard RSW theory estimates. These properties are used in Section \ref{sec:IIC} to construct the incipient infinite cluster (IIC) measure of the model. Once all these features are in place, in Section \ref{sec:conv-meas}, we prove the key estimate (Theorem \ref{thm:boxcount}) which can be thought of as the discrete version of Theorem \ref{thm:unique-meas}. This will imply that the scaling limit of the area measures of the discrete clusters is measurable with respect to the continuum clusters. This is the crucial input needed to pass from the discrete to the continuum, which we finalize in Section \ref{sec:proofs} to prove Theorems \ref{thm:main-coupling}--\ref{thm:main-decomp}--\ref{thm:unique-meas}--\ref{thm:joint}.

\noindent \textbf{Acknowledgments. } We are grateful to Lukas Schoug for kindly discussing the modified statement in Theorem \ref{thm:Miller-Schoug} with us. We thank Christophe Garban for helpful discussions concerning the construction of the IIC measure in Section \ref{sec:IIC}. We also thank Nathana\"{e}l Berestycki for valuable comments on a first version of the paper. Finally, TAL would like to thank Avelio Sepúlveda for many helpful conversations. 

\noindent This research was funded in part by the Austrian Science Fund (FWF) grant P36298 ``Spins, Loops and Fields''.
The research of ML was also supported by the Austrian Science Fund (FWF) grant F1002 ``Discrete random structures: enumeration and scaling limits''. TAL acknowledges the support from a grant of the Vienna School of Mathematics (VSM).

\section{The discrete coupling}

\subsection{Definition and main properties}\label{sec:defandprop}

Let $G = (V(G),E(G))$ be a finite connected graph.
For any subset of vertices $S \subset V(G)$, we denote by $E(S)$ those edges with both endpoints contained in $S$.
We consider a family of ferromagnetic coupling constants $J : E(G) \to [0,\infty)$.

We also fix $V^+ \subset V(G)$ to be a (possibly empty) connected subset of the vertices and set $E^+ = E(V^+)$. These will be used to impose $+$ boundary conditions on $V^+$ for spin configurations and wired boundary conditions on $E^+$ for percolation configurations.

Below we will often assume that $G$ is planar and embedded in the plane, and we will write $\bG$ for the set of vertices of $G$ incident to the outer face.
We then denote by $G^*$ the dual graph of $G$ and by $G^\dual$ the weak dual of $G$, that is the (strong) dual graph with the vertex corresponding to the outer face of $G$ removed.
For those statements which rely on planarity, we will \emph{additionally assume} that $V^+$ is either empty or is equal to $\bG$.

Given any coupling constants $J$ there is a unique dual family of coupling constants $J^\dual$ on $G^\dual$ which are defined through the relation
\begin{equation}\label{eq:Jdualdef}
	\tanh( J^\dual_{(uv)^\dual}) = \exp( - 2J_{uv}),
\end{equation} 
{for every $uv \in E(G)$, where $(uv)^\dual$ is the dual edge of $uv$}.
We note that the mapping $J \mapsto J^\dual$ is an involution and the unique fixed point gives the self-dual coupling constants as in~\eqref{eq:critical-Ising}, which are critical on the square lattice.

Given $A \subset V(G)$, we define $\Sigma^{A}$ to be the set of spin configurations $\sigma : V \mapsto \left\{ \pm 1 \right\}$ which satisfy $\sigma_v = +1$ for all $v \in A$.

\begin{definition}[Ising model] \label{def:Ising}
{The \emph{Ising model} with $+$ boundary conditions on $V^+$ is the probability measure $\P^+_{G}=\P^+_{G, J}$ on spin configurations $\sigma: V(G) \to \left\{ \pm 1 \right\} $ given by
\[
	\P^+_G(\sigma) \propto \i( \sigma \in \Sigma^{V^+}) \exp\Big( \sum_{uv\in E(G)} J_{uv} \sigma_u \sigma_{v}\Big).
\] 	
When the set $V^+$ is empty, we instead say that the model has \emph{free} boundary conditions and we write $\P^{\f}_G$.
}
\end{definition}
We immediately extend the measure to include a second independent and identically distributed Ising model $\tilde{\sigma}$,
\begin{equation}\label{eq:xorham}
	\P^+_G(\sigma,\tilde{\sigma}) \propto \i(\sigma,\tilde{\sigma} \in \Sigma^{V^+}) \exp \Big ( \sum_{uv \in E} J_{uv} ( \sigma_u \sigma_v + \tilde{\sigma}_u\tilde{\sigma}_v) \Big).
\end{equation}
{This is the non-interacting version of the Ashkin--Teller model with $U=0$, and the product spin will be denoted by
\[
\tau = \sigma \tilde{\sigma}.
\] 
The marginal of $\tau$ under $\P^+_G$ (resp.~$\P^{\f}_G$) is referred to as the \emph{XOR-Ising model} with $+/+$ (resp.~free/free) boundary conditions.}

\begin{definition}[Double Random Current]\label{def:drc}
	A current is a map $\n : E(G) \mapsto \left\{ 0,1,2,\ldots \right\}$. We denote by $\b\n \subseteq V(G)$ the set of its \emph{sources}, 
	\begin{equation}\label{eq:sourcedef}
	\b\n = \left\{ u \in V(G) \setminus V^+ \ \mid \ \textstyle \sum_{v: \ uv \in E(G)} \n_{uv} \text{ is odd}  \right\}.
\end{equation} 
For $A\subseteq V(G)$, we write $\Omega_A$ for the set of all currents with sources $A$. 
	We introduce a weight $w = w_J$ on currents by setting
	\begin{equation}\label{eq:drcdef}
		w(\n) = \prod_{uv \in E(G)} \frac{J_{uv}^{\n_{uv}}}{\n_{uv}!}.
	\end{equation} 
	Whenever $A,B \subseteq V(G)$ are of even size, the \emph{double random current (DRC)} probability measure $\P^{A,B}_{G}=\P^{A,B}_{G,J}$ with sources $A,B$ is defined to be the law of $\n_1 + \n_2$, where $\n_1$ and $\n_2$ are independently sampled proportional to the weight $w_J$ from $\Omega_A$ and $\Omega_B$ respectively.
	
	The case of main importance for this article is when $A=B=\emptyset$, and if so we say that the currents are \emph{sourceless}. 
	The model is said to have free boundary conditions when $V^+$ is empty, otherwise it is said to have wired boundary conditions.
	The \emph{trace} $\nh$ of $\n$ is a percolation configuration given by
	\[
		\nh = \left\{ uv \in E(G) \mid \n_{uv} > 0 \right\}.
	\] 
	We define $\n_{\odd}$ and $\n_{\even}$ as the subsets of $\nh$ on which $\n$ takes odd and even values respectively.
\end{definition}

This definition for random currents with wired boundary conditions is not the most standard but it is natural. The set of sources \eqref{eq:sourcedef} arises from expanding in the usual manner the partition function of the Ising model with $+$ boundary conditions on $V^+$. The constraint $\b\n = \emptyset$ ensures that
\[
	\textstyle\sum_{u \in V^+} \textstyle \sum_{v : \ uv \in E(G)} \n_{uv} \ \text{ is even,} 
\] 
so that the current is completely sourceless when considered on the graph with the edges of $E^+$ contracted and all vertices of $V^+$ merged into one.

Given a spin configuration $\sigma : V(G) \to \left\{ \pm 1 \right\}$, we write
\[
\xi^\pm(\sigma) = \left\{ uv \in E(G) \mid \sigma_u = \sigma_v = \pm 1 \right\} 
\] 
and we define $\xi(\sigma) = \xi^+(\sigma) \cup \xi^-(\sigma)$. 
When $G$ is planar, the {dual complement} of $\xi(\sigma)$ is the \emph{contour (domain-wall)} representation of $\sigma$ and will be denoted
\begin{equation}\label{eq:domain-wall}
	\b \sigma = \xi(\sigma)^* = \left\{ (uv)^* \in E(G^*) \mid \sigma_u \neq \sigma_v \right\}. 
\end{equation} 

Before proceeding, we briefly recall the definition of the FK-Ising model and refer to \cite{RC} for a full account. We define $\wFK \in \left\{ 0,1 \right\}^E$ by setting
\begin{equation}\label{eq:fkdef}
\wFK = \xi(\sigma) \cap \eta',
\end{equation} 
where $\sigma \sim \P^+_G$ and $\eta'$ is sampled independently of $\sigma$ according to an (inhomogeneous) Bernoulli bond percolation with parameter
\[
	p'_{uv} = 1 - \exp(-2J_{uv})
\]
for $uv \in E(G) \setminus E^+$ and $p'_{uv} = 1$ (i.e.~always open) for $uv \in E^+$. The law of $\wFK$ will be denoted by $\phi^+_{G} = \phi^+_{G,J}$ and it has an explicit form given by
\begin{equation}\label{eq:fklaw}
	\phi^+_{G}(\wFK) \propto 2^{k(\wFK)} \prod_{uv \in \wFK} p'_{uv} \prod_{uv \not\in \wFK} (1-p'_{uv}),
\end{equation} 
where $k(\wFK)$ is the number of connected components of $(V(G),\wFK)$ including the isolated vertices.
As above, we will write $\phi^{\textnormal{f}}_{G}$ in the case that $V^+$ is empty and refer to the model as having \emph{free} boundary conditions. Otherwise, we say that the boundary conditions are \emph{wired} and note that $\wFK \supseteq E^+$.
Note that by construction, for wired boundary conditions, $V^+$ will belong to a unique cluster of $\wFK$.

We now state the theorem of Edwards and Sokal, relating the FK-Ising percolation model to the Ising spin model. Before doing so, we recall (and slightly generalise) the notation $\coin$ introduced in Definition \ref{def:coins}

\begin{definition}[Coin tosses] \label{def:coins2}
For any percolation configuration $\w \subset E(G)$, we denote by
\[
	\coin(\w) 
\]
the assignment of independent symmetric $\{\pm 1\}$-valued spins to each cluster of $\w$, and by $\coin^+(\w)$ the same procedure with the exception that each cluster intersecting $V^+$ is always assigned the spin $+1$.
\end{definition}

\begin{definition}[]
	We say that a probability measure $P$ on $\left\{ 0,1 \right\}^{E(G)}$ has the \emph{Edwards--Sokal property} (with $+$ boundary conditions) if $\coin^+(\w)$ has the law of the Ising model on $G$ with $+$ boundary conditions when $\w \sim P$.
\end{definition}

\begin{theorem}[Edwards--Sokal \cite{EdwSok}]\label{thm:ESfk}
	In the coupling $(\wFK, \sigma)$ defined in \eqref{eq:fkdef}, conditionally on $\wFK$ the law of $\sigma$ is uniform on the subset of $\Sigma^{V^+}$ where each cluster of $\w$ has a constant spin. In particular, the law $\phi^+_{G}$ has the Edwards-Sokal property. 
\end{theorem}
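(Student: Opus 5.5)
The plan is to compute the joint law of $(\wFK,\sigma)$ directly and read off the conditional law of $\sigma$ given $\wFK$. By \eqref{eq:fkdef}, for a fixed spin configuration $\sigma\in\Sigma^{V^+}$ and a fixed edge set $\w\subset E(G)$, the event $\{\wFK=\w\}$ holds iff $\w\subseteq\xi(\sigma)$ and $\eta'\cap\xi(\sigma)=\w$. Since $\eta'$ is an independent Bernoulli percolation with parameters $p'_{uv}$ (equal to $1$ on $E^+$), this gives
\[
\P(\sigma,\wFK=\w)=\P^+_G(\sigma)\,\i(\w\subseteq\xi(\sigma))\prod_{uv\in\w}p'_{uv}\prod_{uv\in\xi(\sigma)\setminus\w}(1-p'_{uv}).
\]

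Next I rewrite the Ising weight. Up to a global constant, $\exp(\sum_{uv}J_{uv}\sigma_u\sigma_v)\propto\prod_{uv\notin\xi(\sigma)}e^{-2J_{uv}}=\prod_{uv\notin\xi(\sigma)}(1-p'_{uv})$, valid for $uv\notin E^+$. For $uv\in E^+$ we always have $uv\in\xi(\sigma)$ because $\sigma\equiv+1$ on $V^+$, so no issue with $p'=1$ arises. Substituting, the product over $\xi(\sigma)\setminus\w$ and over $E\setminus\xi(\sigma)$ combine into $\prod_{uv\notin\w}(1-p'_{uv})$. Hence
\[
\P(\sigma,\wFK=\w)\propto \i(\sigma\in\Sigma^{V^+})\,\i(\w\subseteq\xi(\sigma))\prod_{uv\in\w}p'_{uv}\prod_{uv\notin\w}(1-p'_{uv}).
\]
The key observation is that the $\sigma$-dependence is now only through indicators. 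Moreover, $\w\subseteq\xi(\sigma)$ is equivalent to $\sigma$ being constant on every cluster of $\w$.

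The conclusions then follow by counting. For fixed $\w$ with $\w\supseteq E^+$ (automatic since $p'=1$ there), the conditional law of $\sigma$ is uniform on the configurations in $\Sigma^{V^+}$ that are constant on each cluster of $\w$. The number of such configurations is $2^{k(\w)-\i(V^+\neq\emptyset)}$, because the cluster containing $V^+$ is forced to be $+$. Summing over $\sigma$ recovers \eqref{eq:fklaw} up to a constant, which confirms that the marginal of $\wFK$ is $\phi^+_G$. Uniformity on cluster-constant configurations is exactly the statement that $\sigma$ has the law of $\coin^+(\wFK)$. Since the marginal of $\sigma$ is $\P^+_G$ by construction, $\phi^+_G$ has the Edwards--Sokal property.

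The only delicate point is the bookkeeping on $E^+$, namely edges with $p'=1$ and the forced cluster sign; the argument above handles it. The free case $V^+=\emptyset$ is identical, with no forced cluster.
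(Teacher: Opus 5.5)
Your proof is correct. It is the classical joint-density computation of Edwards and Sokal, which the paper does not reproduce but simply cites from \cite{EdwSok}. The two bookkeeping points are handled properly. First, edges of $E^+$ never lie outside $\xi(\sigma)$ for $\sigma\in\Sigma^{V^+}$, so the identity $e^{-2J_{uv}}=1-p'_{uv}$ is only ever used where it holds. Second, the count $2^{k(\w)-1}$ in the wired case relies on the paper's standing assumption that $V^+$ is connected: then $\w\supseteq E^+$ places all of $V^+$ in a single cluster of $\w$, which is exactly what makes the $\w$-marginal agree with \eqref{eq:fklaw}.
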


The use of the FK-Ising model and its percolative perspective to understand the behaviour of the Ising model is a recurring theme in many works. Part of the reason for this is that the measure $\phi^+_G$ enjoys many nice properties. We will focus on two here which will play an important role for us.
The first is the behaviour under duality. 
\begin{itemize}
	{\item \textbf{(Duality)} Let $G$ be a planar graph and let $\wFK\sim \phi^+_{G,J}$. Then $(\wFK)^*\sim \phi^\f_{G^\dual,J^\dual}$. }
\end{itemize}
On the square lattice with the self-dual coupling constants, the involution $\wFK \leftrightarrow (\wFK)^*$ therefore only induces a change in the boundary conditions of the measure.
The second important property is valid on any finite graph $G$.
\begin{itemize}
\item \textbf{(FKG)} The law $\phi^+_{G,J}$ satisfies the FKG inequality and is stochastically increasing in $J$. 
\end{itemize}

We will discuss the FKG inequality and its consequences more in Section \ref{sec:FKG}.
Together these two properties are very powerful and for instance allow one to prove that the phase transition of the Ising model on $\Z^2$ is continuous \cite{werner2009percolation}.

We now introduce our main definition. It is a coupling between the Ising model on $G$ and a percolation model on $E(G)$, which will be denoted $\w$. It starts by extending $\P^+_G$ to include an independent Bernoulli bond percolation $\eta \subset E(G)$ of (inhomogeneous) parameter
\[
	p_{uv} = 1 - \exp(-4J_{uv})
\]
for $uv \in E(G) \setminus E^+$ and $p_{uv} = 1$ (i.e.~always open) for $uv \in E^+$.

\begin{definition}\label{def:one}
	Define the measure
	\[
		\P^+_{G}(\sigma,\tilde{\sigma},\eta) \propto \i(\sigma,\tilde{\sigma} \in \Sigma^{V^+}) \exp \Big(\sum_{uv \in E(G)} J_{uv} (\sigma_u \sigma_v + \tilde{\sigma}_u \tilde{\sigma}_v)\Big)\prod_{uv \in \eta} p_{uv} \prod_{uv \not\in \eta} (1-p_{uv} ).
	\]
	on the set $\left\{ \pm 1 \right\}^V \times \left\{ \pm 1 \right\}^V \times \left\{ 0,1 \right\}^{E(G)}$ and construct $\w \subset E(G)$ by setting
	\begin{equation}\label{eq:wdef}
	\w = \xi(\sigma) \cap \xi(\tilde{\sigma}) \cap \eta.
	\end{equation} 
	The free and wired measures are defined as before, noting again that in the wired model $\omega \supseteq E^+$. 
\end{definition}

The first observation is that \emph{both} spins $\sigma$ and $\tilde{\sigma}$ must be constant on any cluster of $\w$, and so too the product $\tau$. This motivates us to introduce a decomposition based on the sign of~$\tau$,
\[
\w = \w^+ \cup \w^-,
\] 
where we set $\w^\pm = \w \cap \xi^\pm(\tau)$. Note that $\w^+$ and $\w^-$ are actually vertex-disjoint so we may consider this as a decomposition of the clusters of $\w$. The pair $(\w^+,\w^-)$ will play an important role in Section \ref{sec:propofrep}. 

We are now in a position to state the main theorem of this section: the pair $(\sigma, \omega)$ is coupled with the Edwards--Sokal property, just like the FK-Ising model in Theorem \ref{thm:ESfk}.

\begin{theorem}[New Edwards--Sokal coupling]\label{thm:edwardsokal}
	In the coupling $\P^+_G$ of Definition \ref{def:one} and conditionally on $\w$, the law of $\sigma$ is uniform on the subset of $\Sigma^{V^+}$ where each cluster of $\w$ has a constant spin.
	In particular, the law of $\w$ has the Edwards--Sokal property. 
\end{theorem}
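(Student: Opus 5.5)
The plan is to compute the joint law of $(\sigma,\tilde\sigma,\w)$ explicitly by summing out $\eta$ edge by edge. Then we change variables from $(\sigma,\tilde\sigma)$ to $(\sigma,\tau)$ with $\tau=\sigma\tilde\sigma$, and observe that the resulting weight depends on $\sigma$ only through the constraint that $\sigma$ is constant on the clusters of $\w$.

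\emph{Step 1: edge weights.} Fix an edge $uv\in E(G)\setminus E^+$ and write $J=J_{uv}$. Multiply its Ising factor by the harmless constant $e^{-2J}$. Then $\exp(J(\sigma_u\sigma_v+\tilde\sigma_u\tilde\sigma_v))$ becomes
\begin{itemize}
\item $1$ if both spins agree across $uv$,
\item $e^{-2J}$ if exactly one spin disagrees, which happens iff $\tau_u\neq\tau_v$,
\item $e^{-4J}$ if both spins disagree.
\end{itemize}
We now sum over $\eta_{uv}$ with $\w_{uv}$ held fixed.
\begin{itemize}
\item If $uv\in\w$, then necessarily both spins agree and $\eta_{uv}=1$. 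The weight is $p_{uv}=1-e^{-4J}$ times the indicator that $\sigma$ and $\tilde\sigma$ are both constant on $uv$.
\item If $uv\notin\w$ and both spins agree, then $\eta_{uv}=0$ and the weight is $1-p_{uv}=e^{-4J}$.
\item If $uv\notin\w$ and exactly one spin disagrees, then $\eta_{uv}$ is free and the weight is $e^{-2J}$.
\item If $uv\notin\w$ and both spins disagree, then $\eta_{uv}$ is free and the weight is $e^{-4J}$.
\end{itemize}
The key point is that in the case $uv\notin\w$ the weight equals $e^{-4J}$ if $\tau_u=\tau_v$ and $e^{-2J}$ if $\tau_u\neq\tau_v$. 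It is therefore a function $\psi_{uv}(\tau_u\tau_v)$ of $\tau$ alone.

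For edges in $E^+$ we have $p_{uv}=1$ and $\sigma=\tilde\sigma=+1$ on $V^+$. Such edges are always in $\w$ and contribute only a constant.

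\emph{Step 2: change of variables.} Combining Step 1 gives
\[
\P^+_G(\sigma,\tilde\sigma,\w)\propto \i(\sigma,\tilde\sigma\in\Sigma^{V^+})\prod_{uv\in\w}(1-e^{-4J_{uv}})\,\i(\sigma,\tilde\sigma \text{ constant on } uv)\prod_{uv\notin\w}\psi_{uv}(\tau_u\tau_v).
\]
The map $(\sigma,\tilde\sigma)\mapsto(\sigma,\tau)$ is a bijection of $\Sigma^{V^+}\times\Sigma^{V^+}$ onto itself. Moreover, given that $\tau$ is constant on the clusters of $\w$, the spin $\tilde\sigma$ is constant on them iff $\sigma$ is. Hence the weight factorises as
\[
F(\w,\tau)\cdot \i(\sigma\in\Sigma^{V^+},\ \sigma \text{ constant on clusters of }\w),
\]
where $F(\w,\tau)$ absorbs the constraint that $\tau$ is constant on $\w$-clusters, the constraint $\tau\in\Sigma^{V^+}$, and the $\psi$ factors.

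\emph{Step 3: conclusion.} Conditionally on $(\w,\tau)$, the law of $\sigma$ is uniform on the set of $\sigma\in\Sigma^{V^+}$ constant on the clusters of $\w$. This set does not depend on $\tau$, so averaging over $\tau$ gives the same conditional law given $\w$ alone. This conditional law is exactly the law of $\coin^+(\w)$: clusters meeting $V^+$ are forced to $+1$, and all other clusters receive independent fair signs. Since the $\sigma$-marginal of $\P^+_G$ is the Ising model (Definition \ref{def:one} merely tensors the Ising model with independent $\tilde\sigma$ and $\eta$), the Edwards--Sokal property follows.

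The only delicate point is the bookkeeping in Step 1. One must verify that the closed-edge weight, after summing over $\eta$, depends on $(\sigma,\tilde\sigma)$ only through $\tau$. This is precisely where the specific choice $p_{uv}=1-e^{-4J_{uv}}$ is used: it makes the case "both spins agree" weigh the same as "both spins disagree". Everything else is routine.
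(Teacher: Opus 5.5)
Your proof is correct and follows essentially the same route as the paper's first proof: condition on $(\omega,\tau)$, show that $\sigma$ is uniform on the configurations in $\Sigma^{V^+}$ constant on the clusters of $\omega$, and observe that this set does not depend on $\tau$. The only difference is that you check the conditional uniformity directly by summing out $\eta$, whereas the paper identifies the conditional law of $(\omega,\sigma)$ given $\tau$ as the FK--Ising Edwards--Sokal coupling with couplings $2J_{uv}\mathbf{1}(uv\in\xi(\tau))$ (Lemma~\ref{lemma:favlemma} and Proposition~\ref{prop:couplingstruc}) and then cites Theorem~\ref{thm:ESfk}.
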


At first sight, it may be surprising that the Ising model admits more than one percolation measure with the Edwards-Sokal property.
This was first observed in \cite{steif2019generalized} and further studied in \cite{forsstrom2022color}\footnote{We propose to use the term Edwards-Sokal property, differentiating them from the term `divide-and-colour representation' of \cite{steif2019generalized} by requiring that the partitions they induce are connected subsets of the graph. }. 
Our construction gives an explicit family of such measures for any finite graph $G$ (generically different from the FK-Ising model) which continuously depend on the coupling constants $J$. See Remark \ref{remark:sources} for an even larger class of examples.

A classical consequence of the relation $\sigma \sim \coin^+(\w)$ are the identities relating connection events for $\w$ to spin correlations in $\sigma$. In particular, we will make frequent use of 
\begin{equation}
	 \E^+_{G} \left[ \sigma_x \right] =\P^+_{G}( x \overset{\w}{\longleftrightarrow} \bG) 
\end{equation}
and
\begin{equation}	
\label{eq:sokalsitetosite}
 \E^+_{G}\left[ \sigma_x \sigma_y \right]=\P^+_{G}( x \overset{\w}{\longleftrightarrow} y) ,
\end{equation} 
for any $x,y \in G$. Here the standard notation $ \overset{\w}{\longleftrightarrow}$ indicates that both sides belong to the same connected component of $\omega$.
Before going further, we introduce another result which may again be surprising to the reader. Its simple proof is postponed to the end of this section. 

\begin{prop}\label{prop:dualiden}
	Let $G$ be a planar graph. 
	\begin{itemize}
		\item (Wired boundary conditions) Let $\n \sim \P^{\emptyset,\emptyset}_{G^\dual,J^\dual}$ be the double random current on the \textbf{weak dual} graph with free boundary conditions. Then, the dual complement $\nh^*$ of $\nh$ has the same distribution as $\w \sim \P^+_G$.
		\item (Free boundary conditions) Let $\n \sim \P^{\emptyset,\emptyset}_{G^*,J^\dual}$ be the double random current on the \textbf{strong dual} graph with free boundary conditions. Then, the dual complement $\nh^*$ of $\nh$ has the same distribution as $\w \sim \P^\f_G$.
	\end{itemize}
\end{prop}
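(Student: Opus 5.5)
The plan is to compute the law of $\w$ from Definition~\ref{def:one} explicitly and compare it with the law of the dual complement of the DRC trace. I would compare the two laws edge by edge after a change of variables from spins to domain walls (low-temperature expansion).

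\emph{Step 1: marginal law of $\w$.} Sum out $\eta$ and $(\sigma,\tilde\sigma)$ in $\P^+_G$ for a fixed $\w$, and write $t_e=e^{-2J_e}$.
\begin{itemize}
\item An edge $e\in\w$ forces $\sigma,\tilde\sigma$ to agree across $e$. It contributes $e^{2J_e}p_e=2\sinh(2J_e)=(1-t_e^2)/t_e$.
\item An edge $e\notin\w$ with both spins agreeing must have $e\notin\eta$, and contributes $e^{2J_e}(1-p_e)=t_e$.
\item An edge $e\notin\w$ with both spins disagreeing contributes $e^{-2J_e}=t_e$.
\item An edge $e\notin\w$ with exactly one spin disagreeing contributes $e^{0}\cdot(p_e+1-p_e)=1$.
\end{itemize}
So for $e\notin\w$ the weight depends only on $\tau=\sigma\tilde\sigma$: it is $t_e$ if $\tau_u=\tau_v$ and $1$ otherwise. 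Each $\tau$ constant on the clusters of $\w$ (and $+1$ on $V^+$) arises from $2^{k_+(\w)}$ pairs $(\sigma,\tilde\sigma)$, where $k_+(\w)$ counts the clusters of $\w$ not meeting $V^+$. This gives
\[
\P^+_G(\w)\propto 2^{k_+(\w)}\prod_{e\in\w}\frac{1-t_e^2}{t_e}\sum_{\tau}\ \prod_{e\notin\w} t_e^{\i(\tau_u=\tau_v)} .
\]

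\emph{Step 2: law of the DRC trace.} Expanding $\n=\n_1+\n_2$ and using $\sum_{n_1+n_2=n}1/(n_1!n_2!)=2^n/n!$ gives the standard formula. With $t=\tanh J^\dual$ it reads
\[
\P(\nh)\propto\sum_{F\subset\nh\ \text{even}}\ \prod_{e\in F}\sinh(2J^\dual_e)\prod_{e\in\nh\setminus F}\big(\cosh(2J^\dual_e)-1\big),\qquad \sinh 2J^\dual=\tfrac{2t}{1-t^2},\ \ \cosh2J^\dual-1=\tfrac{2t^2}{1-t^2}.
\]
By \eqref{eq:Jdualdef}, this $t$ equals $t_e=e^{-2J_e}$ on the primal edge dual to $e$.

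\emph{Step 3: matching.} Planar duality identifies even subgraphs $F$ of $G^\dual$ (free DRC on the weak dual) with domain-wall sets $\b\tau$ of spin configurations $\tau\in\Sigma^{\bG}$ on $G$. Even subgraphs of $G^*$ correspond to $\tau$ modulo global flip. Set $\nh=\w^*$. The constraint $F\subset\nh$ is then exactly the requirement that $\tau$ be constant on $\w$-clusters. Comparing per-edge weights:
\begin{itemize}
\item Primal, $e\notin\w$: ratio (disagree):(agree) $=1:t$. Dual, $e^\dual\in\nh$: ratio $\tfrac{2t}{1-t^2}:\tfrac{2t^2}{1-t^2}=1:t$. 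These match.
\item For the ratio (edge in $\w$):(edge not in $\w$, $\tau$ agreeing), the primal gives $(1-t^2)/t^2$ and the dual gives $(1-t^2)/(2t^2)$.
\end{itemize}
Hence the two laws agree up to a global constant times a factor $2^{|\w|}$ against $2^{k_+(\w)}$.

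\emph{Main obstacle.} The crux is this bookkeeping of powers of $2$. I would resolve it via Euler's formula on the planar graph. For $H=(V(G),\w)$ one has $k(\w)=|V|-|\w|+\text{(number of independent cycles of }\w)$, and the cycles of $\w$ correspond to connected components of $\nh$. The leftover powers of $2$ should therefore be accounted for by $2^{|\w|}$ together with $2^{k_+(\w)}$ once the count of admissible $F$ versus $\tau$ is taken into account (the global-flip factor $2$ in the free case, and contraction of $V^+$ in the wired case). I expect to check carefully that the contraction of $E^+$ in the wired case is consistent with using the weak dual; $E^+\subset\w$ automatically since $p_e=1$ there and boundary spins are $+$. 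Up to this constant check, the identification follows.
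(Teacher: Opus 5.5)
There is a genuine gap, and it sits in Step~2 rather than in the power-of-$2$ bookkeeping you flag at the end. Your Step~1 density and the per-edge ratios in Step~3 are correct.

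\textbf{The error in Step~2.} The identity $\sum_{n_1+n_2=n}\frac{1}{n_1!\,n_2!}=\frac{2^n}{n!}$ counts \emph{all} splittings of $\n$. Under $\P^{\emptyset,\emptyset}$, however, $\n_1$ and $\n_2$ must each be sourceless. For a fixed sourceless $\n$, the binomially weighted number of admissible $\n_1$ is $|\Ev(\n)|=2^{\sum_e\n_e-|V|+k(\nh)}$; this is the count behind the switching lemma, with $V$ the vertex set of the dual graph carrying $\n$. The correct trace law is therefore
\[
\P(\nh)\propto 2^{k(\nh)}\sum_{F\subset\nh\ \text{even}}\ \prod_{e\in F}\sinh(2J^\dual_e)\prod_{e\in\nh\setminus F}\big(\cosh(2J^\dual_e)-1\big).
\]
This is \eqref{eq:ATcurr} at $U=0$ after summing over $\n_\odd=F$. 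Your formula is instead the trace law of a \emph{single} sourceless current at coupling $2J^\dual$.

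A one-edge check shows the difference. Take one edge between two distinct vertices. The true odds of $\nh=\{e\}$ against $\nh=\emptyset$ are $\cosh^2J^\dual-1=\sinh^2J^\dual$, so $\P(e\in\nh)=\tanh^2J^\dual$. Your formula gives $2\sinh^2J^\dual$.

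\textbf{Why Step~3 cannot then be closed.} Up to a constant, the mismatch you found is the factor $2^{k_+(\w)+|\w|}$, and it is not constant. In the free case, $k(\w)+|\w|=|V(G)|+c(\w)$, where $c(\w)$ is the cycle rank of $\w$. The correspondence between admissible $F$ and $\tau$ is a fixed bijection (wired case) or a fixed $2$-to-$1$ map (free case). It therefore contributes only a global constant and cannot absorb a $\w$-dependent factor.

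\textbf{The fix.} Once the factor $2^{k(\nh)}$ is restored, your Euler remark finishes the proof. For $\nh=\w^*$ viewed in $G^*$, the components of $\nh$ are the faces of $(V(G),\w)$, so
\[
k(\w)=k_{G^*}(\nh)-|\w|+|V(G)|-1 .
\]
In the free case this gives $2^{k(\w)}\propto 2^{k(\nh)-|\w|}$, which exactly compensates the factor $2^{-|\w|}$ coming from your per-edge ratios. In the wired case, no edge of $\nh$ meets the outer dual vertex. Hence $k_{G^*}(\nh)=k_{G^\dual}(\nh)+1$ and $k_+(\w)=k(\w)-1=k_{G^\dual}(\nh)-|\w|+|V(G)|-1$. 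The outer-face edges are forced open on both sides and contribute only constants.

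\textbf{Comparison with the paper.} With this correction your argument is a valid alternative, and it yields the explicit density of $\w$ (compare \eqref{eq:fulldef}). The paper instead uses a structural argument with no global counting. It writes $\w^*=\b\sigma\cup\b\tilde\sigma\cup\eta^*$, and splits $\nh$ into $(\n_1)_\odd\cup(\n_2)_\odd$ together with the edges where both currents are even and $\n>0$. By Kramers--Wannier, the two odd parts are independent and distributed as $\b\sigma$ and $\b\tilde\sigma$. Conditionally on them, every other edge lies in $\nh$ independently with probability $1-\cosh^{-2}J^\dual_e=\tanh^2J^\dual_e=e^{-4J_e}$, which is exactly the law of $\eta^*$. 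That route is shorter and shows directly where the parameter $1-e^{-4J}$ of $\eta$ comes from.
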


For a planar graph, Theorem \ref{thm:edwardsokal} can be summarised thus: \emph{tossing coins on the clusters of the {dual complement} of the double random current results in an Ising model.}
This should not be confused with the fact that applying the same procedure to the double random current itself (i.e. \emph{not} its dual complement) will result instead in the law of the XOR-Ising model \cite{spins-perc-height}.

The next result is a link between $\w$ and the double random current \emph{which holds on any finite graph $G$}. It will be particularly useful in Section \ref{sec:heightfunc} to extend the coupling further. We state the result as an another definition of $\w$.

\begin{definition}[]\label{def:two}
	Let $G$ be any finite graph. We construct a coupling $\P_G^+$ of $(\n,\omega,\tau,\sigma)$, where $\n\in \Omega_\emptyset$, $\omega \in \{0,1\}^{E(G)}$ and $\tau,\sigma \in\{ \pm 1 \}^{V(G)}$, as follows.
	\begin{enumerate}[(1)]
	\item Sample the double random current $\n \sim \P^{\emptyset,\emptyset}_{G}$ with wired boundary conditions.
	\item Sample $\tau \in \left\{ \pm 1 \right\}^{V(G)}$ by setting $\tau = \coin^+(\nh)$.
	\item Sample an independent bond percolation $\eta$ with parameters $1 - \exp(-2J_{uv})$.
	\item Set 
		\[
			\w = \nh \cup (\eta \cap {\xi(\tau)}),
		\]
	\item Sample $\sigma \in \left\{ \pm 1 \right\}^{V(G)}$ by setting $\sigma = \coin^+(\omega)$.
\end{enumerate}
\end{definition}

The key point is that the coupling of $(\w,\sigma,\tau\sigma)$ in Definition \ref{def:two} then exactly coincides with the coupling of $(\w,\sigma,\tilde{\sigma})$ in Definition \ref{def:one}. This is not difficult to check using arguments as in Proposition \ref{prop:couplingstruc} and we leave it to the interested reader. 

\begin{remark}
	This description of the law of $\w$ above should be compared to the known coupling between the single random current and the FK-Ising model \cite{lupu2016note,ADTW}. Similar to there, exactly the same procedure as in Definition \ref{def:two} except starting instead with the sourced double random current measure
	\[
		\n \sim \P^{A,\emptyset}_G 
	\] 
	will yield $\w$ sampled according to the conditioned measure $\P_G(\cdot \mid \w \in \F_A)$, where $\F_A$ is the event that the clusters of $\w$ induce an even partition on the vertices of the (even) subset $A$.
\end{remark}
We will now give two proofs of Theorem \ref{thm:edwardsokal}. The first applies on any finite graph $G$ and will along the way introduce further aspects of the coupling which we will later need. 
The second uses Proposition \ref{prop:dualiden} to prove the result on a planar graph by appealing to the switching lemma for the random current model. 
{A version of the latter can be found in the master's thesis of Ivan Hejn\'y~\cite{Hej} which was 
directly based on~\cite{LisT}. A statement to the same effect appears also in~\cite{klausen2022monotonicity} (also using~\cite{LisT}).}

In the first proof, the product $\tau$ will be fundamental to understanding the coupling between $\sigma$ and $\w$. We start with an observation which concerns only the pair $(\sigma,\tau)$.

\begin{lemma}[]\label{lemma:favlemma}
	Let $(\sigma,\tilde{\sigma})$ be two independent Ising models as in \eqref{eq:xorham} and set $\tau = \sigma \tilde{\sigma}$. Then conditionally on $\tau$, $\sigma$ is still distributed as an Ising model on $G$ except now with $\tau$-dependent coupling constants $J^\tau$ given by
	\[
	J^{\tau}_{uv} = 2 \times \i( uv \in \xi(\tau) )  J_{uv} 
	\] 
	for each  $uv \in E(G).$
\end{lemma}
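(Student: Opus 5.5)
The plan is a direct computation from the joint law \eqref{eq:xorham}. I reparametrise the pair $(\sigma,\tilde\sigma)$ by $(\sigma,\tau)$ with $\tilde\sigma=\sigma\tau$. This is a bijection of $\{\pm1\}^V\times\{\pm1\}^V$. It maps $\Sigma^{V^+}\times\Sigma^{V^+}$ onto $\Sigma^{V^+}\times\Sigma^{V^+}$, since $\sigma_v=\tilde\sigma_v=1$ on $V^+$ if and only if $\sigma_v=\tau_v=1$ there. Hence the joint law of $(\sigma,\tau)$ is proportional to
\[
\i(\sigma,\tau\in\Sigma^{V^+})\exp\Big(\sum_{uv\in E}J_{uv}\,\sigma_u\sigma_v\,(1+\tau_u\tau_v)\Big).
\]

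Next I use the elementary identity $1+\tau_u\tau_v = 2\,\i(uv\in\xi(\tau))$, valid because $\tau_u\tau_v\in\{\pm1\}$. The exponent therefore becomes $\sum_{uv}J^\tau_{uv}\sigma_u\sigma_v$ with $J^\tau_{uv}=2\,\i(uv\in\xi(\tau))J_{uv}$, exactly as in the statement.

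Now fix $\tau\in\Sigma^{V^+}$ (the only values of positive probability). Conditioning on $\tau$ amounts to viewing the above expression as a function of $\sigma$ alone. It is proportional to $\i(\sigma\in\Sigma^{V^+})\exp(\sum_{uv}J^\tau_{uv}\sigma_u\sigma_v)$, with a normalising constant $Z(\tau)$ that depends only on $\tau$. By Definition~\ref{def:Ising} this is precisely the Ising measure $\P^+_{G,J^\tau}$, with the same boundary set $V^+$ (or free if $V^+=\emptyset$).

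There is no real obstacle. The only points needing care are that the change of variables respects the boundary constraint, and that the weights $J^\tau$ may vanish on some edges, which is harmless in Definition~\ref{def:Ising} since $J\ge0$ is allowed.
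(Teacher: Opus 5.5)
Your proposal is correct and follows the paper's own argument. The paper just rewrites the Hamiltonian as $J_{uv}(\sigma_u\sigma_v+\tilde\sigma_u\tilde\sigma_v)=2\,\mathbf{1}(\tau_u=\tau_v)J_{uv}\sigma_u\sigma_v$ and reads off the conditional law; your change of variables $(\sigma,\tilde\sigma)\mapsto(\sigma,\tau)$ and your check that it preserves the constraint $\Sigma^{V^+}\times\Sigma^{V^+}$ spell out the details the paper calls self-evident.
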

\begin{proof}
	We may simply rearrange the Hamiltonian appearing in \eqref{eq:xorham},
	\begin{equation}\label{eq:Jtaudef}
		J_{uv}( \sigma_u \sigma_v + \tilde{\sigma}_u \tilde{\sigma}_v) = 2 \times \i(\tau_u = \tau_v)J_{uv} \sigma_u \sigma_v,
\end{equation} 
from which the result is self-evident.
\end{proof}

Note that by the simple form of \eqref{eq:Jtaudef} we can instead interpret it as the Ising model restricted to the subgraph $\xi(\tau)$ and with coupling constants $2J$. This implies for instance that the state of $\sigma$ restricted to $\left\{ \tau = -1 \right\}$ and to $\left\{ \tau = +1 \right\}$ become conditionally independent. With this lemma in hand, we can now clearly elucidate the structure of the coupling.

\begin{prop}[]\label{prop:couplingstruc}
	Consider the \emph{conditional} law of the coupling $\P^+_G$  given $\tau$:
	\begin{enumerate}
		\item The (conditional) law of the pair $(\omega, \sigma)$ is that of \eqref{eq:fkdef} between the Ising model and the FK-Ising model with coupling constants $J^\tau$. In particular, the conditional law of $\w$ is $\phi^+_{G,J^\tau}$. 
		\item The two percolations $\w^+$ and $\w^-$ are (conditionally) independent and distributed according to the measure $\phi^+_{G,J^\tau}$ restricted, respectively, to the subgraphs $\xi^+(\tau)$ and $\xi^-(\tau)$. 
	\end{enumerate}
\end{prop}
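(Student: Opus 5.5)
We argue directly from the explicit joint weight in Definition~\ref{def:one}, conditioning on $\tau=\sigma\tilde\sigma$. Since $\tilde\sigma=\sigma\tau$, the pair $(\sigma,\tilde\sigma)$ is in bijection with $(\sigma,\tau)$. The constraint $\sigma,\tilde\sigma\in\Sigma^{V^+}$ becomes $\sigma\in\Sigma^{V^+}$ and $\tau\in\Sigma^{V^+}$. By \eqref{eq:Jtaudef} the joint weight of $(\sigma,\tau,\eta)$ is proportional to
\[
\i(\sigma,\tau\in\Sigma^{V^+})\prod_{uv\in E(G)} e^{J^\tau_{uv}\sigma_u\sigma_v}\,p_{uv}^{\i(uv\in\eta)}(1-p_{uv})^{\i(uv\notin\eta)} .
\]
Also $\xi(\sigma)\cap\xi(\tilde\sigma)=\xi(\sigma)\cap\xi(\tau)$, because on an edge with $\sigma_u=\sigma_v$ one has $\tilde\sigma_u=\tilde\sigma_v$ if and only if $\tau_u=\tau_v$. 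Hence
\[
\w=\xi(\sigma)\cap\big(\eta\cap\xi(\tau)\big).
\]

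\textbf{Part 1.} Fix $\tau$. By Lemma~\ref{lemma:favlemma}, $\sigma$ is an Ising model with couplings $J^\tau$ and $+$ boundary conditions on $V^+$, independent of $\eta$. The set $\eta':=\eta\cap\xi(\tau)$ is an independent Bernoulli percolation with the following parameters:
\begin{itemize}
\item on edges of $\xi(\tau)\setminus E^+$, parameter $p_{uv}=1-e^{-4J_{uv}}=1-e^{-2J^\tau_{uv}}$;
\item on edges outside $\xi(\tau)$, parameter $0=1-e^{-2J^\tau_{uv}}$, since $J^\tau_{uv}=0$ there;
\item on edges of $E^+$, parameter $1$, since $E^+\subset\xi(\tau)$ because $\tau\equiv+1$ on $V^+$.
\end{itemize}
So $\eta'$ has exactly the parameters $p'$ of \eqref{eq:fkdef} for the couplings $J^\tau$, and $\w=\xi(\sigma)\cap\eta'$ is precisely the Edwards--Sokal construction \eqref{eq:fkdef} with couplings $J^\tau$. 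This gives the first claim, including $\w\mid\tau\sim\phi^+_{G,J^\tau}$.

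\textbf{Part 2.} We use the explicit law \eqref{eq:fklaw} of $\phi^+_{G,J^\tau}$. Every edge outside $\xi(\tau)$ has $p'=0$, so $\w\subset\xi(\tau)=\xi^+(\tau)\sqcup\xi^-(\tau)$, and these two edge sets live on the disjoint vertex sets $\{\tau=+1\}$ and $\{\tau=-1\}$. The weight then factorises:
\begin{itemize}
\item the Bernoulli product splits over edges of $\xi^+(\tau)$ and $\xi^-(\tau)$;
\item the number of clusters satisfies $k(\w)=k_{\{\tau=+\}}(\w^+)+k_{\{\tau=-\}}(\w^-)$, counting isolated vertices on each side.
\end{itemize}
Wiring on $E^+$ only involves $\{\tau=+1\}$, so it attaches to the $\w^+$ factor. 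If $V^+=\emptyset$ the free case is identical. The product form gives conditional independence, and each factor is the FK-Ising law with couplings $J^\tau$ (equivalently $2J$) on the induced subgraph. This is what "restricted to the subgraph" means: free boundary on $\xi^-(\tau)$, and wired on $V^+$ for $\xi^+(\tau)$.

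The only delicate point is the boundary bookkeeping: checking that $\tau\in\Sigma^{V^+}$ makes $E^+\subset\xi^+(\tau)$, so that the wiring convention of \eqref{eq:fkdef} matches and the normalising constants depend only on $\tau$. Once that is verified, everything else is the rearrangement above, and no step is a genuine obstacle.
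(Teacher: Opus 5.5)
Your proof is correct and follows the paper's own route. You rewrite $\w=\xi(\sigma)\cap\xi(\tau)\cap\eta$, view $\eta\cap\xi(\tau)$ given $\tau$ as an independent Bernoulli percolation with parameters $1-e^{-2J^\tau_{uv}}$, and invoke Lemma~\ref{lemma:favlemma} to match the construction \eqref{eq:fkdef}; you then read off the conditional independence of $\w^\pm$ from the factorisation of \eqref{eq:fklaw} over the vertex-disjoint pieces $\xi^\pm(\tau)$, with your boundary bookkeeping ($E^+\subset\xi^+(\tau)$, so the wiring attaches to the $\w^+$ factor) making explicit what the paper leaves implicit.
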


\noindent Using Proposition \ref{prop:couplingstruc}, we can also give an explicit density for the pair $(\tau,\w)$ in the coupling~$\P$,
\begin{align}\label{eq:fulldef}
	\P^+_G( (\tau,\w)) \propto \i(\tau \in \Sigma^{V^+}, \w \subseteq \xi(\tau)) \times 2^{k(\w)} \prod_{uv \in \w} p_{uv} \prod_{uv \in \xi(\tau) \setminus \w} (1-p_{uv}) \\
	\propto \i(\tau \in \Sigma^{V^+}, \w \subseteq \xi(\tau)) \times \exp( - \sum_{uv \in E(G)} J\tau_u \tau_v) \times 2^{k(\w)} \prod_{uv \in \w} p_{uv} \prod_{uv \notin \w} (1-p_{uv}).\nonumber 
\end{align} 
This will be used in Section \ref{sec:FKG}.
Before proving the result, we show how it implies Theorem \ref{thm:edwardsokal}. 

\begin{proof}[First proof of Theorem \ref{thm:edwardsokal}]
	By Proposition \ref{prop:couplingstruc} and Theorem \ref{thm:ESfk}, we see that conditionally on both $\w$ and $\tau$, $\sigma$ is uniformly distributed among spin configurations in $\Sigma^{\bG}$ which are constant on clusters of $\w$. Since this law is independent of $\tau$, it must also hold without conditioning on $\tau$, which proves the result. 
\end{proof}

\begin{proof}[Proof of Proposition \ref{prop:couplingstruc}] 
	Note that the definition \eqref{eq:wdef} may be equivalently written as
	\begin{equation}\label{eq:wdefprime}
	\w = \xi(\sigma) \cap \xi(\tau) \cap \eta,
	\end{equation} 
	and recall that $\eta$ is open independently on each edge $uv$ with probability $1 - \exp(-4J_{uv})$. In particular, we see that $\xi(\tau) \cap \eta$ may be interpreted (conditional on $\tau$) as a Bernoulli bond percolation with parameter
	\[
		1 - \exp(- 2J^{\tau}_{uv}).
	\] 
	Using Lemma \ref{lemma:favlemma}, we can now exactly identify \eqref{eq:wdefprime} as the definition of the FK-Ising model we gave in \eqref{eq:fkdef}. The independence of $\w^+$ and $\w^-$ is the property of the FK-Ising model, seen evidently from \eqref{eq:fklaw}, that the restrictions to different connected components of the domain are independent.
\end{proof}

\def\Ev{\mathcal{E}}
We now turn to our second proof of Theorem \ref{thm:edwardsokal} and restrict to the case that $G$ is planar. We prove the result for $+$ boundary conditions on $\bG$, though the argument is almost identical for free boundary conditions. There are two properties of the double random current model we will use. 

The first is {very closely related to} the \emph{switching lemma}, which has been instrumental in the study of random currents and the Ising model since its introduction in \cite{GHS}. The slight generalisation that we use may be found in \cite[Lemma 2.1]{ADTW}\footnote{Note that there is stated that the conditional law of $\n_1$ is invariant under $\n_1 \mapsto \n_1 \triangle \mathbf{k}$ for any fixed $\mathbf{k} \in \Ev(\n)$ which, this map being a bijection on $\Ev(\n)$, is equivalent to the version we use here.}.
For any graph $H$, we let $\Ev(H)$ be the set of \emph{even} (or \emph{Eulerian}) subgraphs (subgraphs whose induced vertex degrees are all even) of $H$. When $\n$ is a current we identify it with {the 
multigraph having $\n_e$ copies of each edge $e\in E(G)$}, and consider $\Ev(\n)$ to be the set of even submultigraphs of $\n$. We take $\n = \n_1 + \n_2$ to be sampled according to $\P_{G,J}^{\emptyset,\emptyset}$. 
\begin{itemize}
	\item \textbf{(Switching lemma)} Conditionally on $\n$, the law of $\n_1$ is uniform on $\Ev(\n)$.
\end{itemize}
Our second property concerns the Kramers-Wannier duality of the Ising model. When $G$ is planar, there is a one-to-one correspondence between the set $\Sigma^{\bG}$ of spin configurations {with $+$ boundary conditions on $\bG$} and the set $\Ev(G^\dual)$ of even subgraphs of the weak dual $G^\dual$.
The bijection is given by matching each $\sigma \in \Sigma^{\bG}$ to its domain-wall expansion $\b\sigma$ (recall \eqref{eq:domain-wall}). Let $\n$ be a single random current on $G^\dual$ sampled proportional to the weight $w_{J^\dual}$.
\begin{itemize}
	\item \textbf{(Odd part of a single current)} The law of $(\n_1)_\odd$ is that of $\b\sigma$ under $\sigma \sim \P_G^+$.
\end{itemize}
This result can be found as part of \cite[Theorem 3.2]{ADTW}.

\begin{remark}[]\label{remark:XORdual}
We will later use the analagous fact that the odd part of the entire double current $\n_\odd$ is the domain-wall expansion of the XOR-Ising model $\tau$. This due to the equivalence betweeen
\[
\n_\odd = (\n_1)_\odd \, \triangle\, (\n_2)_\odd \quad\text{ and }\quad \tau = \sigma \tilde{\sigma}.
\] 
\end{remark}

\begin{proof}[Second proof of Theorem \ref{thm:edwardsokal}]
	We take $\n = \n_1 + \n_2 \sim \P^{\emptyset,\emptyset}_{G^\dual,J^\dual}$. 
	We then set $\w = (\nh)^*$, which by Proposition \ref{prop:dualiden} has the same law as under $\P_G^+$. Our goal is to identify the law of $\coin^+(\w)$.

Under the correspondence described above, the subset of spin configurations $\sigma \in \Sigma^{\bG}$ which are constant on the clusters of $\w$ is equivalent to the set $\Ev(\n)$. As such, using the identification of $(\n_1)_\odd$, it suffices to prove that conditionally on $\n$, the law of $(\n_1)_\odd$ is uniform on $\Ev(\nh)$. 
We will show that this follows from the property that the law of $\n_1$ is uniform on $\Ev(\n)$ (i.e. the switching lemma) once we have the following simple combinatorial fact.  
\begin{itemize}
	\item Fix any current $\n$. The map $\pi : \Ev(\n) \to \Ev(\nh)$ projecting any even submultigraph onto its set of odd edges $\pi(\m) = \m_\odd$ has the same number of preimages for any element of $\Ev(\nh)$. 
\end{itemize}
Indeed, for any $\eta \in \Ev(\nh)$ the number of $\pi$-preimages factorises over each edge and is given by
\[
	\prod_{e \in \eta} \left( \sum_{\substack{1 \leq m \leq \n_e \\ m \text{ odd}}} {\n_e \choose m}\right) \prod_{e \not\in \eta} \left( \sum_{\substack{0 \leq m \leq \n_e \\ m \text{ even}}} {\n_e \choose m}\right) = 2^{\sum_{e} ( \n_e - 1)},
\] 
which is independent\footnote{Here, we used the classical identity that $1 - 1 = 0$.} of $\eta$. This is then exactly the right condition to ensure that the conditional law of $(\n_1)_\odd$ is still uniformly distributed, now on the set $\Ev(\nh)$, completing the proof.
\end{proof}

\begin{remark}
	Under the guise of computing correlation functions for fermionic disorder operators (which correspond to order operators on the dual), the observation that tossing coins on the clusters of the dual complement of the double random current leads to the law of an Ising model can be deduced from \cite{lis2022boundary, ADTW,DRC}, thus providing a third proof of Theorem \ref{thm:edwardsokal}.
\end{remark}

\begin{remark}\label{remark:sources}
	The careful reader will have noticed that the above proof actually extends to the double random current $\n = \n_1 + \n_2$ sampled according to the measure $\P^{A,\emptyset}_{G^\dual}$ (i.e. with sources $\b\n_1 = A$ and $\b\n_2 = \emptyset$) for any subset $A \subset V$. That is, it still has the property that conditionally on $\n$, the law of $(\n_2)_\odd$ is uniform on $\Ev(\nh)$. This leads to a large class of examples of measures with the Edwards-Sokal property. Interestingly, the FK-Ising measure may be shown to be \emph{non-extremal} in this collection, using the self-duality of $\phi_{G,J}$ and an explicit convex decomposition
	\begin{equation}
		\phi^\f_{G,J} = \sum_{A \subset V} \alpha_A \times \P^{A,\emptyset}_{G,J}\left( \nh \in \cdot \right),
	\end{equation}
	where the coefficients satisfy $\sum_A \alpha_A = 1$ and $\alpha_A \propto \E^\f_{G,J}[\prod_{v \in A} \sigma_v]$. These observations are the starting point to couple Ising models with \emph{different} boundary conditions, which will appear in future work.
\end{remark}

\begin{proof}[Proof of Proposition \ref{prop:dualiden}]
	We let $\n = \n_1 + \n_2$ be the double random current model on the weak dual $G^\dual$. Upon taking the dual of \eqref{eq:wdef},
	\[
	\w^* = \b\sigma \cup \b\tilde{\sigma} \cup \eta^*.
	\] 
	We identify each of these terms with the decomposition
	\[
	\nh = (\n_1)_\odd \cup (\n_2)_\odd \cup ((\n_1)_\even \cap (\n_2)_\even).
	\] 
	Recall that $(\n_1)_\odd$ is distributed as $\b\sigma$ so we may focus on the last term. From the definition \eqref{eq:drcdef} we see that conditionally on $(\n_1)_\odd$, the edges belonging to $(\n_1)_\even$ are an independent sprinkling (i.e.~ Bernoulli bond percolation) in the complement of $(\n_1)_\odd$ with parameter
	\[
		\tanh J^\dual_{uv} = \exp( - 2J_{uv}).
	\] 
	Thus, conditionally on both $(\n_1)_\odd$ and $(\n_2)_\odd$, those edges belonging to $(\n_1)_\even \cap (\n_2)_\even$ are an independent sprinkling in the complement of $(\n_1)_\odd \cup (\n_2)_\odd$ with parameter
	\[
		(\tanh J^\dual_{uv})^2 = 1 - (1 - \exp(-4J_{uv})).
	\] 
	By \eqref{eq:Jdualdef} this exactly coincides with the law of $\eta^*$, completing the proof.
\end{proof}

\subsection{The height function}\label{sec:heightfunc}
\def\blackdot{\bullet}
\def\whitedot{\circ}

In this section we fix $G$ to be planar.
Our goal is to explain how to extend the coupling of the Ising model to include a discrete height function $H$, which will be the discrete analogue of the coupling between the IMF and the GFF.
The height function is equivalent to the six-vertex height function at the free-fermion point.
Proving delocalisation of the six-vertex height function in the regime $1 \leq c \leq 2$ was a major open problem which has now been settled through numerous works. The expected convergence to the GFF in bounded domains is currently only known at the free-fermion point by the work of \cite{DRC,DRC2}.
We will touch more on the relationship with the six-vertex model in Section \ref{sec:vargen}.

The height function that we use was introduced in \cite{nesting-field} and the convergence in finite domains proved in \cite{DRC}.
We give a step-by-step construction of the coupling which does not involve combinatorial mappings to the dimer model as in \cite{DRC} (although the reader should note that the dimer model is fundamental to proving the required convergence to the Gaussian free field).

We start by observing that when $G$ is planar, the construction given in Definition \ref{def:two} may be entirely dualised. From now on, whenever we refer to a random current $\n$ we will consider only its projection onto $(\n_\odd,\n_\even)$ since the exact value of the current will never play a role. We also always impose $+$  boundary conditions on the primal graph $G$ and free boundary conditions on the weak dual $G^\dual$\footnote{This is the opposite convention to \cite{DRC}}.

\begin{definition}[]Let $G$ be a planar graph.
	\begin{enumerate}[(1)]
		\item Construct the coupling $(\n, \w, \tau, \sigma)$ exactly as in Definition \ref{def:two}.
		\item Define, up to a global spin flip which we choose independently,  $\tau^\dual\in\left\{ \pm 1 \right\}^{V(G^\dagger)} $ by 
			\[
				\b\tau^\dual = \n_\odd.
			\]
		\item Set $\w^\dual = \nh^*$ and define $\n^\dual$ by the two conditions that
			\[
			\nh^\dual = \w^* \quad\text{ and }\quad \n^\dual_\odd = \b\tau.
			\]
		\item Define $\sigma^\dual\in \left\{ \pm 1 \right\}^{V(G^\dual)}$ by setting $\sigma=\coin(\omega^\dual)$, and define
			\[
				\tilde{\sigma}^\dual = \sigma^\dual\tau^\dual.
			\]
			
	\end{enumerate}
\end{definition}

There are a few technicalities with this definition: 
\begin{itemize}
	\item In step (3), the two conditions are compatible since $\b\tau$ is an even subgraph and also $\b\tau \subset \w^*$ (as $\tau$ is constant on the clusters of $\w$).
\item In step (4), it is important that the signs on each cluster of $\w^\dual$ are independent from each other and from $\n$. 
However, there is an ambiguity in that it will not be important that these signs are independent from the signs used to construct $\sigma$. 
In this sense, \emph{there is no coupling of all four Ising models which is canonical}. Since it is the most simple, we may choose to take these signs completely independently from everything else; however, it will play no role.
\end{itemize}

It remains to show that the introduced objects on the dual graph have the desired marginals.
\begin{itemize}
\item The marginal of $\n^\dual$ is the double random current on $G^\dual$ with free boundary conditions and coupling constants $J^\dual$.
This is the content of Proposition \ref{prop:dualiden} and Remark \ref{remark:XORdual}. 
\item The marginal of $\tau^\dual$ is the XOR-Ising model on $G^\dual$ with free boundary conditions and coupling constants $J^\dual$, again by Remark \ref{remark:XORdual}.
\item The marginals $\sigma^\dual$ and $\tilde{\sigma}^\dual$ are independent Ising models on $G^\dual$ with free boundary conditions and coupling constants $J^\dual$. This can be seen from the second proof of Theorem \ref{thm:edwardsokal}. Indeed, conditionally on $\n$, the law of $\b\sigma^\dual$ (which determines $\sigma^\dual$ up to a global sign) is equal to the conditional law on $(\n_1)_\odd$ i.e.
	\[
		\b\sigma^\dual \sim (\n_1)_\odd.
	\]
	Since $\b\tau^\dual = \n_\odd$, as in Remark \ref{remark:XORdual}, the conditional law of $\b \widetilde{\sigma}^\dual$ is then $(\n_2)_\odd$, i.e.
	\[
		\b \widetilde{\sigma}^\dual \sim (\n_2)_\odd.
	\]
Therefore, the unconditional joint law $(\b\sigma^\dual,\b \widetilde{\sigma}^\dual)$ is the same as $((\n_1)_\odd,(\n_2)_\odd)$, yet $\n_1$ and $\n_2$ were independent by construction which proves the claim.
\end{itemize}

\noindent We note that the coupling of $(\n^\dual, \tau^\dual, \w^\dual, \sigma^\dual)$ is another instance of the coupling in Definition~\ref{def:two}, now under free boundary conditions. In particular, conditionally on $\n^\dual$, the law of $\tau^\dual$ is that of $\coin(\nh^\dual)$.

A particularly important consequence of this construction is that the resulting pair $(\tau,\tau^\dual)$ obeys the consistency condition: \emph{never} do both
\begin{equation}\label{eq:taucons}
	\tau_u \neq \tau_v \quad\text{ and }\quad \tau^\dual_{u^\dual} \neq \tau^\dual_{v^\dual}
\end{equation} 
occur for a pair $uv \in E(G)$ and $u^\dual v^\dual \in E(G^\dual)$ of edges dual to one another. To see this, note that in the construction $\nh \subset \w$ and so $\nh$ and $\nh^\dual$ (and in particular $\n_\odd$ and $\n^\dual_\odd$) will never cross. 

We may now give an explicit definition of the height function $H$. 
Let $\Gd$ be the diamond graph of $G$. This is the planar bipartite graph with vertex set $V(G) \cup V(G^\dual)$ and an edge between $u$ and $u^\dual$ if and only if $u \in V(G)$ is a vertex incident to the face $u^\dual \in V(G^\dual)$. 

\begin{definition}[]
	We define $H$ to be the unique function $H : V(\Gd) \to \frac{1}{2}\Z$ such that $H|_{\bG} = 0$ and for every $u \in V(G)$ incident to a face $u^\dual \in V(G^\dual)$ we have
	\[
		H_{u^\dual} - H_u = \frac{1}{2} \tau_u \tau_{u^\dual}^\dual.
	\] 
\end{definition}

It is easy to verify that this local rule for defining $H$ is globally consistent. Indeed, it suffices to check that the relevant one-form is closed around the faces of $\Gd$, i.e. around the four-cycles $uu^\dual v v^\dual$ for each pair of edges $uv \in E(G)$ and $u^\dual v^\dual \in E(G^\dual)$ which are dual to each other. 
We only have to notice that
\begin{equation}
	\frac{1}{2} ( \tau_u \tau_{u^\dual}^\dual - \tau^\dual_{u^\dual} \tau_v + \tau_v \tau^\dual_{v^\dual} - \tau^\dual_{v^\dual} \tau_u) 
	= \frac{1}{2} (\tau_u - \tau_v) (\tau^\dual_{u^\dual} - \tau^\dual_{v^\dual}),
\end{equation} 
which must vanish by the consistency condition \eqref{eq:taucons}.
It is also immediate from the definition that $H$ will take integer values on $G$ and half-integer values on $G^\dual$.
Moreover, we have 
\[
	\tau_u = (-1)^{H_u} \quad\text{ and }\quad \tau^\dual_{u^\dual} = (-1)^{H_{u^\dual} - 1 / 2}.
\] 
This is the discrete analagoue of the realisation of the two continuum polarisation fields (with $+$ and free boundary conditions respectively) as the cosine and sine of an appropriately scaled GFF (see Conjecture \ref{conj:AT-IGMC} and \cite{XOR-exc}).

We have now defined the entire coupling of all the discrete objects appearing in the statement of Theorem \ref{thm:joint}:
\[
	(\sigma,\tilde{\sigma},\sigma^\dual,\tilde{\sigma}^\dual, \tau, \tau^\dual, \n,\n^\dual, \w,\w^\dual, H).
\] 
The coupling of $(H,\tau,\tau^\dual,\n,\n^\dual)$ (i.e. without any of the Ising models) appeared in \cite{DRC} where it was referred to as the \emph{master coupling}. We stress that various partial aspects of this coupling have appeared before in the literature and in Remark \ref{remark:history} we include a list of such instances.

\begin{remark}[]
	We point out that when $G$ and $G^\dual$ are suitable subsets of the square lattice, the graph $\Gd$ is another subset of the square lattice and it is the dual of this graph on which the six-vertex arrow configurations are defined. The definition of $H$ given here differs by a factor of $1/2$ from the usual definition of the six-vertex height function (e.g. as described in \cite{GL} where it instead always takes even and odd values respectively on subsets of the bipartite decomposition of $\Z^2$, corresponding here to $V(\Gd) = V(G) \cup V(G^\dual)$). 
\end{remark}

\subsection{Extension to the Ashkin--Teller model}\label{sec:vargen}

We now introduce the generalisation of the coupling $\P^+_G$ to the interacting version of the Ashkin-Teller model.
We will restrict ourselves to the case where 
\begin{equation}\label{eq:ATreg}
	J_{uv} \geq 0 \quad\text{ and }\quad \cosh(2J_{uv}) \geq e^{-2U_{uv}}
\end{equation}
for every edge $uv\in E(G)$. The reader should note that the second inequality is often written in the equivalent form $$\tanh(U _{uv})\geq - (\tanh(J_{uv}))^2.$$
Actually neither the definition of the coupling nor Theorem \ref{thm:ATedwardsokal} will require this restriction. However, it is only in this regime that the Ashkin-Teller model has a duality relation \cite{PfiVel}. 
This is a mapping from $(J,U)$ to a pair $(J^\dual,U^\dual)$ which also satisfy \eqref{eq:ATreg}. We note that the self-dual line
\[
	\sinh(2J_{uv}) = e^{-2U_{uv}}
\] 
always satisifies \eqref{eq:ATreg} and is the unique line of fixed points to the duality map.
This is also the same regime where correlation inequalities are known for the Ashkin-Teller spin model \cite{PfiVel}, and our proof of Proposition \ref{prop:FKG}, which we choose to give in this general setting, will rely on this restriction. The following is the extension of Definition~\ref{def:one}.
\begin{definition}[]\label{def:three}
	Define the measure
	\[
		\P^+_{G}(\sigma,\tilde{\sigma},\eta) \propto \i(\sigma,\tilde{\sigma} \in \Sigma^{V^+}) \exp(\sum_{uv \in E(G)} J_{uv}(\sigma_u \sigma_v + \tilde{\sigma}_u \tilde{\sigma}_v) + U_{uv} \tau_u \tau_v )\prod_{uv \in \eta} p_{uv} \prod_{uv \not\in \eta} (1-p_{uv})
	\]
	on the set $\left\{ \pm 1 \right\}^V \times \left\{ \pm 1 \right\}^V \times \left\{ 0,1 \right\}^{E(G)}$ and construct $\w \subset E(G)$ by setting
	\begin{equation}\label{eq:omega-def}
	\w = \xi(\sigma) \cap \xi(\tilde{\sigma}) \cap \eta.
	\end{equation} 
\end{definition}

\noindent Note that the parameter $p = 1 - \exp(-4J)$ remains the same as before and does not depend on the value of $U$. The statement of Theorem \ref{thm:edwardsokal} remains true in the interacting model.

\begin{theorem}[]\label{thm:ATedwardsokal}
	In the coupling $\P^+_G$ and conditionally on $\w$, the law of $\sigma$ is uniform among spin configurations $\sigma \in \Sigma^\bG$ which are constant on clusters of $\w$. In particular, $\sigma \sim \coin^+(\w)$.
\end{theorem}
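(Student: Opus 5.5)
We follow the first proof of Theorem~\ref{thm:edwardsokal}: condition on the polarisation $\tau=\sigma\tilde\sigma$ and reduce to the classical Edwards--Sokal coupding (Theorem~\ref{thm:ESfk}). The key observation is that the interaction term $U_{uv}\tau_u\tau_v$ is a function of $\tau$ alone. Conditionally on $\tau$, it therefore contributes only a constant factor to the joint weight of $(\sigma,\eta)$ and does not affect the conditional law.

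\textbf{Step 1: the conditional law of $\sigma$ given $\tau$.} Write $\tilde\sigma=\sigma\tau$ and use the identity \eqref{eq:Jtaudef},
\[
J_{uv}(\sigma_u\sigma_v+\tilde\sigma_u\tilde\sigma_v)=2\,\i(\tau_u=\tau_v)J_{uv}\sigma_u\sigma_v .
\]
The factor $\exp(\sum U_{uv}\tau_u\tau_v)$ is constant once $\tau$ is fixed, so it cancels in the normalisation. The constraint $\sigma,\tilde\sigma\in\Sigma^{V^+}$ becomes $\sigma\in\Sigma^{V^+}$ together with $\tau\in\Sigma^{V^+}$, and the latter is again a condition on $\tau$ only. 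Hence, conditionally on $\tau$, the field $\sigma$ is an Ising model with $+$ boundary conditions on $V^+$ and coupling constants $J^\tau$, exactly as in Lemma~\ref{lemma:favlemma}. Moreover, $\eta$ remains an independent Bernoulli percolation with parameters $p_{uv}=1-e^{-4J_{uv}}$, because its density factorises from that of the spins.

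\textbf{Step 2: identification with an FK-Ising coupling.} Since $\tau$ is determined by $\sigma$ and $\tilde\sigma$, we have $\xi(\sigma)\cap\xi(\tilde\sigma)=\xi(\sigma)\cap\xi(\tau)$. Therefore
\[
\w=\xi(\sigma)\cap\bigl(\xi(\tau)\cap\eta\bigr).
\]
Given $\tau$, the set $\xi(\tau)\cap\eta$ is a Bernoulli percolation with edge parameters $1-e^{-2J^\tau_{uv}}$, independent of $\sigma$. This holds both on edges with $\tau_u=\tau_v$, where the parameter is $1-e^{-4J_{uv}}$, and on edges with $\tau_u\ne\tau_v$, where it is $0$. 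Edges in $E^+$ are always open, consistent with $\tau\equiv+1$ on $V^+$. The pair $(\w,\sigma)$ is then exactly the construction \eqref{eq:fkdef} for the coupling constants $J^\tau$, and the proof of Proposition~\ref{prop:couplingstruc} carries over verbatim.

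\textbf{Step 3: conclusion.} By Theorem~\ref{thm:ESfk}, conditionally on $(\w,\tau)$ the law of $\sigma$ is uniform on those configurations in $\Sigma^{V^+}$ that are constant on the clusters of $\w$. This conditional law does not depend on $\tau$, so averaging over $\tau$ given $\w$ yields the claim, and hence $\sigma\sim\coin^+(\w)$.

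The only point requiring care, and the step I expect to be the main obstacle, is Step~1. One must check that the boundary constraint and the $U$-term genuinely factor through $\tau$, with no interaction between $\sigma$ and $\tau$ beyond the effective couplings $J^\tau$. We do not need the restriction \eqref{eq:ATreg} or any positivity of the weights. This is because the conditional measure of $\sigma$ given $\tau$ only involves $J\ge0$, while the $U$-dependent factor, which may be small or large, is absorbed into the marginal law of $\tau$.
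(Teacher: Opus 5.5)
Your proposal is correct and matches the paper's argument. The paper notes that Definition~\ref{def:three} only multiplies the measure of Definition~\ref{def:one} by a factor depending on $\tau$, so Lemma~\ref{lemma:favlemma} and Proposition~\ref{prop:couplingstruc} hold unchanged conditionally on $\tau$, and the first proof of Theorem~\ref{thm:edwardsokal} goes through: apply Theorem~\ref{thm:ESfk} given $(\w,\tau)$, then average out $\tau$. Your Steps 1--3 carry out exactly these checks, and you are right that \eqref{eq:ATreg} is not needed.
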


This result can be proved in a similar way to Theorem \ref{thm:edwardsokal}. The key point is that we have only changed the measure by a factor depending on $\tau$, and so properties of the law conditional on $\tau$ (in particular Lemma \ref{lemma:favlemma}) are unaffected. 

If the graph $G$ is planar and \eqref{eq:ATreg} is satisified, then there is also an analogous result to Proposition \ref{prop:dualiden}. Indeed, one can identify $\w^*$ with the same law of the Ashkin-Teller random current with weights $(J^\dual,U^\dual)$ introduced in Section \ref{sec:conj}. We leave the exact details of this correspondence to the reader.

\begin{remark}
	As explored in \cite{spins-perc-height,GL}, the behaviour of the percolation model changes along the self-dual line. 
	\begin{enumerate}
		\item 
	In the section $J \geq U$, there is a \emph{super-duality} at play. That is, a coupling between $\w \sim \P^+_G$ and $\w^\dual \sim \P^\f_{G^\dual}$ such that
	\[
	\w \subset (\w^\dual)^* \quad\text{ and }\quad \w^\dual \supset \w^*
	\] 
	i.e. every primal-dual edge pair is covered by at least one of $\w$ or $\w^\dual$. 
	\item At the point $J = U$, the law of $\w$ is equal to the critical $\textrm{FK}(4)$ random cluster model and is exactly self-dual. 
	\item In the other regime $J \leq U$ of the self-dual line, the model instead exhibits sub-duality. Moreover, the percolation model $\w$ is actually known to satisfy the FKG inequality when $J \leq U$, which is unavailable when $J > U$ (see Section \ref{sec:FKG}). 
	\end{enumerate}
	It is interesting that this change exactly occurs as the six-vertex model itself changes from delocalised to localised behaviour.
\end{remark}

\begin{remark}\label{remark:history}
	We now list various places where aspects of this coupling have appeared before.
	\begin{itemize}
		\item To our knowlege, the coupling of $(\sigma,\tilde{\sigma},\tau,\w)$ first appeared in the work of \cite[Equation (7)]{chayes1998percolation} in the $U \geq 0$ regime of the Ashkin-Teller model.
		\item The coupling of $(\tau,\tau^\dual,\w^*,(\w^\dual)^*,H)$, i.e. taking the dual of each percolation, appeared in the work \cite{spins-perc-height}.
		\item 
			The coupling of $(\sigma,\tilde{\sigma},\tau,\w, 2H)$ also appears explicitly in the work \cite{AT-6V} in the setting of six-vertex model on $\Z^2$ (corresponding to the self-dual line of the Ashkin-Teller model). 
	\item The coupling of $(\tau,\tau^\dual,\w,\w^\dual,2H)$ is exactly the coupling introduced in \cite[Definition 4.5]{GL} to study the delocalisation of the six-vertex model on the square lattice. The correspondence between the notation is given by $(\tau,\tau^\dual,\w,\w^\dual) \leftrightarrow (\sigma^\blackdot,\sigma^\whitedot,\xi^\blackdot,\xi^\whitedot)$. There, the property of super-duality, mentioned above, is crucially used to conclude delocalisation of the six-vertex height function.
\end{itemize}
\end{remark}

{
\begin{remark}
Generalisations of this coupling to other situations, including Dobrushin and more general boundary conditions, will appear in a future work. 
\end{remark}
}

\section{Properties of the representation}\label{sec:propofrep}

\subsection{Correlation inequalities}\label{sec:FKG}

In this section, we will study the FKG properties of the measure $\P^+_G$ in Definition \ref{def:three}. A recurring theme of our argument is the issue that the law of $\w$ is not known to satisfy the FKG inequality. In fact, it is generally expected that the double random current (equivalently $\w$) does not satisfy the FKG inequality, although ruling it out completely is still seemingly open \cite{klausen2022monotonicity}. 

The results of this section may be seen as one approach to overcome this hurdle. The first result recovers a specific type of FKG inequality for the decomposition $(\w^+,\w^-)$. It originates from the beautiful work of \cite{GL}. After this, we prove a technical extension so that we can apply it to the construction of the IIC in Section \ref{sec:IIC}. We then state and prove a simple correlation inequality which will allow us to decorrelate an increasing event from a decreasing event. 

We turn first our attention to the FKG-properties of the measure $\P^+_G$.
We briefly recall that a random variable $X \sim \mu$ taking values in a (finite) partially-ordered set $(\mathcal X,\leq)$ has the \textbf{FKG property} if for any two increasing functions $f,g : \mathcal X \mapsto \R$,
\[
	\mu \left[ f(X) g(X) \right] \geq \mu[ f(X)] \E[ g(X)].
\] 
The only partially-ordered sets we will consider are the spaces of spin configurations $\left\{ \pm 1 \right\}^V$ and bond configurations $\left\{ 0,1 \right\}^E$ with their natural order, and various products of these spaces. We use the notation $\tau_S$ or $\w^+_H$ to denote the restriction $\tau|_S$ or $\w^+|_H$ for any subset $S \subset V$ or $H \subset E$.

The most important result of this section concerns the distribution of the triple $(\tau,\w^+,\w^-)$ under the law $\P^+_G$. We use the notation $(\tau,\w^+,-\w^-)$ to denote that we consider the triple living in the partially-ordered set 
\[
	\left\{ \pm 1 \right\}^V \times \left\{ 0,1 \right\}^{E} \times \left\{ 0,1 \right\}^E,
\] 
with the order $(\tau_1,\w^+_1,\w^-_2) \leq (\tau_2,\w^+_2,\w^-_2)$ if and only if $\tau_1 \leq \tau_2$, $\w^+_1 \leq \w^+_2$ and $\w^-_1 \geq \w^-_2$. 
That is, we have \emph{reversed the order on the third component} and a function $f$ is considered increasing in $(\tau,\w^+,-\w^-)$ if it is increasing in both $\tau$ and $\w^+$ but decreasing in $\w^-$.

\begin{prop} \label{prop:FKG} The coupling $\P^+_G$ of Definition \ref{def:three} enjoys the following stochastic properties:
	\begin{enumerate}
		\item The triple $(\tau,\w^+,-\w^-)$ has the FKG property.
		\item The boundary cluster $\C_0$ of $\w$ has the FKG property.
	\end{enumerate}
\end{prop}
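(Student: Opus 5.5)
The plan is to prove part 1 via the Holley/FKG lattice condition for the joint density of $(\tau,\w^+,\w^-)$, and to deduce part 2 from part 1 by a monotonicity argument.

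\textbf{Explicit density.} Extending \eqref{eq:fulldef} to the interacting model (the factor $e^{U\tau_u\tau_v}$ only depends on $\tau$), the law of $(\tau,\w^+,\w^-)$ is proportional to
\[
\i\big(\tau\in\Sigma^{V^+},\ \w^\pm\subseteq \xi^\pm(\tau)\big)\, e^{\sum_{uv}(U_{uv}-c_{uv})\tau_u\tau_v}\,2^{k(\w^+)+k(\w^-)}\prod_{e\in\w^+\cup\w^-}\frac{p_e}{1-p_e}.
\]
Here $c_{uv}$ absorbs the normalisation coming from $\prod_{uv\in\xi(\tau)\setminus\w}(1-p_{uv})$ and from the $J$-terms. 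I will recompute it for general $U$.

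The counting of clusters needs care, since vertices of $\{\tau=-1\}$ are isolated in $\w^+$ and vice versa. I would instead write $k(\w)=k(\w^+|_{\{\tau=+\}})+k(\w^-|_{\{\tau=-\}})$, and express this as $k^{V}(\w^+)+k^V(\w^-)-|V|$, where $k^V$ counts clusters on all of $V$. This works because an isolated vertex of the wrong sign contributes one cluster to the "other" configuration. The result is a density of the form
\[
F(\tau)\,\i(\text{compatibility})\,2^{k(\w^+)}2^{k(\w^-)}\prod_{e\in \w^+\cup\w^-} r_e .
\]

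\textbf{Lattice condition for part 1.} I would verify the FKG lattice condition $\mu(x\vee y)\mu(x\wedge y)\geq\mu(x)\mu(y)$ on the product order with $\w^-$ reversed. It suffices to check it for pairs differing in two coordinates, using the Holley-type reduction to "strictly positive on a sublattice" plus pairwise checks.

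The support must be a sublattice. The constraint $\w^+\subseteq\xi^+(\tau)$ is increasing-compatible: raising $\tau$ enlarges $\xi^+$. The constraint $\w^-\subseteq\xi^-(\tau)$ matches the reversed order: raising $\tau$ shrinks $\xi^-$, which only permits smaller $\w^-$. Then $\max/\min$ of compatible triples remain compatible.

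Next I check the factors one at a time. The factor $2^{k(\w^+)}$ is log-supermodular in $\w^+$, as for FK with $q\ge1$. Likewise $2^{k(\w^-)}$ is log-supermodular in $\w^-$, and log-supermodularity is preserved by reversing the order. There are no cross terms between $\w^+$ and $\w^-$ apart from through the support. The remaining $\tau$-factor $e^{\sum (U-c)\tau_u\tau_v}$ must be ferromagnetic, i.e. $U_{uv}-c_{uv}\ge0$, or rather its interplay with the edge weights must be. This is exactly where \eqref{eq:ATreg} should enter. I expect the computation to reduce to $\cosh(2J)\ge e^{-2U}$, once the edge weight $(1-p_e)$ on closed edges of $\xi(\tau)$ is rewritten as a $\tau$-interaction.

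\textbf{Main obstacle.} The delicate step is this $\tau$–$\w$ mixed check. The reference measure for edges in $\xi(\tau)$ but not in $\w$ depends on $\tau$, and the cluster count of isolated wrong-sign vertices couples $\tau$ with $\w^\pm$. I would handle it by checking the four-point inequality explicitly for a single flip of $\tau_v$ together with a single edge change.

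\textbf{Part 2.} The boundary cluster $\C_0$ lies in $\w^+$, since $\tau=+1$ on $V^+$. An event increasing in $\C_0$ can be written as an increasing function of $\w^+$ that does not depend on $\w^-$. Thus it is an increasing function of the triple, and FKG follows from part 1 by restriction. I will double-check that "increasing in $\C_0$" is indeed increasing in $\w^+$, since adding $\w^+$ edges only enlarges the boundary cluster.
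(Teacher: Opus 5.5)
\textbf{There is a genuine gap in Part 1.} Your Part 2 is fine and matches how the paper deduces it. The problem is that the joint law of $(\tau,\omega^+,\omega^-)$ does \emph{not} satisfy the lattice condition $\mu(x\vee y)\mu(x\wedge y)\ge\mu(x)\mu(y)$ when $J>U$. This includes the Ising case $U=0$, so no careful mixed $\tau$--$\omega$ check can succeed.

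Your own density already shows this. Take $\tau,\tau'$ equal everywhere except at the endpoints of an edge $uv$ with $u,v\notin V^+$, where they are $(+,-)$ and $(-,+)$ respectively. Take $\omega^+=E^+$ and $\omega^-=\emptyset$ in all four configurations; this lies in the support and is stable under $\vee,\wedge$. All $\omega$-dependent factors then coincide, and the lattice inequality reduces to $F(\tau\vee\tau')F(\tau\wedge\tau')\ge F(\tau)F(\tau')$ with $F(\tau)\propto\exp\big(\sum(U-J)\tau_u\tau_v\big)$. That is, it reduces to $e^{4(U_{uv}-J_{uv})}\ge 1$, which is false. For a single edge with free boundary conditions and $U=0$ you can check directly that $\P(\tau=(\pm,\pm),\omega=\emptyset)=e^{-2J}\,\P(\tau=(+,-),\omega=\emptyset)$.

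This is exactly the antiferromagnetism noted in Remark~\ref{remark:antiferro}. Rewriting the weights $(1-p_e)$ as a $\tau$-interaction cannot help, because the lattice condition is a property of the function, not of how you write it. Your method only works when $U\ge J$, not on the full regime \eqref{eq:ATreg}. (Separately, the reduction to two-coordinate checks needs positivity, which the hard constraints destroy, but that is secondary.)

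\textbf{The missing idea} is that FKG for the triple only appears after summing out $\omega$. The paper proceeds in three steps.
\begin{enumerate}
\item The $\tau$-marginal is $\nu(\tau)\propto\mathbf{1}(\tau\in\Sigma^{V^+})\,y(\tau)\,Z_G(x^+_\tau)\,Z_G(x^-_\tau)$, where $y=e^{-2(J+U)}$ is now \emph{ferromagnetic}. The Ising partition functions $Z_G(x^\pm_\tau)$, with $x=e^{-4J}$, work against FKG. They are controlled by the lattice property of $x\mapsto Z_G(x)$ together with the reverse-monotonicity bound $Z_G(x')\ge\prod_e\frac{1+x'_e}{1+x_e}Z_G(x)$ for $x'\le x$. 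This yields $\nu(\tau\vee\tau')\nu(\tau\wedge\tau')\ge\prod_{uv}\big(\frac{1+x_{uv}}{2y_{uv}}\big)^2\nu(\tau)\nu(\tau')$, with the product over edges across which both $\tau$ and $\tau'$ disagree and $\tau_u\neq\tau'_u$. This is where \eqref{eq:ATreg}, equivalently $1+x\ge 2y$, enters (Lemma~\ref{lemma:taufkg}).
\item Given $\tau$, $\omega^+$ and $\omega^-$ are independent FK-Ising measures on $\xi^+(\tau)$ and $\xi^-(\tau)$ with couplings $2J$ (Proposition~\ref{prop:couplingstruc}). Hence $(\omega^+,-\omega^-)$ is conditionally FKG. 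By monotonicity of FK in the coupling constants, $\omega^+$ is stochastically increasing and $\omega^-$ decreasing in $\tau$.
\item For increasing $f,g$ of the triple, $\E[fg]\ge\E\big[\E[f\mid\tau]\,\E[g\mid\tau]\big]\ge\E[f]\,\E[g]$. The first inequality is conditional FKG; the second is FKG of $\nu$ applied to the increasing functions $\E[f\mid\tau]$ and $\E[g\mid\tau]$.
\end{enumerate}
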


	The first property should be understood as saying that individually $\w^+$ and $\w^-$ have the FKG property but that they are \emph{negatively correlated} with each other. This replaces the lack of a traditional FKG inequality for the union $\w = \w^+ \cup \w^-$.  
	The second property in Proposition~\ref{prop:FKG} is an immmediate consequence of the first (since the boundary cluster $\C_0$ is an increasing function of $\w^+$) but is remarkably useful and worth highlighting by itself. 

	The result of Proposition \ref{prop:FKG} is contained in \cite{GL}, where it stated in terms of the six-vertex spin representation of the square lattice. There, this intricate FKG structure was key to deriving delocalisation for the six-vertex height function.
	We review the proof here for completeness and because it is an essential component in what is to come. It is a consequence of three important facts: 
	\begin{enumerate}[(1)]
		\item The marginal law on $\tau$ 
		has the FKG property, see Lemma~\ref{lemma:taufkg}.
		\item The percolations $\w^+$ and $\w^-$ are conditionally independent given $\tau$.  
		\item Given $\tau$, the conditional law of $\w^+$ (resp. $\w^-$) is increasing (resp. decreasing) in~$\tau$.
	\end{enumerate}
	The first of these facts was proved in \cite[Lemma 1]{chayes1998percolation} when $U \geq 0$. It was also proved in \cite{AT-6V} and \cite{GL} along the entire self-dual curve of the square lattice. These later proofs only superficially use planar duality and their adaptation to a general setting is the proof we give here. 
	The second and third facts rely on the structure of the coupling elucidated in Proposition \ref{prop:couplingstruc}. Indeed, they follow from general properties of the FK-Ising model $\phi_{G,J}$ on a graph: independence across connected components of the $G$ and monotonicity of the law in $J$ . Together, these three facts may be chained to conclude that the triple $(\tau,\w^+,-\w^-)$ satisifies the FKG inequality.
	
	We also observe that the first and third facts above imply that, under the measure $\P^+$ (or trivially $\P^\f$), there is a stochastic domination
	\begin{equation}\label{eq:stocdom}
	\w^+ \succeq \w^-.
	\end{equation} 

\begin{remark}[]\label{remark:antiferro}
	The primary reason why $\w$ does not immediately satisfy FKG is that in the regime $J > U$, the $\tau$-spin of each cluster is distributed, conditionally on $\w$, as an \emph{antiferromagnetic} 
	Ising model conditioned to be constant on each cluster (see \eqref{eq:fulldef}). It should be noted that the statement of Proposition \ref{prop:FKG} does not immediately preclude $\w$ from having FKG. As an example, if $(\w^+,\w^-)$ represent the two signs of clusters of the FK-Ising model then both the pair $(\w^+,-\w^-)$ and the union $\w^+ \cup \w^-$ satisfy the FKG inequality. 
\end{remark}

This next result will be used to show that the the configuration $(\tau,\w^+)$ may be \emph{explored} in a suitable manner whilst preserving the FKG inequality at each stage. The combination of exploration algorithms (sometimes referred to as decision trees) and the FKG inequality has recently been useful in various approaches to study percolation models \cite{duminil2019sharp,duminil2022planar,gladkov2024percolation}. 
Applications so far have generally been restricted to \emph{monotonic measures}, a strictly stronger property than satisifying the FKG inequality. 
A measure $\mu$ on $\left\{ 0,1 \right\}^E$ is monotonic if it satisfies the following condition: 
\begin{enumerate}[(1)]
	\item Let $X \sim \mu$. For each subset $H \subset E$, the conditional measure 
		\[
		\mu(X \in \cdot \mid X_H)
		\] 
		is stochastically increasing in $X_H \in \left\{ 0,1 \right\}^H$.
\end{enumerate}
Whenever the measure $\mu$ is strictly positive\footnote{The reader should beware that various measures we consider will not always be positive due to hard-core constraints.} this condition is equivalent to another condition:
\begin{enumerate}[(1)]
	\setcounter{enumi}{1}
	\item Let $X \sim \mu$. For each subset $H \subset E$ and $X_H \in \left\{ 0,1 \right\}^H$, the conditional measure
		\[
		\mu(X \in \cdot \mid X_H)
		\] 
		has the FKG property.
\end{enumerate}
We refer to \cite{RC} for a proper introduction to the notion of a monotonic measure and other equivalent characterisations.
The combination of the two conditions (1) and (2) will play the important role in our argument in Section \ref{sec:IIC}.
They should be understood as stating that it is possible to partially explore $X$ whilst ensuring that various correlation inequalities remain valid.

Unforunately, we are unable to prove that the law of $\w^+$ is monotonic.
However, the following result will suffice for us. It states that it is possible to explore the \emph{joint pair} $(\tau,\w^+)$ in a specific manner that preserves the FKG inequality. 

\begin{lemma}[]\label{lemma:conFKG}
	Let $S \subset V$ and $H \subset E(G)$ satisfy the constraint $H \subset E(S)$. Then, the conditional measures 
	\[
	\P^+_G( (\tau,\w^+) \in \cdot \mid \tau_S, w^+_H ),
	\] 
satisfy the FKG-inequality and\footnote{Note that without the constraints on $H$ and $S$, the FKG part of the statement would also imply the stochastic ordering. With the constraint, this implication is no longer clear.} are stochastically increasing in $(\tau_S,\w^+_H)$. 
\end{lemma}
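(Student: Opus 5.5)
The plan is to decompose the conditioning in two layers, following the three-fact structure of the proof of Proposition~\ref{prop:FKG}. First condition on the whole of $\tau$, then integrate over $\tau_{S^c}$ given $(\tau_S,\w^+_H)$. By Proposition~\ref{prop:couplingstruc}, given $\tau$ the law of $\w^+$ is the FK-Ising measure $\phi^+_{J^\tau}$ restricted to $\xi^+(\tau)$, with coupling $2J$ on edges where $\tau$ agrees. This law is monotonic in the sense of condition (1) and (2), since it is a positive FK measure on the subgraph $\xi^+(\tau)$. Moreover, for fixed $\w^+_H$ its conditional law is stochastically increasing in $\tau$, because raising $\tau$ enlarges $\xi^+(\tau)$ and the FK measure is increasing in the coupling constants and in the available edge set.

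The constraint $H\subset E(S)$ enters through the following observation. Whether an edge $e=uv\in H$ lies in $\xi^+(\tau)$ is decided by $\tau_S$ alone. So the event $\{\w^+_H = \text{fixed}\}$ is compatible with $\tau$ only through $\tau_S$, and the Radon--Nikodym factor is
\[
\phi^+_{J^\tau}(\w^+_H\mid\tau).
\]
Hence the conditional law of $\tau_{S^c}$ given $(\tau_S,\w^+_H)$ is the XOR-Ising (or AT polarisation) law conditioned on $\tau_S$, reweighted by $g(\tau):=\P(\w^+_H=\text{fixed}\mid\tau)$. The key step is to show this reweighted measure on $\tau_{S^c}$ satisfies the FKG lattice condition and is monotone in $(\tau_S,\w^+_H)$. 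To do this I would exploit that $\P^+(\tau,\w^+)$ has an explicit density: marginalise $\w^-$ in \eqref{eq:fulldef}, treating $\w^-$ as an FK partition function $Z^{\mathrm{FK}}$ on $\xi^-(\tau)$. Then I would verify the FKG lattice condition directly on the joint pair $(\tau,\w^+)$, with $\tau_S$ and $\w^+_H$ frozen. Freezing coordinates preserves the lattice condition, and the lattice condition gives both FKG and monotonicity in the frozen coordinates via Holley's inequality, provided the measure restricted to the fixed values is supported on a sublattice.

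So the main steps are as follows.
\begin{enumerate}[(i)]
\item Write the joint density of $(\tau,\w^+)$ as
\[
\mathbf 1(\w^+\subset\xi^+(\tau))\cdot W(\tau)\cdot 2^{k(\w^+)}\prod p^{\w^+}(1-p)^{\xi^+(\tau)\setminus\w^+}\cdot Z^{\mathrm{FK}}_{\xi^-(\tau)}.
\]
\item Check the lattice (log-supermodularity) condition in $(\tau,\w^+)$. This uses the FK-Ising lattice condition in $\w^+$, the XOR/AT lattice condition of Lemma~\ref{lemma:taufkg} for the $\tau$-part including $Z^{\mathrm{FK}}$ on the minus region, and the support constraint $\w^+\subset\xi^+(\tau)$, which is an increasing-compatible set.
\item Observe that the support is a distributive sublattice, so Holley/FKG applies to conditionals.
\end{enumerate}
The catch is that the lattice condition is needed on the support, and a positivity/irreducibility issue arises because of the hard constraint. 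This is exactly where $H\subset E(S)$ is used. Once $\tau_S$ is fixed, the constraint on $\w^+_H$ no longer couples to the free variables $\tau_{S^c}$, so the conditioned support remains a sublattice and Holley's criterion applies to compare different frozen values.

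I expect step (ii) to be the main obstacle, specifically the supermodularity of the combined $\tau$-weight $W(\tau)\cdot(\text{factor from }\w^+\text{ on }\xi^+(\tau))\cdot Z^{\mathrm{FK}}_{\xi^-(\tau)}$. My first attempt would be to avoid a direct computation. Instead I would condition on $\tau$ as in the proof of Proposition~\ref{prop:FKG}: the integrated $\tau$-marginal is already known to be FKG by Lemma~\ref{lemma:taufkg}, and its proof presumably yields the lattice condition, not just FKG. I would then only need to check that multiplying by the FK factor for $\w^+$ preserves it, edge by edge. For a single edge $uv$ the factor depends on $(\tau_u,\tau_v,\w^+_{uv})$ only through $\mathbf 1(\tau_u=\tau_v=+1)$ together with the cluster-count term. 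This makes it a product of increasing indicators, which is log-supermodular.
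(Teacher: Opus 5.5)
Your outer reduction is correct, and it is also how the paper begins. Given $(\tau,\w^+_H)$, $\w^+$ is the FK-Ising measure on $\xi^+(\tau)$ conditioned on $\w^+_H$, which is FKG and increasing in $(\tau,\w^+_H)$. So everything rests on the law of $\tau$ given $(\tau_S,\w^+_H)$.

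\textbf{The gap is step (ii).} The joint law of $(\tau,\w^+)$ is \emph{not} log-supermodular, so freezing coordinates and applying Holley cannot work. Take a single edge $uv$ with $V^+=\emptyset$ (or attach $V^+$ through an edge of coupling $0$), $U=0$ and $J>0$. A direct computation from Definition~\ref{def:one} gives unnormalised weights $4,\,4,\,4e^{-2J},\,2e^{2J}+2e^{-2J}$ for $(\tau,\w^+)=((+,-),0),\,((-,+),0),\,((+,+),0),\,((-,-),0)$ respectively. The lattice inequality would then need $8(1+e^{-4J})\geq 16$, which fails for $J>0$.

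Worse, condition on $\w^+_{uv}=0$, i.e.\ $S=\emptyset$, $H=\{uv\}$. The covariance of $\mathbf 1(\tau_u=+1)$ and $\mathbf 1(\tau_v=+1)$ is then $8(e^{-4J}-1)/Z^2<0$, where $Z$ is the sum of the four weights. So the conclusion of the lemma genuinely fails when $H\not\subset E(S)$. The support $\{\w^+\subseteq\xi^+(\tau)\}$ is a sublattice for every $S,H$, so in your scheme the constraint does no work, and the scheme therefore proves too much.

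The culprit is the factor $(1-p)^{|\xi^+(\tau)\setminus\w^+|}$. On an edge closed in $\w^+$ it equals $(1-p_{uv})^{\mathbf 1(\tau_u=\tau_v=+1)}$, which is log-\emph{sub}modular in $(\tau_u,\tau_v)$. It is not a product of increasing indicators, as your last paragraph asserts. Moreover, multiplying $\nu$ by the FK conditional density divides out $Z_G(x^+_\tau)$, which is precisely the term that compensates this loss in Lemma~\ref{lemma:taufkg}. Quantitatively, on each edge of $\mathcal E_{\tau,\tau'}$ the joint density picks up the factor $x_{uv}(1+x_{uv})/(2y_{uv}^2)$, which equals $(1+x_{uv})/2<1$ at $U=0$.

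\textbf{What the paper does instead.} It never un-integrates $\w^+$ off $H$. It works with
\[
\mu^{\eta}(\tau)\propto\mathbf 1(\tau\in\Sigma^{V^+\cup V(\eta)})\,y(\tau)\,Z_G(x^+_{\tau,\eta})\,Z_G(x^-_\tau),
\]
where the edge weights on $H$ are frozen to $\mathbf 1(uv\notin\eta)$. It uses $H\subset E(S)$ to show that $\P^+_G(\tau\in\cdot\mid\tau_S,\w^+_H)=\mu^{\w^+_H}(\cdot\mid\tau_S)$. The point is that the $\tau$-dependent factors contributed by $H$ depend only on $\tau_S$ and drop out. These are the constraint $\w^+_H\subseteq\xi^+(\tau)$ and the powers of $1-p$ on closed edges of $H\cap\xi^+(\tau)$, i.e.\ exactly the bad factors above. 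This is the right way to use your observation that, once $\tau_S$ is fixed, $\w^+_H$ no longer couples to $\tau_{S^c}$: it should discard weights, not merely keep the support a sublattice.

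The paper then proves the two-measure inequality $\mu^{\eta}(\tau\vee\tau')\mu^{\eta'}(\tau\wedge\tau')\geq\mu^{\eta}(\tau)\mu^{\eta'}(\tau')$ for $\eta\geq\eta'$, using \eqref{eq:ytaulat}, \eqref{eq:Zlat} and \eqref{eq:Zmon}. Off $H$ the plus partition function costs only $(1+x_{uv})/2$ per crossing edge rather than $x_{uv}$. On $H$ nothing is lost on the plus side. This gives FKG and monotonicity in $(\tau_S,\eta)$ at once, and chaining with the FK step yields the lemma.
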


\begin{remark}
	The constraint that $H \subset E(S)$ is crucial to our argument and we do not know how to prove the result (it may not even be true) otherwise. The implication for our exploration procedure is that when discovering the state of a new edge, we must first ensure that we have discovered the $\tau$-spin on both endpoints of the edge before proceeding.
\end{remark}

We now turn to a correlation lemma for $\w$ which, whilst not strictly necessary, is helpful to streamline the proof of Theorem \ref{thm:boxcount} and may be useful more generally.
We introduce the following definition.
\begin{definition}[Primitive events]
	An increasing event $A$ is primitive for $\w$ if for any $\w \in A$ and any colouring of the clusters of $\w$ there is still a monochromatic subset which satisifies $A$.
\end{definition}

The definition should be interpreted that there is a single cluster `witnessing' the event $A$. Most increasing events related to the standard RSW theory of planar percolation (for instance the event $A \leftrightarrow B$ for two sets $A$ and $B$) are seen to be primitive events. The usefulness of this definition is as follows. For any increasing primitive event $A$, if we set $A^\zeta = \left\{ \w^\zeta \in A \right\}$ for each $\zeta \in \left\{ \pm 1 \right\}$, then there is simple inclusion
\[
A \subset A^+ \cup A^-.
\] 
Note that for events depending only on the cluster of a single vertex $x$ say, then is the exact decomposition $A = A^+ \sqcup A^-$.
Overall, if $A$ is any increasing primitive event, then using also \eqref{eq:stocdom}
\begin{equation}\label{eq:primitive}
\P(\w^+ \in A) \leq \P(\w \in A ) \leq 2 \P(\w^+ \in A).
\end{equation}
This turns out to be a very useful comparison tool when combined with the joint FKG property of $\w^+$ and $\w^-$.

\begin{lemma}[Anti-correlation lemma]\label{lemma:decorrlemma}
	Let $A$ be an increasing primitive event and $B$ a decreasing event. Then,
	\[
	\P(\w \in A \cap B) \leq 2  \P(\w^- \in B)  \P(\w \in A).
	\] 
\end{lemma}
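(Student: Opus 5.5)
The plan is to combine the primitive decomposition $A\subset A^+\cup A^-$ with the FKG property of the triple $(\tau,\w^+,-\w^-)$ from Proposition~\ref{prop:FKG}, and then use the stochastic domination \eqref{eq:stocdom} to pass from $\w^+$ to $\w^-$. The key is to pair each $A^\zeta$ with the event that $B$ holds for the \emph{same} colour class $\w^\zeta$. This makes the two indicators monotone in opposite directions for the order on $(\tau,\w^+,-\w^-)$, which is the configuration in which FKG yields an upper bound.

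\textbf{Step 1: reduce to single colour classes.} Since $\w^\pm\subset\w$ and $B$ is decreasing, $\{\w\in B\}\subset\{\w^+\in B\}\cap\{\w^-\in B\}$. Since $A$ is increasing and primitive, $\{\w\in A\}\subset A^+\cup A^-$. Hence
\[
\P(\w\in A\cap B)\le \P(\w^+\in A,\ \w^+\in B)+\P(\w^-\in A,\ \w^-\in B).
\]

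\textbf{Step 2: apply FKG to each term.} In the partial order on $(\tau,\w^+,-\w^-)$, the indicator of $\{\w^+\in A\}$ is increasing and that of $\{\w^+\in B\}$ is decreasing. By Proposition~\ref{prop:FKG}(1), applied to an increasing function and the negative of a decreasing one, these events are negatively correlated:
\[
\P(\w^+\in A,\ \w^+\in B)\le \P(\w^+\in A)\,\P(\w^+\in B).
\]
Because the order on the third coordinate is reversed, $\{\w^-\in A\}$ is decreasing and $\{\w^-\in B\}$ is increasing for this order. Again they are negatively correlated, giving
\[
\P(\w^-\in A,\ \w^-\in B)\le \P(\w^-\in A)\,\P(\w^-\in B).
\]

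\textbf{Step 3: compare and conclude.} By \eqref{eq:stocdom}, $\w^+\succeq\w^-$, and since $B$ is decreasing this gives $\P(\w^+\in B)\le\P(\w^-\in B)$. Since $A$ is increasing and $\w^\pm\subset\w$, we have $\P(\w^\pm\in A)\le\P(\w\in A)$. Summing the two bounds from Step 2 then gives
\[
\P(\w\in A\cap B)\le 2\,\P(\w^-\in B)\,\P(\w\in A),
\]
which is the claim.

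The only delicate point is the bookkeeping of monotonicity under the reversed order on $\w^-$. The pairing in Step 1 (colour $\zeta$ for $A$ matched with colour $\zeta$ for $B$) is chosen precisely so that each pair consists of one increasing and one decreasing function. Pairing $\w^+\in A$ with $\w^-\in B$ instead would give two increasing functions, and FKG would produce a bound in the wrong direction.
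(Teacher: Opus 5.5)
Your proof is correct and follows the paper's argument. The paper uses the same union bound $\P(\w\in A\cap B)\le \P(A^+\cap B^+)+\P(A^-\cap B^-)$ from primitivity of $A$ and monotonicity of $B$, then applies FKG for $(\tau,\w^+,-\w^-)$ to each same-colour pair and the domination $\w^+\succeq\w^-$. You simply spell out the monotonicity bookkeeping under the reversed order on $\w^-$, which the paper leaves implicit.
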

\begin{proof}
	Using a union bound and the inclusions $A \subset A^+ \cup A^-$ (since $A$ is primitive) and $B \subset B^\pm$ (since $B$ is decreasing), 
	\[
	\P(\w \in A \cap B) \leq \P(A^+ \cap B^+) + \P(A^- \cap B^-).
	\] 
	We may now apply the FKG property in Proposition \ref{prop:FKG} along with the stochastic domination $\w^+ \succeq \w^-$ to deduce the result. 
\end{proof}

\def\DL{\mathcal{E}}
\def\S{\mathcal{S}}
The rest of this section is devoted to the proofs of Lemma \ref{lemma:conFKG} and Lemma \ref{lemma:taufkg} , which are slightly involved. We have chosen to adopt the general setting of the Ashkin-Teller coupling from Definition \ref{def:three}, where it is crucial that the coupling constants satisfy \eqref{eq:ATreg}.

In terms of notation, given any $\w^+_H \in \left\{ 0,1 \right\}^H$, we set $V(\w^+_H)$ to be the set of vertices incident to an open edge in $\w^+_H$. Moreover, we let $\nu$ be the marginal law on $\tau$ under $\P^+_G$. Recall that a measure $\nu$ on $ \left\{ \pm 1 \right\}^V$ satisifies the FKG--lattice condition if 
\begin{equation}\label{eq:fkglattice}
\nu(\tau \vee \tau) \nu(\tau \wedge \tau') \geq \nu(\tau) \nu(\tau')
\end{equation}
for each $\tau, \tau' \in \left\{ \pm 1 \right\}^V$. When $\nu$ is positive, this is sufficient for the FKG property~\cite{fortuin1971correlation,RC}.

\begin{lemma}\label{lemma:taufkg}
	The marginal $\nu$ satisifes the FKG-lattice condition.
\end{lemma}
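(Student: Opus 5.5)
The plan is to compute the marginal $\nu$ explicitly through a high-temperature expansion. We isolate a single-edge factor that is monotone exactly under the condition \eqref{eq:ATreg}. What remains is a ratio of Ising partition functions, which we control with a switching-lemma / Griffiths-type argument.

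\emph{Step 1: explicit form of $\nu$.} Summing $\sigma$ in Definition~\ref{def:three} with $\tilde\sigma=\sigma\tau$, and using the rearrangement \eqref{eq:Jtaudef}, we get
\[
\nu(\tau)\propto \i(\tau\in\Sigma^{V^+})\, e^{\sum_{uv}U_{uv}\tau_u\tau_v}\, Z_{\xi(\tau)},\qquad Z_A:=\sum_{\sigma\in\Sigma^{V^+}}\exp\Big(\sum_{uv\in A}2J_{uv}\sigma_u\sigma_v\Big).
\]
We expand $Z_A$ in the usual way: $Z_A\propto\prod_{e\in A}\cosh(2J_e)\,\Ev_t(A)$, where $\Ev_t(A)=\sum_{\eta}\prod_{e\in\eta}\tanh(2J_e)$. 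The sum runs over $\eta\subset A$ that are even off $V^+$, as in \eqref{eq:sourcedef}. This gives
\[
\nu(\tau)\propto \prod_{e\in\xi(\tau)}\lambda_e\;\Ev_t(\xi(\tau)),\qquad \lambda_e=e^{2U_e}\cosh(2J_e),
\]
after dividing by $\prod_e e^{-U_e}$. The point is that $\lambda_e\ge1$ is \emph{exactly} the second inequality in \eqref{eq:ATreg}. The first inequality $J\ge0$ gives $\tanh(2J_e)\ge0$, so that $\Ev_t(A)$ is a Griffiths-type partition function, increasing in $A$.

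\emph{Step 2: edgewise combinatorics.} Fix $\tau,\tau'$, and set $A=\xi(\tau)$, $B=\xi(\tau')$, $A'=\xi(\tau\vee\tau')$, $B'=\xi(\tau\wedge\tau')$. A case check on the two endpoints of an edge gives three facts.
\begin{itemize}
\item If $e\in A\cap B$, then $e\in A'\cap B'$.
\item If $e$ lies in exactly one of $A,B$, then it lies in exactly one of $A',B'$.
\item If $e\notin A\cup B$, then $e$ lies in either none or both of $A',B'$.
\end{itemize}
Hence $\mathbf 1_A+\mathbf 1_B\le \mathbf 1_{A'}+\mathbf 1_{B'}$ edgewise, and since $\lambda_e\ge1$ the $\lambda$-factors satisfy the lattice inequality. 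The remaining task is to show
\[
\Ev_t(A)\,\Ev_t(B)\le \Ev_t(A')\,\Ev_t(B').
\]
To reduce this to clean statements, let $C\subset E(G)\setminus(A\cup B)$ be the set of edges that lie in both $A'$ and $B'$. By monotonicity in the edge set we may replace $(A',B')$ by $(A'\setminus C,B'\setminus C)$. After this replacement $A'\cap B'=A\cap B$ and $A'\cup B'=A\cup B$, and $(A',B')$ is obtained from $(A,B)$ by swapping the edges of the symmetric difference $A\triangle B$ between the two sides.

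\emph{Step 3 (main obstacle).} We must show that $\Ev_t(A)\Ev_t(B)\le \Ev_t(A')\Ev_t(B')$ whenever $A\cap B=A'\cap B'$ and $A\cup B=A'\cup B'$. In general this is false, since supermodularity only compares against the nested pair $(A\cup B,A\cap B)$. So the geometry of which edges move has to be used.

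The edges of $A\triangle B$ moved to $A'$ are those whose endpoints carry $\tau$-values equal to $+$ (and the analogous statement for $\tau'$). We would therefore try to exploit the cluster structure: $\tau$ is constant on components of $A$, and $\tau'$ is constant on components of $B$.

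The first attempt is the switching lemma for the double current $\eta+\eta'$ on $A\cup B$, with $\eta\subset A$ and $\eta'\subset B$. Conditionally on its trace, the pair is obtained by a uniform choice of even subgraph. We would then try to build a weight-preserving injection $(\eta,\eta')\mapsto(\eta\triangle\kappa,\eta'\triangle\kappa)$ that moves each cycle of $A\triangle B$ to the side prescribed by $(\tau\vee\tau',\tau\wedge\tau')$.

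If this direct approach fails, the fallback is the Pfister--Velenik / Glazman--Lammers route. There one proves the lattice condition for $\nu$ by interpolating one vertex at a time, which reduces the claim to a single-site monotonicity statement. That statement is a conditional probability of $\tau_v=+1$ given its neighbours, expressed as a ratio of $\Ev_t$'s that differ by the edges at $v$. This ratio can then be handled by the second Griffiths inequality, which gives a nonnegative mixed derivative of $\log\Ev_t$ in the couplings. Since we are not yet sure $\nu$ is strictly positive, we would first check the positivity needed for this local criterion.
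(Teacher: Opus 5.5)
Your Steps 1 and 2 are correct. They are the paper's bookkeeping in a different normalisation: your $\lambda_e=e^{2U_e}\cosh(2J_e)$ equals $(1+x_e)/(2y_e)$, and monotonicity of $S\mapsto\Ev_t(S)$ in the edge set is exactly \eqref{eq:Zmon}. So your set $C$ is the paper's $\DL_{\tau,\tau'}$ and is handled the same way.

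\textbf{The gap is Step 3.} It carries the whole difficulty and is not proved. The switching-lemma injection is never constructed, and the fallback, as stated, does not work. The missing idea is that $\xi^+(\tau)$ and $\xi^-(\tau)$ are vertex-disjoint. Hence $\Ev_t(\xi(\tau))=\Ev_t(\xi^+(\tau))\,\Ev_t(\xi^-(\tau))$, because even-off-$V^+$ subgraphs split over the two pieces. Moreover, the lattice operations act on each sign class in a \emph{nested} way:
\begin{itemize}
\item $\xi^+(\tau\wedge\tau')=\xi^+(\tau)\cap\xi^+(\tau')$;
\item $\xi^+(\tau\vee\tau')=\xi^+(\tau)\cup\xi^+(\tau')\cup C$;
\item the same holds for $\xi^-$ with $\vee$ and $\wedge$ exchanged.
\end{itemize}
Write $A^\pm=\xi^\pm(\tau)$ and $B^\pm=\xi^\pm(\tau')$. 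Log-supermodularity of $S\mapsto\Ev_t(S)$ follows from GKS~II, since $\Ev_t(S)\propto Z_S/\prod_{e\in S}\cosh(2J_e)$ and the denominator is log-modular. Together with monotonicity this gives
\begin{align*}
\Ev_t(A')\,\Ev_t(B') &\ge \Ev_t(A^+\cup B^+)\,\Ev_t(A^+\cap B^+)\,\Ev_t(A^-\cup B^-)\,\Ev_t(A^-\cap B^-)\\
&\ge \Ev_t(A^+)\,\Ev_t(B^+)\,\Ev_t(A^-)\,\Ev_t(B^-)=\Ev_t(A)\,\Ev_t(B).
\end{align*}
This is precisely the paper's argument. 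There the factorisation appears as $Z_G(x^+_\tau)\,Z_G(x^-_\tau)$, and \eqref{eq:Zlat} is applied to each factor separately. Your remark that edges of $A\triangle B$ go to $A'$ or $B'$ according to their $\tau$-sign is the germ of this. However, by collapsing to an unsplit pair with equal union and intersection, you throw away exactly the structure that makes each comparison nested. As you note yourself, the inequality is false for such general pairs.

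The fallback has the same problem. Raising $\tau_w$ from $-$ to $+$ both adds the edges from $w$ to its $+$-neighbours and removes the edges to its $-$-neighbours. The single-site ratio therefore compares non-nested edge sets. Nonnegativity of the mixed derivatives of $\log\Ev_t$ on the full edge set cannot order such changes. Only after splitting into sign classes does each factor of the ratio move monotonically, so that GKS~II applies. Either route therefore needs the $\pm$ factorisation. Positivity, by contrast, is not an issue: $\nu>0$ on $\Sigma^{V^+}$, which is closed under $\vee$ and $\wedge$.
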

\begin{proof}
	Our first aim is to  give an explicit definition of the law of $\nu$.
	We consider the map $x = (x_e)_{e \in E} \mapsto Z_G(x)$, with $0 \leq x_e \leq 1$, given by
	\[
		Z_G(x) = \sum_{\sigma \in\left\{ \pm 1 \right\}^V} \i(\sigma \in \Sigma^{V^+}) x(\sigma),
	\] 
	where, for any spin $\sigma \in\left\{ \pm 1 \right\}^V$ and coupling constants $x : E \to \left[ 0,1 \right]$, we write
	\[
		x(\sigma) = \prod_{\substack{uv \in E \\ \sigma_u \neq \sigma_v}} x_{uv}. 
	\] 
	As can be seen from Lemma \ref{lemma:favlemma}, the marginal $\nu$ is explictly given by
	\begin{equation}\label{eq:nulaw}
		\nu(\tau) \propto \i(\tau \in \Sigma^{V^+} ) \times y(\tau) \times Z_G(x^+_\tau) \times Z_G(x_{\tau}^-).
	\end{equation} 
	where
	\begin{align} \label{eq:defy}
	y_{uv} = \exp(-2(J_{uv} + U_{uv}))
	\end{align} 
	and 
	\begin{equation}\label{eq:xplusdef}
	x_\tau^\pm = \begin{cases}
		x_{uv} = \exp(-4J_{uv}) \quad \text{ if } \tau_u = \tau_v = \pm 1,\\
		1 \quad \text{ otherwise,}
	\end{cases}
	\end{equation} 
	for every $uv \in E(G)$. We will consider the lattice condition \eqref{eq:fkglattice} for each of the three factors in \eqref{eq:nulaw} seperately.

	\noindent \textbf{(1) The term $y(\tau)$}. It is classical that whenever $0 \leq y \leq 1$ (i.e. the model is ferromagnetic), the FKG-lattice condition is satisfied:
	\begin{equation}\label{eq:ytlatweak}
	y(\tau \vee \tau') y(\tau \wedge \tau') \geq y(\tau) y(\tau').
	\end{equation} 
	Alone this will not suffice to prove the result and we will have to be more precise. We introduce the set of edges
	\[
		\DL_{\tau,\tau'} = \left\{ uv \in E(G) \mid uv \not\in \xi(\tau), uv \not\in \xi(\tau'), \tau_u \neq \tau'_u \right\}.
	\] 
	There is then an exact identity,
	\begin{equation}\label{eq:ytaulat}
		y(\tau \vee \tau') y(\tau \wedge \tau') = \left( \prod_{uv \in \DL_{\tau,\tau'}} \frac{1}{y_{uv}^2} \right) y(\tau) y(\tau').
	\end{equation} 

	\noindent \textbf{(2) The term $Z_G(x^+_\tau)$. } We need two properties of the partition function $Z_G(x)$. 

	The first is the FKG-lattice condition \cite[Lemma 6.1]{lammers2024delocalisation}, 
	\begin{equation}\label{eq:Zlat}
	Z_G(x \vee x') Z_G(x \wedge x') \geq Z_G(x) Z_G(x'),
	\end{equation} 
	for any $0 \leq x,x' \leq 1$. The second is the inequality
	\begin{equation}\label{eq:Zmon}
		Z_G(x') \geq \prod_{uv \in E} \frac{1 + x'_{uv}}{1 + x_{uv}} \times Z_G(x)
	\end{equation} 
	for $0 \leq x' \leq x \leq 1$\footnote{Note that this inequality goes against the trivial mononoticity of the map $x \mapsto Z_G(x)$. In the planar setting, \eqref{eq:Zmon} follows immediately from the Kramers-Wannier duality and said monotonicity but for the dual partition function.}. To prove \eqref{eq:Zmon}, we can argue by integrating the inequality
	\begin{equation}\label{eq:Zdiff}
		\frac{\b}{\b x_{uv}} \log Z_G(x) =  \frac{1}{2x_{uv}} (1 - \left< \sigma_u \sigma_v \right>^+_{x}) \leq \frac{1}{2x_{uv}} \left( 1 - \left< \sigma_u \sigma_v \right>_{x'}^+ \right)  \leq \frac{1}{1+x_{uv}},
	\end{equation} 
	where $x'_{uv} = x_{uv}$ but otherwise $x'_{zw} = 1$. The first inequality in \eqref{eq:Zdiff} follows from the monotonicity of Ising spin correlations and the second by a direct computation.

	\noindent Whilst it is true that $x^+_{\tau \wedge \tau'} = x^+_\tau \vee x^+_{\tau'}$, there is only in general the inequality 
	\[
		x^+_{\tau \vee \tau'} \leq x^+_{\tau} \wedge x^+_{\tau'}.
	\] 
	Applying \eqref{eq:Zmon}, we see that
	\[
		Z_G(x^+_{\tau \vee \tau'}) \geq \left( \prod_{uv \in \DL_{\tau,\tau'}}  \frac{1+x_{uv}}{2} \right) Z_G(x^+_\tau \wedge x^+_{\tau'}).
	\]
	We may now use \eqref{eq:Zlat}, leading to 
	\begin{equation}\label{eq:Zpluslat}
		Z_G(x_{\tau \vee \tau'}^+) Z_G(x_{\tau \wedge \tau'}^+) \geq \left( \prod_{uv \in \DL_{\tau,\tau'}} \frac{1 + x_{uv}}{2} \right) \times Z_G(x_\tau^+) Z_G(x_{\tau'}^-). 
	\end{equation}
	This same inequality also holds for the term $Z_G(x^-_\tau)$ by identical reasoning (with the roles of $\wedge$ and $\vee$ exchanged).

	\noindent We can now put \eqref{eq:ytaulat} and \eqref{eq:Zpluslat} together to show that
	
	\[
		\nu(\tau \vee \tau') \nu(\tau \wedge \tau') \geq \left( \prod_{uv \in \DL_{\tau,\tau'}}  \frac{1+x_{uv}}{2 y_{uv}} \right)^2 \nu(\tau) \nu(\tau'). 
	\] 
	The condition is thus satisfied whenever $1+x_{uv} \geq 2 y_{uv}$, which is equivalent to \eqref{eq:ATreg}.
\end{proof}

\begin{proof}[Proof of Lemma \ref{lemma:conFKG}]
	By Proposition \ref{prop:couplingstruc}, the conditional law of $\w^+$ under $\P^+_G$ given $(\tau, \w^+_H)$ is simply the FK-Ising measure
	\[
		\P_G^+(\w^+ \in \cdot \mid \tau, \w^+_H) = \phi^+_{\xi^+(\tau),2J}( \cdot \mid \w^+_H),
	\] 
	which still satisfies the FKG-inequality and is stochastically increasing in both $\w^+_H$ and $\tau$. 
	Thus, it remains to prove that the conditional laws
	\[
		\P_G^+(\tau \in \cdot \mid \tau_S, \w^+_H)
	\]
	satisfy the FKG-inequality and are increasing in the pair $(\tau_S, \w^+_H)$. To do so, we introduce a collection of auxiliary measures on $\left\{ \pm 1 \right\}^V$ and parameterised by $\w^+_H \in \left\{ 0,1 \right\}^H$ given by
	\[
		\mu^{\w^+_H}(\tau) \propto \i(\tau \in \Sigma^{V^+ \cup V(\w^+_H)}) \times y(\tau) \times Z(x^+_{\tau,\w^+_H}) \times Z(x^-_{\tau}).
	\]
	Here, 
	\[
	 (x^+_{\tau,\w^+_H})_{uv} = \begin{cases} \i(uv \not\in \w^+_H)\quad \text{if}\ uv \in H,\\
	 	(x^+_\tau)_{uv}\quad \text{otherwise,}
	 	\end{cases}
	\]
	where $x^+_\tau$,$x^-_\tau$ are defined as in \eqref{eq:xplusdef} and $y$ is as in~\eqref{eq:defy}. We proceed in two steps.

	\noindent \textbf{Step 1. } We claim the following. Given $\tau_S$ and $\w^+_H$, the conditional law coincides with the auxiliary measure above, i.e.
	\begin{equation}\label{eq:mulaweq}
	\P_G^+( \tau \in \cdot \mid \tau_S, \w^+_H) =  \mu^{\w^+_H}(\ \cdot \mid \tau_S),
	\end{equation} 
	for every $\tau_S \in \left\{ \pm 1 \right\}^S$ and $\w^+_H \in \left\{ 0,1 \right\}^H$.
	Note that this is \emph{not true} without the conditioning on $\tau_S$ as we will make no statement about the distribution (conditionally on $\w^+_H$) of $\tau_S$ itself. It is also the place where the restriction $H \subset E(S)$ is relevant. The claim follows from an inspection of the density in \eqref{eq:fulldef}.

	\noindent We now argue that it is enough to verify the following (generalised) FKG-lattice condition: For any $\eta,\eta' \in \left\{ 0,1 \right\}^H$ such that $\eta \geq \eta'$, the inequality
	\begin{equation}\label{eq:mulat}
		\mu^{\eta}(\tau \vee \tau') \mu^{\eta'}(\tau \wedge \tau') \geq \mu^{\eta}(\tau) \mu^{\eta'}(\tau')
	\end{equation} 
	holds for all $\tau, \tau' \in \left\{ \pm 1 \right\}^V$. Indeed, this is sufficient to imply that the law 
	\[
		\mu^{\w^+_H}(\ \cdot \mid  \tau_S)
	\] 
	satisifies the FKG inequality and is increasing in both $\w^+_H$ and $\tau_S$ \cite{RC}, which together with the identication \eqref{eq:mulaweq} is exactly what is desired.

	\noindent \textbf{Step 2.} We now verify the inequality \eqref{eq:mulat} by arguing similarily to the proof of Lemma~\ref{lemma:taufkg}. A simple first observation is that if $\tau \in \Sigma^{V^+ \cup V(\eta)}$ and $\tau' \in \Sigma^{V^+\cup V(\eta')}$ then it must be that
	\[
		\tau \wedge \tau' \in \Sigma^{V^+ \cup V(\eta')} \text{ and } \tau \vee \tau' \in \Sigma^{V^+ \cup V(\eta)},
	\] 
	because of the ordering $\eta \geq \eta'$. We now turn to the coupling constants $(x^+_{\tau,\eta})$ which we recall are independent of $\tau$ whenever $uv \in H$. 
	The important observation is that
	\[
		(x^+_{\tau \vee \tau',\eta})_{uv} = (x^+_{\tau,\eta} \wedge x^+_{\tau',\eta'})_{uv} \quad \text{ and } (x^+_{\tau \wedge \tau', \eta'})_{uv} = (x^+_{\tau, \eta} \vee x^+_{\tau',\eta})_{uv}
	\]
	when $uv \in H$, due to the ordering $\eta \geq \eta'$.
	Thus, we have that (arguing exactly as in \eqref{eq:Zmon} on $ E(G) \setminus H$ and then using \eqref{eq:Zlat})
	\begin{equation}\label{eq:Zpluswlat}
		Z_G(x_{\tau \vee \tau',\eta}^+) Z_G(x_{\tau \wedge \tau',\eta'}^+) \geq \left( \prod_{uv \in \DL_{\tau,\tau'} \setminus H} \frac{1 + x_{uv}}{2} \right) \times Z_G(x^+_{\tau, \eta}) Z_G(x_{\tau',\eta'}^+). 
	\end{equation}
	The corresponding inequality for $Z(x^-_\tau)$ is exactly as before. 
	All together, we deduce that
	\begin{equation}\label{eq:mulatineq}
		\mu^{\eta}(\tau \vee \tau') \mu^{\eta'}(\tau \wedge \tau') \geq \prod_{uv \in \DL_{\tau,\tau'} \setminus H} \left( \frac{1+x_{uv}}{2y_{uv}} \right)^2 \prod_{uv \in \DL_{\tau,\tau'} \cap H} \left( \frac{1+x_{uv}}{ 2 y^2_{uv}} \right) \mu^{\eta}(\tau) \mu^{\eta'}(\tau'),
	\end{equation} 
	so \eqref{eq:mulat} is satisfied whenever we are in the regime \eqref{eq:ATreg}.
\end{proof}

\begin{remark}\label{remark:altFKG}
	We would now like to mention an alternative approach to above, that we originally intended to take. The definition \eqref{eq:omega-def} suggests that at first glance it is most obvious to decompose $\w$ into \emph{four} vertex-disjoint subsets based on the joint spin of $(\sigma,\tilde{\sigma})$ on each cluster:
	\[
		\w = \w^{++} \cup \w^{--} \cup \w^{+-} \cup \w^{-+}.
	\] 
	It is now immediate that $\w^{++}$ say satisifies the FKG-inequality since it is a directly increasing function of the triple $(\sigma,\tilde{\sigma},\eta)$. With this approach, it is possible to simply derive the FKG properties of each subset as well as various correlations between them. One can continue on with this decomposition instead and still prove all of our results (see Remark \ref{remark:altIIC} for instance). In the end, it was simpler to work with the decomposition $(\w^+,\w^-)$ which had already appeared in \cite{GL}, but we believe that this other approach (together with inequalities such as Lemma \ref{lemma:decorrlemma}) may be useful in other situations e.g. studying the dual of a single random current. 
\end{remark}

\subsection{RSW Theory}\label{sec:RSW}

In this section, we state and prove the minimal RSW theory needed to prove Theorem \ref{thm:boxcount}. Let us note that much stronger crossing results for the double random current have been established in \cite{DRC2} but we do not rely on these. We also mention that RSW results were also developed in \cite{GL} for the percolations $\w^+,\w^-$ on the section of the self-dual line when $U \leq J$, which may be very useful in generalising our work in the direction of Conjecture \ref{conj:AT-mag-field}.

We introduce the notation we use that will be specific to the square lattice.
We will restrict ourselves always to \emph{domains} $\L \subset \Z^2$ that are a finite connected union of faces of $\Z^2$. 
We take $\b\L$ to be the set of vertices $u \in \L$ incident to a vertex $v \not\in \L$.
We further impose that the boundary of such a domain $(\bL,E(\bL)$ has the property that it is always connected. 
We say that $\G$ is a \emph{circuit} of $\Z^2$ if it is the possible edge-boundary of such a domain (i.e. a vertex-self-avoiding closed path of edges in $\Z^2$) and we denote by $ \overline{\G}$ the unique domain with $E(\b \overline{\G}) = \G$.
The most common examples of domains we will use are the boxes $\L_n$, defined for each $n \geq 1$ by
\[
\L_n = \left\{ (x,y) \in \Z^2 \mid |x|,|y| \leq n-1 \right\}.
\] 

Unless otherwise stated, the measure $\P_\L^+$ will always refer to the coupling on the graph $(\L,E(\L))$ with $+$ boundary conditions on $\b\L$ and the coupling constants fixed to take their critical value given in \eqref{eq:critical-Ising}.

Before turning to crossing estimates we will need two further properties of the measure $\P^+_\L$. The first is perhaps the most important tool in the study of planar percolation.
\begin{itemize}
	\item (\textbf{Markov property}) Let $\G$ be any circuit in $\b\L$. Under the conditional law
		\[
			\P^+_\L(\cdot \mid \G \text{ is }\w\text{-open}),
		\]
		the triplet $(\sigma,\tilde{\sigma},\w)$ restricted to the vertices and edges of $ \overline{\G}$ is given by $\P_{ \overline{\G}}^+$, except that possibly the global sign of $\sigma$ and $\sigma'$ must be flipped (which does not affect $\w$) to ensure they are $+/+$ on $\G$.
\end{itemize}
In particular, the the state of $\w|_{ \overline{\G}}$ and $\w|_{\L \setminus \overline{\G}	}$ are, conditionally on $\G$ being $\w$-open, independent. We will often use this to explore an circuit from the outside so that no information about inside the circuit is revealed. Note that (unlike $\phi_G$) there is no Markov property with respect to $\w$-closed cutsets (e.g. after discovering the entirety of a $\w$-cluster).
The other property we will need is a spatial mixing estimate. We will use this only in the proof of Lemma \ref{lemma:dualonearm}.
\begin{itemize}
	\item (\textbf{Spatial mixing}) There are constants $c, C > 0$ such that for any $n \geq 1$, $\L \supset \L_{2n}$ and events $A$ (resp. $B$) depending only on the state in $\L_n$ (resp. $\L \setminus \L_{2n}$) we have
		\begin{equation}\label{eq:mixing}
		c \times \P^+_\L(A) \P^+_\L(B) \leq \P^+_\L(A \cap B) \leq C \times \P^+_\L(A) \P^+_\L(B).	
		\end{equation}
\end{itemize}
To prove these two properties we take a common approach and rely on analagous properties of the triple $(\sigma,\tilde{\sigma},\eta)$ and then extend them to $\w$ using the simple relation $\w = \xi(\sigma) \cap \xi(\tilde{\sigma}) \cap \eta$.
For the first property, we can use the Markov property of the Ising model together with the observation that if $\G$ is $\w$-open then necessarily both $\sigma$ and $\tilde{\sigma}$ are constant on $\G$. 
Similarly, the second property immediately reduces to a mixing estimate for the critical Ising model which are well known (see e.g. \cite{tassion2025noise} or \cite{duminil2022planar,RSW} for mixing statements for the FK-Ising model).	

\begin{remark}
	We remark that a similar strategy can be used to prove mixing statements for the (sourceless) critical random current $\n_1$ on the square lattice: It suffices to prove mixing for $(\n_1)_\odd$ (since $(\n_1)_\even$ is a sprinkling on top) and this may be derived by the identification $(\n_1)_\odd = \b\sigma$ and mixing of the Ising model.
	Other stronger mixing properties of the planar random current model are proved in \cite{DRC2}.
\end{remark}
\begin{remark}
	We could improve the result to a polynomial-mixing statement since this is also known for the critical Ising model (see \cite{duminil2022planar,RSW} for analagous proofs for the FK-Ising model), though we have no use for this improvement in this paper. 
\end{remark}

We define $A(n,N)$ to be the event that there is an open circuit encircling the annulus $\L_N \setminus \L_n$. 
It is known (it can be derived from the results of \cite{DRC2}) that
\begin{equation}\label{eq:exactonearm}
	\P_{\Z^2}\left( \w \not\in A(n,N)  \right) = \P^{\emptyset, \emptyset}_{(\Z^2)^*}\left( \L^\dual_n \overset{\n}{\longleftrightarrow} \b\L_{N}^\dual \right) \asymp \left( \frac{n}{N} \right)^{1 / 4}.
\end{equation}
Notice that here $1 / 4$ is the one-arm exponent of the double random current, whilst the one-arm exponent of $\w$ is instead $1 / 8$ owing to its connection with the Ising model. The double random current is then an interesting example of a (critical) planar model which is not self-dual, and moreover, whose one-arm and dual one-arm exponents \emph{differ}. We do not rely on \eqref{eq:exactonearm} or any other results from \cite{DRC2} in this work.

Instead, we state and prove the following result which we will then bootstrap to a polynomial (but non-explicit) bound on the probability of there not existing any circuits in $\w^+$ or $\w^-$ in a large annulus. This dual one-arm exponent of $\w^+$, which we do not know how to identify explicitly (see Remark \ref{remark:1arm}), will be the main error term in Theorem \ref{thm:boxcount}.
In light of Lemma \ref{lemma:decorrlemma} and the lack of an FKG inequality for $\w$, it will also be very useful to derive a bound on the dual one-arm exponent for $\w^-$ as well.

\begin{lemma}[Existence of circuits] \label{lemma:onecircuit}There exists a constant $c > 0$ such that for any $n \geq 1$ and any domain $\L$ containing $\L_{2n}$,
	\[
		\P_{\L}^+(\w^+ \in A(n,2n)) \geq c.
	\] 
	Moreover, $\P_{\L}^+(\w^- \in A(n,2n)) \geq c$ whenever $\L \supset \L_{4n}$.
\end{lemma}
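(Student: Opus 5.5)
The plan is to prove the statement for $\w^+$ directly, using the FKG property of $(\tau,\w^+)$ from Proposition~\ref{prop:FKG}. The statement for $\w^-$ then follows from a symmetry argument combined with spatial mixing.

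\emph{Step 1: reduction to a fixed box.} The event $\{\w^+\in A(n,2n)\}$ is increasing in $(\tau,\w^+)$. I would like to say that, restricted to $\L_{2n}$, the law of $(\tau,\w^+)$ under $\P^+_\L$ dominates the law under the coupling on $\L_{2n}$ with free boundary conditions. I would try to obtain this from Lemma~\ref{lemma:conFKG}, exploring $\L\setminus\L_{2n}$ and always revealing $\tau$ at both endpoints before revealing an edge. A cruder route is to work with the smaller percolation $\w^{++}=\xi^+(\sigma)\cap\xi^+(\tilde\sigma)\cap\eta\subset\w^+$ from Remark~\ref{remark:altFKG}. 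This is an increasing function of three independent FKG configurations $(\sigma,\tilde\sigma,\eta)$, so comparison of Ising boundary conditions applies directly. In either case it then suffices to lower bound the probability of an open circuit in the annulus uniformly in $n$, for a model living in a box of size comparable to $n$.

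\emph{Step 2: gluing crossings.} Cover $\L_{2n}\setminus\L_n$ by four $2\!:\!1$ rectangles whose long crossings, if all present, must form a circuit. By FKG, either in $(\tau,\w^+)$ or in $(\sigma,\tilde\sigma,\eta)$ for $\w^{++}$, the probability of the circuit is at least the product of the four crossing probabilities. The problem therefore reduces to a uniform lower bound on long-way crossings of $2\!:\!1$ rectangles by $\w^+$.

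\emph{Step 3: the crossing estimate, which I expect to be the main obstacle.} No self-duality is available, so the usual RSW route does not apply. Instead I would use a second-moment argument based on the Edwards--Sokal identity \eqref{eq:sokalsitetosite}. Let $\mathcal N$ count pairs $(x,y)$ with $x$ in a small box at the left end of the rectangle, $y$ in a small box at the right end, and $x\overset{\w}{\longleftrightarrow}y$.
\begin{itemize}
\item The first moment is $\asymp n^4\cdot n^{-1/4}$, by critical Ising two-point estimates.
\item The second moment is controlled by four-point spin correlations, which are bounded by products of two-point functions.
\end{itemize}
Together these give $\P(\mathcal N>0)\ge c$. Using \eqref{eq:primitive}, this transfers to $\w^+$ at the cost of a factor $2$.

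Connecting two end boxes is weaker than crossing the rectangle. I would upgrade it by also connecting each end box to the corresponding short side, using the same argument in smaller rectangles, and then combining the pieces with FKG for $\w^+$. The delicate point is to ensure that these pieces are connected by a single cluster. I would try forcing this by requiring each piece to traverse a common small annulus, and then invoking the Markov property on an open circuit there. If this gluing fails, the fallback is to run the second-moment argument directly on crossings of a sequence of overlapping squares.

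\emph{Step 4: the statement for $\w^-$.} Under free boundary conditions the law of $(\tau,\w^+,\w^-)$ is invariant under $\tau\mapsto-\tau$, which exchanges $\w^+$ and $\w^-$. Hence $\w^-$ under free boundary conditions has the same circuit bound as $\w^+$. To return to $+$ boundary conditions on $\L\supset\L_{4n}$, I would apply the spatial mixing estimate \eqref{eq:mixing} with inner box $\L_{2n}$ and outer region $\L\setminus\L_{4n}$. The circuit event depends only on the inner box, so the boundary condition outside $\L_{4n}$ changes its probability by at most a constant factor. This comparison is exactly where the extra room $\L_{4n}$ in the statement is needed.
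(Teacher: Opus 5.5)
\textbf{The gap is in Step 3, and Step 3 carries the entire content of the lemma.}

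Grant the four-point bound under the relevant boundary conditions. Even then, the second-moment argument gives at most $\P(B_1\overset{\omega}{\longleftrightarrow}B_2)\ge c$ for two boxes at distance comparable to their size. That is not a crossing of the rectangle: the connecting path may leave the rectangle, and even the annulus $\Lambda_{2n}\setminus\Lambda_n$. Nor can such connections be glued. FKG only makes several connection events occur simultaneously; it does not put them in a common cluster.

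Your remedy is to force the pieces across a common small annulus and use an open circuit there. This assumes that $\omega^+$-circuits in annuli exist with uniformly positive probability, which is the lemma itself at a smaller scale. An induction over scales would lose a constant factor at each step. Since $\omega^+$ is not self-dual, there is no square-crossing input from which an RSW argument could start, and your fallback is not specified. So as written the proposal does not establish the bound.

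\textbf{The missing idea: the complementary event is exactly computable, so no RSW theory for $\omega$ is needed.}

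Work under free boundary conditions on the annulus $\Lambda_{2n}\setminus\Lambda_n$.
\begin{itemize}
\item By Proposition~\ref{prop:dualiden}, the absence of an $\omega$-circuit is the event that the sourceless double random current on the strong dual connects the inner face to the outer face.
\item By the switching lemma, this probability is the square of a dual Ising two-point function.
\item By Edwards--Sokal and FK-Ising self-duality, that square equals $(1-\phi^{\textnormal{f}}_{\Lambda_{2n}\setminus\Lambda_n}(A(n,2n)))^2$. Hence $\P^{\textnormal{f}}_{\Lambda_{2n}\setminus\Lambda_n}(\omega\in A(n,2n))\ge \phi^{\textnormal{f}}_{\Lambda_{2n}\setminus\Lambda_n}(A(n,2n))$.
\item RSW for FK-Ising \cite{RSW}, where self-duality is available, bounds the right-hand side below.
\end{itemize}
The paper then reduces from $\P^+_\Lambda$ to the free annulus using the monotonicity of the double random current in the domain \cite[Lemma 3.2]{DRC2}. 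It passes from $\omega$ to $\omega^+$ via \eqref{eq:primitive}.

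\textbf{Two secondary problems.}

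Your Step 1 reduction is false as stated once $\Lambda$ is large. Restricted to $\Lambda_{2n}$, the $\tau$-marginal of $\P^+_\Lambda$ approaches a flip-invariant infinite-volume measure; for the $\omega^{++}$ route the same holds for the $\sigma$-marginal. The free measure on $\Lambda_{2n}$ is also flip-invariant, and two distinct flip-invariant measures cannot be stochastically ordered. Lemma~\ref{lemma:conFKG} does not help here, since it only compares conditional laws within one fixed domain.

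In Step 4, the symmetry $\sigma\mapsto-\sigma$ under free boundary conditions is correct. However, \eqref{eq:mixing} is a decorrelation estimate inside a single measure and does not compare boundary conditions. You would additionally need a boundary-condition comparison, such as ratio mixing for the critical Ising model. The paper is equally brief at this point.
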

\begin{proof}
	Observe that the event $A(n,2n)$ is primitive. Thus,
	\[
		\P_{\L}(\w^+ \in A(n,2n)) \geq \frac{1}{2} \P_{\L}(\w \in A(n,2n)).
	\]
	Let $\P^\f_{\L_{2n} \setminus \L_n}$ be the free measure on the domain $\L_{2n} \setminus \L_n$. 
	It follows from known monotonicity properties of the double random current \cite[Lemma 3.2]{DRC2} that 
	\[
		\P_{\L}^+(\w \in A(n,2n)) \geq \P^\f_{\L_{2n} \setminus \L_n}(\w \in A(n,2n)).
	\] 
	Finally, there is an exact identity relating this probability to the free-boundary FK-Ising measure $\phi^\f_{\L_{2n} \setminus \L_n}$,
	\[
		\P^\f_{\L_{2n} \setminus \L_{n}}(\w \in A(n,2n)) = 1 - (1 - \phi^\f_{\L_{2n} \setminus \L_n}(A(n,2n)))^2 \geq \phi^\f_{\L_{2n} \setminus \L_{n}}(A(n,2n)).
	\] 
	Indeed, from the duality relation between $\w$ and $\n$,
	\[
		1 - \P^\f_{\L_{2n} \setminus \L_n}(\w \in A(n,2n)) = \P^{\emptyset,\emptyset}_{(\L_{2n} \setminus \L_{n})^*}\left( \L_n \overset{\n}{\longleftrightarrow} \L_{2n}^c \right) 
	\] 
	where $(\L_{2n} \setminus \L_n)^*$ is the full (i.e. not weak) dual and the event is that $\n$ connects the inner face to the outer face. Then, by writing the connection event as a spin correlation(see \ldots ) and using the self-duality of the FK-Ising model, we have
	\[
	\P^{\emptyset,\emptyset}_{(\L_{2n} \setminus \L_{n})^*}\left( \L_n \overset{\n}{\longleftrightarrow} \L_{2n}^c \right)  = (1 - \phi^\f_{\L_{2n} \setminus \L_n}(A(n,2n)))^2.  
	\] 
	The result is then immediate from the RSW property of the FK-Ising model \cite{RSW}. The statement for $\w^-$ follows by applying the mixing result in \eqref{eq:mixing} which is why we impose the boundary of $\L$ not to be too close.
\end{proof}

\begin{remark}
	It is possible to prove a corresponding upper bound whenever $\bL$ is sufficiently far from $\b\L_{2n}$. The subtelty here is that the planar double random current does not satisfy RSW results which are uniform in the domain. In particular, the probability 
	\[
		\P_{\L}^{\emptyset,\emptyset}(\L_n \overset{\n}{\longleftrightarrow} \b\L_{2n})
	\] 
	is \emph{not} uniformly positive for all $n \geq 1$ when $\L = \L_{2n}$, but is when $\L \supset \L_{(2+\eps)n}$ for some fixed $\eps > 0$ (i.e. only when the free-boundary is macroscopically far away) \cite{DRC2}. This is also the expected behaviour of the random cluster model with $q = 4$, since the scaling limit of the outer boundaries of large clusters in both models is the $\CLE_4$, whose loops are known not to touch the boundary of the domain.
\end{remark}

\begin{lemma}[Dual one-arm exponent] \label{lemma:dualonearm}
There exists $\alpha, C > 0$ such that for all $\zeta \in \left\{ \pm 1 \right\}$, $N \geq n \geq 1$ and any $\L \supset \L_N$,
\[
	\P_\L^+\left( \w^{\zeta} \not\in A(n,N) \right)  \leq C \left( \frac{n}{N} \right)^{\alpha}.
\] 
\end{lemma}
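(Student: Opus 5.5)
We bootstrap Lemma~\ref{lemma:onecircuit} across a sequence of disjoint dyadic annuli. Set $k = \lfloor \log_4(N/n) \rfloor - 1$ and consider the annuli $A_j = \L_{4^{j+1}n}\setminus \L_{4^j n}$ for $j=0,\dots,k-1$. Each $A_j$ contains the sub-annulus $\L_{2\cdot 4^j n}\setminus\L_{4^j n}$, and the outer box $\L_{4\cdot 4^j n}$ lies inside $\L_N\subset\L$. So Lemma~\ref{lemma:onecircuit} gives $\P^+_\L(\w^\zeta\in A(4^jn,2\cdot 4^jn))\ge c$ for both $\zeta=\pm1$.

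The event $\{\w^\zeta\notin A(n,N)\}$ is contained in $\bigcap_j\{\w^\zeta\notin A(4^jn,2\cdot 4^jn)\}$. We want to show that the probability of this intersection decays like $(1-c')^{k}$, which gives the bound with $\alpha=-\log_4(1-c')$. We do not have FKG for $\w$, nor a Markov property across closed cutsets. Two routes are available.

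The cleaner route uses spatial mixing~\eqref{eq:mixing}, iterated across well-separated scales. The events $\{\w^\zeta\notin A(4^jn,2\cdot4^jn)\}$ depend on the state in disjoint annuli. We keep only every other scale, so that consecutive retained annuli are separated by a factor $2$ in radius. Applying~\eqref{eq:mixing} once with constant $C$ does not iterate directly, because $C^k$ would swamp the gain. The standard fix is to use the conditional form of mixing: we explore from the inside outward and bound
\[
\P^+_\L\bigl(\w^\zeta \in A(4^jn,2\cdot 4^jn)\,\big|\,\mathcal F_{\L_{4^{j-1}\cdot 2 n}}\bigr)\ge c''
\]
uniformly over the revealed configuration inside the smaller box. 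This uniform conditional lower bound follows from the Ising mixing statement applied to the triple $(\sigma,\tilde\sigma,\eta)$. The quantities $\w^\pm$ are local functions of $(\sigma,\tilde\sigma,\eta)$, since the sign of $\tau=\sigma\tilde\sigma$ is local. For the Ising model, the conditional law outside $\L_{2m}$ given the configuration inside $\L_m$ is comparable to the unconditional law, by the ratio form of~\eqref{eq:mixing}: $\P(B\mid A)\ge c\,\P(B)$ for all $A$ of positive probability. Chaining these conditional bounds over the scales gives $\P(\bigcap_j\cdots)\le(1-c\,c_0)^{\lfloor k/2\rfloor}$.

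The alternative route for $\zeta=+$ uses the monotone exploration of Lemma~\ref{lemma:conFKG}. We reveal $(\tau,\w^+)$ on the outermost scales first, respecting $H\subset E(S)$. The failure of $A$ at the outer scales is a decreasing event, and the conditional measure is stochastically increasing. However, the conditioning is on decreasing information, which goes the wrong way, so this route does not work directly. We therefore rely on mixing for both signs.

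The main obstacle is ensuring that mixing holds \emph{conditionally and uniformly} in the boundary domain $\L$ and the inner information, rather than only in the product form~\eqref{eq:mixing}. For $\w^-$ there is an additional point: Lemma~\ref{lemma:onecircuit} requires room $\L\supset\L_{4n}$, and our choice of scales (factor $4$, and stopping one scale before $N$) was made to guarantee this. If only the product form of mixing were available, we would first upgrade it to the ratio form using the fact that~\eqref{eq:mixing} holds for all events $A$ in $\L_n$. Conditioning on an atom $A$ of the inner $\sigma$-algebra then gives $\P(B\mid A)\ge c\,\P(B)$, which is exactly the uniform conditional bound needed.
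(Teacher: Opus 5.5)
Your proposal is correct and is essentially the paper's argument. Lemma~\ref{lemma:onecircuit} gives a uniformly positive probability of a $\w^\zeta$-circuit at each scale, and the lower bound in \eqref{eq:mixing}, read as $\P(B\mid A)\ge c\,\P(B)$ for every inner event $A$, lets you chain these scale by scale so that the good scales dominate a Bernoulli process of positive density, which yields geometric decay in $\log(N/n)$. (Skipping every other scale is unnecessary, since your factor-$4$ spacing already separates consecutive sub-annuli by a factor $2$, and the case of bounded $N/n$ is absorbed into $C$.)
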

\begin{proof}
	The argument is standard and consists of applying Lemma \ref{lemma:onecircuit} and the spatial mixing property of $\P_\L^+$ to show that the set of scales on which there is a $\w^\pm$-circuit stochastically dominates a point process of uniformly positive density. 
\end{proof}

\begin{remark}\label{remark:1arm}
	It is not difficult to derive the required quasi-multiplicativity to prove the existence of the dual one-arm exponent $\a$ for $\w^+$. The exact value of $\a$ is then an interesting question, not least because it represents the main error term in Theorem \ref{thm:boxcount}. Using the relation with $\tau$ and the FKG properties (Proposition \ref{prop:FKG}), one can show that it must satisfy $\a \leq 1 / 8$. If the value $\alpha = 1 / 8$ were to be true, then the model $\w^+$ would be `approximately self-dual' in the sense that the one-arm and dual one-arm exponents would coincide. However, it is generally the case that there are polynomial improvements to applications of the FKG inequality for arm-exponents \cite{beffara2011monochromatic,radhakrishnan2024strict,gassmann2024comparison}, suggesting that possibly $\alpha < 1 / 8$. 
\end{remark}

Suppose that for some integer $n \geq 1$ and two vertices $a,b \in \L$ are such that the two boxes $a + \L_{2n}$ and $b + \L_{2n}$ are both contained in $\L$ and are disjoint. In this case, we define
\[
A = a + \L_n \quad \text{ and } \quad B = b + \L_n,
\] 
two boxes of size $n \geq 1$. The next proposition is concerned with the probability of the connection event $\left\{ A \leftrightarrow B \right\}$. 
Its proof is a standard application of the FKG inequality and the bounds on existence of circuits above. It will allow us to take sharp estimates on site-to-site connection probabilities (derived from \eqref{eq:sokalsitetosite}) and translate them into estimates for box-to-box connections. We write $X \lesssim Y$ if there is a universal constant $C > 0$ such that $X \leq CY$ and $X \asymp Y$ if $X \lesssim Y$ and $Y \lesssim X$.

\begin{prop}[Box-to-box connection estimate]\label{prop:boxtobox} There exists a constant $C > 0$ such that the following is true. Let $\L$ be a domain containing two boxes $A$ and $B$ satisfying the conditions above. Then,
	\[
		\P_{\L}\left( A \leftrightarrow B \right) \leq C n^{1 / 4 - 4} \E_{\L} \left[ \sum_{\substack{x \in A, y \in B}} \sigma_x \sigma_y \right].
	\] 
\end{prop}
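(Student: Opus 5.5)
Since $\E_\L[\sigma_x\sigma_y]=\P_\L(x\overset{\w}{\longleftrightarrow}y)$ by \eqref{eq:sokalsitetosite}, and $|A|=|B|\asymp n^2$, the claim is equivalent to
\[
\P_\L(A\leftrightarrow B)\lesssim n^{1/4}\,\min_{x\in A',\,y\in B'}\P_\L(x\leftrightarrow y)\quad\text{(up to averaging)},
\]
where $A'\subset A$, $B'\subset B$ are sub-boxes carrying a positive fraction of the points. The plan is to show that, on the event $\{A\leftrightarrow B\}$, a typical point $x\in A$ is connected to the cluster realising the crossing with conditional probability $\gtrsim n^{-1/8}$, and likewise for $y\in B$. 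Summing over $x,y$ then gives
\[
\E\Big[\sum_{x\in A,y\in B}\sigma_x\sigma_y\Big]\gtrsim n^4\cdot n^{-1/4}\,\P(A\leftrightarrow B).
\]

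Concretely, I would work with the enlarged boxes $\hat A=a+\L_{2n}$ and $\hat B=b+\L_{2n}$. Using that $A\leftrightarrow B$ is primitive and \eqref{eq:primitive}, I reduce to the event $\{A\overset{\w^+}{\longleftrightarrow}B\}$ at the cost of a factor $2$. On this event I then want a $\w^+$-circuit in $\hat A\setminus A$ and one in $\hat B\setminus B$. These are increasing in $(\tau,\w^+)$, and by Proposition~\ref{prop:FKG} together with Lemma~\ref{lemma:onecircuit}, each has probability bounded below. FKG therefore gives
\[
\P\big(A\overset{\w^+}{\leftrightarrow}B,\ \text{both circuits}\big)\gtrsim \P\big(A\overset{\w^+}{\leftrightarrow}B\big).
\]
Both circuits then lie in the crossing cluster. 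Conditioning on the outermost such circuit $\G$ around $A$, explored from the outside, the Markov property gives the measure $\P^+_{\overline\G}$ inside. The event that $x$ is connected to $\G$ is then $\E^+_{\overline\G}[\sigma_x]\gtrsim n^{-1/8}$ for $x\in A$, using Theorem~\ref{thm:CHI1} or standard Ising one-point bounds on the domain of size $\asymp n$ (via monotonicity in the domain, $\overline\G\supset A$). The same holds for $y$ inside the circuit around $B$.

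The main obstacle is doing the two conditionings simultaneously while keeping the lower bounds. Conditioning on both outermost circuits via the Markov property makes the interiors of $\overline{\G_A}$ and $\overline{\G_B}$ independent of each other and of the outside. This yields
\[
\P(x\leftrightarrow y)\ \ge\ \E\Big[\i(\G_A\leftrightarrow \G_B)\,\E^+_{\overline{\G_A}}[\sigma_x]\,\E^+_{\overline{\G_B}}[\sigma_y]\Big]\gtrsim n^{-1/4}\,\P(A\leftrightarrow B).
\]
The subtle point is justifying that the outermost circuit can be revealed from outside without conditioning the inside, given the lack of FKG for $\w$. I would handle this by exploring with Lemma~\ref{lemma:conFKG}, revealing $\tau$ on vertices before edges, and using the plain Markov property for $\w$-open circuits. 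Summing over $x\in A$ and $y\in B$ concludes the proof.
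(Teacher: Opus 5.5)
Your proposal is correct and follows essentially the same route as the paper: pass to $\omega^+$ via \eqref{eq:primitive}, use FKG and Lemma~\ref{lemma:onecircuit} to add $\omega^+$-circuits around $A$ and $B$, condition on the outermost ones, and bound the one-point function inside from below. Two small remarks: conditioning on the outermost circuits is an event determined from the outside, so the plain Markov property suffices and Lemma~\ref{lemma:conFKG} is not needed; and the domain monotonicity you need is $\overline{\Gamma}\subset a+\Lambda_{2n}$ (the $+$ one-point function decreases as the domain grows), not $\overline{\Gamma}\supset A$.
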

\begin{proof}
	We define also
	\[
		A' = a + \L_{2n} \quad \text{ and } \quad B' = b + \L_{2n},
	\] 
	Let $\G_A$ and $\G_B$ be the outermost $\w^+$-open circuits in the annuli $A' \setminus A$ and $B' \setminus B$ respectively (if they exist). Let $C_{A,B}$ be the event that $\G_A$ and $\G_B$ do both exist and that
	\[
		\G_A \overset{\w^+}{\longleftrightarrow} \G_B.
	\]
	It follows from \eqref{eq:primitive}, the FKG inequality and Lemma \ref{lemma:onecircuit} that
	\[
		\P( A \leftrightarrow B) \leq 2\P(A \overset{\w^+}{\longleftrightarrow} B) \lesssim \P(C_{A,B}). 
	\] 
	By conditioning on the state of $\w$ outside $\G_A$ and $\G_B$,
	\[
		\E\left[ \sum_{x \in A, y \in B} \i(x \overset{\w^+}{\longleftrightarrow} y) \right] \geq \E \left[ \i(C_{A,B}) \E\left[ \sum_{x \in A} \i(x \overset{\w^+}{\longleftrightarrow} \G_A) \mid \G_A \right] \E \left[ \sum_{y \in B} \i(y \overset{\w^+}{\longleftrightarrow} \G_B) \mid \G_B \right] \right].
	\] 
	Then, by the FKG property of the boundary cluster (or in this case simply by monotonicity of Ising spin correlations in the domain),
	\[
		\E\left[ \sum_{x \in A} \i(x \overset{\w^+}{\longleftrightarrow} \G_A) \mid \G_A \right] \geq \E^+_{\L_{2n}}\left[ \sum_{x \in \L_n} \sigma_x \right] \gtrsim n^{2 - 1/8},
	\] 
	where the last estimate is contained in Lemma \ref{lemma:Ising2}. Applying the same for the box $B$ and rearranging all the terms leaves the desired inequality.
\end{proof}

\subsection{Construction of the IIC measure}\label{sec:IIC}

In this section, we construct the \emph{incipient infinite cluster} (or IIC) measure of $\w$, by which we mean we prove that in an appropriate the sense the limit 
\[
	\lim_{N \to \infty} \P_{\L_N}^+( \cdot \mid o \overset{\w}{\longleftrightarrow} \b \L_N)
\]
exists. This a classical problem in percolation theory first solved by Kesten \cite{kesten1986incipient} in the case of Bernoulli percolation. Results of this kind for dependent percolation models are not extensive but there has recent progress in a variety of models \cite{panis2024incipient}. 
The construction of a suitable IIC measure is an integral aspect to the $L^2$ argument introduced in \cite{GPS}. It is used to argue that the behaviour of macroscopic clusters, when viewed at a mesoscopic scale, concentrates sharply around the IIC measure. It is this concentration which will play the key role in the proof of Theorem \ref{thm:boxcount}.  
In \cite{GPS}, the analysis is mostly focused on constructing the pivotal measure of Bernoulli site percolation. For the pivotal measure, the analagous input is to construct an infinite-volume measure conditioned on an alternating \emph{four-arm} event. 
There, a scale-by-scale coupling argument was used to prove the existence of such a measure. The analysis is intricate and requires certain seperation of arm estimates, however, a simpler approach for the case of a single arm (which is relevant to the cluster measures) was also suggested \cite[Proposition 5.2]{GPS}. Whilst it is possible to adapt this to $\w$ using the RSW theory in Section \ref{sec:RSW}, we take another approach.
We utilise an argument recently introduced in \cite{hillairet2025short}. It uses a simple and explicit coupling to prove a quantitative convergence towards the IIC measure based on the dual-one-arm exponent. The argument there was written for Bernoulli site percolation but it immediately extends to any monotonic measure on bond configurations. After overcoming a slight difficult due to not knowing $\w^+$ is monotonic, our argument will follow \cite{hillairet2025short} almost exactly. The result we will prove is the following. 

\begin{prop}[Existence of IIC measure]\label{prop:IIC}
	There exists $C,\alpha > 0$ such that the following holds. For each $N \geq k \geq n \geq 1$ and each domain $\L \supset \L_N$, the measures
	\[
		\w^{(1)} \sim \P^+_{\L_N}(\cdot \mid \L_n \overset{\w}{\longleftrightarrow} \L_N) \quad \text{and} \quad \w^{(2)} \sim \P^+_{\L}(\cdot \mid \L_n \overset{\w}{\longleftrightarrow} \L)
	\]
	may be coupled so that with probability $1 - C (\frac{k}{N})^{\alpha}$ they satisfy:
	\begin{itemize}
		\item The restriction of $\w^{(1)}$ and $\w^{(2)}$ agree inside $\L_k$ and furthermore, the restrictions of the boundary clusters $\C_0(\w^{(1)})$ and $\C_0(\w^{(2)})$ agree inside $\L_k$.
	\end{itemize}
\end{prop}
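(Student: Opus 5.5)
We adapt the coupling argument of \cite{hillairet2025short}, which for monotonic measures couples two conditioned measures by exploring from the outside inwards until a common open circuit is found. The plan has three steps.

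\textbf{Exploration and the coupling.} We explore $(\tau,\w^+)$ in both configurations simultaneously from the outside of $\L_N$ inwards, scale by scale, in dyadic annuli $\L_{2^{j+1}}\setminus\L_{2^j}$ with $k\le 2^j\le N$. The exploration always reveals $\tau$ on both endpoints of an edge before revealing the state of that edge in $\w^+$. This keeps the explored region in the form $(\tau_S,\w^+_H)$ with $H\subset E(S)$, so that Lemma~\ref{lemma:conFKG} applies at every stage: conditionally on what has been explored, the law of the unexplored part is FKG and monotone in the explored data. Using this monotonicity, we build the two explorations as a monotone coupling, in which the configuration conditioned on the (increasing, primitive, via \eqref{eq:primitive}) connection event dominates a reference unconditioned configuration. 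Following \cite{hillairet2025short}, we stop as soon as an outermost $\w^+$-open circuit $\G$ surrounding $\L_k$ is discovered in both configurations simultaneously, with $\G$ connected to the boundary cluster (respectively to $\b\L_N$). At that point the connection event $\{\L_n\leftrightarrow\L_N\}$ reduces to $\{\L_n\leftrightarrow\G\}$, and it remains to show that the two conditional laws inside $\G$ are identical.

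\textbf{Agreement inside $\G$.} Once $\G$ is $\w$-open, the Markov property identifies the law of $(\sigma,\tilde\sigma,\w)$ inside $\overline\G$ as $\P^+_{\overline\G}$, up to a global spin flip. Conditioning on $\{\L_n\leftrightarrow\G\}$ then gives the same law for both measures, so we sample the interior identically. The two configurations then agree in $\L_k\subset\overline\G$. The boundary clusters also agree there, because inside $\overline\G$ the boundary cluster is the cluster of $\G$, and $\G$ belongs to the boundary cluster in both configurations.

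\textbf{Failure probability.} The coupling fails only if no such common circuit exists in any of the $\asymp\log_2(N/k)$ dyadic annuli. Lemma~\ref{lemma:onecircuit} gives a uniformly positive conditional probability, given the outside, that each annulus contains a $\w^+$-circuit. Under the conditioning, this circuit is connected to the outside thanks to the FKG property of the boundary cluster (Proposition~\ref{prop:FKG}). Iterating this bound as in Lemma~\ref{lemma:dualonearm} yields a failure probability of at most $C(k/N)^{\alpha}$.

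\textbf{Main obstacle.} The hard step is that $\w^+$ is not known to be monotonic. The proof in \cite{hillairet2025short} uses monotonicity to condition on revealed edges, both closed and open, and to keep the coupled configurations ordered. Lemma~\ref{lemma:conFKG} is the substitute, but it only holds for explorations revealing $\tau$ before edges, and only for the pair $(\tau,\w^+)$ rather than $\w$. Two points must therefore be checked. First, the outermost-circuit exploration must be implementable respecting $H\subset E(S)$. Second, the $\w^-$ part must be controlled: the conditioning event involves $\w$, not only $\w^+$. We handle $\w^-$ using its conditional independence from $\w^+$ given $\tau$, its domination by $\w^+$ from \eqref{eq:stocdom}, and Lemma~\ref{lemma:decorrlemma}.
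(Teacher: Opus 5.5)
Your skeleton matches the paper's: an outside-in exploration of $(\tau,\omega^+)$ that respects $H\subset E(S)$, a monotone coupling against an unconditioned reference via Lemma~\ref{lemma:conFKG}, and Markov decoupling at an open circuit. Two steps as written do not go through.

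\textbf{The failure bound.} You estimate it scale by scale under the conditioned measures. You claim Lemma~\ref{lemma:onecircuit} gives a uniformly positive probability of an $\omega^+$-circuit ``conditionally given the outside''. That lemma is an unconditional bound for $\mathbb{P}^+_{\Lambda'}$ on a domain. Conditionally on a partial exploration that has not yet found an open circuit, the unexplored part does not have a law of that form, since there is no Markov property for closed cutsets. So your step would need at least the mixing estimate \eqref{eq:mixing} together with control of the conditioning.

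None of this is necessary. Drive the exploration by the reference configuration $X^{(3)}\sim\mathbb{P}^+_\Lambda$ alone. Prioritise edges whose dual edges are joined to the outside of $\Lambda_N$ by $(\omega^+)^{(3)}$-closed dual edges. Then the outermost $(\omega^+)^{(3)}$-open circuit $\Gamma$ in $\Lambda_N\setminus\Lambda_k$ is revealed before anything inside it. By the ordering $X^{(j)}\geq X^{(3)}$, $\Gamma$ is open in both conditioned configurations. The failure event is therefore contained in $\{(\omega^+)^{(3)}\notin A(k,N)\}$. Its probability is the unconditioned $\mathbb{P}^+_\Lambda(\omega^+\notin A(k,N))\le C(k/N)^\alpha$, by Lemma~\ref{lemma:dualonearm}.

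No FKG argument is needed to connect $\Gamma$ to the boundary either: on the conditioning event, every path from $\Lambda_n$ to the boundary meets $\Gamma$. Also note that $\omega^{(1)}$ lives on $\Lambda_N$. To order it against a reference on $\Lambda$, realise it as $\mathbb{P}^+_\Lambda$ conditioned on the increasing event $\{\Lambda_n\leftrightarrow\partial\Lambda_N\}\cap\{\omega^+\equiv 1 \text{ on } E(\partial\Lambda_N\cup\Lambda\setminus\Lambda_N)\}$.

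\textbf{The role of $\omega^-$.} Your treatment rests on a misdiagnosis: the conditioning event does not involve $\omega^-$. Under $+$ boundary conditions $\tau\equiv+1$ on the boundary, so the boundary cluster is an $\omega^+$-cluster. Hence $\{\Lambda_n\overset{\omega}{\longleftrightarrow}\partial\Lambda\}=\{\Lambda_n\overset{\omega^+}{\longleftrightarrow}\partial\Lambda\}$ is measurable with respect to, and increasing in, $\omega^+$. This observation is what licenses the conditioned monotone coupling of $(\tau,\omega^+)$. Primitivity, the domination \eqref{eq:stocdom} and Lemma~\ref{lemma:decorrlemma} do not do this job, since none of them produces a coupling. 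If the event genuinely depended on $\omega^-$, your scheme would break.

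Once $(\tau,\omega^+)$ is coupled, sample $\omega^-$ last from $\phi_{\xi^-(\tau),2J}$ (Proposition~\ref{prop:couplingstruc}). Since $\tau\equiv+1$ on $\Gamma$, the components of $\xi^-(\tau)$ inside $\Gamma$ depend only on $\tau|_{\overline{\Gamma}}$. That restriction coincides in both configurations, so $\omega^-$ can also be made to agree there.
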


As a consequence of Proposition \ref{prop:IIC}, we may speak of the limiting measure\footnote{We note that other definitions of the IIC (which we will not use) can be easily shown to exist and coincide with this one by combining this result with the RSW estimates in Section \ref{sec:RSW}.} as $N \to \infty$,
\[
	\P^+_{\Z^2}(\cdot \mid \L_n \overset{\w}{\longleftrightarrow} \infty).
\]
In Section \ref{sec:conv-meas}, we will use this measure to define a quantity $\beta$ which should be interpreted as the `density' of the infinite cluster $\C_0$ under this measure.
To be more precise, we expect that since the infinite cluster $\C_0$ is conditioned to intersect the box $\L_n$, then $|\C_0 \cap \L_n|$ will be of the order of $n^{2 - 1 / 8}$. 
In fact, we will prove that
\[
	\E^+_{\Z^2}\left[ |\C_0 \cap \L_n| \mid \L_n \leftrightarrow \infty  \right] = (\beta + o(1)) n^{2 - 1 / 8},
\] 
for some constant $\beta > 0$.
The existence of this limit will use the following lemma, which itself is a simple consequence of Proposition \ref{prop:IIC}.

\begin{lemma}[Cluster-box intersection estimate]\label{lemma:Xnestimate}
	There exist constants $C, \a > 0$ such that for any $N' \geq N \geq n \geq 1$,
	\[
		\left| \E^+_{\L_{N'}}[|\C_0 \cap \L_n| \mid \L_n \leftrightarrow \b \L_{N'}] - \E^+_{\L_N}[|\C_0 \cap \L_n| \mid \L_n \leftrightarrow \b \L_N] \right|  \leq C n^{2- 1 / 8} ( \frac{n}{N})^{\alpha / 2}.
	\] 
	\smallskip
\end{lemma}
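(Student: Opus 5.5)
We derive the estimate from Proposition \ref{prop:IIC} by coupling the two conditional measures and controlling the error on the bad event with a second-moment bound. Write $X_n = |\C_0 \cap \L_n|$, viewed as a function of the restriction of the boundary cluster to $\L_n$. Apply Proposition \ref{prop:IIC} with $k = n$ if $N' \ge N$ is large compared to $n$. Compare both $\P^+_{\L_{N}}(\cdot \mid \L_n \leftrightarrow \b\L_N)$ and $\P^+_{\L_{N'}}(\cdot \mid \L_n \leftrightarrow \b\L_{N'})$ with the common measure in the domain $\L = \L_{N'}$. More precisely, take $\w^{(1)}$ for $N$ and $\w^{(2)}$ for $\L = \L_{N'}$ (using $\L_{N'} \supset \L_N$). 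Then Proposition \ref{prop:IIC} yields a coupling under which the boundary clusters agree inside $\L_n$ outside an event $\mathcal B$ of probability at most $C(n/N)^{\alpha}$. (If $N \le 2n$, say, the claim is trivial since both expectations are $\lesssim n^{2-1/8}$ by the moment bound below, and $(n/N)^{\alpha/2}$ is bounded below.)

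On the coupled space, the difference of expectations is bounded by $\E[|X_n^{(1)} - X_n^{(2)}| \mathbf 1_{\mathcal B}]$. By Cauchy--Schwarz this is at most
\[
\P(\mathcal B)^{1/2}\Big(\E[(X_n^{(1)})^2]^{1/2} + \E[(X_n^{(2)})^2]^{1/2}\Big),
\]
which explains the exponent $\alpha/2$. It then remains to prove the uniform second-moment bound
\[
\E^+_{\L_M}\big[X_n^2 \mid \L_n \leftrightarrow \b\L_M\big] \lesssim n^{2(2-1/8)}
\]
for all $M \ge 2n$ (and for the analogous measure in $\L$).

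\textbf{Second-moment bound (main obstacle).} Bound $X_n^2 \le \sum_{x,y \in \L_n} \i(x \leftrightarrow \b\L_M,\ y\leftrightarrow \b\L_M)$. Since $\C_0$ is the boundary cluster, $x \overset{\w}{\leftrightarrow} \b\L_M$ is equivalent to $x \overset{\w^+}{\leftrightarrow} \b\L_M$. By the Edwards--Sokal identities, the unconditioned sum equals $\E^+_{\L_M}[\sigma_x\sigma_y]$-type quantities, namely $\P(x,y \leftrightarrow \b\L_M) \le \E^+_{\L_M}[\sigma_x\sigma_y]$ via $\coin^+$. Standard Ising estimates (as in Lemma \ref{lemma:Ising2}) then give
\[
\sum_{x,y\in\L_n}\E^+_{\L_M}[\sigma_x\sigma_y] \lesssim n^{4-1/4}\,(n/M)^{1/8}
\]
using quasi-multiplicativity of the Ising one-arm at criticality. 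The denominator satisfies $\P(\L_n \leftrightarrow \b\L_M) \gtrsim (n/M)^{1/8}$, as follows from Proposition \ref{prop:boxtobox}-type reasoning, or directly: $\E^+[\sum_{x\in\L_n}\sigma_x] \le n^2\,\P(\L_n\leftrightarrow\b\L_M)$ and $\E^+_{\L_M}[\sum_{x \in \L_n}\sigma_x] \asymp n^{2}(n/M)^{1/8}n^{-1/8}$. Dividing the two gives the claim. The delicate point is the lower bound on $\P(\L_n\leftrightarrow\b\L_M)$ of the correct order, which we get from the first-moment identity above. For the measure in a general $\L$, the same estimates hold by monotonicity of Ising correlations in the domain, or by simply taking $\L = \L_{N'}$ throughout.

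Combining the two steps yields the bound $C n^{2-1/8}(n/N)^{\alpha/2}$.
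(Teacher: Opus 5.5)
Your first step is exactly the paper's argument: couple the two conditioned measures through Proposition~\ref{prop:IIC} with $k=n$ and $\Lambda=\Lambda_{N'}$, then apply Cauchy--Schwarz on the failure event. This correctly reduces the lemma to a uniform second-moment bound. Your argument does not establish that bound.

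\textbf{The numerator estimate is false.} The inequality $\sum_{x,y\in\Lambda_n}\mathbb{E}^+_{\Lambda_M}[\sigma_x\sigma_y]\lesssim n^{4-1/4}(n/M)^{1/8}$ fails. The quantity $\mathbb{E}^+_{\Lambda_M}[\sigma_x\sigma_y]=\mathbb{P}^+_{\Lambda_M}(x\leftrightarrow y)$ also counts connections that never leave $\Lambda_n$. By Griffiths' inequality it dominates the infinite-volume critical two-point function, which is of order $|x-y|^{-1/4}$. So the sum is of order $n^{4-1/4}$ uniformly in $M\ge 2n$. Replacing $\mathbb{P}(x,y\leftrightarrow\partial\Lambda_M)$ by $\mathbb{P}(x\leftrightarrow y)$ throws away precisely the factor $(n/M)^{1/8}$ that you later divide by.

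\textbf{The denominator estimate is also off.} From $\mathbb{E}^+_{\Lambda_M}[|\mathcal{C}_0\cap\Lambda_n|]\le n^2\,\mathbb{P}(\Lambda_n\leftrightarrow\partial\Lambda_M)$ and $\mathbb{E}^+_{\Lambda_M}[|\mathcal{C}_0\cap\Lambda_n|]\asymp n^2M^{-1/8}$, you only get $\mathbb{P}(\Lambda_n\leftrightarrow\partial\Lambda_M)\gtrsim M^{-1/8}$, not $(n/M)^{1/8}$. The bound $|\mathcal{C}_0\cap\Lambda_n|\le n^2$ loses a factor $n^{1/8}$.

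Combining the correct versions of your two estimates yields only $\mathbb{E}[X_n^2\mid\Lambda_n\leftrightarrow\partial\Lambda_M]\lesssim n^{4-1/4}M^{1/8}$. This is useless, since the lemma must hold uniformly in $N'$ (it is used that way in Proposition~\ref{prop:betalimit}). Proving the true order $n^{4-1/4}(n/M)^{1/8}$ of $\sum_{x,y}\mathbb{P}(x,y\leftrightarrow\partial\Lambda_M)$ directly would need multi-scale arm estimates for $\omega$ that the paper does not have.

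The missing idea is monotonicity, which removes any need for a denominator estimate. The paper uses the FKG property of the boundary cluster (Proposition~\ref{prop:FKG}) to dominate the conditioned measure by plus boundary conditions on $\partial\Lambda_n$:
\[
\mathbb{E}^+_{\Lambda_M}\big[|\mathcal{C}_0\cap\Lambda_n|^2\mid\Lambda_n\leftrightarrow\partial\Lambda_M\big]\le\mathbb{E}^+_{\Lambda_n}\big[|\mathcal{C}_0\cap\Lambda_n|^2\big]\le\sum_{x,y\in\Lambda_n}\mathbb{P}^+_{\Lambda_n}(x\leftrightarrow y)=\sum_{x,y\in\Lambda_n}\mathbb{E}^+_{\Lambda_n}[\sigma_x\sigma_y].
\]
The right-hand side is $\lesssim n^{4-1/4}$ by Lemma~\ref{lemma:Ising1}.

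One way to make the first inequality precise is Lemma~\ref{lemma:conFKG}, with $S$ the vertices of $\Lambda_M$ outside the interior of $\Lambda_n$ and $H=E(S)$:
\begin{itemize}
\item the conditioning event is increasing and $\omega^+_H$-measurable;
\item $|\mathcal{C}_0\cap\Lambda_n|^2$ is increasing in $\omega^+$;
\item the maximal configuration on $(S,H)$ leaves $\mathbb{P}^+_{\Lambda_n}$ inside, by the Markov property.
\end{itemize}
The same domination, combined with $\mathbb{E}[X_n]\le\mathbb{E}[X_n\mid\Lambda_n\leftrightarrow\partial\Lambda_M]\,\mathbb{P}(\Lambda_n\leftrightarrow\partial\Lambda_M)$, is also what would give you the correct lower bound $(n/M)^{1/8}$ on the connection probability.
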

\begin{proof}
	By using the coupling of Proposition \ref{prop:IIC} and the Cauchy-Schwarz inequality, we deduce 
	\begin{multline}
		| \E_{\L_{N'}}[|\C_0 \cap \L_n| \mid \L_n \overset{}{\longleftrightarrow} \b\L_{N'}] - \E_{\L_N}[|\C_0 \cap \L_n| \mid \L_n \overset{ }{\longleftrightarrow} \b \L_N] | 
		\\
		\lesssim (\frac{n}{N})^{\a_1 / 2} \sqrt{ \E_{\L_{N'}}[|\C_0 \cap \L_n|^2 \mid \L_n \overset{}{\longleftrightarrow} \b\L_{N'}] + \E_{\L_{N}}[|\C_0 \cap \L_n|^2 \mid \L_n \overset{ }{\longleftrightarrow} \b\L_N]}.
	\end{multline} 
	Then, by the FKG property of the boundary cluster (Proposition \ref{prop:FKG}), we can compare to when the boundary conditions are wired on $\b\L_n$, 
	\[
		\E_{\L_N}\left[ |\C_0 \cap \L_n|^2 \mid \L_n \leftrightarrow \b\L_N \right] \leq \E_{\L_n} \left[ |\C_0 \cap \L_n|^2 \right] \leq \sum_{x,y \in \L_n} \P_{\L_n}(x \leftrightarrow y),
	\] 
	and so we can use Lemma \ref{lemma:Ising1} to conclude.
\end{proof}

Let us now explain the details of the coupling argument which is based on an exploration algorithm. The basic strategy is simple to understand but formalising the details will be a little fiddly. We recommend the exposition in \cite{hillairet2025short} and also \cite[Proposition 2.6]{duminil2022planar} for an argument of a similar flavour. The main additional complication to these proofs is that we do not know $\w^+$ is monotonic so must instead use Lemma \ref{lemma:conFKG} for the pair $(\tau,\w^+)$. 

\begin{figure}[t]
	\centering
	\includegraphics[width=0.5\linewidth]{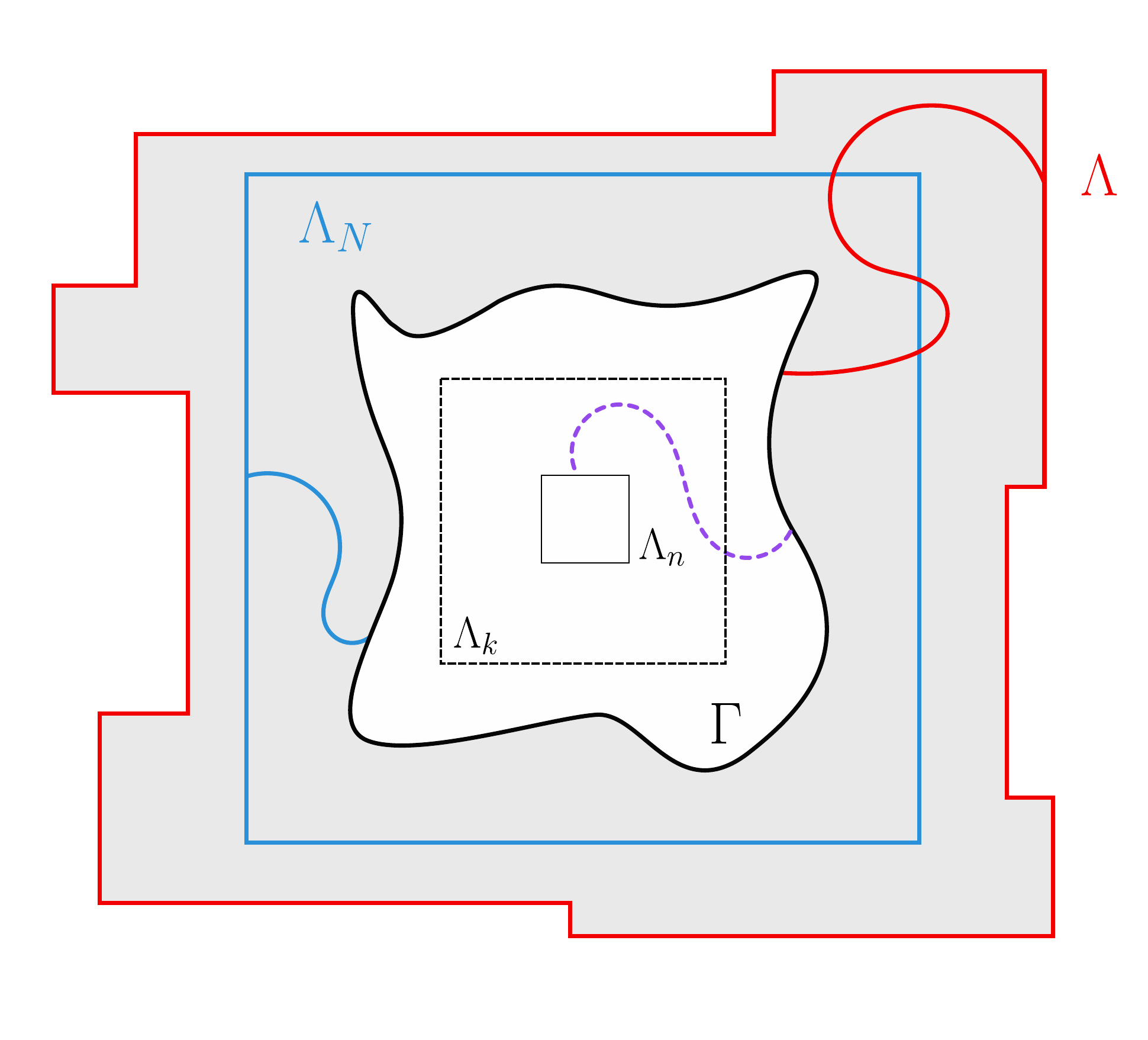}
	\caption{A schematic of the proof of Proposition \ref{prop:IIC}. The exploration of $X^{(3)}$ (blue) and exploration of $X^{(1)}$ (red) is shown up until the outermost $(\w^+)^{(3)}$-open circuit $\G$ (black) in $\L_N \setminus \L_k$ is discovered. At this time, the state of each configuration is known only on the shaded region. The purple dashed arm signifies that from this time, the two configurations $X^{(1)}$ and $X^{(2)}$ can be sampled to coincide inside $\Gamma$. }
	\label{fig:IIC}
\end{figure}

We fix $N \geq k \geq n \geq 1$ as in the statement. A first observation is that for each domain $\L$ and under the measure $\P_\L^+$, the event $\L_n \overset{\w}{\longleftrightarrow} \b\L$ is both measurable with respect to $\w^+$ and an increasing event for $\w^+$.  We will let 
\[
X = (\tau,\w^+)
\] 
and focus solely on the law of $X$ for the time being. We will actually couple not two but three configurations $X^{(1)},X^{(2)},X^{(3)}$ with the marginals
\begin{align*}
	X^{(1)} = (\tau^{(1)}, (\w^+)^{(1)}) &\sim \P^+_{\L_N}( \cdot \mid \L_n \overset{\w}{\longleftrightarrow} \L_N) \\
	X^{(2)} = (\tau^{(2)},(\w^+)^{(2)}) &\sim \P^+_{\L}( \cdot \mid \L_n \overset{\w}{\longleftrightarrow} \L) \\
	X^{(3)} = (\tau^{(3)},(\w^+)^{(3)}) &\sim \P^+_{\L}( \cdot).
\end{align*}
Actually, we will sample $X^{(1)}$ according to the equivalent measure
\[
	\P^+_{\L}(\cdot \mid \L_n \overset{\w}{\longleftrightarrow} \L_N , \w^+_{E(\b\L_N \cup \L\setminus\L_N)} = 1)
\] 
so that it is extended to the domain $\L$ and can still be viewed as the law $X^{(3)}$ conditioned on an increasing event.
The coupling will have the property that
\begin{equation}\label{eq:xiorder}
	X^{(1)} \geq X^{(3)} \quad\text{and}\quad X^{(2)} \geq X^{(3)}.
\end{equation}
The existence of such a coupling is guaranteed by the FKG inequality (Proposition \ref{prop:FKG}). To give an explicit coupling we will explore the simultaneuous state of $X^{(1)},X^{(2)},X^{(3)}$ starting from the boundary $\b \L$ and proceeding inwards.
That we have the desired ordering \eqref{eq:xiorder} will follow from inductively applying Lemma \ref{lemma:conFKG}. 
They key step will be to to ensure with high probability $X^{(1)}$ and $X^{(2)}$ agree exactly before the box $\L_k$ is reached, and so our coupling is as desired.

Let us now say a bit more about this key step. The plan will be to find a circuit encircling the annulus $\L_N \setminus \L_k$ which is entirely $(\w^+)^{(3)}$ open, which by Lemma \ref{lemma:dualonearm} will happen with high probability. Due to \eqref{eq:xiorder}, it must also be open in both $(\w^+)^{(1)}$ and $(\w^+)^{(3)}$. 
Then, we can argue that if such a circuit $\Gamma$ is discovered from the outside to be open in both $(\w^+)^{(1)}$ and $(\w^+)^{(2)}$, then, the law of $X^{(1)}$ and $X^{(2)}$ inside $\Gamma$ is precisely the same. 
Indeed, together with the Markov property of $\P_\L^+$, there is a \emph{decoupling} of the conditioning upon finding $\Gamma$ so that is now equivalent to $\L_n \leftrightarrow \Gamma$. 
Thus, the coupling can be completed whilst ensuring that $\w^{(1)}$ and $\w^{(2)}$ agree exactly inside $\Gamma$. 

We now spell out exactly how Lemma \ref{lemma:conFKG} can be used to explore the configurations whilst preserving the order. We focus our attention on just showing $X^{(2)} \geq X^{(3)}$ with the other case treated similarily. Suppose that we have explored the state of both $\tau^{(2)}$ and $\tau^{(3)}$ on a common set of vertices $S$ and also the state of $(\w^+)^{(2)}$ and $(\w^+)^{(3)}$ on a common set of edges $H$ with $H \subset E(S)$. As our inductive hypothesis, we may assume that so far the ordering is as desired,
\begin{equation}\label{eq:IH}
	\tau^{(2)}_S \geq \tau^{(3)}_S \quad\text{and}\quad (\w^+)^{(2)}_H \geq (\w^+)^{(3)}_H.
\end{equation} 
We will now sample the state of an edge $e \notin H$. For simplicitly, we will assume that already  $H \cup \left\{ e \right\} \subset E(S)$ (see Stage 2 of the algorithm otherwise).
To continue the coupling whilst maintaining $X^{(2)} \geq X^{(3)}$, we need to show that the conditional probability (given what we have observed so far) of the edge being open in $(\w^+)^{(2)}$ is at least that of it being open in $(\w^+)^{(3)}$. 
Indeed, we have
\begin{align}\label{eq:weone}
	\P_{\L}^+( \w^+_e =1  \mid \L_n \overset{\w^+}{\longleftrightarrow} \b\L, \tau_S &= \tau^{(2)}_S,\w^+_H =  (\w^+)^{(2)}_H) \nonumber \\
	\geq \P_{\L}^+( \w^+_e =1  \mid \tau_S &= \tau^{(2)}_S, \w^+_H = (\w^+)^{(2)}_H) \nonumber \\
	\geq \P_{\L}^+( \w^+_e =1  \mid \tau_S &= \tau^{(3)}_S, \w^+_H = (\w^+)^{(3)}_H). 
\end{align} 
Here, we have applied both aspects of Lemma \ref{lemma:conFKG} (using our assumption on $S$ and $H$). In the first inequality, we used that the conditional measure 
\[
	\P^+_{\L}( X \in \cdot  \mid \tau_S = \tau^{(2)}_S, \w^+_H = (\w^+)^{(2)}_H)
\] 
has the FKG property and that the events $\left\{ \w^+_e = 1 \right\}$ and $\{ \L_n \overset{\w^+}{\longleftrightarrow} \b\L \}$ are increasing. In the second, we applied the ordering of the conditional measures and \eqref{eq:IH}.

\begin{remark}
	The introduction of the third configuration and the stochastic domination is a particularly nice approach to circumvent the incomplete argument given in \cite[Lemma 2.9]{mag-field}, where it is claimed that one can directly order the two conditioned measures in different domains. We thank Christophe Garban for discussions on this point.  
\end{remark}

\begin{remark}
	As mentioned in \cite{hillairet2025short}, this strategy is only suited to the planar case where `open surfaces' which decouple the event exist with high probability. In dimension $d \geq 4$, it is possible to use the coupling in Definition \ref{def:two} and mixing results for random currents \cite{AizDC} to construct a version of the IIC (this time the limit $\P_{\Z^d}(\cdot \mid o \overset{\w}{\longleftrightarrow} x)$ as $|x| \to \infty$) in a similar manner to \cite{panis2024incipient}. 
\end{remark}

\begin{remark}[]\label{remark:altIIC}
	In the spirit of Remark \ref{remark:altFKG}, we now sketch an alternative approach. The event $\L_n \leftrightarrow \b \L$ must be realised by the configuration $\w^{++}$. In particular, it is an increasing event for the triple $(\sigma,\tilde{\sigma},\eta)$ whose law we do know to be monotonic (it is the product of three positive measures satisfying the FKG-lattice condition). Thus, we could instead simulatenously explore $(\sigma,\tilde{\sigma},\eta)$ in each of the three measures in an ordered fashion. The failure rate of the coupling is then controlled by the probability that $(\w^{++})^{(3)} \notin A(k,N)$ which is larger but can still be easily shown to decay polynomially.
\end{remark}

\begin{proof}[Proof of Proposition \ref{prop:IIC}]
	We now give the necessary details required to implement the strategy described above.
	We can consider only the task of coupling the laws on $(\tau,\w^+)$ and postpone sampling $\w^-$ to the end. 

	\noindent We work on an enlarged probability space endowed by an independent collection of random variables $(U_e)_{e \in E(\L)}, (U_v)_{v \in \L}$ taking values uniformly in $[0,1]$, and indexed by both each edge and vertex of $\L$. The entire coupling will be a deterministically constructed from this initial randomness. To ease notation we define $\mu^{(i)}$ to be the marginal law of each $X^{(i)}$ for each $1 \leq i \leq 3$. 
	To describe the state of the algorithm we will use a sequence $(S_t,H_t)_{t \geq 0}$, where $S_t \subset \L$ and $H_t \subset E(\L)$ for each $t \geq 0$. We explicitly set 
	\[
	S_0 = \b\L \quad \text{ and } \quad H_0 = E(S_0).
	\] 
	It will be important that we will always have $H_t \subset E(S_t)$. We now describe each step of the algorithm. At each stage, we will have discovered the state of $\tau^{(i)}$ only on $S_t$ and the state of $(\w^+)^{(i)}$ only on $H_t$. 

	\noindent Let $t \geq 0$. Each step of the algorithm will be split into three stages. The first stage will be to choose the next edge $e_{t+1}$ to add to $H_t$. We would like to do this in a manner so that we will first completely reveal the outermost $(\w^+)^{(3)}$-open circuit in $\L_N \setminus \L_n$ before entering the circuit. It is best if the reader first convinces themselves that this is certainly possible (one informal but natural strategy is to explore the clusters of $(\w^+)^{(3)}$ using the `right hand rule'), but we formalise one option below.

	\noindent \textbf{Stage 1.}  We first choose $e_{t+1}=uv \in E(\L) \setminus H_t$ (if the set is empty then the algorithm is considered over) so that at least one endpoint (say $u$) belongs to $S_t$ according to the following rule:
	We choose such an edge $e_{t+1}$ in any manner except for prioritising (i.e. always selecting first) any edge in which at least one endpoint of the dual edge $e_{t+1}^*$ is connected to $\L^* \setminus \L_N^*$ in the set 
	\[
		((\w^+)_{H_t}^{(3)})^* = \left\{ e^* \in E(\L^*) \mid e \in H_t, (\w^+)^{(3)}_{e} = 0 \right\}.
	\]
	Note that this condition is always satisified for any edge in $E(\L) \setminus E(\L_N)$. 
	We set $H_{t+1} = H_t \cup \left\{ e \right\} $.

	\noindent \textbf{Stage 2. } This stage is only relevant if the other endpoint $v$ of $e_{t+1}$ did not already belong to $S_t$, otherwise we can skip directly to the next stage. We set $S_{t+1} = S_t \cup \left\{ v \right\}$ and sample 
	\[
		\tau^{(i)} = 
		\begin{cases}
			+1 \quad\text{if}\quad U_{v} \leq \mu^{(i)}( \tau_v = +1 \mid \tau_{S_t} = \tau^{(i)}_{S_t}, \w^+_{H_t} = (\w^+)^{(i)}_{H_t}),\\
			-1 \quad\text{else.}
		\end{cases}
	\] 
	for each $1 \leq i \leq 3$. We now see that given $\tau^{(j)}_{S_t} \geq \tau^{(3)}_{S_t}$ and $(\w^+)^{(j)}_{H_t} \geq (\w^+)^{(3)}_{H_t}$, then we must have
	\[
		\tau^{(j)}_v \geq \tau^{(3)}_v
	\] 
	for each $j \in \left\{ 1,2 \right\}$. Since $H_t \subset E(S_t)$, this follows from using Lemma \ref{lemma:conFKG} to argue as in \eqref{eq:weone}. 
	Note that if $j=1$ and $v \in \L \setminus \L_N$, then $\tau^{(1)}_v = +1$ and there is nothing to prove. 

	\noindent \textbf{Stage 3. } We set 
	\[
		(\w^+)^{(i)}_e = 
		\begin{cases}
			1 \quad\text{if}\quad U_e \leq \mu^{(i)}( \w^+_e = 1 \mid \tau_{S_{t+1}} = \tau^{(i)}_{S_{t+1}}, \w^+_{H_t} = (\w^+)^{(i)}_{H_{t}}),\\
		0 \quad\text{else.}
		\end{cases}
	\] 
	for each $1 \leq i \leq 3$. We know that $H_t \subset E(S_{t+1})$ because of Stage 2 and so again we may again argue as in \eqref{eq:weone} to deduce that 
	\[
		(\w^+)^{(j)}_e \geq (\w^+)^{(3)}_e. 
	\]
	As before, if $j = 1$ and $e \notin E(\L_N)$ then $(\w^+)^{(1)}_e = 1$ and there is nothing to prove.
	We can proceed to a new step of the algorithm, safe in the knowledge that we still have the ordering 
	\[
		(\tau^{(j)}_{S_{t+1}}, (\w^+)^{(j)}_{H_{t+1}}) \geq (\tau^{(3)}_{S_{t+1}}, (\w^+)^{(3)}_{H_{t+1}}).
	\] 
	
	\noindent We suppose now that the algorithm has finised. Firstly, that each $X^{(i)}$ has the desired marginal may be easily checked (see \cite[Lemma 2.1]{duminil2019sharp} for example) and so we have constructed a bona fide coupling. We now consider whether the coupling has succeeded or not. We introduce the event $G$ that there exists $\Gamma$ encircling the annulus $\L_N \setminus \L_n$ and $\Gamma$ is entirely $(\w^+)^{(3)}$ open. For the outermost such circuit $\Gamma$, there must exist a time $T \geq 0$ such that $\Gamma \subset H_{T}$ but
	\[
		V(\overline{\Gamma}) \cap S_T = V(\Gamma) \quad \text{ and }\quad E(\overline{\G}) \cap H_T = \Gamma.
	\] 
	That is, at time $T$ the algorithm had not explored the state of $\tau^{(i)}$ (and so $(\w^+)^{(i)}$) inside the circuit $\Gamma$. 
	If the circuit is $(\w^+)^{(3)}$ open then it is also open in $(\w^+)^{(1)}$ and $(\w^+)^{(2)}$.
	As argued before, it must be the case that the conditional law of \emph{both} $X^{(2)}$ and $X^{(3)}$ in $ \overline{\G}$ given the exploration at time $T$ is exactly
	\[
		\P^+_{ \overline{\G}}( \cdot \mid \L_n \overset{\w^+}{\longleftrightarrow} \G).
	\] 
	As explained before, here we have used the Markov property of the unconditioned measure $\P_G^+$ and that the existence of an open circuit $\G$ decouples the conditioning in both $\mu^{(1)}$ and $\mu^{(2)}$. Therefore, it follows (since we the random variables $U$ are common for each configuration) that we have
	\[
		X^{(1)}|_{ \overline{\G}} = X^{(2)}|_{ \overline{\G}}.
	\] 
	At the end of the algorithm, we may consider (for each configuration) the law of $\w^-$ given $X = (\tau,\w^+)$. This is simply the random-cluster measure $\phi_{\xi^-(\tau),2J}$, and so it follows immediately that on the event $G$, the conditional distributions of $(\w^-)^{(1)}$ and $(\w^-)^{(2)}$ (when projected onto $\L_k$) are identical. Therefore, they too may be sampled to coincide. 

	\noindent It only remains to say that the probability the event $G$ does not occur is given by 
	\[
		\P\left( (\w^+)^{(3)} \not\in A(k,N) \right) =  \P^+_{\L}(  \w^+ \not\in A(k,N)),
	\] 
whence we may apply the bound in Lemma \ref{lemma:dualonearm} to conclude the proof of the Theorem, noting that our coupling has all the desired properties.
\end{proof}

\section{Convergence of cluster area measures}\label{sec:conv-meas}
\def\Cdp{\mathfrak{C}^\de(\rho')}
\def\C{\mathcal{C}}
\def\Cp{\mathfrak{C}(\rho')}
\def\Cd{\C^\de}
\def\mudC{\mu_\C^\de}
\def\Q{\mathcal{Q}}
\def\A{\mathcal{A}}
\def\Ade{\mathcal{A}^\de_\eps}
\def\Ader{\mathcal{A}^\de_{\eps,r}}
\def\Cdk{\mathfrak{C}^\de_N}
\def\CCd{\mathfrak{C}^\de}
\def\N{N}
\def\Pd{\P_{\de}}
\def\Ed{\E_{\de}}
\def\ot{\frac{1}{3}}
\def\th{\frac{2}{3}}

Let $D$ be a Jordan domain. Let $\Dd\subset\Z^2$ be a discrete domain approximation of $D$ as defined in \eqref{eq:dom-approx}. For instance, a natural choice is to take $\Dd$ to be the induced subgraph of $\de\Z^2$ where every vertex corresponds to a dual face intersecting $D$. Throughout this section, we assume the domains have non-fractal boundary in the following sense:
\begin{definition}\label{def:non-fractal}
	A Jordan domain $D$ is said to be non-fractal if the number of $\eps$-boxes (i.e. squares of sidelength $\eps$) intersecting $\partial D$ is of order $O(\eps^{-1})$.
\end{definition}
\noindent Actually, the arguments we give can be changed only slightly to work with boundary dimensions strictly less $2 - 1 / 8$, which is optimal, but not necessary for our purposes.

\noindent Moreover, we only consider critical coupling constants $J_c$ as in \eqref{eq:critical-Ising} and $+$ boundary conditions, using the shorthand notation
	\[
	\P_\de\equiv\P_{\Dd}^+.
	\]
We explain the case of free boundary conditions in Remark \ref{rem:free-bc} and the case of fractal domains in Remark \ref{rem:fractal-bdy}. Recall that the discrete IMF is defined as 
	\begin{equation}\label{eq:def-ising-delta}
		\IMF_\de := \de^{2-1/8}\sum_{v\in\Dd}\sigma^\de_v\boldsymbol{\updelta}_v, 
	\end{equation}
where $\boldsymbol{\updelta}_v$ is the Dirac mass at $v$, and the field should be interpreted either as a measure or as a distribution in some negative-index Sobolev space (see Appendix \ref{app:spaces} for some background). Theorem \ref{thm:main-ES} allows us to rewrite the IMF as\footnote{Recall that the boundary cluster is deterministically assigned the label $\xi=+1$.}
	\[
		\IMF_\de = \sum_{\C^\de}\xi_{\C^\de}\mu^\de_{\C^\de},
	\]
where the sum is over all clusters $\C^\de$ of $\omega_\de$ on $\Dd$ and
	\[
		\mu^\de_{\C^\de} = \de^{2-1/8}\sum_{v\in\C^\de}\boldsymbol{\updelta}_v
	\]
is the (renormalized) discrete ``area'' measure of the cluster $\C^\de$. Abusing notation, we also write $\C^\de$ for the $\de$-enlargement of the cluster, i.e. the closed subset of $D$ obtained as the union of all dual faces corresponding to the vertices belonging to the cluster. This allows us to view clusters as random variables taking values in the collection of all closed subsets of $D$, which we can equip with the standard Hausdorff metric and the associated
Borel $\sigma$-algebra. In particular, we can write $\F_{\C}$ for the $\sigma$-algebra generated by some cluster $\C$. We refer to Appendices \ref{app:spaces}-\ref{app:conv-ext} for more details. Finally, we fix an ordering of the clusters by decreasing size of their diameter, indexing them as elements of the collection $(\C^\de_k)_{k\geq0}$. 

The goal of this section is to prove the following theorem.

\begin{theorem}[Convergence of Area Measures]\label{thm:convergencearea} 
	Let $D$ be any Jordan domain with non-fractal boundary. Fix $k \geq 0$. As $\delta\to0$, 
	\[
		(\C^\de_k, \mu^\de_{\C^\de_k}) \overset{(d)}{\longrightarrow}(\C_k, \mu_{\C_k}),
	\]
	where the clusters converges with respect to the Hausdorff metric and the measures converge weakly. Moreover, the limit measure $\mu_{\C_k}$ is a measurable function of $\F_{\C_k}$. 
\end{theorem}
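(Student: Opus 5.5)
The plan is to follow the $L^2$ box-counting strategy of \cite{GPS, mag-field}, with the key discrete estimate (Theorem \ref{thm:boxcount}, announced in the outline) as the central input. Fix a mesoscopic scale $\eps>0$ and tile $D$ by the boxes $Q$ of $\eps\Z^2$. For a discrete cluster $\C^\de$, set
\[
\A^\de_\eps(\C^\de)[K] := \beta\,\eps^{2-1/8}\, \#\{Q\subset K : Q\cap \C^\de\neq\emptyset\},
\]
a quantity that depends only on the coarse-grained shape of $\C^\de$ at scale $\eps$. The main discrete claim is an $L^2$ estimate, uniform in $\de$:
\[
\E_\de\Big[\big(\mu^\de_{\C^\de_k}[K]-\A^\de_\eps(\C^\de_k)[K]\big)^2\Big]\le C\,\eps^{c}
\]
for some $c>0$, and similarly with $K$ replaced by smooth test functions.

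To prove it, I will expand the square as a double sum over pairs of boxes $Q,Q'$. For each box I compare $\de^{2-1/8}|\C^\de\cap Q|$ with $\beta\eps^{2-1/8}\mathbf 1(\C^\de\cap Q\ne\emptyset)$.

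First, conditionally on $Q$ being reached by a macroscopic cluster, Proposition \ref{prop:IIC} together with Lemma \ref{lemma:Xnestimate} shows that the conditional mean of $|\C^\de\cap Q|$ is $(\beta+o(1))(\eps/\de)^{2-1/8}$. This is how $\beta$ is defined, as the limit from Lemma \ref{lemma:Xnestimate}. The error is polynomial, coming from the dual one-arm exponent $\alpha$ of Lemma \ref{lemma:dualonearm}.

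Second, for the off-diagonal terms with $Q,Q'$ far apart, I will use the IIC coupling inside annuli around $Q$ and $Q'$. Conditionally on the outermost $\w^+$-circuits around each box, the inner configurations decouple by the Markov property. This turns the covariance of the two errors into a product of two small conditional errors, times $\P(Q\leftrightarrow Q')$. The connection probability is controlled through Proposition \ref{prop:boxtobox} and Ising two-point bounds (Lemmas \ref{lemma:Ising1}, \ref{lemma:Ising2}).

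Near-diagonal pairs, and boxes close to $\partial D$ (handled using the non-fractal hypothesis), contribute $O(\eps^{c})$ via crude second moment bounds from \eqref{eq:sokalsitetosite}.

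A complication is that the relevant cluster must be the specific cluster $\C^\de_k$, not merely ``some cluster''. I will handle this by noting that boxes meeting $\C^\de_k$ whose local cluster inside the circuit is not the one connected to $\C^\de_k$ are excluded by the decoupling. Where decreasing events appear, for instance the absence of other connections, I will invoke Lemma \ref{lemma:decorrlemma}.

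With this estimate, the passage to the limit runs as follows. By Theorem \ref{thm:DRC} and duality, $\C^\de_k\to\C_k$ in the Hausdorff sense. Tightness of $\mu^\de_{\C^\de_k}$ follows from first moment bounds via Theorem \ref{thm:CHI1}. Along a subsequence, $(\C^\de_k,\mu^\de_{\C^\de_k})\to(\C_k,\mu)$ jointly.

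The functional $\C\mapsto\A_\eps(\C)[K]$ is not continuous in the Hausdorff topology, but it is almost surely continuous at the limit. A box that the limiting cluster only touches boundary-tangentially has probability zero, by translating the grid randomly or by the RSW and arm estimates of Section \ref{sec:RSW}. So $\A^\de_\eps(\C^\de_k)\to\A_\eps(\C_k)$ in law jointly, and the uniform $L^2$ bound passes to the limit:
\[
\E[(\mu[K]-\A_\eps(\C_k)[K])^2]\le C\eps^c .
\]
Hence $\mu[K]=\lim_{\eps\to0}\A_\eps(\C_k)[K]$ in $L^2$, which is $\F_{\C_k}$-measurable. This identifies the subsequential limit uniquely and gives convergence of the whole sequence.

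I expect the main obstacle to be the off-diagonal decorrelation in the second moment. It must be done without FKG for $\w$, using only the $(\tau,\w^+,-\w^-)$ structure of Proposition \ref{prop:FKG} and Lemma \ref{lemma:conFKG}. It also requires controlling that conditioning on a connection $Q\leftrightarrow Q'$ at macroscopic distance still leaves the local picture near $Q$ close to the IIC. For this I would first try to mimic the circuit-exploration coupling of Proposition \ref{prop:IIC} in two boxes simultaneously, paying the polynomial price $(\eps/|Q-Q'|)^{\alpha}$.
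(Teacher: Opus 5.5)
Your architecture is the paper's: a single-scale box-counting $L^2$ approximation (Theorem~\ref{thm:boxcount}), the IIC to fix the conditional mass in a hit box, open circuits and the Markov property to decouple distant boxes, and Lemma~\ref{lemma:decorrlemma} for the failure of local uniqueness. It also matches on a.s.\ convergence of the indicators via non-tangency and identification of subsequential limits through $\mathcal{C}_k$-measurability. The genuine gap is the normalising constant. Lemma~\ref{lemma:Xnestimate} only says that, for a \emph{fixed} inner scale $n$, the quantity $b(n,N):=n^{1/8-2}\mathbb{E}_{\Lambda_N}[|\mathcal{C}_0\cap\Lambda_n|\mid\Lambda_n\leftrightarrow\partial\Lambda_N]$ is Cauchy in the outer scale $N$, uniformly in $n$. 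It says nothing about $b(n):=\lim_N b(n,N)$ as $n\to\infty$, and in your scheme $n=\varepsilon/\delta\to\infty$. So your IIC step really produces the $\delta$-dependent prefactor $b(\varepsilon/\delta)$, not a constant $\beta$. Nothing among RSW, the dual one-arm bound, the IIC coupling, cluster convergence and the Ising upper bounds rules out, say, a log-periodic oscillation $b(n)\approx 1+c\sin(2\pi\log_2 n)$. With such an oscillation, the subsequences $\delta_j=2^{-j}$ and $\delta_j=2^{-j-1/4}$ would give two subsequential limits differing by a constant factor, each $\mathcal{C}_k$-measurable. Hence neither the convergence of your coarse approximant as $\delta\to0$ nor uniqueness of the limit law follows.

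What is missing is an exact input, which the paper supplies in Proposition~\ref{prop:betalimit}. Write $b(n,kn)$ as a ratio. The numerator is the renormalised one-point function $n^{1/8-2}\sum_{x\in\Lambda_n}\mathbb{E}^+_{\Lambda_{kn}}[\sigma_x]$, which converges as $n\to\infty$ by Theorem~\ref{thm:CHI1}. The denominator is the crossing probability $\mathbb{P}_{\Lambda_{kn}}(\Lambda_n\leftrightarrow\partial\Lambda_{kn})$, which converges to the probability that a $\mathrm{CLE}_4$ carpet in the square of side $k$ meets the unit square (Theorem~\ref{thm:DRC} plus non-tangency). Then combine with the uniformity in $n$ from Lemma~\ref{lemma:Xnestimate} via Moore--Osgood. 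You use Theorem~\ref{thm:CHI1} only for tightness, not for this; in fact, near $\partial D$ it is the upper bound of Proposition~\ref{lemma:Ising0} that controls the mass. Relatedly, an $L^2$ bound $C\varepsilon^{c}$ uniform in $\delta$ cannot hold. For $\delta$ comparable to $\varepsilon$ the error does not vanish with $\varepsilon$, and no polynomial rate is available because the $n$-limit above is non-quantitative. What holds, and suffices after a Skorokhod coupling and Fatou, is that $\lim_{\varepsilon\to0}\limsup_{\delta\to0}$ of the error is $0$.

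Smaller points. The decoupling circuits must be $\omega$-open circuits of either $\tau$-sign, not $\omega^+$-circuits. For $k\geq1$, $\mathcal{C}_k$ may be an $\omega^-$-cluster, and an $\omega^+$-circuit around the box would cut it off. With $\omega$-circuits the failure event is decreasing, and Lemma~\ref{lemma:decorrlemma} bounds its contribution by $2\,\mathbb{P}(\omega^-\notin U_A)$ times a bounded sum. This is why Lemma~\ref{lemma:dualonearm} is needed for $\omega^-$ too.

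The two-box IIC coupling you single out as the main obstacle is unnecessary. For well-separated boxes the product of errors vanishes unless $\mathcal{C}^\delta_k$ crosses both circuits. The event $\{\Gamma_A\leftrightarrow\Gamma_B\}$ and the identity of $\mathcal{C}^\delta_k$ are measurable with respect to the configuration outside both circuits. By the Markov property the insides are then independent with laws $\mathbb{P}^+_{\overline{\Gamma_A}}$ and $\mathbb{P}^+_{\overline{\Gamma_B}}$, so the one-box estimate (Proposition~\ref{prop:IIC}, valid in arbitrary outer domains) applies in each.

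Finally, your random grid-shift argument for non-tangency is a valid and more elementary substitute for Lemma~\ref{lem:SLE4-touching}. The shifts $s$ for which a compact $C$ is tangent to $A+s$ form $\partial(C-A^\circ)$. This lies in a level set of an $\ell^\infty$-distance function, hence is Lebesgue-null, and Fubini gives one deterministic shift that is good for all dyadic boxes simultaneously.
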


The first step is to prove the existence of subsequential limits for the measures. 

\begin{prop}\label{prop:tight}
	Let $D$ be a Jordan domain with non-fractal boundary. Fix $k \geq 0$. For every $\eps>0$, there exists a compact subset $K\subset D$ such that
	\[
		\limsup_{\de\to0}\E[ \mu^\de_{\C^\de_k}(D\setminus K)^2 ]< \eps.
	\] 
	In particular, there exists a subsequential weak limit in distribution.
\end{prop}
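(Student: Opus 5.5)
The plan is to dominate the mass of any single cluster near $\partial D$ by the total mass of \emph{all} clusters there, and to control that total through Ising two-point functions. Since $\mu^\de_{\C^\de_k}(D\setminus K) \le \de^{2-1/8}\,\#\{v\in\Dd\setminus K\}$ is too crude, I will instead use
\[
\mu^\de_{\C^\de_k}(D\setminus K)^2 \le \de^{4-1/4}\sum_{x,y\in \Dd\setminus K}\i(x\overset{\w}{\longleftrightarrow} y),
\]
which holds because both $x$ and $y$ belong to $\C^\de_k$ only when they are connected. Taking expectations and applying \eqref{eq:sokalsitetosite}, we get
\[
\E[\mu^\de_{\C^\de_k}(D\setminus K)^2]\le \de^{4-1/4}\sum_{x,y\in\Dd\setminus K}\E^+_{\Dd}[\sigma_x\sigma_y].
\]
The problem is thereby reduced to a pure critical Ising estimate, uniform in $\de$, for $+$ boundary conditions.

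To bound the two-point function I will use monotonicity in boundary conditions and domain. Since $+$ boundary conditions on $\Dd$ are dominated by $+$ boundary conditions on a box $x+\L_{r}$ with $r=d(x,y)/2$ or $r=d(x,\partial\Dd)$, GKS/FKG give
\[
\E^+_{\Dd}[\sigma_x\sigma_y]\lesssim \min\bigl(1,(\de/|x-y|)^{1/4}\bigr)
\]
up to constants. More precisely, I will use the standard bound $\E^+[\sigma_x\sigma_y]\le \E^+_{x+\L_r}[\sigma_x]\,\E^+_{y+\L_r}[\sigma_y]\lesssim (\de/r)^{1/8}(\de/r)^{1/8}$. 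The estimates I take as input are the one-arm bound $\E^+_{\L_n}[\sigma_0]\asymp n^{-1/8}$ and its summed versions, Lemmas \ref{lemma:Ising1} and \ref{lemma:Ising2}. With this, the double sum is bounded by $\de^{4-1/4}\sum_{x}\sum_{y}(\de/|x-y|)^{1/4}$. The inner sum over $y\in\Dd$ is $O(\de^{-2}\cdot \de^{1/4})$, since $|x-y|^{-1/4}$ is integrable. The whole expression is therefore $\lesssim \de^{2}\#\{x\in\Dd\setminus K\}\cdot O(1)\lesssim \mathrm{Leb}(D\setminus K^{\de})$, where $K^\de$ denotes a slight shrinking of $K$.

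I then choose $K=\{z\in D: d(z,\partial D)\ge \eta\}$. By the non-fractal assumption (Definition \ref{def:non-fractal}), $\mathrm{Leb}(D\setminus K)\lesssim \eta$, and this is $<\eps$ for small $\eta$, uniformly in $\de$. Some care is needed near $\partial D$, where discrete boxes in $\Dd$ may be cut by the boundary. This is harmless because the bound only shrinks as the domain shrinks under $+$ boundary conditions. I could sharpen the estimate using the one-point decay $(\de/d(x,\partial D))^{1/8}$, but the crude bound already suffices.

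Tightness then follows from the same estimate with $K$ replaced by all of $D$: $\E[\mu^\de_{\C^\de_k}(D)^2]$ is bounded uniformly in $\de$, so the total masses are tight. Combined with the mass-at-the-boundary control just proved, this gives tightness of the measures in the weak topology on finite measures on $\bar D$, with no mass escaping to $\partial D$. The clusters $\C^\de_k$ are tight in the Hausdorff metric because all of them are closed subsets of the compact set $\bar D$. Prokhorov's theorem then yields a joint subsequential limit in distribution. The main obstacle I expect is the uniform two-point bound near a possibly irregular boundary. I would handle it by comparison with full-plane or box correlations via GKS monotonicity, which only loosens the bound.
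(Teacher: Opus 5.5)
Your reduction is the same as the paper's. You bound $\mu^\delta_{\mathcal{C}^\delta_k}(D\setminus K)^2$ by $\delta^{4-1/4}\sum_{x,y}\mathbf{1}(x\leftrightarrow y)$, pass to $\mathbb{E}^+_{D_\delta}[\sigma_x\sigma_y]$ via \eqref{eq:sokalsitetosite}, and then need a near-boundary two-point estimate. The paper finishes by citing Lemma~\ref{lemma:Ising2}. You instead derive the estimate by hand, and that step is wrong. The bound
\[
\mathbb{E}^+_{D_\delta}[\sigma_x\sigma_y]\lesssim\min\bigl(1,(\delta/|x-y|)^{1/4}\bigr)
\]
fails precisely in the region this proposition concerns. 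Under $+$ boundary conditions, correlations \emph{grow} as the points approach $\partial D_\delta$:
\begin{itemize}
\item Boundary spins are frozen to $+1$, so $\mathbb{E}^+[\sigma_x\sigma_y]=1$ for $x,y\in\partial D_\delta$, at any distance.
\item More generally, FKG gives $\mathbb{E}^+[\sigma_x\sigma_y]\geq\mathbb{E}^+[\sigma_x]\,\mathbb{E}^+[\sigma_y]$, which does not decay in $|x-y|$ at all.
\end{itemize}
The comparison with $+$ boundary conditions on $x+\Lambda_r$ is legitimate only when the box lies inside $D_\delta$, i.e.\ $r\le d(x,\partial D_\delta)$. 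Your justification for boxes cut by the boundary (``the bound only shrinks as the domain shrinks under $+$ boundary conditions'') has the monotonicity backwards. Shrinking a domain with $+$ boundary conditions moves the $+$ spins closer, which increases correlations.

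The repair is to take $r_x=\tfrac12|x-y|\wedge d(x,\partial D_\delta)$ and $r_y$ analogously, which gives Proposition~\ref{lemma:Ising0}(ii). The extra singularity $d(x,\partial D)^{-1/8}$ is integrable. The non-fractal hypothesis gives $\mathrm{Leb}\{z: d(z,\partial D)\le t\}\lesssim t$, hence $\int_{\{d(x,\partial D)\le\eta\}}d(x,\partial D)^{-1/8}\,dx\lesssim\eta^{7/8}$. Consequently the sum over $x\in D_\delta\setminus K$, $y\in D_\delta$ is $O(\eta^{7/8})$, not the $O(\mathrm{Leb}(D\setminus K))=O(\eta)$ you claim. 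This is exactly Lemma~\ref{lemma:Ising2}, which you list among your inputs but never actually apply; applying it directly is the paper's proof. With that substitution, the rest of your argument is fine: the uniform second moment of the total mass from Lemma~\ref{lemma:Ising1}, compactness of closed subsets of $\bar D$ in the Hausdorff metric, and Prokhorov.
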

\begin{proof}
	Choose any $K$ such that $d(K, \partial\D)\geq\eps>0$. A straightforward expansion gives that
	\begin{align*}
		\Ed [ \mu^\de_{\C^\de_k}(D\setminus K)^2 ]  &= \de^{4 - 1 / 4} \sum_{x,y \in \Dd\setminus K} \Pd(x \overset{\C^\de_k}{\longleftrightarrow} y) \\
		&\leq  \de^{4 - 1 / 4} \sum_{x,y \in \Dd\setminus K} \Pd(x \overset{\omega^\delta}{\longleftrightarrow} y)  \\
		& \leq \de^{4 - 1 / 4} \sum_{\substack{x,y \in \Dd \\ \d(x,\b\Dd), d(y, \b\Dd) \leq \eps}}\E^\de[\sigma_x\sigma_y],
	\end{align*} 
	whence we can use Lemma \ref{lemma:Ising2} to see that
	\[
		\limsup_{\de\to0}\Ed [ \mu^\de_{\C^\de_k}(D\setminus K)^2 ] = O(\eps^{2-1/4}).
	\]
\end{proof}

\def\NN{\mathbb{N}}

\subsection{Outline and approximation scheme}\label{sec:outline}

The proof of Theorem \ref{thm:convergencearea} is strongly inspired by the $L^2$-approximation argument introduced in \cite{GPS}, which was motivated by the study of the scaling of measures supported on the set of pivotal points of Bernoulli percolation. This same approach was used throughout \cite{mag-field, CME} to prove an analagous geometric decomposition of the IMF in terms of measures supported on the clusters of a nested CLE$_{16/3}$. However, the proof presented here features some key differences to \cite{mag-field, CME} which lead to an improved understanding of the limiting measures. We take the time now to highlight these differences.

In \cite{mag-field}, following the \cite{GPS}, the main idea is to introduce the notion of an \emph{annulus measure}. For a given annulus, this is the (appropriately renormalised) number of points inside the inner face of the annulus that are connected in $\w_\de$ to the exterior. The first step of the argument in \cite{mag-field} is to show that the annulus measure of a given annulus converges as $\delta \to 0$ and that the limit is measurable with respect to the scaling limit of crossing events\footnote{Note that in the paper \cite{mag-field} they work with the Schramm-Smirnov crossing topology, whereas here we work with the Hausdorff metric on the clusters, viewed as closed sets.}
in $\w_\de$.

To do so, they tile the interior of the annulus into boxes of size $\eps_1$, with $\eps_1\gg\de$, and consider a coarse-grained variant of the annulus measure: the (appropriately renormalised) number of $\eps_1$-boxes which intersect a cluster crossing the annulus, which is then multiplied by a factor $\beta(\eps_1)$. We will discuss the relevance of this factor shortly, see \eqref{eq:beta-intu}.
Once it is proved that this is indeed a good approximation as $\de, \eps_1\to0$, since the approximant depends only on mesoscopic crossing events, it follows that the limit of the annulus measures must be measurable with respect to the limit of the crossing events. It also implies that the IMF is measurable with respect to crossing events in the scaling limit, but does not yet prove a decomposition of the IMF in terms of the clusters of CLE$_{16 / 3}$.

The existence of the decomposition\footnote{Note that in \cite{mag-field} the IMF in a bounded domain is defined as we do here. However, in \cite{CME} they consider only the full-plane IMF and then restrict to bounded domains.} was finalized in \cite{CME}. For a fixed cluster $\C^\de$, the cluster measure of a set $K$ is shown to be well-approximated by considering a tiling of the set $\C^\de \cap K$ into annuli of fixed ratio and size $\eps_2$, with $\eps_2\gg\eps_1\gg\de$, and then summing the annulus measure of each annulus in the tiling. This is the content of \cite[Proposition 8.4]{CME}. 
Using this approximation and the convergence of annulus measures in \cite{mag-field}, they prove that the cluster measure itself does indeed converge and that the limit is measurable with respect to the full scaling limit of crossing events.

Our approach \emph{does not use annulus measures}. Instead, we take a more direct route. We consider only \emph{one} mesoscopic scale $\eps\gg\de$ and tile the plane into $\eps$-boxes. Given a cluster $\C^\de$ and a set $K$, we approximate its cluster measure by just the (appropriately renormalised) number of $\eps$-boxes intersecting $\C^\de \cap K$, multiplied by a factor $\beta$. We use the convergence of each cluster in the Hausdorff metric to show that this approximant converges exactly to the corresponding number of $\eps$-boxes intersecting $\C\cap K$, which is clearly measurable with respect to the limiting cluster. This is the first upshot of our approach:
\begin{itemize}
	\item We prove that the continuum are measure $\mu_\C$ is measurable with respect to $\C$ itself, not just the scaling limit of the entire collection of clusters.
\end{itemize}
This is crucial in our identification of the cluster measure as the measures constructed in \cite{CLE-meas}.
We are able to prove this because our approximant depends directly on the cluster $\C$ itself, whereas with annulus measures the underlying event is a crossing event that must be defined without reference to the cluster. 
Another improvement is the following:
\begin{itemize}
	\item We give an explicit expression for the limiting measure (recall Theorem \ref{thm:unique-meas}). 
\end{itemize}
One reason for this is that the factor $\beta$ does not depend on the scale $\eps$, although it is possible that \cite[Lemma 4.7]{GPS} could be adapted to show that $\beta(\eps)$ converges as $\eps \to 0$, which would be in effect the same. Here, we give a new proof for the existence of $\beta$ (Proposition \ref{prop:betalimit}), coming as a simple combination of the existence of the IIC measure and the convergence of single-spin correlation functions. To prove the existence of a corresponding $\beta(\eps)$ in \cite[Equation (2.23)]{mag-field}, they must adapt the `ratio-limit theorem' approach of \cite{GPS} and rely on half-plane exponents for the FK-Ising model.


Finally, we believe that our direct approach leads to the argument being simpler at many points. As evidence for this, we do not rely on any quasi-multiplicavity estimates for arm events, which are used in both \cite{mag-field, CME}. At various stages of the $L^2$ estimate, we also considerably simplify the analysis -- for instance, in handling boxes which do not satisfy a certain local uniqueness event we utilise the simple correlation inequality in Lemma \ref{lemma:decorrlemma}. 

Let us now give some more details for the $L^2$-approximation we use, especially hoping to shed some light on the crucial role of $\beta$. To simplify the exposition, we focus on applying the strategy to study the size of the boundary cluster. 
Namely, we look at
\[
	X = X^{\de} = \de^{2 - 1 / 8} |\C^\de_0|.
\]
For some $\eps\gg\de$, fix a tiling of the plane into $\eps$-boxes and decompose
\[
	X^{\de} = \de^{2 - 1 / 8} \sum_{A^\eps} |\C^\de_0 \cap A^\eps|,
\] 
where the sum runs over all boxes in our tiling. The coarse-grained approximant, which only depends on the large-scale geometry of the cluster $\C^\de_0$, is given by the number of $\eps$-boxes it intersects. That is, we set
\[
	Y = Y^{\de,\eps} = \beta \eps^{2 - 1 / 8} \sum_{A^\eps} \i(A^\eps \cap \C^\de_0),
\]
where we also scale by a fixed positive constant $\beta$. See the start of Section \ref{sec:mainest} for a discussion on why the right scaling is $\de^{2 - 1 / 8}$ and $\eps^{2 - 1/8}$. Once we show that
\[
	X^{\de} \approx Y^{\de,\eps}
\]
as $\de \to 0$ and then $\eps \to 0$, we can lift the convergence as $\de\to0$ of $Y^{\de,\eps}$, obtained from the convergence of cluster $\C^\de_0$, in order to show convergence of $X^{\de}$. 

The choice of the constant $\beta$ is crucial in order to ensure that $X$ and $Y$ are close. The obvious starting point is to choose it so that the first moments of $X$ and $Y$ coincide. That is, we would like that
\begin{equation}\label{eq:beta-intu}
	\de^{2 - 1 / 8} \E_\de\left[ |\C^\de_0 \cap A^\eps| \mid \C^\de_0 \cap A^\eps \neq \emptyset \right]  = (1 + o(1)) \beta,
\end{equation} 
for a `typical' box $A^\eps$ in our tiling. Again, we stress that this choice will come naturally given by the IIC construction in Section~\ref{sec:IIC}, see Proposition \ref{prop:betalimit}. With this choice in hand, one aims to show that
\[
	\E_\de\left[ (X^{\de} - Y^{\de,\eps})^2 \right] = o(1).
\] 
Of course, this is the heart of the proof, and the strategy will be to decompose
\[
	X^{\de} - Y^{\de,\eps} = \sum_{A^\eps} (\de^{2 - 1 / 8} |\C^\de_0 \cap A^\eps| - \beta \eps^{2 - 1 / 8} \i(\C^\de_0 \cap A^\eps) ),
\] 
and show that the terms corresponding to boxes which are far away from each other are approximately independent. Here we  will use Proposition \ref{prop:IIC} again to deduce this spatial decorrelation, arguing that the behaviour of large clusters will concentrate sharply on a mesoscopic scale around the IIC measure. 

Before rigorously setting up the approximation scheme, we deduce the correct parameter $\beta$. For convenience during the proof, we do not yet consider mesh size $\de$, but one can of course rescale everything appropriately thereafter. 

\begin{prop}[Existence of $\beta$]\label{prop:betalimit}
	Let $m \geq n \geq 1$. The limit 
	\[
		\beta = \lim_{\substack{m / n \to \infty, n \to \infty}} n^{1 / 8 - 2} \E_{\L_m}\left[ |\C_0 \cap \L_n| \big| \L_n \overset{\omega}{\longleftrightarrow} \L_m \right] 
	\] 
	exists and lies in $(0,\infty)$.
\end{prop}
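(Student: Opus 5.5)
Write $a(n,m) := n^{1/8-2}\,\E_{\L_m}\big[|\C_0\cap\L_n| \,\big|\, \L_n\overset{\w}{\longleftrightarrow}\b\L_m\big]$. The plan is to combine two limits. First, send $n\to\infty$ with the ratio $r=m/n$ fixed, using the scaling limit of the boundary cluster together with Theorem~\ref{thm:CHI1}; this produces a function $b(r)$. Second, use Lemma~\ref{lemma:Xnestimate}, which is uniform in $n$, to show that $b(r)$ is Cauchy as $r\to\infty$. The double limit then follows from the triangle inequality.

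\textbf{Step 1 (fixed ratio).} Write numerator and denominator separately. Since $\{x\overset{\w}{\longleftrightarrow}\b\L_m\}\subset\{\L_n\overset{\w}{\longleftrightarrow}\b\L_m\}$ for $x\in\L_n$, the Edwards--Sokal identity gives
\[
\E_{\L_m}\big[|\C_0\cap\L_n|\,\mathbf 1(\L_n\leftrightarrow\b\L_m)\big]=\sum_{x\in\L_n}\E^+_{\L_m}[\sigma_x].
\]
Rescale by $\de=1/m$, so that $\L_m$ approximates the unit square $Q$ and $\L_n$ approximates the square $Q_{1/r}$ of side $1/r$. Theorem~\ref{thm:CHI1} is uniform on $Q_{1/r}$, which is at positive distance from $\b Q$, so the sum equals
\[
(1+o(1))\,n^2 m^{-1/8}\,\mathfrak C\,2^{1/4} r^2\int_{Q_{1/r}}\CR(z,Q)^{-1/8}\,dz .
\]
For the denominator, $\P(\L_n\leftrightarrow\b\L_m)=\P(\C_0^\de\cap \bar Q_{1/r}\neq\emptyset)$. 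By the Hausdorff convergence of $\C_0^\de$ in Theorem~\ref{thm:DRC} (see Appendix~\ref{app:conv-ext}), this converges to $p(r):=\P(\C_0\cap\bar Q_{1/r}\neq\emptyset)$ for the CLE$_4$-carpet $\C_0$ in $Q$, provided $\P(\C_0\cap \b Q_{1/r}\ne\emptyset,\ \C_0\cap Q_{1/r}^{\circ}=\emptyset)=0$. I would obtain this either from monotonicity in the side length, which leaves at most countably many bad side lengths (and since $r$ only enters through a limit, we may restrict to good $r$), or more robustly from the fact that carpet points are a.s.\ not isolated from the interior.

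Combining the two asymptotics, $a(n,rn)\to b(r)$ as $n\to\infty$, where $b(r)$ is an explicit constant times $r^{-1/8}\big(r^2\!\int_{Q_{1/r}}\CR^{-1/8}\big)/p(r)$.

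\textbf{Step 2 (Cauchy in $r$).} Lemma~\ref{lemma:Xnestimate} gives, for $r'\ge r$ and all $n$,
\[
|a(n,r'n)-a(n,rn)|\le C r^{-\alpha/2},
\]
with the normalisation $n^{1/8-2}$ absorbing the factor $n^{2-1/8}$. Letting $n\to\infty$ yields $|b(r')-b(r)|\le Cr^{-\alpha/2}$. Hence $\beta:=\lim_{r\to\infty}b(r)$ exists. For general $n\to\infty$ and $m/n\to\infty$, choose a large fixed $r$ and write
\[
|a(n,m)-\beta|\le|a(n,m)-a(n,rn)|+|a(n,rn)-b(r)|+|b(r)-\beta| .
\]
The first and third terms are at most $Cr^{-\alpha/2}$ uniformly, and the middle term vanishes as $n\to\infty$. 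One technical point: Step 1 needs $a(n,rn)\to b(r)$ for $r$ in the good set, so the interpolation should use good values of $r$, which is harmless.

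\textbf{Step 3 ($0<\beta<\infty$).} For the upper bound, the argument in the proof of Lemma~\ref{lemma:Xnestimate} (the FKG comparison with wiring on $\b\L_n$, then Lemma~\ref{lemma:Ising1}) gives $a(n,m)^2\le n^{1/4-4}\E[|\C_0\cap\L_n|^2\mid\cdot\,]\lesssim1$. For the lower bound, condition on the outermost $\w^+$-circuit in $\L_{2n}\setminus\L_n$ as in Proposition~\ref{prop:boxtobox}, and use Lemma~\ref{lemma:onecircuit} together with Lemma~\ref{lemma:Ising2}. This gives $\E[|\C_0\cap\L_{2n}|\mid \L_n\leftrightarrow\b\L_m]\gtrsim n^{2-1/8}$. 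Comparing $\L_{2n}$ with $\L_n$ costs only a constant factor, handled by an RSW/box-covering argument or by replacing $n$ by $2n$ in the limit.

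I expect the main obstacle to be Step 1's denominator: passing from Hausdorff convergence of the cluster to convergence of the hitting probability $p(r)$, which requires excluding the boundary-touching degeneracy. The rest is bookkeeping with the already-established uniform estimates.
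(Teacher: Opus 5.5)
Your proposal is correct and follows the paper's proof: the paper likewise takes the $n\to\infty$ limit at fixed ratio $k=m/n$, using Theorem~\ref{thm:CHI1} for the numerator and convergence of the carpet-hitting probability for the denominator, then gets a Cauchy bound uniform in $n$ from Lemma~\ref{lemma:Xnestimate}, and combines the two via the Moore--Osgood theorem, which is exactly what your three-term triangle inequality does by hand. Your countable-exceptional-set argument for non-tangency is a pleasant elementary substitute for the SLE-based Lemma~\ref{lem:SLE4-touching}, and your Step~3 supplies the bound $0<\beta<\infty$ that the paper's proof does not spell out (for the lower bound, the cleanest way to pass from $\Lambda_{2n}$ to $\Lambda_n$ is to note that $\mathbb{E}_{\Lambda_m}[|\mathcal{C}_0\cap\Lambda_n|]\gtrsim\mathbb{E}_{\Lambda_m}[|\mathcal{C}_0\cap\Lambda_{2n}|]$ by bulk one-point estimates, while the quantity you bound from below is at most the latter and is divided by the same probability $\mathbb{P}(\Lambda_n\leftrightarrow\partial\Lambda_m)$).
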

\begin{proof}
	Define
	\[
		\beta(n,m) := n^{1 / 8 - 2} \E_{\L_m}\left[ |\C_0 \cap \L_n| \big| \L_n \overset{\omega}{\longleftrightarrow} \L_m \right]. 
	\] 
	Then, Proposition \ref{lemma:Xnestimate} is equivalent to the estimate 
	\begin{equation}\label{eq:betainequality}
		| \beta(n,m) - \beta(n,m') | \leq C \left(\frac{n}{m}\right)^{\alpha'},
	\end{equation} 
	for some constant $C > 0$ and $m' \geq m$. We first prove the relevant limit with the slightly modified version $\beta'(n,k) = \beta(n,kn)$ for $n,k \geq 1$. We will deduce this from two preliminary facts.
	\begin{enumerate}[(1)]
		\item The limit $k \to \infty$ of $\beta'(n,k)$ exists \emph{uniformly in $n$}.
	\end{enumerate}
	This is an immediate consequence of \eqref{eq:betainequality}.
	\begin{enumerate}[(1)]
		\setcounter{enumi}{1}
		\item For each $k \geq 1$, the limit $n\to\infty$ of $\beta'(n,k)$ exists. 
	\end{enumerate}
	To prove this, we can expand $\beta'(n,k)$ as
			\[
				\beta'(n,k) = \frac{ n^{1 / 8 - 2} \E_{\L_{kn}}\left[ |\C_0 \cap \L_n| \right] }{\P_{\L_{kn}}( \L_n {\longleftrightarrow} \partial\L_{kn})}  
			\] 
	The numerator is simply the renormalized one-point function of the Ising model, which converges as $n\to\infty$ by Theorem \ref{thm:CHI1} and a straightforward dominated convergence argument.
			A simple argument (see Lemma \ref{lemma:indicator-conv} and Appendix \ref{app:conv-ext} for details) using the convergence of the clusters of $\omega$ gives that the denominator also converges to the corresponding connection probability in the $\CLE_4$ carpet. That is,
			\[
				\lim_{n \to \infty} \P_{\L_{kn}}(\L_n {\longleftrightarrow}\b  \L_{kn}) =  \P_{S_k}\left( S_1 \cap \CLE_4 \neq \emptyset  \right),
			\] 
			where $\CLE_4$ denotes its carpet in the square domain $S_k$ of side-length $k \geq 1$.
	Using (1) and (2) together, we can apply the Moore-Osgood theorem to the double sequence $(\beta'(n,k))_{n,k \geq 1}$ to see that the limit
	\[
		\beta = \lim_{\substack{ n \to \infty \\ k \to \infty}} \beta'(n,k)
	\] 
	exists. Applying \eqref{eq:betainequality} once more, we deduce the identical limit for $\beta(n,m)$.
\end{proof}

\begin{remark}
	Convergence of the numerator is the only statement where we invoke Theorem~\ref{thm:CHI1}. Hence, as claimed, our arguments give a proof for the convergence of the discrete IMF that does not rely on the convergence of all $n$-point correlation functions. This is also the case in \cite{mag-field}.
\end{remark}

\subsection{Proof of the main estimate}\label{sec:mainest}
We rigorously prove the approximation described in the previous section. Let $\eps_n = 2^{-n}$ be the sequence of mesoscopic scales and let $\A\equiv\A_n$ be a tiling of the plane into boxes of side-length $\eps_n$, in such a way that the different approximations are compatible. We often identify each $A \in \A_n$ with its discrete counterpart $A \cap \Dd$, and assume that each $A \cap \Dd$ is identical (i.e. we ignore lattice effects). Moreover, we use the shorthand notation $A\cap \C^\de_k$ for the event that $A\cap\C^\de_k\neq\emptyset$. 

\begin{theorem}\label{thm:boxcount} Let $D$ be a non-fractal domain. Let $k \geq 0$. For any continuous function $f\in C(\bar D)$, we have that
	\[
		\lim_{n \to \infty} \limsup_{\de \to 0} \Ed \left[ \left( \mu^\de_{\C^\de_k}[f] - \beta\eps_n^{2 - 1 / 8} \sum_{A \in \A_n} f_A \i(A \cap \C^\de_k) \right)^2  \right] = 0,
	\]
	where $f_A$ is the average value of $f$ in the box $A$. 
\end{theorem}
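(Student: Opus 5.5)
The plan follows the $L^2$ scheme of \cite{GPS} outlined in Section~\ref{sec:outline}. Write $\C=\C^\de_k$ and $Z_A := \de^{2-1/8}\sum_{v\in A} f(v)\i(v\in\C) - \beta\eps_n^{2-1/8} f_A \i(A\cap\C)$. Then the quantity inside the expectation is $\sum_{A}Z_A$, and the second moment is $\sum_{A,A'}\E_\de[Z_AZ_{A'}]$. First, replace $f(v)$ by $f_A$ inside each box. By uniform continuity the resulting error is $o_n(1)\,\mu^\de_{\C}(D)$, and $\E_\de[\mu^\de_\C(D)^2]$ is bounded uniformly in $\de$ by the two-point bound of Lemma~\ref{lemma:Ising2}, exactly as in Proposition~\ref{prop:tight}. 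Second, use Proposition~\ref{prop:tight} and the non-fractal assumption to discard the boxes within distance $r$ of $\partial D$. Their contribution is controlled by $\sum_{x,y\text{ near }\partial D}\E[\sigma_x\sigma_y]$ for the $X$ part. For the $Y$ part, Proposition~\ref{prop:boxtobox} converts box-to-box connections into spin sums, so the same bound applies; this costs $O(r^{c})$ uniformly in $\de,n$. So it suffices to treat pairs of interior boxes, with $f_A$ bounded.

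For the pairs $(A,A')$ at distance at most $K\eps_n$ (the near-diagonal terms), I will bound $|\E[Z_AZ_{A'}]|$ by Cauchy--Schwarz. The diagonal moments satisfy $\E[Z_A^2]\lesssim \eps_n^{4-1/4}\P(A\leftrightarrow \partial D)$, using the two-point estimate for $|\C\cap A|^2$ together with the FKG property of the boundary cluster (or of $\w^+$ via \eqref{eq:primitive}), as in Lemma~\ref{lemma:Xnestimate}. Summing over $A$ gives $O(K^2\eps_n^{4-1/4}\eps_n^{-2}\eps_n^{1/8})=O(K^2\eps_n^{2-1/8})$, which vanishes. For far pairs, the key claim is
\[
\E_\de[Z_AZ_{A'}] \le C\Big(\tfrac{\eps_n}{d(A,A')}\Big)^{\alpha'}\eps_n^{4-1/4}\,\de^{-0}\,\P_\de(A\overset{\C}{\longleftrightarrow}A') + \text{(non-shared cluster terms)}.
\]
To prove it I will condition on the configuration outside $A'':=$ the box of side $\sqrt{\eps_n d(A,A')}$ around $A$ and use Proposition~\ref{prop:IIC}. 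Once the cluster reaches $\partial A''$, the law of $\w$ inside $A$ is within $(\eps_n/|A''|)^{\alpha}$ in total variation of the IIC-type measure. Under that measure, Proposition~\ref{prop:betalimit} (rescaled by $\de$) gives $\E[\de^{2-1/8}|\C\cap A|\mid A\leftrightarrow\partial A'']=\beta\eps_n^{2-1/8}(1+o(1))$, so the conditional mean of $Z_A$ is almost zero. Doing this at both $A$ and $A'$ yields the decorrelation. Summing over pairs then uses $\sum_{A,A'}\eps_n^{4-1/4}\P(A\leftrightarrow A')\lesssim 1$, which follows from Proposition~\ref{prop:boxtobox} and Lemma~\ref{lemma:Ising2}, so the polynomial factor makes the total small as $n\to\infty$.

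I expect the main obstacle to be this decorrelation step, for two reasons. First, $\C$ is a specific cluster (the $k$-th largest by diameter), not merely $\C_0$ or the cluster of a fixed point. Second, $\w$ lacks a Markov property for closed cutsets and lacks FKG. I would handle the first issue by working on the event that the cluster through $A$ is macroscopic, and by using the Markov property on an outermost $\w^+$-open circuit in the annulus $A''\setminus A$, found by exploration from outside as in the proof of Proposition~\ref{prop:IIC}. Once the circuit is found, which of the large clusters it belongs to is determined by information outside it, so the inside is an IIC sample independent of the labelling. The failure probability of finding such a circuit is polynomially small by Lemma~\ref{lemma:dualonearm}. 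The events where $A$ touches the cluster but local uniqueness fails, i.e.\ another macroscopic cluster comes near, would be controlled with Lemma~\ref{lemma:decorrlemma}, pairing the increasing primitive event $\{A\leftrightarrow\partial A''\}$ with the decreasing event that no $\w^-$ circuit separates. That gives an extra factor $(\eps_n/|A''|)^{\alpha}$ and closes the estimate.
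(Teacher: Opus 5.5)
Your proposal follows essentially the same route as the paper's proof: Riemann approximation, discarding boundary and nearby pairs, local uniqueness via open circuits controlled by Lemma~\ref{lemma:decorrlemma} and Lemma~\ref{lemma:dualonearm}, and then the circuit Markov property with Proposition~\ref{prop:IIC}, Lemma~\ref{lemma:Xnestimate} and the $\beta$ of Proposition~\ref{prop:betalimit} to make each conditional box mean $o(\varepsilon_n^{2-1/8})$, summed via \eqref{eq:O1bound}. The paper differs only in bookkeeping: it uses the fixed annulus $A(\varepsilon_n^{1/3})\setminus A(\varepsilon_n^{2/3})$ and disposes of all pairs closer than $10\varepsilon_n^{1/3}$ with the trivial bound, where you use an adaptive scale $\sqrt{\varepsilon_n d(A,A')}$ and Cauchy--Schwarz. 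One detail to fix is that the separating circuits should be $\omega$-open (the Markov property holds for these), so that their absence is a decreasing event of $\omega$ and Lemma~\ref{lemma:decorrlemma} applies directly against the primitive events $\{x\leftrightarrow y\}$ and $\{A\leftrightarrow A'\}$; with $\omega^+$-circuits the failure event is positively correlated with $\omega^-$-connections under the available FKG structure, so it cannot be decorrelated that way.
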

\def\Ck{\C_k}

\noindent Before starting the proof, we briefly justify why the scaling $\eps_n^{2 - 1  / 8 }$ shows up. In particular, we show that both terms are roughly of the same order and uniformly bounded $\de \to 0$ and then $n \to \infty$. Arguing as in the proof of Proposition \ref{prop:tight},
\[
	\E_\de\left[ \mu^\de_{\C^\de_k}[D]^2 \right] \leq  \de^{4 - 1 / 4} \sum_{x,y \in \Dd} \Ed [\sigma_x \sigma_y].
\] 
Similarly, the coarse-grained version of the above inequality is that
\[
	\Ed\left[\left(\beta\eps_n^{2 - 1 / 8} \sum_{A \in \A_n} \i(A \cap \C^\de_k)\right)^2\right]\leq \beta^2 \eps_n^{4 - 1 / 4} \sum_{A,B \in \A_n} \Pd(A \cap \C^\de_k, B \cap \C^\de_k).
\] 
The important but elementary observation is that if both events $A \cap \C^\de_k$ and $B \cap \C^\de_k$ occur, then necessarily $A \longleftrightarrow B$. As such, we may apply\footnote{At least when both boxes are at distance greater than $2\eps_n$ from each other and the boundary, and the remaining boxed can be handled as we will see shortly} Proposition \ref{prop:boxtobox} to see that
\[
	\eps_n^{4 - 1 / 4} \Pd(A \cap \C^\de_k, B \cap \C^\de_k) \leq \eps_n^{4 - 1  / 4} \Pd(A \overset{\omega_\de}{\longleftrightarrow} B)  \lesssim \de^{4 - 1 / 4} \sum_{x \in A, y \in B} \Ed[\sigma_x \sigma_y].
\] 
All together, 
\begin{equation}\label{eq:O1bound}
	\Ed\left[\left(\mu^\de_{\C^\de_k}[D]^2 +  \beta \eps_n^{2 - 1 / 8} \sum_{A \in \A_n}\i(A \cap \C^\de_k) \right)^2  \right] \lesssim \de^{4 - 1 / 4} \sum_{x,y \in \Dd} \Ed [\sigma_x \sigma_y],
\end{equation} 
which is bounded as a consequence of Lemma \ref{lemma:Ising2}.

\def\M{\mathcal{M}}
\begin{proof}
	As shorthand, we write $\eps_n = \eps$ and $\P=\Pd$.
	
	\noindent \textbf{Step 1. Riemann approximation.} 
	We first reduce the claim for continuous functions $f$ to a statement solely about dyadic boxes.
	By decomposing
	\[
		\mu^\de_{\Ck^\de}[f] = \sum_{A \in \A_\eps} \mu^\de_{\Ck^\de}[f \i_A],
	\] 
	we may compare to a Riemann sum approximation and show that
	\begin{equation}\label{eq:contf}
		\E \left[ \left( \mu^\de_{\Ck^\de}[f] - \sum_{A \in \A_\eps} f_A \mu^\de_{\Ck^\de}[A] \right)^2  \right] \leq  \sup_{A \in \A_\eps} \text{osc}_A(f)^2  \E \left[ \mu^\de_{\Ck^\de}[D]^2 \right].  
	\end{equation} 
	By Lemma  \ref{lemma:Ising1}, the second factor is bounded as $\de\to0$, and the first factor vanishes as $\eps\to0$ since $f$ is (absolutely) continuous.
	Thus, it remains to analyse 
	\begin{equation}\label{eq:step1prog}
		\E \left[ \left( \sum_{A \in \A_\eps} f_A \left( \mu^\de_{\Ck^\de}[A] - \beta \eps^{2-1/8} 1(A \cap \Ck^\de) \right) \right)^2  \right]. 
	\end{equation} 
	Since $f$ is bounded, we can disregard the prefactor $f_A$ in what follows.
	
	\noindent \textbf{Step 2. Well-separated boxes.}
	We start with the simple inequality
	\begin{align}
		 (\mu^\de_{\Ck^\de}[A] - \beta \eps^{2 - 1 / 8}\i(A\cap\Ck^\de))(\mu^\de_{\Ck^\de}[B] - \beta\eps^{2 - 1 /8}\i(B\cap\Ck^\de)) \\ \label{eq:trivialestimate}
		 \leq \de^{4 - 1/4} \sum_{x \in A, y \in B} \i(x \overset{\omega_\de}{\longleftrightarrow} y) +  \beta^2 \eps^{4 - 1 / 4} \i( A \overset{\omega_\de}{\longleftrightarrow}  B),
	\end{align}
	where we have neglected the negative cross-terms. Note that such cross-terms can actually be helping us (in particular, the left-hand side above may be negative, but the right-hand side is positive), yet we only rely on these cancellations in the last step of the proof. Let
	\[
		\mathcal G^2_\eps = \left\{ (A,B) \in \A_\eps \times \A_\eps \mid \d(A,\b D), \d(B, \b D), \d(A,B) \geq 10\eps^{1/3} \right\}
	\]
	be the set of pairs of boxes that are far ``enough'' from the boundary and from each other. We now show that we can safely neglect pairs of boxes not belonging to $\mathcal G^2_\eps$, which would be problematic later on. After using \eqref{eq:step1prog}, we can apply  Lemma \ref{lemma:Ising2} and Lemma \ref{lemma:Ising3} to show that
	\[
		 \de^{4 - 1 / 4}\sum_{(A,B) \notin \mathcal G^2_\eps} \sum_{x \in A, y \in B}  \P\left( x \overset{\omega_\de}{\longleftrightarrow} y \right) \longrightarrow 0
	\] 
	uniformly in $\de$ as $\eps \to 0$. For the other term, a very simple box-counting counting argument, using the regularity of $\b D$, is enough to show that
	\[
		\eps^{4 - 1/4}\sum_{(A,B) \not\in \mathcal G^2_\eps} \P(A\overset{\omega_\de}{\longleftrightarrow}  B) \leq \eps^{4 - 1 / 4} | ( \A_\eps \times \A_\eps) \setminus \mathcal G^{2}_\eps | \longrightarrow 0
	\] 
	as $\eps \to 0$.	
	
	\noindent\textbf{Step 3. Boxes with a unique large cluster.}
	For each box $A\in\A_\eps$, we define an event $U_A$, which is a type of \emph{local uniqueness event}, and show that we may further discard any pair $(A, B)\in\mathcal G^2_\eps$ for which at least one box fails the event. Let
	\[
		U_A = \left\{ \text{ there is an $\w_\de$-open circuit in the annulus } A(\eps^{1 / 3}) \setminus A(\eps^{2 / 3}) \right\},
	\] 
	where we write $A(R)$ for the box of side-length $R$ which is concentric to $A$. On the event $U_A$, we take $\G_A$ to be the outermost such circuit. See Figure \ref{fig:L2-proof}.
	
	\begin{figure}[t]
		\centering
		\includegraphics[width=0.8\linewidth]{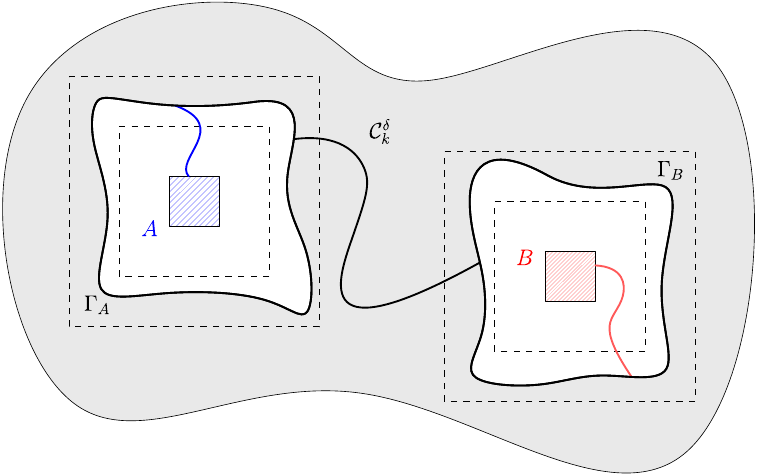}
		\caption{A pair of boxes $(A, B)\in\mathcal G_\eps^2$ satisfying the events $U_A$ and $U_B$. The dashed lines correspond to the annuli of outer radius $\eps^{1/3}$ and inner radius $\eps^{2/3}$. The events $\{A\longleftrightarrow\G_A\}, \{B\longleftrightarrow\G_B\}, \{\G_A\longleftrightarrow\G_B\}$ are also satisfied. The state of the configuration in the grey region is $\mathcal{F}_{AB}$.}
		\label{fig:L2-proof}
	\end{figure}

	\noindent Using \eqref{eq:trivialestimate} once more, 
	\begin{align}
		\E \left[\i(U_A^c)  (\mu^\de_{\Ck^\de}[A] - \beta \eps^{2 - 1 / 8}\i(A\cap\Ck^\de))(\mu^\de_{\Ck^\de}[B] - \beta\eps^{2 - 1 /8}\i(B\cap\Ck^\de))  \right] \\
		 \leq \E \left[ \i(U_A^c) \left( \de^{4 - 1/4}  \sum_{x \in A, y \in B} \i(x \overset{\omega_\de}{\longleftrightarrow} y) +  \beta^2 \eps^{4 - 1 / 4} \i( A \overset{\omega_\de}{\longleftrightarrow} B) \right) \right].
	\end{align}
	We now apply Lemma \ref{lemma:decorrlemma} to decorrelate the event $U_A^c$ with either of the primitive events $\left\{ x \longleftrightarrow y \right\}$ or $\left\{ A \longleftrightarrow B\right\} $, so that the the above expression is upper bounded by
	\begin{equation}\label{eq:postdecorr}
	2 \times \P(\w^-_\de \notin U_A) \times \left( \de^{4 - 1/4}\sum_{x \in A, y \in B} \P(x\overset{\omega_\de}{\longleftrightarrow} y) +  \beta^2 \eps^{4 - 1 / 4} \P( A \overset{\omega_\de}{\longleftrightarrow}  B) \right).
	\end{equation} 
	By Lemma \ref{lemma:dualonearm}, there exist $\w^-_\de$-circuits with high probability, and in particular, 
	\[
		\P(\w^-_\de \notin U_A) = o(1)
	\] 
	uniformly in $\de$ as $\eps\to0$.
	After summing over $(A,B) \in \mathcal G^2_\eps$, it follows from \eqref{eq:O1bound} that the remaining factor in \eqref{eq:postdecorr} is bounded. All together, it remains to consider
	\begin{equation}\label{eq:step3prog}
		\E \left[  \sum_{(AB) \in \mathcal G^2_\eps} \i(U_{A,B}) \left( \mu^\de_{\Ck^\de}[A] - \beta \eps^{2-1/8} \i(A \cap \Ck^\de) \right) \left( \mu^\de_{\Ck^\de}[B] - \beta \eps^{2 - 1 / 8} \i(B \cap \Ck^\de) \right) \right],
	\end{equation} 
	where we write $U_{AB} = U_A \cap U_B$.

	\noindent\textbf{Step 4. Applying the IIC estimate.}
	This constitutes the heart of the proof. The idea is to use the IIC construction to argue that the mesoscopic structure of macroscopic clusters is highly concentrated and decorrelated across large distances. 
	
	\noindent On the event $U_{AB}$, there exists at most one cluster intersecting both $A$ and $B$. Thus, 
	\begin{align}
		\i&(U_{AB}) \left( \mu^\de_{\Ck^\de}[A] - \beta \eps^{2-1/8} \i(A \cap \Ck^\de) \right) \left( \mu^\de_{\Ck^\de}[B] - \beta \eps^{2 - 1 / 8} \i(B \cap \Ck^\de) \right) \nonumber  \\
		& = \i( \G_A \overset{\Ck^\de}{\longleftrightarrow} \G_B, U_{AB}) \times \i(A \overset{\omega_\de}{\longleftrightarrow} \G_A)\left( \mu^\de_{\Ck^\de}[A] - \beta \eps^{2 - 1 / 8} \right) \times \i(B \overset{\omega_\de}{\longleftrightarrow} \G_B) \left( \mu^\de_{\Ck^\de}[B] - \beta \eps^{2 - 1/8} \right). \label{eq:3factors} 
	\end{align}
	We define $\F_{AB}$ to be the information in the restriction of $\w_\de$ to the set outside both circuits $\G_A$ and $\G_B$. Crucially, the first factor in \eqref{eq:3factors} is measurable with respect to $\F_{AB}$. Observe that there is an implicit claim here that the event of being the $k$'th largest cluster is also measurable with respect to $\F_{AB}$, which is indeed true since the diameter of a cluster cannot be affected by its shape inside an open circuit and no other cluster inside can be larger. 
	As the interiors $\overline{\G}_A$, $\overline{\G}_B$ of the circuits are disjoint, the states of $\w_\de|_{\overline{\G}_A}$ and $\w_\de|_{\overline{\G}_A}$ are conditionally independent given $\F_{AB}$. Moreover, they are distributed according to the measures $\P_{\overline{\G}_A}$ and $\P_{\overline{\G}_A}$ respectively. Thus, we may consider the contributions of $A$ and $B$ separately, and we further decompose
	\begin{multline}
		\E \left[ \i(A\overset{\omega_\de}{\longleftrightarrow}\G_A) \left( \mu^\de_{\Ck^\de}[A] - \beta\eps^{2-1 / 8} \right)  \mid \F_{AB} \right] =\\ \P\left[ A \overset{\omega_\de}{\longleftrightarrow} \G_A \mid \F_{AB} \right] \times \E_{ \overline{\G_A}}\left[ \de^{2 - 1 /  8} |A\cap\C^{\G_A}_0 | - \beta \eps^{2 - 1 / 8} \mid A \overset{\omega_\de}{\longleftrightarrow} \G_A \right],
	\end{multline} 
	where $\C^{\G_A}_0$ is used to represent the fact that the cluster $\Ck^\de$ is acting as the \emph{boundary cluster} under the new measure.
	Now, we invoke our carefully chosen $\beta$ from Proposition \ref{prop:betalimit} and also Lemma \ref{lemma:Xnestimate} to deduce that
	\[
		\E_{ \overline{\G_A}}\left[ \de^{2 - 1 /  8} |A\cap\C^{\G_A}_0 | - \beta \eps^{2 - 1 / 8} \big| A \overset{\omega_\de}{\longleftrightarrow}\G_A \right] = o(\eps^{2 - 1/8}),
	\] 
	uniformly in $\de$. An equivalent estimate holds for the box $B$, and after undoing the conditioning, we learn that 
	\begin{align}
		\E \left[  \sum_{(A,B) \in \mathcal G^2_\eps} \i(U_{AB}) \left( \mu^\de_{\Ck^\de}[A] - \beta \eps^{2-1/8} \i(A \cap \Ck^\de) \right) \left( \mu^\de_{\Ck^\de}[B] - \beta \eps^{2 - 1 / 8} \i(B \cap \Ck^\de) \right) \right] \\
		= o(1)\ \eps^{4 - 1 / 4} \E \left[ \sum_{A,B \in \A_\eps} \i(U_{AB}, A \overset{\omega_\de}{\longleftrightarrow} B)  \right].
	\end{align} 
	This concludes the proof, as the non-vanishing factor is bounded by \eqref{eq:O1bound}.
\end{proof}

\begin{remark}\label{rem:fractal-bdy}
	The second step in the above proof showed how, under some very mild regularity assumptions on $\partial D$, one can handle boundary issues from first principles, albeit in a non-optimal way. If the boundary of $D$ is fractal, one can instead obtain vague convergence of the measures. Namely, the same argument as in the proof of Proposition \ref{prop:tight} gives that for any compact subset $K\subset D$,
	\[
	\limsup_{\de\to0}\Ed[ \mu^\de_{\C^\de_k}(K)^2] < \infty,
	\]
	and throughout the proof of Theorem \ref{thm:boxcount} all boundary issues are avoided by taking $f\in C_c(D)$ to be a continuous function with compact support in $D$. 
\end{remark}

\begin{remark}\label{rem:meas-compacts}
	The statement of Theorem \ref{thm:boxcount} also holds for $f=\i_K$, where $K\subset D$ is any compact subset. It suffices to show that
	\[
	\Ed \left[ \left( \sum_{\substack{A \in \A_\eps \\ A \cap \b K}} \de^{2 - 1 / 8} \mu^\de_{\Ck^\de}[A] \right)^2 \right] = o(1),
	\] 
	which follows by a straightforward argument using Lemma \ref{lemma:Ising0}.
\end{remark}

\begin{remark}\label{rem:free-bc}
	For free boundary conditions, one can check that an almost identical (in fact, simpler) proof holds. Indeed, in the appropriate places, any needed estimates for free boundary conditions can be dominated by those for $+$ boundary conditions on a suitably larger domain.
\end{remark}

\subsection{Measurability of the continuum area measures}\label{sec:meas-meas}

Let us put everything together and use Theorem \ref{thm:boxcount} to prove Theorem \ref{thm:convergencearea}. As the convergence of the clusters has already been established in Theorem \ref{thm:DRC} (see also Lemma \ref{lem:bdy-to-hausdorff}), it remains to prove that any subsequential scaling limit of the discrete measures is a measurable function of $\mathcal F_{\C_k}$. The following statements are stated under the assumption of a coupling in which a.s. convergence holds, which we know exists by Skorokhod's representation theorem.
 
\begin{lemma}\label{lemma:indicator-conv}
	Let $\P$ be a coupling such that the collection $(\Cd_k)_{k \geq 0}$ converges to $(\C_k)_{k \geq 0}$ $\mathbb P$-almost surely
	as $\delta \to 0$. Fix $k\geq0$ and a dyadic box $A\in\A$ of size $\eps_n$, for any $n\geq1$. Then,
	\[
	\mathbf 1(\Cd_k \cap A) \overset{}{\longrightarrow} \mathbf 1(\C_k \cap A),
	\] 
	$\P$-almost surely as $\de \to 0$ .
\end{lemma}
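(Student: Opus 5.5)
The proof splits the claim into two one-sided statements, only one of which uses anything beyond Hausdorff convergence. Throughout, $A$ is viewed as a closed square, $\mathrm{int}(A)$ is its interior, and $A^{-\eta}\subset \mathrm{int}(A)$ is the concentric closed square shrunk by $\eta>0$.

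\emph{Upper semicontinuity.} Suppose $\Cd_k\cap A\neq\emptyset$ along a sequence $\de_j\to0$, and pick $z_j\in \Cd_k\cap A$. By compactness of $A$ some subsequence converges to $z\in A$. Since $\Cd_k\to\C_k$ in the Hausdorff metric and $\C_k$ is closed, $z\in\C_k$. Hence $\limsup_{\de\to0}\mathbf 1(\Cd_k\cap A)\le \mathbf 1(\C_k\cap A)$ holds almost surely, deterministically on the convergence event.

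\emph{Lower semicontinuity off a bad event.} Suppose $\C_k\cap \mathrm{int}(A)\neq\emptyset$, and pick $z$ in this set with $d(z,\partial A)=r>0$. Hausdorff convergence gives points of $\Cd_k$ within distance $r/2$ of $z$ once $\de$ is small, so $\Cd_k\cap A\neq\emptyset$ eventually. This includes the $\de$-enlargement convention, which only moves points by $O(\de)$. Consequently the indicators converge on the complement of the bad event
\[
E_A=\{\C_k\cap A\neq\emptyset,\ \C_k\cap \mathrm{int}(A)=\emptyset\},
\]
and the lemma reduces to showing $\P(E_A)=0$.

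\emph{Showing $\P(E_A)=0$.} Since $\{\C_k\cap\mathrm{int}(A)\neq\emptyset\}=\bigcup_{\eta>0}\{\C_k\cap A^{-\eta}\neq\emptyset\}$, it suffices to show that $\P(\C_k\cap A\neq\emptyset,\ \C_k\cap A^{-\eta}=\emptyset)\to0$ as $\eta\to0$. I would bound this by its discrete analogue through the portmanteau theorem, since the events involved are open or closed in the Hausdorff topology up to replacing $A$ by slightly enlarged or shrunk squares. The discrete event is then: a macroscopic cluster (of diameter $\ge c$ with high probability, since $k$ is fixed) comes within distance $2\eta$ of $A$ but avoids $A^{-\eta}$.

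The tool is the $\w^+$-circuit argument behind Lemma~\ref{lemma:dualonearm}. Let $z$ be a point where the cluster approaches $\partial A$. By Lemma~\ref{lemma:dualonearm}, with probability $1-O(\eta^{\alpha/2})$ there is an $\w^+$-open circuit in the annulus around $z$ of radii $3\eta$ and $\sqrt\eta$. The cluster must cross this annulus, so the circuit belongs to the cluster, and the circuit necessarily enters $A^{-\eta}$, which is a contradiction.

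\emph{Main obstacle.} The single-point bound above is not enough, because the touching point $z$ is random and a naive union bound over the $O(\eta^{-1})$ boundary boxes of size $\eta$ is too lossy: the gain $\eta^{1/8+\alpha/2}$ does not beat $\eta^{-1}$. I would first try to avoid the union bound by exploring the cluster from outside, following Stage~1 of the exploration in the proof of Proposition~\ref{prop:IIC}, and stopping at the first time it comes within $2\eta$ of $A$. At that stopping time the unexplored region still contains the annulus around the touching point, and Lemma~\ref{lemma:conFKG} lets me apply the circuit bound conditionally. The difficulty is decoupling this conditioning, and handling it is the crux of the proof.

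If this fails, the fallback is to exploit the freedom in the dyadic grid. A uniformly random translation of the tiling makes $\P(E_A)=0$ for almost every shift by a Fubini argument. That is sufficient for the use of the lemma in the proof of Proposition~\ref{prop:betalimit} and in Theorem~\ref{thm:convergencearea}.
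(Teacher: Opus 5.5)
Your two semicontinuity steps coincide with the paper's deterministic Proposition~\ref{prop:intersection}. Your reduction to $\P(E_A)=0$ is exactly the non-tangency statement of Lemma~\ref{lem:SLE4-touching}, and the portmanteau comparison with enlarged or shrunk squares is fine. The gap is that $\P(E_A)=0$ is never proved.

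Your discrete route stalls where you say it does. The union bound over the $O(\eps_n/\eta)$ touching locations gains only $\eta^{1/8+\alpha/2}$. The exploration repair also fails. If you stop when the cluster first comes within $2\eta$ of $A$, the explored arm already crosses the annulus in which you want the circuit, together with the closed $\omega^+$-edges revealed along it. Under Lemma~\ref{lemma:conFKG} this conditioning lowers, rather than raises, the circuit probability. Moreover, $\omega$ has no Markov property with respect to closed cutsets that could decouple it.

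The missing idea is that non-tangency is a property of the explicit continuum limit and should be proved there. Conditionally on its outer boundary, $\mathcal{C}_k$ is a $\CLE_4$ carpet, which has empty interior. So if it meets $A$, then $A$ meets one of its loops, hence one of the $\textup{SLE}_4(-1,-1)$/$\textup{SLE}_4(-1)$ level lines of the Aru--Sep\'ulveda--Werner construction. These are continuous curves driven by a continuous strong Markov process. The paper applies the strong Markov property at the first hitting time of $\partial A$, then Blumenthal's $0$--$1$ law, to conclude that the curve immediately enters $\mathrm{int}(A)$, which rules out tangency.

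Your random-shift fallback is correct as far as it goes. For a fixed closed set $C$, the shifts $s$ making $C$ tangent to $A+s$ lie on the boundary of the closed set $\{s:\ C\cap\mathrm{int}(A+s)=\emptyset\}$. Each such boundary point lies on the closure of an open square of side $\eps_n$ contained in the complement, so it is not a Lebesgue density point of that closed set. Hence this set is null and Fubini applies.

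But this proves a different statement: it holds for almost every translate of the grid, not for the fixed dyadic box $A$. It would suffice for Theorem~\ref{thm:convergencearea} after fixing one good shift for all countably many boxes. It does not suffice, as you claim, for Proposition~\ref{prop:betalimit}. There the relevant box is the concentric square $S_1\subset S_k$, not a tile, so translating the tiling is irrelevant. One could instead use scaling, $\P_{S_k}(\mathcal{C}\ \text{tangent to}\ S_1)=\P_{S_1}(\mathcal{C}\ \text{tangent to}\ S_{1/k})$, together with the fact that a closed set is tangent to at most one concentric square $S_r$. Then only countably many $k$ are bad, and the Moore--Osgood argument can be run along good $k$. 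None of this is in your proposal, though.
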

\begin{proof}
	See Appendix \ref{app:conv-ext}.
\end{proof}

\begin{prop}\label{prop:meas}
	Let $\P$ be a coupling such that the collection $(\Cd_k)_{k \geq 0}$ converges to $(\C_k)_{k \geq 0}$ $\mathbb P$-almost surely as $\delta \to 0$. Fix $k\geq0$ and $f\in C(\bar D)$. Then,
 	\[
		\mu^\de_{\C^\de_k}[f]\overset{}{\longrightarrow}\mu_{\C_k}[f]
	\] 
	exists as a limit in $L^2(\P)$ as $\de\to0$. Moreover, the limit $\mu_{\C_k}$ is measurable with respect to $\F_{\C_k}$.  
\end{prop}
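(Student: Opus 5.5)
The plan is to show that $(\mu^\de_{\C^\de_k}[f])_{\de}$ is Cauchy in $L^2(\P)$, by passing through the coarse-grained approximants of Theorem \ref{thm:boxcount}. For $n\geq1$ set
\[
Y^{\de}_n := \beta\eps_n^{2-1/8}\sum_{A\in\A_n} f_A\,\i(A\cap\C^\de_k), \qquad Y_n := \beta\eps_n^{2-1/8}\sum_{A\in\A_n} f_A\,\i(A\cap\C_k).
\]
Since $\bar D$ is bounded and the tiling $\A_n$ is fixed, only finitely many boxes $A$ contribute to these sums.

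By Lemma \ref{lemma:indicator-conv}, each indicator $\i(\C^\de_k\cap A)$ converges $\P$-almost surely to $\i(\C_k\cap A)$. With finitely many terms, this gives $Y^\de_n\to Y_n$ almost surely. The variables $Y^\de_n$ are also uniformly bounded by $\beta\|f\|_\infty \eps_n^{2-1/8}|\{A\in\A_n: A\cap \bar D\neq\emptyset\}|$, which is a deterministic constant depending only on $n$. Dominated convergence therefore upgrades this to $Y^\de_n\to Y_n$ in $L^2(\P)$ for each fixed $n$.

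Next we run a triangle inequality. For $\de,\de'$ small,
\[
\|\mu^\de_{\C^\de_k}[f]-\mu^{\de'}_{\C^{\de'}_k}[f]\|_{L^2}\leq \|\mu^\de_{\C^\de_k}[f]-Y^\de_n\|_{L^2}+\|Y^\de_n-Y^{\de'}_n\|_{L^2}+\|Y^{\de'}_n-\mu^{\de'}_{\C^{\de'}_k}[f]\|_{L^2}.
\]
Fix $\eta>0$. Theorem \ref{thm:boxcount} lets us choose $n$ so large that the $\limsup_{\de\to0}$ of both the first and third terms is below $\eta$. For that fixed $n$, the middle term tends to $0$ as $\de,\de'\to0$, because $Y^\de_n$ converges in $L^2$. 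Hence the sequence is Cauchy in $L^2(\P)$ and converges to some limit, which we call $\mu_{\C_k}[f]$.

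The same estimate shows that $Y_n\to\mu_{\C_k}[f]$ in $L^2$ as $n\to\infty$. Indeed,
\[
\|\mu_{\C_k}[f]-Y_n\|_{L^2}=\lim_{\de\to0}\|\mu^\de_{\C^\de_k}[f]-Y^\de_n\|_{L^2},
\]
and the right-hand side vanishes as $n\to\infty$ by Theorem \ref{thm:boxcount}.

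It remains to handle measurability. Each $Y_n$ is a finite combination of indicators of events $\{\C_k\cap A\neq\emptyset\}$. For a closed box $A$, the set $\{F\ \text{closed}: F\cap A\neq\emptyset\}$ is closed in the Hausdorff topology, so each $Y_n$ is $\F_{\C_k}$-measurable. Passing to an almost surely convergent subsequence, $\mu_{\C_k}[f]$ is then $\F_{\C_k}$-measurable, up to $\P$-null sets, which we absorb by completing the $\sigma$-algebra.

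To conclude that the measure $\mu_{\C_k}$ itself is $\F_{\C_k}$-measurable, we apply this to a countable dense family of $f\in C(\bar D)$. We then identify $\mu_{\C_k}$ with the weak limit given by Proposition \ref{prop:tight}, noting that the limit obtained is independent of the subsequence. A linear functional that is positive and bounded on a dense set extends uniquely to a measure, and the extension is measurable.

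The main obstacle is hidden in the fact that the indicator convergence must hold exactly, not merely for the Hausdorff limit. A box could be touched by $\C_k$ only on its boundary while the discrete clusters miss it, or conversely. This is delegated to Lemma \ref{lemma:indicator-conv}, which is presumably where an argument is needed that the CLE$_4$ carpet almost surely does not touch a fixed box boundary only tangentially. I would verify that the almost sure statement there is genuinely pointwise, rather than in probability. Beyond that, the only subtlety is the order of limits ($\de\to0$ first, then $n\to\infty$), which the triangle inequality respects.
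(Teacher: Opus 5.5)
Your proposal is correct and is essentially the paper's argument: Theorem~\ref{thm:boxcount} is combined with almost sure (hence, by uniform boundedness, $L^2$) convergence of the finitely many box indicators from Lemma~\ref{lemma:indicator-conv}, and $\mathcal{F}_{\mathcal{C}_k}$-measurability is inherited from the continuum box-counting approximants. The differences are cosmetic: you show $(\mu^\delta_{\mathcal{C}^\delta_k}[f])_\delta$ is Cauchy directly via the triangle inequality, whereas the paper first shows that $\beta\varepsilon_n^{2-1/8}\sum_A f_A\mathbf{1}(A\cap\mathcal{C}_k)$ is Cauchy in $n$. Your extension from a countable dense family of $f$ to the measure itself fills in a detail the paper leaves implicit.
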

\begin{proof}
	By Theorem \ref{thm:boxcount}, we have that
	\[
		\mu^\de_{\C^\de_k}[f] - \beta \eps_n^{2 - 1 / 8}\sum_{A\in \A_n}f_A\i(A\cap\Ck^\de) \overset{}{\longrightarrow} 0
	\]		
	in $L^2(\P)$ as $\de \to 0$ and then $n \to \infty$. For $n$ fixed, the sum above is over finitely many boxes, and applying Lemma \ref{lemma:indicator-conv} we see that
	\[
		\beta \eps_{n}^{2 - 1/8} \sum_{A\in \A_n}f_A\i(A\cap\Ck^\de) \longrightarrow \beta \eps_{n}^{2 - 1 / 8}\sum_{A\in \A_n}f_A\i(A\cap\Ck)
	\] 
	almost surely and in $L^2(\P)$ as $\de \to 0$. Moreover, using Theorem \ref{thm:boxcount} again, we see that the sequence 
	\begin{equation} \label{eq:eps-sequence}
		\Big(\ \beta \eps_{n}^{2 - 1 / 8} \sum_{A\in \A_n}f_A\i(A\cap\Ck) \Big)_{n\geq1}	
	\end{equation}
	is Cauchy in $L^2(\P)$, and thus convergent. Together, we deduce that 
	\[
		\mu_{\C_k}[f] := \lim_{\de\to0}\mu^\de_{\C^\de_k}[f] = \lim_{n\to\infty}\beta \eps_n^{2 - 1 / 8} \sum_{A\in \A_n}f_A\i(A\cap\Ck)
	\]
	exist as a limit in $L^2(\P)$. Since the sequence \eqref{eq:eps-sequence} is measurable with respect to $\F_{\C_k}$, the claim follows.  
\end{proof}

The proof of Theorem \ref{thm:convergencearea} now follows from a standard uniqueness argument. We write the full details of this argument as it will be used repeatedly in what follows.

\begin{proof}[Proof of Theorem \ref{thm:convergencearea}]
	Fix $k\geq0$ and drop it from the notation. Combining Theorem \ref{thm:DRC} with Proposition \ref{prop:tight}, we know that the joint law $(\C^{\de}, \mu^{\de}_{\C^\de})$ is tight: there exists a subsequence $(\de_n)$ and a measure $\mu$ such that
	\[
		(\C^{\de_n}, \mu^{\de_n}_{\C^{\de_n}})\overset{(d)}{\longrightarrow}(\C, \mu),
	\]
	where $\C$ is the appropriate $\CLE_4$ carpet described in Theorem \ref{thm:DRC}. Now, it suffices to show that for any other convergent subsequence $(\de_k)$ with limit
	\begin{equation}\label{eq:unique-law}
		(\C^{\de_k}, \mu^{\de_k}_{\C^{\de_k}})\overset{(d)}{\longrightarrow}(\C, \mu^*),
	\end{equation}
	we have that $(\C, \mu^*)\overset{(d)}{=}(\C, \mu)$. Note that here we are using the fact that the limit $\C$ is independent of the subsequence (in fact explicit) by Theorem \ref{thm:DRC}. 
	
	\noindent Consider the limit in \eqref{eq:unique-law}. By Skorokhod's representation theorem, there exists a joint coupling $\P$ such that convergence holds $\P$-a.s. In particular, for any $f\in C(\bar D)$,
	\begin{equation}\label{eq:meas-unique-1}
		\lim_{k\to\infty}\mu^{\de_k}_{\C^{\de_k}}[f] = \mu^*[f]\quad \text{a.s.}
	\end{equation}
	 Moreover, in such a coupling, we can use Proposition $\ref{prop:meas}$ to see that
	\begin{equation}\label{eq:meas-unique-2}
		\lim_{k\to\infty}\mu^{\de_k}_{\C^{\de_k}}[f] = \lim_{\eps\to0}\beta\eps^{2-1/8}\sum_{A\in\Q_f}f_A\i_{\{A\cap\C\}} =: \mu_{\C}[f]
	\end{equation}
	exist as limits in $L^2(\P)$. Combining \eqref{eq:meas-unique-1} and \eqref{eq:meas-unique-2}, we identify
	\[
		\mu^*[f] = \mu_{\C}[f]
	\]
	as a measurable function of $\C$. Since this function is (crucially) independent of the subsequence $(\de_k)$, the uniqueness of the limit law follows.
\end{proof}

\section{Convergence of the IMF and coupling with the GFF}\label{sec:proofs}

We devote this section to proving all of our main results, as stated in Section \ref{sec:intro}. We start with all the statements concerning the coupling with the GFF, along with the joint convergence of the discrete coupling. We then proceed to identify the measures and prove any remaining claims.

\subsection{Couplings and joint convergence}

\def\sobD{H^{s}_{loc}(D)}
\def\sobC{H^{s}(\CC)}

For completion, we state the tightness of the IMF and sketch the proof. For further background on Sobolev spaces, see Appendix \ref{app:spaces}.

\begin{prop}[\cite{tight-field, mag-field}]\label{prop:ising-tight}
	Let $D$ be a Jordan domain with non-fractal boundary. For $s<-1$, the law of $(\IMF_\de)_{\de>0}$ is tight with respect to the topology of $\sobC$. 
\end{prop}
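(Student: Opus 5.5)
Tightness in $H^s(\CC)$ for $s<-1$ reduces to a uniform second-moment bound on Sobolev norms: the embedding $H^{s'}(\CC)\hookrightarrow H^{s}(\CC)$ is compact for fields supported in a fixed compact set, here $\bar D$. So I fix $s<s'<-1$ and show
\[
\sup_{\de>0}\E_\de\big[\|\IMF_\de\|_{H^{s'}(\CC)}^2\big]<\infty ,
\]
after which Markov's inequality gives tightness in $H^s$.

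To bound the norm I pass to Fourier space. Since $\IMF_\de=\de^{2-1/8}\sum_v\sigma_v\boldsymbol{\updelta}_v$ is a finite signed measure,
\[
\E\|\IMF_\de\|_{H^{s'}}^2=\int_{\CC}(1+|\xi|^2)^{s'}\,\E|\widehat{\IMF_\de}(\xi)|^2\,d\xi,
\qquad
\E|\widehat{\IMF_\de}(\xi)|^2=\de^{4-1/4}\sum_{x,y\in\Dd}\E_\de[\sigma_x\sigma_y]\,e^{-i\xi\cdot(x-y)} .
\]
I bound the phase by $1$, so $\E|\widehat{\IMF_\de}(\xi)|^2\le \de^{4-1/4}\sum_{x,y}\E_\de[\sigma_x\sigma_y]$. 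This total double sum is bounded uniformly in $\de$ by Lemma~\ref{lemma:Ising2}; this is the same bound already used for \eqref{eq:O1bound} and in the proof of Proposition~\ref{prop:tight}. The remaining integral $\int(1+|\xi|^2)^{s'}d\xi$ is finite precisely because $2s'<-2$. This is exactly where the restriction $s<-1$ enters.

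The only genuinely nontrivial input is therefore the uniform bound on $\de^{4-1/4}\sum_{x,y}\E[\sigma_x\sigma_y]$. I take it from Lemma~\ref{lemma:Ising2}, which rests on the standard estimate $\E^+_\de[\sigma_x\sigma_y]\lesssim (\de/|x-y|)^{1/4}$; it can also be obtained via RSW-type bounds for FK-Ising, as in \cite{mag-field}. Summing this bound gives the integrable kernel $|x-y|^{-1/4}$ on the bounded domain $D$.

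Near $\partial D$ the $+$ boundary conditions raise correlations, since $\E^+[\sigma_x\sigma_y]\ge\E^+[\sigma_x]\E^+[\sigma_y]$. Even so, each one-point function is at most $1$ times its $\de^{1/8}$ scaling and has at worst a $d(x,\partial D)^{-1/8}$ singularity, which is integrable. I therefore expect no obstacle beyond quoting Lemma~\ref{lemma:Ising2}; the non-fractal boundary hypothesis only ensures the discrete approximations $\Dd$ stay inside a fixed compact set. Compactness of the embedding, via Rellich on a fixed ball, then concludes the proof.
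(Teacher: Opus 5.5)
Your proposal is correct and follows essentially the same route as the paper: Rellich compactness, Fourier expansion of $\E\|\IMF_\de\|^2_{H^{s}}$, reduction to a uniform bound on $\de^{4-1/4}\sum_{x,y}\E^+_\de[\sigma_x\sigma_y]$, and integrability of $(1+|\xi|^2)^{s}$ for $s<-1$. Two minor points. The cleanest citation for the uniform double-sum bound is the upper bound in Lemma~\ref{lemma:Ising1}, which comes from Proposition~\ref{lemma:Ising0}(ii) rather than from a bulk $(\de/|x-y|)^{1/4}$ estimate (that estimate fails near $\partial D$ under $+$ boundary conditions). Also, the non-fractal hypothesis is what makes the $d(x,\partial D)^{-1/8}$ boundary singularity summable uniformly in $\de$; it does more than keep $\Dd$ inside a compact set.
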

\begin{proof}
	By the Rellich-Kondrachov embedding theorem, it suffices to show that
	\[
		\limsup_{\de\to0}\E_\de[\norm{\IMF_\de}^2_{\sobC}]	<\infty.
	\]
	Since we can expand the expected norm as 
	\[
		\int_{\CC}(1+|\xi|^2)^s\Ed[|\hat{\IMF}^\de(\xi)|^2]d\xi = \int_{\CC}(1+|\xi|^2)^s\Ed[|\IMF_\de(e^{-2\pi i\xi\cdot})|^2]d\xi,
	\]
	and by our choice of $s<-1$, it suffices to show that for any positive function $f\in L^\infty(\CC)$, 
	\[
		\limsup_{\de\to0}\Ed[\Phi^\de[f]^2] <\infty.
	\]
	This follows from Lemma \ref{lemma:Ising1}.
\end{proof}

\begin{remark}
	The exponent $s<-1$ can be improved up to $s<-1/8$, as established in \cite{tight-field}. However, since Dirac masses belong to the Sobolev space $H^{s}(\CC)$ if and only if $s<-1$, 
	we would need to modify our definition of the discrete IMF. To match our approach using measures, and for simplicity, we do not worry about the optimality of the exponent.
\end{remark}

We now prove the convergence of a single IMF along with its Edwards-Sokal representation, and in a joint coupling with the GFF. The joint coupling with four IMFs will follow readily thereafter. All the arguments presented in the proof below are standard when dealing with (the scaling limit of) Edwards-Sokal representations of spin fields \cite{CME, CME-review, mag-field, GFF-excur}. We do not explicitly state the topologies of convergence in what follows, hoping they have become clear by now.

\begin{lemma}\label{lemma:single-IMF}
	Let $D$ be a Jordan domain with non-fractal boundary. Then, as $\de\to0$, 
	\[
		(H_\de, \IMF_\de, (\C_k^\de)_{k=0}^\infty, (\mu^\de_k)_{k=0}^\infty) 
		\overset{(d)}{\longrightarrow}(h/(\pi\sqrt{2}), \IMF, (\C_k)_{k=0}^\infty, (\mu_k)_{k=0}^\infty). 
	\]
	Moreover, 
	\[
		\Phi = \mu_0 + \sum_{k=1}^\infty\xi_k\mu_k,
	\]
	where convergence holds a.s. and in $L^2(\P)$ in the appropriate Sobolev space.
\end{lemma}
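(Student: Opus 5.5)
The plan is to combine the joint convergence of clusters and height function (Theorem \ref{thm:DRC}, transferred to the dual complement $\omega_\de$ via Proposition \ref{prop:dualiden}) with the convergence and measurability of each cluster measure (Theorem \ref{thm:convergencearea} and Proposition \ref{prop:meas}). A uniform $L^2$ tail bound on the sum over small clusters then lets us pass to the limit in the discrete Edwards--Sokal identity
\[
\IMF_\de=\mu^\de_0+\sum_{k\ge1}\xi_k\mu^\de_k .
\]

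\textbf{Step 1 (tightness and coupling).} The clusters of $\omega_\de$ are the dual complements of the DRC clusters in the master coupling. Theorem \ref{thm:DRC} therefore gives joint convergence in law of $H_\de$ and of $(\C^\de_k)_{k\ge0}$ to $h/(\pi\sqrt2)$ and to the clusters described in \hyperlink{pos}{$(+)$}, in the Hausdorff sense via Appendix \ref{app:conv-ext}. Tightness of each $\mu^\de_k$ follows from Proposition \ref{prop:tight}, and tightness of $\IMF_\de$ from Proposition \ref{prop:ising-tight}. The coin tosses $\xi_k$ are independent of everything else, so we can realise them on the same space with the same labels $k$ for every $\de$. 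We then take a subsequential limit and use Skorokhod to obtain an a.s. convergent coupling.

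\textbf{Step 2 (identifying the limits of the measures).} In this coupling, Proposition \ref{prop:meas} shows that $\mu^\de_k[f]\to\mu_{\C_k}[f]$ in $L^2$, and the limit is a measurable function of $\C_k$ alone. So every subsequential limit $\mu_k$ equals $\mu_{\C_k}$, a fixed function of the limit cluster. This function does not depend on the subsequence, so the limit law of the tuple $(H,(\C_k),(\mu_k),(\xi_k))$ is unique.

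\textbf{Step 3 (truncation estimate, the main obstacle).} We need
\[
\lim_{K\to\infty}\limsup_{\de\to0}\E\Big[\Big(\sum_{k>K}\xi_k\mu^\de_k[f]\Big)^2\Big]=0 .
\]
Since the $\xi_k$ are independent of the clusters, the cross terms vanish and the expectation equals
\[
\E\Big[\sum_{k>K}\mu^\de_k[f]^2\Big]\le\|f\|_\infty^2\,\de^{4-1/4}\sum_{x,y}\P(x\leftrightarrow y,\ \mathrm{diam}(\C_x)\le r_K),
\]
where $r_K$ is the diameter of the $K$-th cluster. I would split the sum according to $|x-y|$:
\begin{itemize}
\item When $|x-y|\le\eta$, Lemma \ref{lemma:Ising1}/\ref{lemma:Ising2} bounds the contribution by $O(\eta^{2-1/4})$ uniformly in $\de$.
\item When $|x-y|>\eta$, the cluster must have diameter at least $\eta$. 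Tightness of the number of clusters of diameter $\ge\eta$, which follows from the Hausdorff convergence in Theorem \ref{thm:DRC} together with local finiteness of the $\CLE_4$-type iteration, makes $\P(r_K\ge\eta)$ small for large $K$. One then bounds the contribution by Cauchy--Schwarz against a fourth-moment estimate, or by the Lemma \ref{lemma:decorrlemma}-type argument.
\end{itemize}
Applying the same estimate to the limiting objects, using Fatou, gives convergence a.s. and in $L^2$ of $\sum_k\xi_k\mu_k$. Both facts are needed at the level of test functions.

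\textbf{Step 4 (Sobolev convergence).} The pieces combine as follows.
\begin{itemize}
\item The discrete identity holds for each $\de$.
\item The finite sums $\mu^\de_0+\sum_{k\le K}\xi_k\mu^\de_k$ converge in law by Step 2.
\item The tails are uniformly small by Step 3.
\end{itemize}
Together these give $(\IMF_\de,\dots)\to(\mu_0+\sum_k\xi_k\mu_k,\dots)$ in law, jointly with $H_\de$ and the clusters. To upgrade the test-function $L^2$ bounds to $H^s(\CC)$ with $s<-1$, we take $f=e^{-2\pi i\xi\cdot}$, exactly as in the Fourier argument of Proposition \ref{prop:ising-tight}, since those bounds are uniform over $\|f\|_\infty\le1$. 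Almost sure convergence of the series in $H^s$ then follows from $L^2$-convergence of the partial sums, which form an $L^2$-bounded martingale in the coin tosses (conditionally on the clusters), together with Doob's inequality.
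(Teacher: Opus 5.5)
Your proposal is correct and follows essentially the paper's own argument: a Skorokhod coupling built from Theorem~\ref{thm:DRC}, Theorem~\ref{thm:convergencearea} and Proposition~\ref{prop:ising-tight}, identification of each $\mu_k$ as the $\F_{\C_k}$-measurable limit of Proposition~\ref{prop:meas}, and control of the tail $\sum_{k>K}\xi_k\mu_k$ through orthogonality of the coins and the short-range two-point bound (the lemma you need there is Lemma~\ref{lemma:Ising3}, not Lemmas~\ref{lemma:Ising1}--\ref{lemma:Ising2}). Your handling of the random cutoff is in fact more careful than the paper's, which simply replaces $\{k>N\}$ by $\{\mathrm{diam}(\C^\de_k)<\rho\}$ for the random $N=N(\rho)$; you instead split at $|x-y|=\eta$, apply Cauchy--Schwarz against a fourth moment (bounded, e.g.\ via the Lebowitz inequality after treating the $+$ boundary as a single ghost vertex), and use $\lim_{K}\limsup_{\de}\P(r^\de_K\ge\eta)=0$, which follows from the diameter-ordered convergence.
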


\def\diam{\textnormal{diam}}
\begin{proof}
	Fix $N\geq 1$ and index $s<-1$. Combining Theorem \ref{thm:DRC}, Theorem \ref{thm:convergencearea} and Proposition~\ref{prop:ising-tight}, we can pass to a subsequence (denoted the same way) and apply Skorokhod's representation theorem to pass to a common probability space in which
		\[
			(H_\de, \IMF_\de, (\C_k^\de)_{k=0}^\infty, (\mu^\de_k)_{k=0}^\infty, (\xi_k)_{k=0}^\infty) {\longrightarrow}(h/(\pi\sqrt{2}), \IMF^*, (\C_k)_{k=0}^\infty, (\mu_k)_{k=0}^\infty, (\xi_k)_{k=0}^\infty)
		\]
	almost surely. Note that we have (trivially) included the i.i.d. signs $(\xi_k)_{k\geq0}$ independent of everything else. Moreover, all limits obtained are unique except for $\IMF^*$.
	
	\noindent In the discrete, for any $N\geq1$, we can write 
		\[
			\IMF_\de = \mu_0^\de+\sum_{k=1}^N\xi_k\mu^\de_{k} + R^\de_N,
		\]
	where $R^\de_N$ is simply a remainder term. 
	Since $s<-1$, weak convergence of measures readily implies convergence in $\sobC$. Hence, as a finite sum, 
		\[
			\mu_0^\de+\sum_{k=1}^N\xi_k\mu^\de_{k}\ {\longrightarrow}\ \mu_0+\sum_{k=1}^N\xi_k\mu_{k}
		\]
	almost surely and in $L^2(\P)$ as $\de\to0$. As a difference of two converging terms, we can also conclude that $R^{\de}_N{\longrightarrow}R^*_N$ almost surely as $\de\to0$. For every cutoff $\rho>0$ there is an almost surely finite\footnote{This follows from the Aizenman-Burchard criterion \cite{AB}, which is satisfied by the DRC \cite[Theorem~1.1]{DRC2}. A direct proof may be given in the continuum, see \cite[Proposition 6.1]{XOR-exc}. } number $N\equiv N(\rho)$ of clusters with diameter greater than $\rho$. By Lemma \ref{lemma:Ising3}, it then follows that
	\begin{align}\nonumber
		\E[R^*_N[f]^2] & \leq \limsup_{\de\to0}\Ed\bigg[\sum_{k=N+1}^\infty\mu^\de_k[f]^2\bigg] \\\nonumber
		& = \limsup_{\de\to0}\Ed\bigg[\sum_{k:\ \diam(\C^\de_k)<\rho}^\infty\mu^\de_k[f]^2\bigg] \\\nonumber
		& \leq \norm{f}_\infty^2\limsup_{\de\to0}\de^{4-1/4}\ \Ed\bigg[\sum_{\substack{x,y \in \Dd \\ \d(x,y) \leq \rho}}\sigma_x\sigma_y\bigg]\\ \label{eq:error-rho}
		& \leq C\norm{f}_\infty^2\rho^{2-1/4} \longrightarrow 0\quad \text{as}\quad \rho\to0,
	\end{align}
	for any $f\in H^{-s}(\CC)$. As the signs $(\xi_k)_{k\geq0}$ are i.i.d. and symmetric,	
	\[
		\E[\IMF^*[f]^2] = \sum_{k=1}^N\E[\mu_k[f]^2] + 	\E[R^*_N[f]^2],
	\]
	so \eqref{eq:error-rho} implies that 
	\[
		\sum_{k=0}^\infty\E[\mu_k[f]^2] < \infty.
	\]
	In particular, 
	\[
	\mu_0[f]+\sum_{k=1}^N\xi_k\mu_{k}[f]\ {\longrightarrow}\ \IMF^*[f]
	\]
	almost surely and in $L^2(\P)$ as $N\to\infty$. A standard argument can be used to raise convergence against a fixed test function to convergence in $\sobC$.
	
	\noindent Finally, since we have written $\IMF^*$ as a measurable function of $h$ and the independent signs $(\xi_k)_{k\geq0}$, and this function is the same for any subsequential limit, we conclude joint convergence as in the proof of Theorem \ref{thm:convergencearea}.
\end{proof}

\begin{remark}
	Similarly to Remark \ref{rem:fractal-bdy}, whenever $D$ has fractal boundary one readily obtains convergence with respect to the topology of $\sobD$ when $s<-1$.
\end{remark}

\def\f{\textup{f}}
\def\A{\mathbb{A}}

Before proving the main Theorems \ref{thm:main-coupling}-\ref{thm:main-decomp}-\ref{thm:joint}, let us describe in detail how the spins $(\tau_k^+)_{k\geq1}$ and $(\tau_k^\f)_{k\geq1}$ are defined. In order to use the results of \cite{DRC, DRC2} and match them to the values of the discrete XOR-Ising spins on the respective clusters, we define them in terms of the labels of the two-valued sets. Writing $\La_{-a,b}^+$ to denote the collection of loops of $\La_{-a,b}$ with boundary value $b$, the iterations are as follows:

\paragraph{$(+)$ For $+$ boundary conditions:}\hypertarget{pos}{}
\begin{enumerate}[align = left, labelwidth=\parindent, labelsep = 0pt]
	\item[(1.$+$)]\ The boundary cluster is $\C_0^+=\A_{-2\lambda, 2\lambda}(\GFF)$. The label of this cluster is $c^+_{\C^+_0}=-1$.
	\item[(2.$+$)]\ Let $\gamma\in\La_{-2\lambda, 2\lambda}(\GFF)$.
	\begin{enumerate}
		\item\ If $\gamma\in\La_{-2\lambda, 2\lambda}^+(h)$, every loop $\ell\in\La_{-2\lambda, (2\sqrt{2}-2)\lambda}(\GFF^\gamma)$ is given a cluster \\
		$\C^+_\ell=\A_{-2\lambda, 2\lambda}(\GFF^\ell)$. The label of the cluster is
		\begin{equation}\label{eq:tau-1}
			c^+_{\C^+_\ell} = \begin{cases}
				+1\quad \text{if}\ \ell\in\La_{-2\lambda, (2\sqrt{2}-2)\lambda}^+(\GFF^\gamma), \\
				-1\quad \text{if}\ \ell\in\La_{-2\lambda, (2\sqrt{2}-2)\lambda}^-(\GFF^\gamma).
			\end{cases}
		\end{equation}
		\item\ If $\gamma\in\La_{-2\lambda, 2\lambda}^-(h)$, every loop $\ell\in\La_{-(2\sqrt{2}-2)\lambda, 2\lambda}(\GFF^\gamma)$ is given a cluster\\
		 $\C^+_\ell=\A_{-2\lambda, 2\lambda}(\GFF^\ell)$. The label of the cluster is
		\begin{equation}\label{eq:tau-2}
			c^+_{\C^+_\ell} = \begin{cases}
				+1\quad \text{if}\ \ell\in\La_{-(2\sqrt{2}-2)\lambda, 2\lambda}^-(\GFF^\gamma), \\
				-1\quad \text{if}\ \ell\in\La_{-(2\sqrt{2}-2)\lambda, 2\lambda}^+(\GFF^\gamma).
			\end{cases}
		\end{equation}
	\end{enumerate}
	\item[(3.$+$)]\  Iteratively, let $\C^+_\ell=\A_{-2\lambda, 2\lambda}(\GFF^\ell)$ for some loop $\ell$. Apply (2.$+$) to every $\gamma\in\mathcal L_{-2\lambda, 2\lambda}(\GFF^\ell)$. Define the spin
	\[
	\tau_{\C^+_\ell}^+ = \prod_{j}(-c^+_{\C^+_j}),
	\]
	where the product runs over all clusters with an outer boundary enclosing $\C^+_\ell$, including the cluster $\C^+_\ell$ itself.
\end{enumerate}

\paragraph{$(\text{free})$ For free boundary conditions:}\hypertarget{free}{}
\begin{enumerate}[align = left, labelwidth=\parindent, labelsep = 0pt]	
	\item[(1.\textup{f})]\ Every loop $\ell \in \La_{-\sqrt{2}\lambda, \sqrt{2}\lambda}(\GFF)$ is given a cluster $\C^{\textrm{f}}_\ell=\A_{-2\lambda, 2\lambda}(\GFF^\ell)$. The label is
	\[
	c^{\textup{f}}_{\C^{\textup{f}}_\ell} = \begin{cases}
		+1\quad \text{if}\ \ell\in\La_{-\sqrt{2}\lambda, \sqrt{2}\lambda}^+(h), \\
		-1\quad \text{if}\ \ell\in\La_{-\sqrt{2}\lambda,\sqrt{2}\lambda}^-(h).
	\end{cases}
	\]
	\item[(2.\textup{f})]\ Iterate as in (3.$+$). Define $\tau^\f_{\C^\f}$ in the same way.
\end{enumerate}

\begin{figure}[htbp]
	\renewcommand*\thesubfigure{\arabic{subfigure}}
	
	\begin{subfigure}[t]{0.45\textwidth}
		\includegraphics[width=\textwidth]{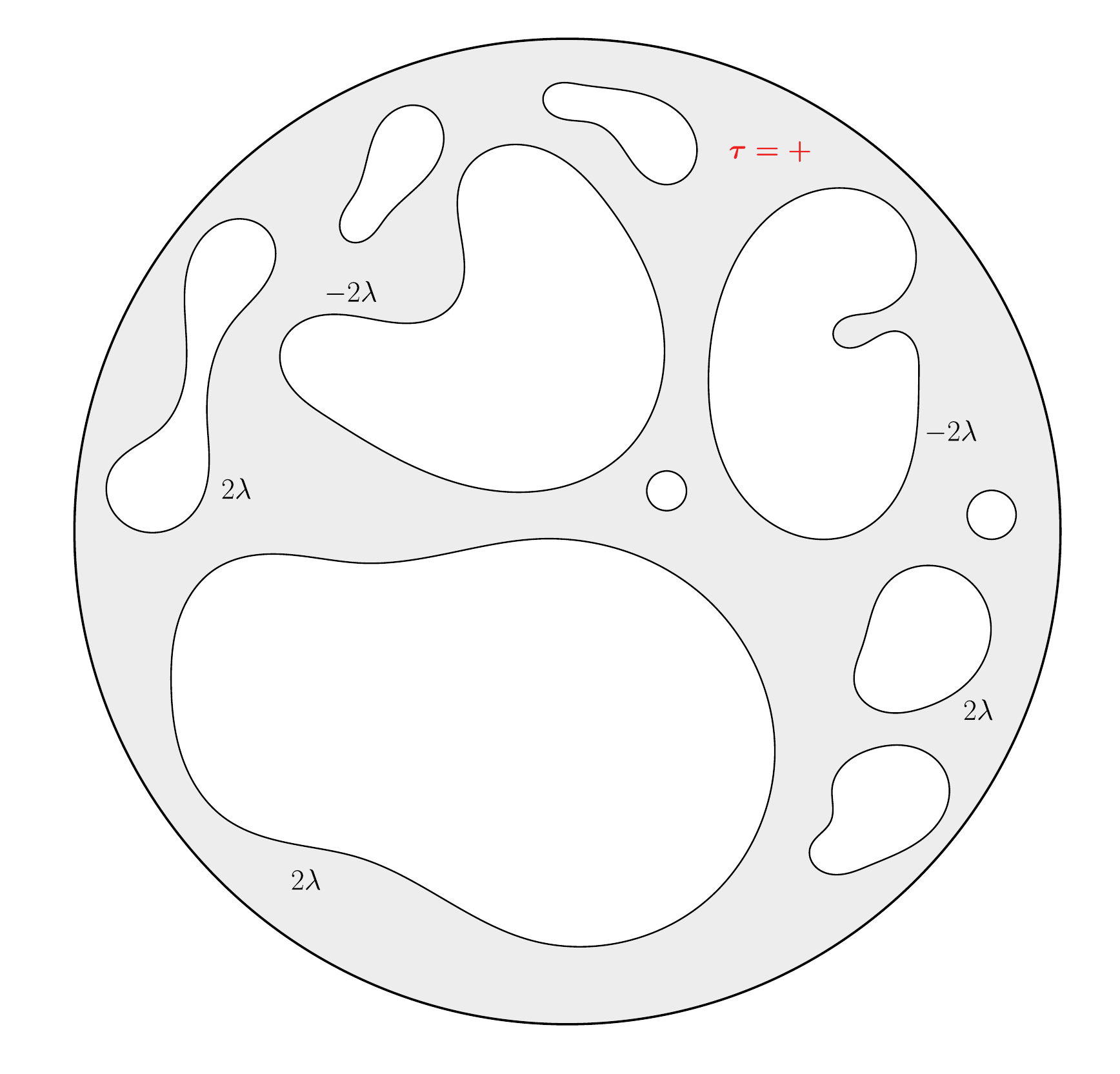}
		\caption[]{}
	\end{subfigure}\hspace{\fill}
	\begin{subfigure}[t]{0.45\textwidth}
		\includegraphics[width=\textwidth]{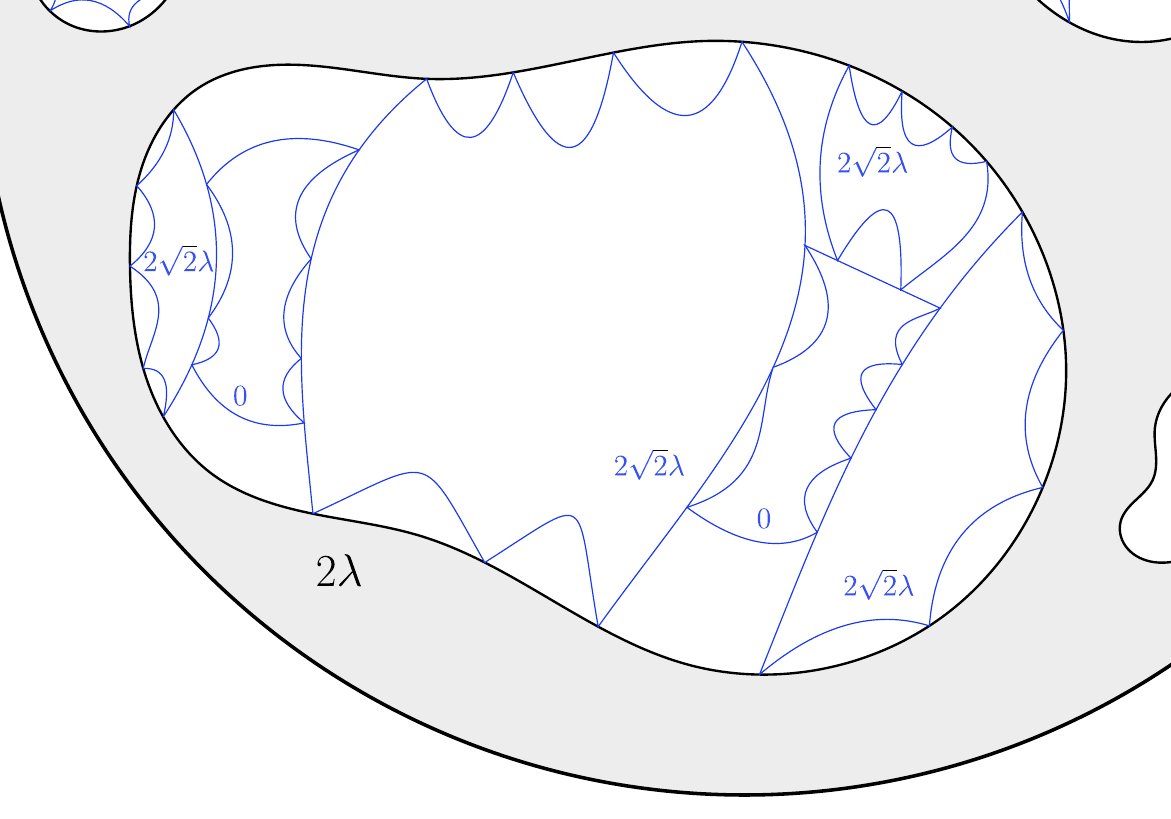}
		\caption[]{}		
	\end{subfigure}\bigskip\bigskip\bigskip

	\begin{subfigure}[t]{0.45\textwidth}
		\includegraphics[width=\textwidth]{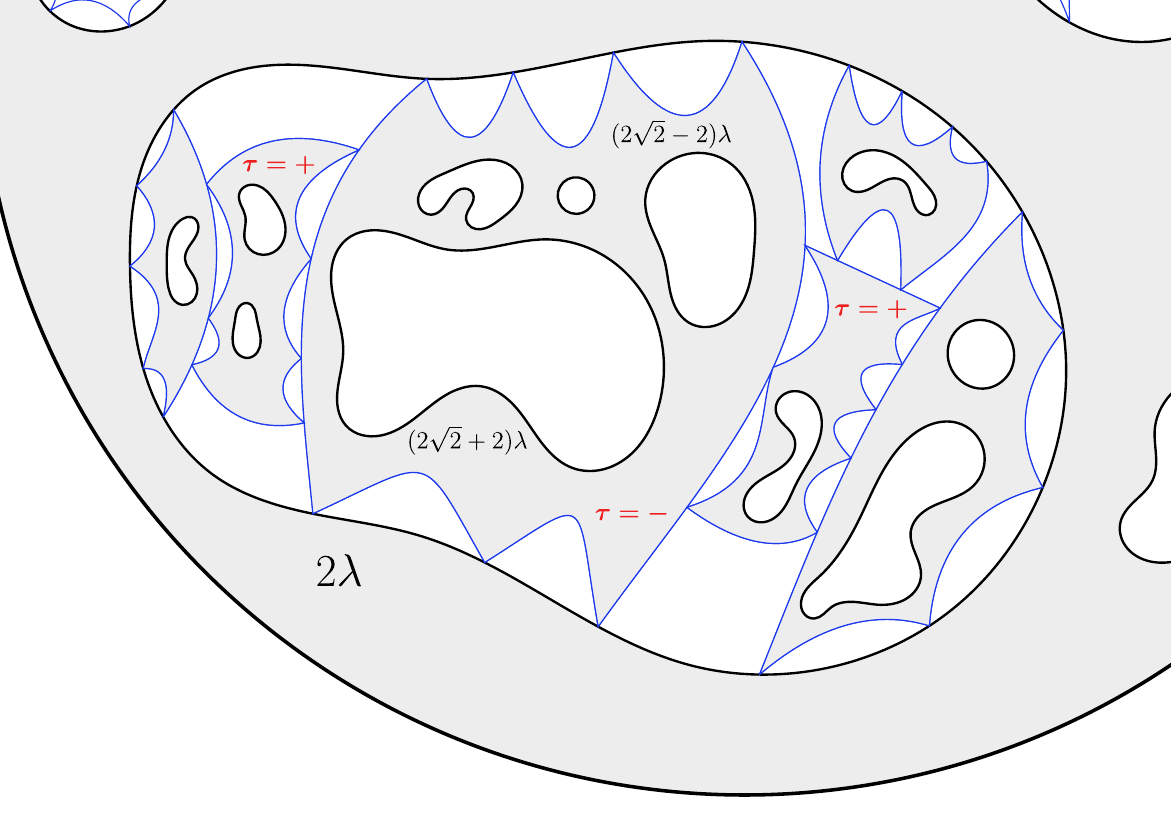}
		\caption[]{}
	\end{subfigure}\hspace{\fill}
	\begin{subfigure}[t]{0.45\textwidth}
		\includegraphics[width=\textwidth]{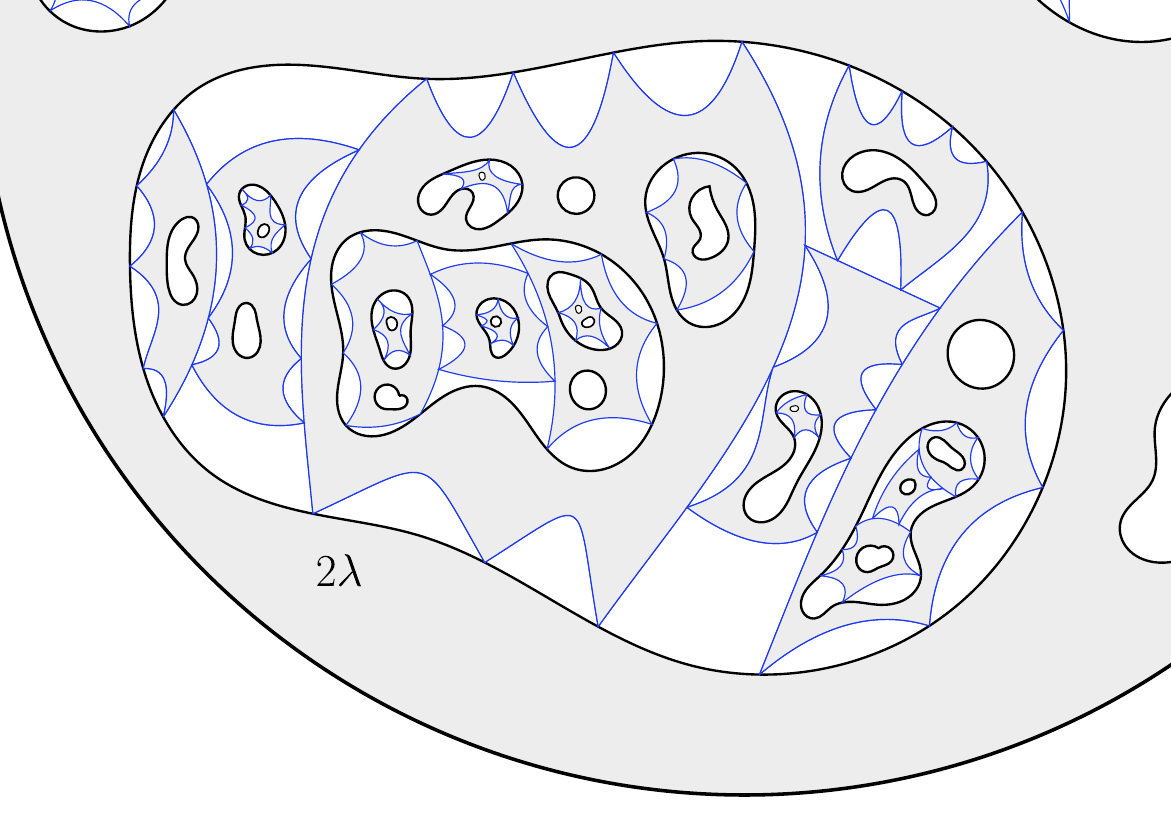}
		\caption[]{}	
	\end{subfigure}\bigskip

	\caption{
		Step-by-step illustration of the exploration of the clusters $(\C^+_k)_{k \geq 0}$, shaded in grey. Whenever a cluster is discovered for the first time, its $\tau^+$--spin is shown in red. (1) The boundary cluster is $\C^+_0=\A_{-2\lambda,2\lambda}(h)$. (2) We focus on a single loop $\gamma\in\La^+_{-2\lambda,2\lambda}(h)$ with boundary value $2\lambda$. Every loop $\ell\in\La_{-2\lambda, (2\sqrt{2}-2)\lambda}(h^\gamma)$, coloured in blue, will be associated a cluster. Note that the boundary values of these loops must be as shown, and no even loop can touch the boundary. (3) The clusters inside each blue loop $\ell\in\La_{-2\lambda, (2\sqrt{2}-2)\lambda}(h^\gamma)$ are discovered, where each cluster is of the form $\A_{-2\lambda, 2\lambda}(h^\ell)$. Mind the correspondence between the $\tau^+$--spins and the labels. (4) Nested clusters are defined iteratively within each loop (although not all iterations are shown here).
		We highlight that the collection $(\tau_k^+)_{k\geq1}$ is, in fact, measurable with respect to \emph{only} the geometry of the clusters, see Remark \ref{rem:labels}.
		\label{fig:full-iter}
		}
\end{figure}

\noindent We can now begin to prove our main results.

\begin{proof}[Proof of Theorems \ref{thm:main-coupling}-\ref{thm:main-decomp}-\ref{thm:joint}]
	It suffices to prove the joint convergence stated in Theorem \ref{thm:joint}, along with the explicit representations \eqref{eq:IMF}.
	
	\noindent Denote by $(\IMF_\de^{+}, \tilde\IMF_\de^{+})$ the pair of discrete IMFs with $+$ boundary conditions on $\Dd$. For any $k\geq1$, the outer boundary of the cluster $\C_k^{\de}$ is given by an inner boundary of the coupled DRC on the dual graph. As introduced in \cite{DRC}, each such inner boundary has an associated label $c_k^{\delta}$, defined in terms of the parity of the dual DRC. In particular, it is done in such a way that
	\[
		\tau_{k}^{\de, +} = - \prod_{j}c^+_{\C^+_j},
	\]
	where, again, the product runs over all clusters with an outer boundary enclosing $\C^{\de, +}_k$. The joint convergence of both fields, along with their decompositions, follow from the following observations:
	\begin{enumerate}
		\item By \cite[Theorems 6.2 - 6.4]{DRC}, the law of the labels $(c_k^{\delta, +})_{k\geq1}$ converges (jointly) to the law of $(c_k^+)_{k\geq1}$, as defined above.
		\item The analogous decompositions to \eqref{eq:IMF} hold for the discrete IMFs. And both fields are independent of each other.
		\item The law of $(\xi_k\tau_k^+)_{k\geq1}$ is still that of i.i.d. coin tosses, hence Lemma \ref{lemma:single-IMF} applies to $\tilde\Phi_\delta^+$.
	\end{enumerate}
	The joint convergence of all four IMFs is proved analogously, i.e. combining the joint convergence of Theorem \ref{thm:DRC} with Lemma \ref{lemma:single-IMF}, noting Remark \ref{rem:free-bc}.
\end{proof}

\begin{remark}\label{rem:labels}
	The labels $(c_k^+)_{k\geq0}$, and thus the spins  $(\tau_k^+)_{k\geq0}$, are not only a measurable function of $h$, but also a measurable function of the collection $(\C_k^+)_{k\geq0}$. Indeed, by {\cite[Theorem 4.1]{AruSep}}, two spins $\tau_k^+$ and $\tau_{k'}^+$ must differ whenever they correspond to two clusters that touch each other, and all clusters touching the preceding $\CLE_4$ must have the same spin. In the discrete, however, it is \emph{not} true that the parity labels are measurable functions of \emph{only} the geometry of the clusters. As mentioned in Remark \ref{remark:antiferro}, the law of the spins $\tau^{\de,+}$ given the clusters of $\w_\de$ is that of an antiferromagnetic Ising model (with an interaction proportional to the size of the common boundary), and so it is reasonable that in the limit surviving macroscopic clusters which touch will never have the same label.
\end{remark}

\subsection{Identification of the measures and conformal covariance}\label{sec:ident-meas} 

\def\Cont{\textnormal{Cont}}
\def\ContU{\textnormal{Cont}_U}
\def\ContL{\textnormal{Cont}_L}
\def\Box{\textnormal{Box}}
\def\BoxU{\textnormal{Box}_U}
\def\BoxL{\textnormal{Box}_L}
\def\Int{\mathrm{Int}}

\def\Leb{\mathrm{Leb}}

Let us first draw some comparisons between our measures and the $d$-dimensional Minkowski content of the cluster $\C$. We follow the definitions as in \cite{zhan}. For any closed set
$S\subset\CC$, the $d$-dimensional Minkowski content of $S$ is defined as 
	\begin{equation} \label{eq:Mink-cont}
		\Cont_d(S) := \lim_{\eps\to0}\eps^{d-2} \Leb(B(S, \eps)),
	\end{equation}
provided that the limit exists. Here $\Leb$ is the two-dimensional Lebesgue measure and $B(S, \eps)$ is the $\eps$-enlargement of the set $S$. One should think about this definition as the generalisation of $d$-dimensional area. Moreover, a measure  $\mathcal M$ is the $d$-dimensional Minkowski content measure of $S$ in some domain $D$ if it is a measure supported in $D\cap S$ and for every compact set $K\subset D$, 
	\begin{equation}
		\Cont(K\cap S) =\mathcal M_S(K)<\infty.
	\end{equation}
Recall that the Minowski dimension of $S$ is defined as
	\[
		d:= \dim_M(S) := \lim_{\eps\to0}\frac{\log(N_\eps(S))}{\log(1/\eps)}
	\]
provided the limit exists, where $N_\eps(S)$ denote the number of $\eps$-boxes intersecting $S$. In analogy to \eqref{eq:Mink-cont}, it is then natural to define the $d$-dimensional ``box'' content as
	\begin{equation}\label{eq:box-cont}
		\Box_d(S) = \lim_{\eps\to0}\eps^{d}N_\eps(S).
	\end{equation}
Indeed, this definition captures the idea of $d$-dimensional area in the same spirit as the Minkowski content does (and is tied more closely to the box-counting aspect of the Minkowski dimension). Note, however, that the existence of either limit in \eqref{eq:Mink-cont}-\eqref{eq:box-cont} does not guarantee the existence of the other. 

In the previous section, we have shown that the ``box'' content measure of the cluster $\C$ exists and is given by a constant multiple of $\mu_\C$. That is, the continuum measures are indeed ``area'' measures. While this is a very explicit identification, we choose to further verify the uniqueness axioms posed in \cite{CLE-meas} for measures of this type.

\begin{proof}[Proof of Theorem \ref{thm:unique-meas}]
	The existence of the box-counting limit is the content of Proposition \ref{prop:meas}, along with Remark \ref{rem:meas-compacts}. To conclude uniqueness, we verify the axioms in Theorem \ref{thm:Miller-Schoug}.
	\begin{enumerate}[label=(\roman*)]
		\item Immediately follows\footnote{Observe that any locally finite measure on $\CC$ is automatically inner and outer regular, so we can already extract some nice properties.} from Theorem \ref{thm:boxcount}, or simply Proposition \ref{prop:tight}.
		\item We use the same notation as in the statement of Theorem \ref{thm:Miller-Schoug}. For any $f\in C_c(V)$,
		\[
			\mu_{\C}[f] = \lim_{\eps\to0}\beta\eps^{2-1/8}\sum_{A\in\A_\eps}f_A\i(A\cap\C) = \lim_{\eps\to0}\beta\eps^{2-1/8}\sum_{A\in\A_\eps}f_A\i(A\cap\C_V).
		\]
		Moreover, given $\C\setminus\C_V$, the law of $\C_V$ is that of a $\CLE_4$ on $V$. In particular, the right-hand side is precisely the definition of the measure in the domain $V$.
		\item We prove a slightly stronger claim, as it will be needed later. Fix two Jordan domains $D$ and $\tilde D$, with clusters $\C$ and $\tilde\C$ respectively. Given our (continuum) decomposition of the IMF, 
		\begin{equation}\label{eq:meas-CE}
			\mu_\C[f] = \E[\IMF^D[f] \mid \F_{\C}]
		\end{equation}
		for any $f\in C_c(D)$. Observe that, in order for this argument to apply to measures other than that of the boundary cluster, we allow for arbitrary Jordan domains (e.g. those defined by the inner boundaries of a cluster). In particular, by Theorem \ref{thm:CHI1},
		\[
			\E[\mu_\C[f]] = \E[\IMF^D[f]] = 2^{1/4}\mathfrak{C}\int_{D}\CR(z, D)^{-1/8}f(z)dz.
		\]
		For any conformal $\varphi: D\to\tilde\D$ with $\varphi(\C)=\tilde\C$, 
		\begin{align*}
			\E[\mu_{\tilde\C}[f\circ\varphi^{-1}]] & = 2^{1/4}\mathfrak{C} \int_{\tilde\D}\CR(z, \tilde D)^{-1/8}f(\varphi^{-1}(z))dz \\
			& = 2^{1/4}\mathfrak{C}\int_D\CR(\varphi(w), \tilde D)^{-1/8}f(w)|\varphi'(w)|^{2}dw \\
			& = 2^{1/4}\mathfrak{C} \int_D\CR(w, D)^{-1/8}f(w)|\varphi'(w)|^{2-1/8}dw \\
			& = \int_D f(w)|\varphi'(w)|^{2-1/8}d\E[\mu_\C](z),
		\end{align*}
		as required.
\end{enumerate}
\end{proof}

\begin{remark}
	A direct proof of the conformal covariance of the measures follows from the same arguments as in {\cite[Section 6]{GPS}}. Indeed, it is not hard to see that our $L^2$ approximation is unaffected by rotating or translating the $\eps$-lattice. In fact, we observe that both arguments are not too far from each other: the $L^2$ approximation tells us that the measures are very close to its expectation, and thus the conformal covariance of the latter should be sufficient. 
\end{remark}

\begin{remark}
	The proof of conformal covariance implicitly contains the following Markov property statement: given some cluster $\C$,
	\[
		\IMF^D[f] = \IMF^{O(\C)}[f]
	\]
	for all $f\in C_c(O(\C))$, where $O(\C)$ is the domain encircled by the outer boundary of $\C$.
\end{remark}

Requiring only conformal covariance of the intensity measure in Theorem \ref{thm:Miller-Schoug} actually allows one to prove the conformal covariance of the IMF only from the conformal covariance of its one-point function (rather than requiring the covariance of all moments). Indeed, Theorem \ref{thm:Miller-Schoug} implies that the axioms (1)-(4) are enough to conclude the conformal covariance of the measure. From here, it follows that
\begin{align}\nonumber
		\IMF^{\tilde\D}[f\circ\varphi^{-1}] & = \mu_0^{\tilde\D}[f\circ\varphi^{-1}] + \sum_{k=1}^N\xi_k\mu_k^{\tilde\D}[f\circ\varphi^{-1}] + R_N^{\tilde\D}[f\circ\varphi^{-1}] \\ \nonumber
		& = \mu_0^D[f|\varphi'|^{2-1/8}] + \sum_{k=1}^N\xi_k\mu_k^D[f|\varphi'|^{2-1/8}] + R_N^{\tilde\D}[f\circ\varphi^{-1}].
\end{align}
But by the Koebe quarter theorem, the diameter of the clusters in $D$ is comparable to that of the clusters in $\tilde\D$. In particular, the tail of the sum on the second line still vanishes almost surely as $N\to\infty$. And the law of the limit is that of $\Phi^D[f|\varphi'|^{2-1/8}]$, as required.

\subsection{The Ashkin-Teller magnetisation field}

We devote this last, short section to proving that the conjectured scaling limit of the AT magnetisation field is well-defined. 

\begin{proof}[Proof of Proposition \ref{prop:AT-mag-field}]
	Let $g\neq4$. Let $(\C_{k}^{g})_{k\geq0}$ be the clusters under $+$ boundary conditions, which we recall are defined in the almost verbatim analog of Theorem \ref{thm:main-decomp} where any explicit occurrence of $\sqrt{2}$ is replaced by $\sqrt{g}$. Let $(\mu_{k}^g)_{k\geq0}$ be the box-counting measures on these clusters, as defined in Theorem \ref{thm:unique-meas}. We prove that the infinite sum
	\begin{equation}\label{eq:L2-sum}
		\Psi^{g} = \mu_0^g + \sum_{k=1}^\infty\xi^+_k\mu_k^{g}
	\end{equation}
	converges almost surely and in $L^2$. The proof of convergence for the decompositions of the remaining fields is analogous.
	
	\noindent We recall that the ordering of the clusters is by decreasing size of diameter. Moreover, for any $g$, the collection of clusters is locally finite by \cite[Proposition 6.1]{XOR-exc}. To prove the desired convergence of \eqref{eq:L2-sum} it suffices to show that for any $f\in C_c(D)$, 
	\[
	 	\sum_{k=N}^M\E[\mu_k^{g}[f]^2]\longrightarrow0
	\]
	as $M\to\infty$ and then $N\to\infty$. In fact, as this is a non-negative and monotone sequence, it is enough to show that
	\[
	\sum_{k=N}^M\E[\mu_k^{g}[f]^2\mid\F_{O_N}]\longrightarrow0 \quad \text{a.s.},
	\]
	where $\F_{O_N}$ is the $\sigma$-algebra generated by the outermost outer boundaries $O_k$ of the clusters $C_k^g$ for $k\geq N$. One should think of conditioning on the iteration of two-valued sets stopped when discovering loops with diameter smaller than some $\rho=\rho(N)>0$. By \eqref{eq:meas-CE}, 
	\[
		\E[\mu_k^{g}[f]^2\mid \F_{O_N}] = \E[\E[\IMF^{O_k}[f]\mid \F_{\C_k^g}]^2\mid \F_{O_N}] \leq \E[\IMF^{O_k}[f]^2 \mid \F_{O_N}]
	\]
	where $\IMF^{O_k}$ is an IMF in the domain encircled by $O_k$, and independent of everything else.	Now, recall that the outer boundaries $O_k$ are all given by loops of two-valued sets of height gap $2\sqrt{g}\lambda$. Crucially, by \cite{dim-TVS}, these sets have Hausdorff dimension a.s. equal to
	\[
	2-\frac{1}{2g} < 2 - \frac{1}{8}
	\]
	for $g<4$. 
	Since we have fixed the conditioning in the domains, we can conclude from the following (deterministic) claim applied to $D_N = \cup_{k\geq N}O_k$.

	\noindent \textbf{Claim:} Let $D_N\subset D$ be a decreasing sequence of domains (not necessarily connected) such that the maximal diameter over its connected components goes to zero as $N\to\infty$. 
	Moreover, assume that the dimension of $\partial D_N$ is strictly smaller than $2-1/8$. 
	Then, the-two point function \cite{CHI1} of the Ising model on $D_N$ is such that, as $N\to\infty$,
	\[
		\int_{D_N}\int_{D_N}\CR(z, D_N)^{-1/8}\CR(w, D_N)^{-1/8}\cosh((1/2)G_{D_N}(z,w))^{1/2}dzdw \longrightarrow 0.
	\]
	To prove this claim, we write $G_{D_N}(z,w) = -\log(|z-w|) + g_{D_N}(z,w)$ where $g_{D_N}$ is a bounded function. Up to constants, the integral above is then upper bounded by 
	\begin{align}\nonumber
		\int_{D_N}\int_{D_N}\CR(z, D_N)^{-1/8}&\CR(w, D_N)^{-1/8}|z-w|^{-1/4}dzdw \\ \nonumber
		& + \int_{D_N}\int_{D_N}\CR(z, D_N)^{-1/8}\CR(w, D_N)^{-1/8}dzdw.
	\end{align}
	A direct computation shows that, under the assumption on the dimension of $\partial D_N$, the desired convergence holds.
\end{proof}

\begin{remark}
	It is clear from the proof that the case $g=4$, or equivalently $J=U$, requires much more care. Recall this is precisely the point at which the magnetisation field has the same law as the polarisation field, conjecturally that of $\cos((1/2)h)$. A continuum decomposition of this field, and more generally of $\cos(\alpha h)$ for any $\alpha\in(0,1)$, are studied in \cite{XOR-exc}.
\end{remark}

\appendix
\section{Estimates for Ising correlations}\label{app:ising-est}
Let $D$ be a Jordan domain with non-fractal boundary as per Definition \ref{def:non-fractal}. Let $\Dd\subset\de\Z^2$ be a discrete domain approximation as in \eqref{eq:dom-approx}. To derive the correlation inequalities used throughout the paper, one may take the following Onsager-type inequalities as the starting point. Both bounds can be derived from a standard FK-Ising argument and the results in \cite[Section 5]{RSW}. All constants in this section depend only on the domain $D$.

\begin{prop}[{\cite[Proposition 3.10]{tight-field}}, {\cite[Lemma 4.4]{JSW}}]\label{lemma:Ising0}
	There exists a constant $C> 0$ such that for any $x,y \in \Dd$, 
	\begin{enumerate}
		\item[(i)] $\de^{-1/8}\Ed^+[\sigma_x]\leq C \d(x,\b\Dd)^{-1/8}$,
		\item[(ii)] $\de^{-1/4}\Ed^+[\sigma_x\sigma_y]\leq C \left( |x-y| \wedge \d(x,\b\Dd) \right)^{-1/8} \left( |x-y| \wedge \d(y,\b\Dd) \right)^{-1/8}$.
	\end{enumerate}
\end{prop}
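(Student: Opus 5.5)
Both bounds will come from the FK-Ising representation of the critical Ising model together with the RSW theory of \cite{RSW}. By the classical Edwards--Sokal coupling (Theorem~\ref{thm:ESfk}), we have $\Ed^+[\sigma_x]=\phi^+_{\Dd}(x\leftrightarrow\b\Dd)$ and $\Ed^+[\sigma_x\sigma_y]=\phi^+_{\Dd}(x\leftrightarrow y)$ (the latter allowing connection through the wired boundary). Working in lattice units, write $n=\de^{-1}\d(x,\b\Dd)$. By monotonicity of FK-Ising in boundary conditions, the probability that $x$ connects to $\b\Dd$ is at most the probability that $x$ connects to distance $n$ in the full-plane (or wired-box) measure, i.e.\ $\phi^{1}_{\Lambda_n(x)}(x\leftrightarrow\b\Lambda_n(x))$. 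The one-arm estimate from \cite[Section 5]{RSW} together with the critical Ising one-point asymptotics (e.g.\ $\E^+_{\Lambda_n}[\sigma_0]\asymp n^{-1/8}$, which follows from quasi-multiplicativity and the known value of the exponent, or directly from \cite{CHI1}-type bounds) gives $\lesssim n^{-1/8}=\de^{1/8}\d(x,\b\Dd)^{-1/8}$. This is (i).

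For (ii), I would set $r=|x-y|$, $r_x=r\wedge\d(x,\b\Dd)$ and $r_y=r\wedge\d(y,\b\Dd)$. On $\{x\leftrightarrow y\}$, the cluster of $x$ must reach distance $r_x/2$ from $x$, since either it reaches $y$ at distance $r$ or it reaches the boundary at distance $\ge\d(x,\b\Dd)$. Similarly, the cluster of $y$ reaches distance $r_y/2$. If $r_x,r_y$ are small compared with $r$ then the balls $B(x,r_x/2)$ and $B(y,r_y/2)$ are disjoint. Conditioning on the configuration outside one ball and using the domain Markov property with wired conditions (which can only increase arm probabilities), I would bound
\[
\phi^+(x\leftrightarrow y)\le\phi^1_{B(x,r_x/2)}(x\leftrightarrow\b B)\,\phi^1_{B(y,r_y/2)}(y\leftrightarrow\b B).
\]
The first factor is an increasing event depending only on the edges inside the first ball. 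Conditioning on it and pushing the wired boundary outwards decouples it from the second factor via the standard FK-Ising upper bound for arms in disjoint regions. Each factor is then bounded by the one-arm estimate from the first step, giving $\de^{1/4}r_x^{-1/8}r_y^{-1/8}$.

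The main obstacle is doing the decoupling cleanly. The events are both increasing, so FKG goes the wrong way, and one needs the domain Markov property with wired conditions on the exterior of one ball. The remaining point is the case where $r_x$ or $r_y$ is comparable to $r$. Then $B(x,r/3)$ and $B(y,r/3)$ still work as disjoint balls, and the loss is only a constant factor since $r_x\asymp r$. The dependence of constants on $D$ enters only through these comparisons.
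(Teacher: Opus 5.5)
Your proposal is correct and is the standard FK-Ising one-arm argument that the paper relies on. The paper only cites \cite{tight-field,JSW} and remarks that both bounds follow from a standard FK-Ising argument together with \cite[Section 5]{RSW}. Your steps are the ones that remark points to: the inclusion of $\{x\leftrightarrow y\}$ into two one-arm events in disjoint boxes, the domain Markov property with domination by wired boundary conditions, and the $n^{-1/8}$ one-arm bound.

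Your final case split is unnecessary, and as written it is slightly wrong: using $B(x,r/3)$ breaks the event inclusion when $\d(x,\b\Dd)<r/3$. Since $r_x/2+r_y/2\le |x-y|$ always holds, you can simply use boxes of radius $r_x/3$ and $r_y/3$ in every case. Also, the dominating measure in (i) is the wired-box measure, not the full-plane one.
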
	

\noindent These bounds readily yield the upper bounds throughout Lemma \ref{lemma:Ising1}--\ref{lemma:Ising3}. The lower bounds in Lemma \ref{lemma:Ising1} can be proved using \cite[Lemma 5.4, Proposition 5.5]{RSW}. We stress that these estimates are well-known, and e.g. appear throughout the proofs in \cite{mag-field}.

\begin{lemma}\label{lemma:Ising1} There exists $C,c > 0$ such that	
	\[
	c \leq \de^{2 - 1 / 8}\ \Ed^+\bigg[ \sum_{x\in\Dd}\sigma_x\bigg] \leq C.
	\] 
	and 
	\[
	c \leq \de^{4 - 1 / 4}\ \Ed^+\bigg[ \sum_{x,y\in\Dd}\sigma_x\sigma_y\bigg] \leq C.
	\] 
\end{lemma}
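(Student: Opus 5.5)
The plan is to sum the pointwise bounds of Proposition~\ref{lemma:Ising0} for the upper bounds, and to prove the lower bounds via positivity of spin correlations together with a bulk estimate. Write $d_x=\d(x,\b\Dd)$.

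\textbf{One-point upper bound.} By Proposition~\ref{lemma:Ising0}(i),
\[
\de^{2-1/8}\,\Ed^+\Big[\sum_x\sigma_x\Big]\le C\,\de^2\sum_{x\in\Dd} d_x^{-1/8}.
\]
The right-hand side is a Riemann sum for $\int_D \d(z,\b D)^{-1/8}\,dz$. To see this is finite, group the points $x$ into dyadic layers $\{2^{-j-1}<d_x\le 2^{-j}\}$. By the non-fractal assumption (Definition~\ref{def:non-fractal}), the $j$-th layer is covered by $O(2^{j})$ boxes of side $2^{-j}$, so it has area $O(2^{-j})$. The contribution of that layer is therefore $O(2^{-j}2^{j/8})$, which is summable in $j$. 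The layers with $d_x$ of order $\de$ contribute $O(\de\cdot\de^{-1/8})=o(1)$.

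\textbf{Two-point upper bound.} Use Proposition~\ref{lemma:Ising0}(ii) and bound
\[
(|x-y|\wedge d_x)^{-1/8}(|x-y|\wedge d_y)^{-1/8}\le |x-y|^{-1/4}+d_x^{-1/8}|x-y|^{-1/8}+d_y^{-1/8}|x-y|^{-1/8}+d_x^{-1/8}d_y^{-1/8}.
\]
Each of the four resulting double Riemann sums is dominated by an integral that is finite. The terms $|x-y|^{-1/4}$ and $|x-y|^{-1/8}$ are locally integrable in two dimensions, with $x=y$ contributing $\de^{4-1/4}\cdot|\Dd|=O(\de^{2-1/4})$. The $d^{-1/8}$ factors are handled by the layer argument above. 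The cross terms follow by Cauchy--Schwarz or directly from Fubini.

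\textbf{Lower bounds.} Fix a ball $B=B(z_0,r)$ with $B(z_0,2r)\subset D$. Griffiths' inequality gives $\sigma$-correlations that are nonnegative, so it suffices to restrict the sums to $x,y\in B$. By monotonicity in the boundary conditions (equivalently, by comparison with the boundary cluster of the FK-Ising model on $B(z_0,2r)$ with wired boundary), for $x\in B$ we have
\[
\Ed^+[\sigma_x]\ge \phi^{w}_{B(z_0,2r)}\big(x\leftrightarrow \b B(z_0,2r)\big)\gtrsim \de^{1/8}.
\]
The last step uses the RSW one-arm lower bound \cite[Lemma 5.4, Proposition 5.5]{RSW} at scale $r/\de$. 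Summing over the order $\de^{-2}$ points of $B$ gives the first lower bound.

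For the two-point function, Griffiths' second inequality (or FKG for the FK-Ising model) gives $\Ed^+[\sigma_x\sigma_y]\ge \Ed^+[\sigma_x]\,\Ed^+[\sigma_y]\gtrsim\de^{1/4}$. Summing over $x,y\in B$ yields the second lower bound.

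The only mildly delicate step is the boundary layer in the upper bounds, which is exactly what the non-fractal assumption handles. The remainder is routine bookkeeping.
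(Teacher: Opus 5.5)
Your upper bounds are correct and follow the paper's route: sum the pointwise bounds of Proposition~\ref{lemma:Ising0}, treating the diagonal and the boundary layer separately. Reducing the two-point lower bound to the one-point one via Griffiths' second inequality is also fine.

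The gap is in the one-point lower bound. The inequality
\[
\mathbb{E}^+_{D_\delta}[\sigma_x]\ \ge\ \phi^{w}_{B(z_0,2r)}\big(x\leftrightarrow \partial B(z_0,2r)\big)=\mathbb{E}^+_{B(z_0,2r)}[\sigma_x]
\]
goes the wrong way. With $+$ boundary conditions, the magnetisation \emph{decreases} as the domain grows. Conditionally on the spins of $D_\delta$ outside the ball, the field inside is an Ising model on the ball with boundary condition at most $+$. Monotonicity in boundary conditions therefore gives $\mathbb{E}^+_{D_\delta}[\sigma_x]\le \mathbb{E}^+_{B(z_0,2r)}[\sigma_x]$.

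In FK language the reversal happens twice. The restriction of $\phi^{w}_{D_\delta}$ to the ball is dominated by $\phi^{w}_{B(z_0,2r)}$. Also $\{x\leftrightarrow\partial D_\delta\}\subset\{x\leftrightarrow\partial B(z_0,2r)\}$. Comparison with a sub-domain carrying wired boundary conditions can only produce upper bounds, so as written your lower bound is unproved.

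The fix is to compare with a \emph{larger} domain. For every $x\in D_\delta$, the box $x+\delta\Lambda_N$ with $N\asymp \mathrm{diam}(D)/\delta$ contains $D_\delta$. The same Markov/monotonicity argument, now with $D_\delta$ as the sub-domain, gives
\[
\mathbb{E}^+_{D_\delta}[\sigma_x]\ \ge\ \mathbb{E}^+_{\Lambda_N}[\sigma_0]=\phi^{w}_{\Lambda_N}\big(0\leftrightarrow\partial\Lambda_N\big)\ \gtrsim\ N^{-1/8}\asymp\delta^{1/8}.
\]
The last step is the one-arm lower bound for critical FK-Ising, which is the RSW-type input the paper takes from \cite[Lemma 5.4, Proposition 5.5]{RSW}.

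Alternatively, you can keep your ball but put \emph{free} boundary conditions on it. You would then use FKG to glue an RSW connection from an open circuit in $B(z_0,2r)\setminus B(z_0,r)$ to $\partial D_\delta$. This route needs the free-boundary one-arm bound and crossing estimates uniform in boundary conditions.

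Once you have the pointwise bound $\gtrsim\delta^{1/8}$, your summation and the Griffiths step give both lower bounds. Note that this one-arm estimate, not the boundary layer in the upper bounds, is the genuinely nontrivial ingredient of the lemma.
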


\begin{lemma}\label{lemma:Ising2} There exists $C > 0$ such that, for all $\eps > 0$ and $\de$ sufficiently small,
	\[
	\de^{4 - 1 / 4}\ \Ed^+\bigg[\sum_{\substack{x,y \in \Dd \\ \d(x,\b\Dd) \leq \eps}} \sigma_x\sigma_y\bigg] \leq C \eps^{1 - 1/8}.
	\] 
\end{lemma}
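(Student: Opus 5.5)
The natural route is to start from Proposition \ref{lemma:Ising0}(ii) and integrate it against the region $\{x : \d(x,\b\Dd) \leq \eps\}$. Write $r_x = \d(x,\b\Dd)$. After rescaling, the bound reads
\[
\de^{-1/4}\Ed^+[\sigma_x\sigma_y] \leq C\,(|x-y|\wedge r_x)^{-1/8}(|x-y|\wedge r_y)^{-1/8} \leq C\,\big(|x-y|^{-1/4} + |x-y|^{-1/8} r_x^{-1/8} + r_x^{-1/8}r_y^{-1/8}\big).
\]
This follows by bounding each minimum $(a\wedge b)^{-1/8}\leq a^{-1/8}+b^{-1/8}$ and absorbing the cross term $|x-y|^{-1/8}r_y^{-1/8}$ by symmetry. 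In the sum we will bound everything crudely, either by $C$ times a power of $|x-y|^{-1}$ or by an $r$-weight. Since $\de^{4}\sum_{x,y}$ is a Riemann sum, it suffices to bound the continuum integrals
\[
I_1=\int_{\{r_x\le\eps\}}\int_D |x-y|^{-1/4}\,dy\,dx,\qquad I_2=\int_{\{r_x\le\eps\}}\int_D r_x^{-1/8}\big(|x-y|^{-1/8}+r_y^{-1/8}\big)\,dy\,dx.
\]
The discrete sums are dominated by these up to constants, uniformly for $\de$ small. The diagonal and near-diagonal terms $|x-y|\lesssim\de$ only contribute $O(\de^{2-1/4}\cdot\text{area})$, which is negligible.

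The key geometric input is the non-fractal assumption of Definition \ref{def:non-fractal}. Covering $\b D$ by $O(t^{-1})$ boxes of size $t$ shows that $\Leb\{x\in D: r_x\le t\}\le Ct$ for all $t\in(0,1]$. This is the only place regularity of the boundary enters, and it must also hold for the discrete approximations $\Dd$ at scales $t\gtrsim\de$.

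With this in hand, the estimates are routine. For $I_1$, the inner integral is bounded by $\sup_x\int_D|x-y|^{-1/4}dy\le C$, so $I_1\le C\,\Leb\{r\le\eps\}\le C\eps$. For $I_2$, the inner integral in $y$ is bounded by a constant, because $r_y^{-1/8}$ is integrable: $\int_D r_y^{-1/8}dy=\int_0^\infty \Leb\{r_y^{-1/8}>s\}\,ds$, and this converges by the layer-cake bound $\Leb\{r\le t\}\le Ct$. It remains to bound $\int_{\{r_x\le\eps\}} r_x^{-1/8}dx$. Using the layer-cake formula together with $\Leb\{r\le t\}\le Ct$,
\[
\int_{\{r\le\eps\}} r^{-1/8}\,dx = \int_0^{\eps}t^{-1/8}\,d\Leb\{r\le t\} \le C\eps^{1-1/8} + \tfrac{C}{8}\int_0^\eps t^{-1/8}\,dt\le C\eps^{1-1/8},
\]
where the equality is a Stieltjes integral, handled by integration by parts. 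Combining, the total is $O(\eps+\eps^{1-1/8})=O(\eps^{1-1/8})$ for $\eps\le 1$, and trivially bounded otherwise.

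The only mildly delicate step is the Riemann-sum comparison near the boundary, where $r_x$ may be of order $\de$. The fix is to note that Proposition \ref{lemma:Ising0} already holds with $r_x\ge\de$ on lattice points. The discrete layer count $\#\{x\in\Dd: r_x\le t\}\le Ct\de^{-2}$ for $t\ge\de$ then follows from the same box-covering argument, since each $t$-box contains $O(t^2\de^{-2})$ lattice points. With this count, the discrete layer-cake sums mirror the continuum computation exactly, and I expect no further obstacle.
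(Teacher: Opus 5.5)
Your proposal is correct and follows the paper's argument, which is only the one-line remark that the bounds of Proposition~\ref{lemma:Ising0} ``readily yield'' the lemma. You sum the pointwise bound (ii), use the non-fractal assumption to bound the area of the $t$-neighbourhood of the boundary by $O(t)$ (and its number of lattice points by $O(t\de^{-2})$ for $t\ge\de$), and isolate the dominant term $\int_{\{r_x\le\eps\}}r_x^{-1/8}\,dx\cdot\int_D r_y^{-1/8}\,dy\lesssim\eps^{1-1/8}$.

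There is one small slip. The cross term $|x-y|^{-1/8}r_y^{-1/8}$ cannot be absorbed ``by symmetry'', because only $x$ is confined to the boundary layer. Your three-term bound is nonetheless true, by a two-case check:
\begin{itemize}
\item if $|x-y|\le r_x/2$, then $r_y\ge r_x/2$ and the product is at most $|x-y|^{-1/4}$;
\item otherwise the first factor is at most $2^{1/8}r_x^{-1/8}$.
\end{itemize}
Alternatively, keep that term and use $\sup_x\int_D|x-y|^{-1/8}r_y^{-1/8}\,dy<\infty$, so it contributes only $O(\eps)$.
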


\begin{lemma}\label{lemma:Ising3} There exists $C > 0$ such that, for all $\eps > 0$ and $\de$ sufficiently small,
	\[
	\de^{4 - 1 / 4}\ \Ed^+\bigg[\sum_{\substack{x,y \in \Dd \\ \d(x,y) \leq \eps}} \sigma_x \sigma_y \bigg] \leq C \eps^{2- 1 / 4}.
	\] 
\end{lemma}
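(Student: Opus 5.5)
The statement to prove is Lemma \ref{lemma:Ising3}: $\de^{4-1/4}\sum_{|x-y|\le\eps}\Ed^+[\sigma_x\sigma_y]\le C\eps^{2-1/4}$. The plan is to insert the pointwise bound of Proposition \ref{lemma:Ising0}(ii) into the sum and then compare the resulting lattice sum with an integral.

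Write $d_x=\d(x,\b\Dd)$ and $r=|x-y|$. By Proposition \ref{lemma:Ising0}(ii),
\[
\de^{-1/4}\Ed^+[\sigma_x\sigma_y]\le C\,(r\wedge d_x)^{-1/8}(r\wedge d_y)^{-1/8}.
\]
Since $r\wedge d_x\le r$, each factor is at least $r^{-1/8}$; the bound gets worse, not better, when a point is near the boundary. I therefore cannot simply replace the right-hand side by $r^{-1/4}$. I split into two regimes.

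\textbf{Regime 1: $r\le d_x\wedge d_y$.} The bound reads $C r^{-1/4}$. Summing over $y$ with $|x-y|\le\eps$, and writing it as a Riemann sum with the $\de^{4}$ prefactor absorbing the lattice spacing, gives
\[
\de^{4}\sum_{x}\sum_{y:\,|x-y|\le\eps}r^{-1/4}\lesssim |D|\int_{B(0,\eps)}|z|^{-1/4}\,dz\lesssim \eps^{2-1/4}.
\]
The $x$-sum contributes a factor $|D|$, and the diagonal $x=y$ contributes $\de^{4-1/4}\cdot\#\Dd\sim\de^{2-1/4}\to0$.

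\textbf{Regime 2: $r> d_x$ or $r>d_y$.} Say $d_x<r$. Then the factor for $x$ becomes $d_x^{-1/8}$. For the $y$-factor, $d_y\ge d_x-r$, but that bound is useless here, so I instead use $(r\wedge d_y)^{-1/8}\le (d_y)^{-1/8}+r^{-1/8}$. The contribution is then bounded by
\[
\int_{\{d(x,\partial D)<\eps\}}\int_{|x-y|\le\eps}\Big(d_x^{-1/4}+d_x^{-1/8}d_y^{-1/8}+d_x^{-1/8}r^{-1/8}\Big)\,dy\,dx .
\]
Here the non-fractal boundary assumption (Definition \ref{def:non-fractal}) enters. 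The layer $\{d_x\in[2^{-j-1},2^{-j}]\}$ has area $O(2^{-j})$. Hence
\[
\int_{d_x<\eps}d_x^{-1/4}\,dx\lesssim \sum_{2^{-j}<\eps}2^{-j}2^{j/4}\lesssim \eps^{3/4}.
\]
Multiplying by the area $\eps^2$ of the inner $y$-ball gives $O(\eps^{2+3/4})$. The other two terms are handled the same way via Cauchy--Schwarz or AM--GM, reducing $d_x^{-1/8}d_y^{-1/8}$ to $d_x^{-1/4}+d_y^{-1/4}$. All of these are $o(\eps^{2-1/4})$.

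The discrete-to-continuum comparison is routine once $\de\le\eps$, which is the meaning of ``$\de$ sufficiently small''. The integrable singularities $|z|^{-1/4}$ and $d^{-1/4}$ make the Riemann-sum domination uniform; at the lattice scale the exponents are bounded, so summing over cells of size $\de$ causes no problem.

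The main obstacle is Regime 2, where the boundary proximity makes the pointwise bound singular in $d_x$ rather than in $r$. The key input is that the boundary singularity $d^{-1/4}$ is integrable against a boundary layer of linear width, which is exactly the non-fractal hypothesis. For fractal boundaries one would restrict to compacts instead, as in Remark \ref{rem:fractal-bdy}.
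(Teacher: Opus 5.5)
Your proposal is correct and is essentially the argument the paper has in mind. The paper only remarks that the upper bound follows readily from the pointwise estimate of Proposition~\ref{lemma:Ising0}(ii), and you carry this out: you integrate $|x-y|^{-1/4}$ over an $\eps$-ball in the bulk regime, and you use the non-fractal boundary to integrate the $\d(x,\b\Dd)^{-1/4}$ singularity near the boundary, which in fact contributes only $O(\eps^{11/4})$. Two points are left implicit but are harmless: the requirement $\de\lesssim\eps$ needed to absorb the diagonal, and the case of large $\eps$, where boundedness of $D$ and Lemma~\ref{lemma:Ising1} suffice.
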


\section{Spaces of random variables}\label{app:spaces}

The goal of this section is to rigorously define the various topologies with respect to which we take convergence in the scaling limit $\de\to0$, and give some background for them. 

Let $D$ be a Jordan domain. Following \cite{DRC, DRC2}, we say that $D_\de\subset \de\Z^2$ is a discrete domain approximation if 
\begin{equation}\label{eq:dom-approx}
	d_{\ell}(\partial D^\de, \partial D)\longrightarrow0
\end{equation}
as $\de\to0$. Here, the distance $d_{\ell}$ between any two loops $\gamma_1$, $\gamma_2$ is defined as
\begin{equation}\label{eq:dist-loops}
	d_\ell(\gamma_1, \gamma_2) := \inf\sup_{t\in\mathbb S^1}|\gamma_1(t)-\gamma_2(t)|,
\end{equation}
where the infimum is taken over all continuous bijective parametrizations of the loops by $\mathbb S^1$. 

The topologies of convergence are given by either of the following spaces. Again, we stress that these are standard definitions, and can be found in e.g \cite{GPS, CME, JSW}.

\noindent \textbf{(1) Collections of loops:}  Let $\mathsf L=\mathsf{L}(D)$ be the space of locally finite\footnote{Meaning that, for every $\eps>0$, only finitely many loops in the collection have diameter greater than $\eps$.} collections of non-self-touching loops in $D$ that do not cross each other. This space can be equipped with a Hausdroff-type metric induced by the metric $d_\ell$ in \eqref{eq:dist-loops}. That is, we define
\[
d_{\mathsf L}(\La_1, \La_2) := \inf\{\eps>0:\ \forall\gamma_1\in\La_1^\eps\ \ \exists\gamma_2\in\La_2 \ \ \text{s.t.}\ \ d_{\ell}(\gamma_1, \gamma_2)<\eps\ \text{and viceversa}\},
\]
where $\La^\epsilon$ is the (finite) collection of loops in $\La$ with diameter greater than $\eps>0$. This is the same metric as in \cite{DRC, DRC2}. The space  $(\mathsf{L}, d_{\mathsf L})$ is complete and separable \cite{AB}. Moreover, a sequence $(\La_n)_{n\geq1}$ of random collections of loops in $\mathsf{L}$ satisfying the Aizenman-Burchard criterion, denoted (\textbf{H1}) in \cite{AB}, is tight with respect to the metric $d_{\mathsf L}$, and in particular converges to a locally finite collection of loops.

\noindent \textbf{(2) Collections of closed sets:} Let $\mathsf C=\mathsf{C}(D)$ be the space of locally finite collections of closed subsets of $D$. This space can be equipped with a metric analogous to $d_{\mathsf L}$ defined with respect to the standard Hausdorff metric $d_H$ on closed subsets of $D$. That is, 
\[
d_{\mathsf C}(\C_1, \C_2) := \inf\{\eps>0:\ \forall C_1\in\C_1^\eps\ \ \exists C_2\in\C_2 \ \ \text{s.t.}\ \ d_{H}(C_1, C_2)<\eps\ \text{and viceversa}\}.
\]
Again, the space $(\mathsf C, d_{\mathsf C})$ is complete and separable. See Lemma \ref{lem:bdy-to-hausdorff} for a relation between the metrics $d_{\mathsf L}$ and $d_{\mathsf C}$.

\noindent\textbf{(3) Convergence of measures}: Let $\mathsf M=\mathsf M(D)$ be the space of finite measures supported in $D$. A sequence of measures $(\mu_n)_{n\geq1}$ converges to $\mu$ with respect to the weak topology if 
\[
\mu_n[f]\longrightarrow\mu[f]
\]
for every continuous, bounded function $f\in C_b(D)$. This topology is metrizable (under e.g. the Prokhorov metric) and turns $\mathsf M$ into a complete and separable metric space \cite[Lemma 4.5]{kallenberg}. A sequence of measures $(\mu_n)_{n\geq1}$ converges to $\mu$ with respect to the vague topology if 
\[
\mu_n[g]\longrightarrow\mu[g]
\]
for every continuous function $g\in C_c(D)$ with compact support in $D$. Again, this topology is metrizable in a way that turns $\mathsf M$ into a complete and separable metric space \cite[Lemma 4.6]{kallenberg}. It is a standard fact that weak convergence is equivalent to weak convergence and tightness of the sequence. For sequences of (countable) collections of measures, convergence is taken to be coordinate-wise.

\noindent\textbf{(4) Sobolev Spaces:} We closely follow the definitions as found in \cite{JSW}. Let $s\in\R$. The Sobolev space $\sobC$ is defined as the space of tempered distributions $h$ such that
\[\label{eq:def-sobolev}
\norm{h}^2_{H^{s}(\CC)}:= \int_\CC(1+|\xi|^2)^{s}|\hat h(\xi)|^2d\xi<\infty,
\]
where $\hat h$ is the Fourier transform of $h$. The space $H^{s}(\CC)$ is a separable Hilbert space. For every $s>0$, the space $H^{-s}(\CC)$ is the dual of $H^{s}(\CC)$ under the standard $L^2(\CC)$ dual pairing. Moreover, the space $H^{-s}(\CC)$ is a subspace of the space of distributions in $\CC$, i.e. the space of continuous linear functions acting on $C^\infty_c(\CC)$. We highlight that
\begin{enumerate}
	\item [(i)] The Dirac distribution $\boldsymbol{\updelta}_x$ is an element of $H^{s}(\CC)$ if and only if $s<-1$. The same holds for any compactly supported Borel measure (so, in particular, our discrete area measures).
	\item [(ii)] There exists a continuous embedding of $H^{s}(\CC)$ into $C_c(\CC)$ if and only if $s>1$.
\end{enumerate}
For these reasons, all of our fields are taken to converge in Sobolev spaces with regularity $s<-1$.

For any domain $D\subset\CC$, the space $H^{s}(D)$ is defined via restriction. That is, $f\in H^{s}(D)$ if there exists $h\in\sobC$ such that $f=h|_D$. The local Sobolev space $\sobD$ is defined as the space of distributions $h$ such that $h\psi\in\sobC$ for every $\psi\in C_c^\infty(D)$. For further background on Sobolev spaces we refer to \cite{adams}, see also \cite{BerPow, JSW, SLE-GFF, tight-field} for their usage in similar contexts.

\section{Convergence results in \cite{DRC}}\label{app:conv-ext}
\def\L{\mathcal{L}}

The purpose of this section is to tackle some remaining technicalities concerning the convergence of the clusters, when viewed as closed sets, with respect to the Hausdorff distance. In particular, we give the proof of Lemma \ref{lemma:indicator-conv}. 

For a collection of loops $\L\in \mathsf L$, we write $\C(\L)\in\mathsf C$ for the collection of carpets induced by $\L$. That is, the closed sets defined by viewing the nested loops in $\L$ as alternating between being inner and outer boundaries of some carpet.

\begin{lemma}\label{lem:bdy-to-hausdorff}
	Let $(\L_\de)_{\de>0}$ be a sequence of collections of loops in $D$ converging to $\L$ with respect to the metric $d_{\mathsf L}$. Suppose that the loops in $\L$ are pairwise disjoint. Then, $(\C(\La_\de))_{\de>0}$ converges to $\C(\L)$ with respect to the metric $d_{\mathsf C}$. 
\end{lemma}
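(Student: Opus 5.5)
Let $C_1\in\C(\L)$ be a carpet with $\operatorname{diam}(C_1)>\eps$. Its outer boundary $\gamma_0$ and inner boundaries $\gamma_1,\gamma_2,\dots$ all lie in $\L$, so $C_1=\overline{O(\gamma_0)}\setminus\bigcup_{i\ge1}O(\gamma_i)$, where $O(\gamma)$ is the open domain encircled by $\gamma$. The plan is to match $C_1$ with a discrete carpet $C_1^\de\in\C(\L_\de)$ by matching boundary loops, and then to show $d_H(C_1,C_1^\de)<\eps$ for $\de$ small. The converse direction, starting from a macroscopic discrete carpet, will follow by the symmetric argument.

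\emph{Step 1: only finitely many loops matter.} Fix $\eps>0$. By local finiteness, $\L^{\eps/4}$ is finite. Since the loops of $\L$ are pairwise disjoint and closed, any two distinct loops in $\L^{\eps/4}$ are at some positive distance $\eta>0$ from each other. Inner boundaries of $C_1$ of diameter less than $\eps/4$ remove only sets whose closures lie within $\eps/4$ of $C_1$. Consequently $C_1$ is within Hausdorff distance $\eps/4$ of the set $\widehat C_1=\overline{O(\gamma_0)}\setminus\bigcup_{i:\operatorname{diam}\gamma_i\ge\eps/4}O(\gamma_i)$. To see this, note that every point of a small removed hole is $\eps/4$-close to that hole's boundary loop, and that loop lies in $C_1$.

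\emph{Step 2: matching loops.} For $\de$ small, $d_{\mathsf L}(\L_\de,\L)<\min(\eta/3,\eps/4)$. Each loop of $\L^{\eps/4}$ then has a unique partner in $\L_\de$, and the partners are uniformly close. Since $\eta$ separates the limiting loops, the partner loops are disjoint, and their nesting relations agree with those of the limit loops. This is a parity statement: a point far from a loop has the same winding number about the loop and about any loop $d_\ell$-close to it. Moreover, discrete loops of diameter at least $\eps/2$ all have partners. Hence the carpet $C_1^\de$ bounded by the partner $\gamma_0^\de$ of $\gamma_0$ has its macroscopic holes bounded exactly by the partners of the $\gamma_i$. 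The same Step 1 argument then puts $C_1^\de$ within Hausdorff distance $\eps/4$ of $\widehat C_1^\de$.

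\emph{Step 3: comparing the truncated sets.} It remains to show $d_H(\widehat C_1,\widehat C_1^\de)\to0$. Here $\widehat C_1$ is a closed Jordan domain minus finitely many disjoint open Jordan domains. Since $\gamma^\de\to\gamma$ uniformly for each of the finitely many loops involved, both $O(\gamma^\de)\to O(\gamma)$ and $\overline{O(\gamma^\de)}\to\overline{O(\gamma)}$ in the Hausdorff sense (Carathéodory kernel convergence for Jordan curves). Finite intersections and differences of such sets are stable because the holes are at positive distance from each other and from $\gamma_0$. This is where disjointness is used: it prevents a thin neck of $C_1$ from being pinched off in the approximation.

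I expect Step 3, the stability of set difference under Hausdorff perturbation, to be the main obstacle. Hausdorff convergence is not preserved by complements in general, so I would argue pointwise with a positive margin. Every point of $\widehat C_1$ is either on a loop, which is approximated by the partner loop lying in $\widehat C_1^\de$, or in the interior of the region, at positive distance $r$ from all boundary loops. In the latter case, once the approximation error is below $r$, the point also lies in $\widehat C_1^\de$. The reverse inclusion is handled symmetrically.
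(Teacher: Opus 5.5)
The gap is the word ``unique'' in Step 2. The distance $d_{\mathsf L}$ is of Hausdorff type: $d_{\mathsf L}(\mathcal{L}_\delta,\mathcal{L})<r$ only says that each loop of diameter greater than $r$ in either collection is $d_\ell$-close to \emph{some} loop of the other. It is blind to multiplicity, so two distinct macroscopic loops of $\mathcal{L}_\delta$ may have the same partner in $\mathcal{L}$. Pairwise disjointness of the limiting loops does not prevent this.

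Everything after that point in Step 2 depends on injectivity:
\begin{itemize}
\item that the nesting depth, and hence the inner/outer parity, of a macroscopic discrete loop equals that of its partner;
\item that $\gamma_0^\delta$ is an outer boundary in $\mathcal{L}_\delta$;
\item that the macroscopic holes of $C_1^\delta$ are exactly the partners of the $\gamma_i$.
\end{itemize}
A second discrete loop nested just inside $\gamma_0^\delta$ would instead be a macroscopic hole of $C_1^\delta$ with no counterpart. With the metric of Appendix~\ref{app:spaces}, the conclusion actually fails without injectivity. In $D=2\mathbb{D}$, take $\mathcal{L}=\{\partial\mathbb{D}\}$ and $\mathcal{L}_\delta=\{\partial\mathbb{D},(1-\delta)\partial\mathbb{D}\}$. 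Then $d_{\mathsf L}(\mathcal{L}_\delta,\mathcal{L})\le\delta$. If outermost loops are outer boundaries, $\mathcal{C}(\mathcal{L}_\delta)$ is the annulus $\{1-\delta\le|z|\le1\}$, which converges in $d_H$ to $\partial\mathbb{D}$ rather than to $\overline{\mathbb{D}}$. With the opposite convention, $\mathcal{C}(\mathcal{L}_\delta)$ contains the extra carpet $(1-\delta)\overline{\mathbb{D}}$, and nothing in $\mathcal{C}(\mathcal{L})$ is near it.

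So you need an extra input: for $\delta$ small, distinct loops of $\mathcal{L}_\delta$ above a fixed diameter have distinct partners. Equivalently, convergence holds with multiplicity, in which case the lemma's disjointness hypothesis does rule out collapse. This does not follow from the hypotheses as stated. In the application it has to come from the model, e.g.\ from the separate identification of the limits of outer and inner boundaries in \cite{DRC,DRC2}, which prevents a discrete inner boundary from collapsing onto an outer one.

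Granting injectivity, the rest of your plan works:
\begin{itemize}
\item the truncation in Step 1;
\item the winding-number comparison of nesting, which only uses the separation $\eta$;
\item Step 3, once the margin is made uniform: points within $\rho$ of the finitely many large loops are covered by the partner loops, and the remaining ones by winding numbers.
\end{itemize}
You also need to tighten the cutoffs (e.g.\ $r\le\varepsilon/8$) so that discrete holes of diameter at least $\varepsilon/2$ are partners of loops in $\mathcal{L}^{\varepsilon/4}$.

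For comparison, the paper gives no direct argument and instead cites \cite[Theorem 3.5]{loop-soup-CLE}.
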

\begin{proof}
	This follows from \cite[Theorem 3.5]{loop-soup-CLE}.
\end{proof}

\noindent A set $C$ is said to be tangent to $A$ if $C\cap A\neq\emptyset$ and $C\cap A^\circ=\emptyset$, where $A^\circ$ is the interior of $A$.

\begin{prop}\label{prop:intersection}
	Let $(C_\de)_{\de>0}$ be a sequence of closed sets converging to $C$ as $\de\to0$ with respect to the Hausdorff metric. Fix a box $A$ and suppose that $C$ is not tangent to $A$. Then,
	\[
	\i(C_\de\cap A\neq\emptyset)\longrightarrow\i(C\cap A\neq\emptyset)\quad \text{as}\ \ \de\to0.
	\]
\end{prop}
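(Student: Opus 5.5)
We argue the two cases of the dichotomy separately, using only the definition of Hausdorff convergence; $A$ is a fixed closed box with interior $A^\circ$.

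\textbf{Case $C\cap A=\emptyset$.} Both $C$ and $A$ are compact (we work inside $\bar D$), so $\eta:=d(C,A)>0$. For $\de$ small enough that $d_H(C_\de,C)<\eta/2$, every point of $C_\de$ lies within $\eta/2$ of $C$. Hence it lies at distance at least $\eta/2$ from $A$, so $C_\de\cap A=\emptyset$ and the indicators agree (both vanish).

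\textbf{Case $C\cap A\neq\emptyset$.} Since $C$ is not tangent to $A$, the definition of tangency forces $C\cap A^\circ\neq\emptyset$. Pick $z\in C\cap A^\circ$ and $r>0$ with $B(z,r)\subset A^\circ$. For $\de$ small enough that $d_H(C_\de,C)<r$, there is $z_\de\in C_\de$ with $|z_\de-z|<r$. Then $z_\de\in A$, so $\i(C_\de\cap A\neq\emptyset)=1$ eventually, as required.

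The only subtle point is the role of non-tangency. If $C$ merely touched $\partial A$, the approximants $C_\de$ could approach $C$ from outside $A$ and miss $A$ entirely, and the indicator could fail to converge. The interior point is precisely what rules this out.

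For Lemma \ref{lemma:indicator-conv}, the proposition is applied to $C=\C_k$. One must then check that almost surely $\C_k$ is not tangent to a fixed dyadic box $A$. The natural route is to note that the $\CLE_4$ carpet has no isolated boundary structure: any point of the carpet is a limit of points of its loops, which are SLE$_4$-type curves and a.s. cross a fixed line segment rather than only touch it. Combined with the conformal invariance and Markov property of the two-valued sets, this should give that a.s. tangency to a fixed segment does not occur. This a.s. non-tangency is the step I expect to require the most care.
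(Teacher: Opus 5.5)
Your proof is correct and is essentially the paper's argument: the same two-case split, using a positive distance between $C$ and $A$ when they are disjoint, and otherwise approximating an interior point $z\in C\cap A^\circ$ by points of $C_\de$ (your use of $\mathrm{dist}(C,A)$ is in fact the correct quantity where the paper writes $d_H(C,A)$). Your last paragraph, on almost-sure non-tangency of the $\CLE_4$ carpet, goes beyond this statement; the paper handles that separately in Lemma~\ref{lem:SLE4-touching}, using continuity of the SLE$_4$-type level lines and a Blumenthal $0$-$1$ argument.
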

\begin{proof}
	First, suppose that $C\cap A=\emptyset$. Then, $d=d_H(C, A)>0$ and for small enough $\de$ we have that $d_H(C_\de, C) < d/2$. Thus $d_H(C_\de,A)> d/2$ eventually, and the claim follows. Otherwise, we must have that $C\cap A^\circ\neq\emptyset$. Choose $z\in C\cap A^\circ$ and $r>0$ such that $B(z, r)\subset A$. For small enough $\de$, there exists $z_\de\in C_\de$ such that $|z_\de-z| < r/2$. Hence $z_\de\in B(z, r)\subset A$ and $C_\de\cap A\neq\emptyset$. 
\end{proof}

\def\SLE{\textup{SLE}}

\begin{lemma}\label{lem:SLE4-touching}
	Let $A\subset D$ be a fixed (deterministic) box. Let $\C$ be the carpet of a $\CLE_4$ in $D$. Then, $\C$ is not tangent to $A$ almost surely.
\end{lemma}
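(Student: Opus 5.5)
The plan is to reduce tangency to a boundary-hitting event for $\CLE_4$ loops and use the fact that such loops are $\SLE_4$-type curves, which are not tangent to fixed smooth curves. Write $\C$ as the complement in $D$ of the union of the interiors of the (outermost) loops of the $\CLE_4$. Suppose $\C$ is tangent to $A$, so $\C\cap A\neq\emptyset$ and $\C\cap A^\circ=\emptyset$. Then $A^\circ$ is covered by the interiors of loops, and since $A^\circ$ is connected and the interiors are disjoint open sets, $A^\circ$ lies inside a single loop interior $O(\gamma)$. A point of $\C\cap A$ lies on $\partial A$, and it must be in $\overline{O(\gamma)}\cap\C$, hence on $\gamma$. (Points of $\C$ not on any loop are limits of small loops, and a limit point of infinitely many disjoint loop interiors cannot be in $\overline{O(\gamma)}$ unless it lies on $\gamma$.) So tangency forces the existence of a loop $\gamma$ with $A^\circ\subset O(\gamma)$ and $\gamma\cap\partial A\neq\emptyset$. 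In other words, $\gamma$ touches $\partial A$ from the outside without entering $A^\circ$.

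Next I would show that almost surely no $\CLE_4$ loop touches the fixed square boundary $\partial A$ without crossing into $A^\circ$. By local finiteness, only countably many loops matter. Moreover, loops of $\CLE_4$ can be discovered through a countable family of explorations: the $\SLE_4(-2)$ branching tree targeted at a countable dense set of points, or the level lines of the GFF through the ASW coupling $\A_{-2\lambda,2\lambda}$. Each loop is then a concatenation of countably many $\SLE_4$-type arcs, each absolutely continuous on compacts away from its endpoints with respect to chordal $\SLE_4$. It therefore suffices to prove that a chordal $\SLE_4$ $\eta$ in a domain almost surely does not touch a fixed line segment $L$ (a side of $A$) from one side only, at least away from corners and endpoints. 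I would handle the four corners of $A$ separately: a fixed point is hit with probability zero, since $\SLE_4$ does not hit interior points.

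The main obstacle is the one-sided touching statement. My first attempt would be a GFF argument. Along $\eta$, viewed as a level line with boundary values $\pm\lambda$, if $\eta$ touches $L$ at some point $z$ and stays on one side, then locally the restriction of the field near $z$ would exhibit a one-sided boundary behaviour. To exclude this I would use the known result that $\SLE_\kappa$ for $\kappa\le 4$ intersects a smooth curve in a set of dimension $1-\ldots$, together with the fact that one-sided contact points ("boundary touching" of the unbounded side) form the intersection of $\eta$ with $L$ in a configuration where $L$ behaves as a boundary arc. Since $\SLE_4$ does not touch domain boundaries ($\kappa\le4$), conditioning on the curve up to the first time it gets within $r$ of a given point of $L$ and using the domain Markov property and absolute continuity should show that it then crosses $L$ whenever it touches it. Concretely, let $\tau$ be the hitting time of $L$. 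After $\tau$, the future curve in the slit domain starts on $L$, and by the $0$–$1$ behaviour (scale invariance at the tip) it enters both sides of $L$ immediately almost surely. Applying this at countably many stopping times (hitting times of rational neighbourhoods) covers all touching points. This conclusion for every touching point is the delicate step, and it is where I expect the difficulty to lie.
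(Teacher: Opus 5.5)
This is essentially the paper's argument. The paper also takes an ASW level line ($\textup{SLE}_4(-1,-1)$ or $\textup{SLE}_4(-1)$) that meets $A$, stops it at its first hitting time $\tau$ of $\partial A$, and uses the strong Markov property of the driving process together with Blumenthal's $0$--$1$ law to conclude that the curve enters $A^\circ$ after $\tau$. Your extra layers are not needed: localising to a single loop surrounding $A^\circ$, transferring to chordal $\textup{SLE}_4$ by absolute continuity, and covering all one-sided touching points by countably many stopping times can all be dropped. Every level line of the construction lies in $\mathcal{C}$, so a single entry into $A^\circ$ after the first hitting time already gives $\mathcal{C}\cap A^\circ\neq\emptyset$.
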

\begin{proof}
	We work with the construction of the $\CLE_4$ in terms of $\SLE_4(-1, -1)$ and $\SLE_4(-1)$ as introduced in \cite[Section 6]{ASW}. First, we note that if $A\cap\C\neq\emptyset$, then necessarily $A\cap(D\setminus \C)\neq\emptyset$. In particular $A$ must intersect some level line $\eta$ used in the construction of the $\CLE_4$.
	
	\noindent By \cite[Theorem 1.3]{MilShe} (see also \cite[Appendix A.3]{BerPow}), we know that $\eta$ is a continuous curve and, by the above, there exists a finite stopping time $\tau$ such that $\eta_\tau\in\partial A$. Moreover, by \cite[Theorem 2.2]{MilShe}, the underlying driving system of SDEs gives rise to an almost surely continuous Markov process. By applying the strong Markov property at time $\tau$, we can invoke Blumenthal's 0-1 law to see that almost surely $\eta([\tau, \infty])\cap A^\circ \neq\emptyset$, which concludes the proof.
\end{proof}

\begin{proof}[Proof of Lemma \ref{lemma:indicator-conv}]
	Immediate by combining Proposition \ref{prop:intersection} and Lemma \ref{lem:SLE4-touching}.
\end{proof}
\pagebreak

\bibliography{refs.bib}{}
\bibliographystyle{plain}

\end{document}